\theoremstyle{plain}
\newtheorem{theorem}{Theorem}[section]
\newtheorem{thm}[theorem]{Theorem}
\newtheorem{lemma}[theorem]{Lemma}
\newtheorem{lem}[theorem]{Lemma}
\newtheorem{prop}[theorem]{Proposition}
\newtheorem{cor}[theorem]{Corollary}
\theoremstyle{remark}
\newtheorem{remark}[theorem]{Remark}
\DeclareMathOperator{\ch}{ch}
\DeclareMathOperator{\Hom}{Hom}
\DeclareMathOperator{\Aut}{Aut}
\DeclareMathOperator{\Stab}{Stab}
\DeclareMathOperator{\ad}{ad}
\DeclareMathOperator{\diag}{diag}
\DeclareMathOperator{\Irr}{Irr}
\DeclareMathOperator{\IBr}{IBr}
\DeclareMathOperator{\Ext}{Ext}
\DeclareMathOperator{\ext}{Ext}
\DeclareMathOperator{\End}{End}
\DeclareMathOperator{\tr}{tr}
\DeclareMathOperator{\Lie}{Lie}
\DeclareMathOperator{\Ind}{Ind}
\DeclareMathOperator{\GL}{GL}
\DeclareMathOperator{\SL}{SL}
\DeclareMathOperator{\SU}{SU}
\newcommand{\cA}{{\mathcal A}}
\newcommand{\cB}{{\mathcal B}}
\newcommand{\cC}{{\mathcal C}}
\newcommand{\cD}{{\mathcal D}}
\newcommand{\cE}{{\mathcal E}}
\newcommand{\cG}{{\mathcal G}}
\newcommand{\cH}{{\mathcal H}}
\newcommand{\cI}{{\mathcal I}}
\newcommand{\cM}{{\mathcal M}}
\newcommand{\cN}{{\mathcal N}}
\newcommand{\cO}{{\mathcal O}}
\newcommand{\cP}{{\mathcal P}}
\newcommand{\cT}{{\mathcal T}}
\newcommand{\cU}{{\mathcal U}}
\newcommand{\cX}{{\mathcal X}}
\newcommand{\cY}{{\mathcal Y}}
\newcommand{\gtwo}{{\mathrm{G}_2}}
\newcommand{\bbC}{{\mathbb C}}
\newcommand{\bfC}{{\mathbf C}}
\newcommand{\bbE}{{\mathbb E}}
\newcommand{\bbF}{{\mathbb F}}
\newcommand{\bfN}{{\mathbf N}}
\newcommand{\bfO}{{\mathbf O}}
\newcommand{\bbQ}{{\mathbb Q}}
\newcommand{\bfZ}{{\mathbf Z}}
\newcommand{\AAA}{{\sf A}}
\newcommand{\SSS}{{\sf S}}
\newcommand{\bbone}{{\mathbbm{1}}}
\newcommand{\barFp}{{\overline{\bbF}_p}}
\newcommand{\al}{\alpha}
\newcommand{\ve}{\varepsilon}
\newcommand{\la}{\lambda}
\newcommand{\rA}{\mathrm A}
\newcommand{\rB}{\mathrm B}
\newcommand{\bP}{{\mathbb P}}
\newcommand{\p}{\partial}
\newcommand{\sh}{\sharp}
\newcommand{\Ker}{{\mathrm {Ker}}}
\newcommand{\Syl}{{\mathrm {Syl}}}
\newcommand{\Char}{{\mathrm {char}}}
\newcommand{\GP}{{G^+}}
\DeclareMathOperator{\Sp}{Sp}
\DeclareMathOperator{\soc}{soc}
\DeclareMathOperator{\PSL}{PSL}
\DeclareMathOperator{\PSU}{PSU}
\DeclareMathOperator{\PSp}{PSp}
\DeclareMathOperator{\PGL}{PGL}
\DeclareMathOperator{\GU}{GU}
\DeclareMathOperator{\PIM}{\cP}
\DeclareMathOperator{\EE}{End}
\theoremstyle{plain}
\renewcommand{\mod}{\bmod \,}
\newcommand{\fp}{\bbF_p}
\newcommand{\fq}{\bbF_q}
\newcommand{\fpb}{\overline{\bbF}_p}
\newcommand{\ang}[1]{\langle #1 \rangle}
\newcommand{\tw}[1]{{}^#1}
\numberwithin{equation}{section}
\newcommand{\dl}{\mathfrak{d}_p}
\newcommand{\hd}{{\mathrm {head}}}
\newcommand{\Out}{{\mathrm {Out}}}
\begin{document}

\author[R. Guralnick]{Robert Guralnick}
\address{Department of Mathematics, University of Southern California,
Los Angeles, CA 90089-2532, USA}
\email{guralnic@usc.edu}
\author[F. Herzig]{Florian Herzig}
\address{Department of Mathematics, University of Toronto,
40 St. George Street, Room 6290, Toronto, ON M5S 2E4, Canada}
\email{herzig@math.toronto.edu}
\author[P. Tiep]{Pham Huu Tiep}
\address{Department of Mathematics, University of Arizona,
Tucson, AZ 85721-0089, USA}
\email{tiep@math.arizona.edu}

\title{Adequate Groups of Low Degree}
\date{\today}

\thanks{The first author was partially supported by  NSF
  grants DMS-1001962, DMS-1302886 and the Simons Foundation Fellowship 224965.  He
  also thanks the Institute for Advanced Study for its support.
  The second author was partially supported by a Sloan Fellowship and an NSERC grant.
  The third author was partially supported by the NSF grant DMS-1201374 and the Simons 
  Foundation Fellowship 305247.}
\thanks{We thank the referee for careful reading of the paper, and 
Frank L\"ubeck and Klaus Lux for help with several computations.}

\keywords{Artin-Wedderburn theorem,  irreducible representations, automorphic
representations, Galois representations, adequate representations}

\subjclass[2010]{Primary 20C20; Secondary 11F80}

\begin{abstract}
The notion of adequate subgroups was introduced by Jack Thorne \cite{T}.
It is a weakening of the notion of big subgroups used in generalizations of
the Taylor-Wiles method for proving the automorphy of certain Galois representations.
Using this idea, Thorne was able to strengthen
many automorphy lifting theorems.
 It was shown in \cite{GHTT} that if the dimension is small compared to
 the characteristic then all absolutely irreducible representations are
 adequate.   Here we extend the result by showing that, in almost all cases, absolutely
 irreducible $kG$-modules in characteristic $p$, whose irreducible 
 $\GP$-summands have dimension less than $p$ (where $\GP$ denotes the 
 subgroup of $G$ generated by all $p$-elements of $G$),
 are adequate.
 \end{abstract}

\maketitle

\tableofcontents

\section{Introduction}

Throughout the paper,
let $k$ be a field of characteristic $p$ and let $V$ be a finite dimensional vector space over $k$.
Let $\rho:G \rightarrow  \GL(V)$ be an  absolutely irreducible representation.
Thorne  \cite{T} called $(G,V)$ {\it adequate}  if the following conditions hold
(we rephrase the conditions slightly by combining two of the properties into one):
\begin{enumerate}
\item  $p$ does not divide $\dim V$;
\item  $\ext^1_G(V,V)=0$; and
\item  $\EE(V)$ is spanned by the elements $\rho(g)$ with $\rho(g)$ semisimple.
\end{enumerate}
We remark that recently Thorne has shown that one can relax condition (i) above
(see \cite[Corollary 7.3]{T2} and \cite[\S1]{GHT}).

If $G$ is a finite group of order prime to $p$, then it is well known that $(G,V)$ is adequate.
In  this case, condition (iii) is often referred to as Burnside's Lemma.  It is a trivial
consequence of the  Artin-Wedderburn Theorem. Also, $(G,V)$ is adequate if $G$ is a connected
algebraic group over $k = \bar{k}$ and $V$ is a rational irreducible $kG$-module,
see \cite[Theorem 1.2]{Gapp}.

The adequacy conditions are a generalization to higher dimension of the conditions
used by Wiles and Taylor in studying the automorphic lifts of certain
$2$-dimensional Galois representations, and they are a weakening of the
previously introduced {\it bigness} condition \cite{CHT}. 
Thorne \cite{T} strengthened the existing automorphy lifting
theorems for $n$-dimensional Galois representations assuming the weaker
adequacy hypotheses. We refer the reader to \cite{T} for more references and
details.

It was shown  in \cite[Theorem 9]{GHTT} that:

\begin{theorem} \label{adequate}  Let $k$ be a field of characteristic $p$ and
$G$ a finite group.  Let $V$ be an absolutely irreducible faithful $kG$-module.
Let $G^+$ denote the subgroup generated by the $p$-elements of $G$.  If
$\dim W \le (p-3)/2$ for an absolutely irreducible $kG^+$-submodule $W$ of $V$,
then   $(G,V)$ is adequate.
\end{theorem}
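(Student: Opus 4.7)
My plan is to verify each of the three adequacy conditions (i)--(iii) separately, using Clifford theory throughout to transfer questions from $G$ to $G^+$. Since $G^+$ is characteristic in $G$ and $G/G^+$ has order coprime to $p$ (as it contains no $p$-elements by definition), the restriction $V|_{G^+}$ is semisimple, and by Clifford theory $V|_{G^+} \cong e \cdot \bigoplus_i W_i$, where the $W_i$ run over the distinct $G$-conjugates of $W$ and the common multiplicity $e$ divides $[G:G^+]$; in particular $e$ is coprime to $p$. Condition (i) follows at once: since $\dim W \le (p-3)/2 < p$ gives $p \nmid \dim W$, and the number of conjugates divides $|G/G^+|$, we obtain $p \nmid \dim V$.

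For condition (ii), the Lyndon--Hochschild--Serre spectral sequence for $G^+ \triangleleft G$, combined with $p \nmid |G/G^+|$, yields
\[
\Ext^1_G(V,V) \cong \Ext^1_{G^+}(V,V)^{G/G^+}.
\]
Decomposing $V|_{G^+}$ reduces the question to showing $\Ext^1_{G^+}(W_i, W_j) = 0$ for all conjugates $W_i, W_j$ of $W$. Replacing $W_j$ by an appropriate twist of $W$ by an automorphism of $G^+$, this further reduces to an $H^1$-vanishing statement available for irreducible $G^+$-modules of dimension below a small fraction of $p$; the bound $\dim W \le (p-3)/2$ is calibrated to fit such a theorem. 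For condition (iii), the guiding idea is that every $g \in G$ has a canonical decomposition $g = g_s g_u$ with $g_s$ $p$-regular and $g_u \in \langle g \rangle$ a $p$-element commuting with $g_s$, hence $\rho(g) = \rho(g_s) \rho(g_u)$; it thus suffices to express each $\rho(g_u)$, within the centralizer of $\rho(g_s)$ in $\End(V)$, as a linear combination of images of $p$-regular elements of $C_G(g_s)$. This final step is extracted from a structure-theoretic analysis of $G^+$: the hypothesis $\dim W \le (p-3)/2$ places $G^+$, modulo scalars and normal $p$-subgroups, in a tightly restricted family of almost simple or Lie-type groups, for which the needed spanning property is either transferred from the algebraic-group analogue \cite[Theorem 1.2]{Gapp} or checked directly on Brauer character tables.

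The main obstacle is condition (iii): unlike (i) and (ii), it is neither numerical nor cohomological, and so resists a general-nonsense argument. Verifying it appears to require the full strength of the classification of finite groups admitting an absolutely irreducible module of dimension at most $(p-3)/2$ in characteristic $p$, together with a case-by-case production of enough semisimple elements to span $\End(V)$, and one must carefully reconcile the restriction to $G^+$ with the lift back to $G$. Conditions (i) and (ii), by contrast, follow essentially formally from Clifford theory combined with a single known input on $H^1$-vanishing for small-dimensional irreducibles.
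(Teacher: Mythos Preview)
A contextual note first: this paper does not itself prove Theorem~\ref{adequate}; it is quoted from \cite[Theorem~9]{GHTT}. The shape of that argument can be inferred from its generalization in \S4 here (Proposition~\ref{prop4}, Theorem~\ref{thm6}).

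Your treatment of (i) and (ii) is essentially right. For (i) one uses that $(\dim V)/(\dim W)$ divides $|G/\GP|$ (\cite[Theorem~8.30]{N}). For (ii), after reducing to $\Ext^1_{\GP}(W_i,W_j)=0$ as in Lemma~\ref{zero-ext}, a non-split extension would be an indecomposable $\GP$-module of dimension $\le p-3$ with $\bfO_p(\GP)=1$, which \cite{Gcr} forces to be semisimple.

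For (iii), however, there is a genuine gap. The Jordan-decomposition step---spanning $\rho(g_u)$ by $p'$-elements of $C_G(g_s)$---is logically valid but not a simplification: you have replaced one spanning problem by a family of spanning problems inside centralizers about which you know nothing a priori. This is not the \cite{GHTT} route. The bound $\dim W\le(p-3)/2$ rules out cases (a)--(e) of Theorem~\ref{bz} on dimension grounds, so $\GP$ lands in case (f): Lie type in characteristic $p$. One then builds an algebraic group $\cG$ over $\fp$ with $\cG(\fp)$ mapping onto $G$ (as in Proposition~\ref{prop4}), and the heart of the matter is a weight computation: the bound forces $|\langle\mu,\alpha^\vee\rangle|<(p-1)/2$ for all weights $\mu$ of $V$ and roots $\alpha$, so distinct $\cT_{/\fpb}$-weights on $\End V$ stay distinct on the finite torus $\cT(\fp)$. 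Together with semisimplicity of $\End V$ via \cite{Serre1}, this makes every $\cG^0(\fp)$-submodule of $\End V$ already $\cG^0(\fpb)$-stable, and one finishes by Artin--Wedderburn for the algebraic group. You allude to \cite{Gapp}, but the weight-separation mechanism is the entire content; no Brauer-table checks are needed, since no cross-characteristic cases survive the hypothesis.
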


The example $G=\SL_2(p)$ with $V$ irreducible of dimension $(p-1)/2$ shows
that the previous theorem is best possible.  However, the counterexamples are
rare.
Our first goal is to prove a similar theorem under
the assumption that $\dim W < p$.  We show that almost always
$(G,V)$ is adequate, see Corollary \ref{cor: adequate}.
Indeed, we show that the spanning condition always holds under the weaker hypothesis.
We show that there are only a handful
of examples where $\ext_G^1(V,V) \ne 0$.  See Theorems
\ref{thm: weak adequacy} and \ref{thm: ext=0} for more precise statements.

Theorem~\ref{adequate} was crucial in several recent applications of
automorphy lifting theorems, such as \cite{BLGGT}, \cite{C}, \cite{D}.
In fact, the main two technical hypotheses in the most recent automorphy lifting theorems
are potential diagonalizability (a condition in $p$-adic Hodge theory) and adequacy of
the residual image \cite{DG}.
Since some important applications of automorphy lifting theorems \cite{BCDT}, \cite{KW}, \cite{D}
require working with primes $p$ that are small relative to the dimension of the
Galois representation, we expect that our results will be useful in obtaining
further arithmetic applications of automorphy lifting theorems. (Note that 
adequacy of $2$-dimensional linear groups has been analyzed in Appendix A of
\cite{gee}.)

\smallskip
An outgrowth of our results leads us to prove
an analogue of the first author's result \cite{Gcr} and answer a question of Serre on
complete reducibility of finite subgroups of orthogonal and symplectic groups of small degree.
This is done in the sequel \cite{GHT}, where we essentially classify indecomposable modules
in characteristic $p$ of dimension less than $2p-2$. We also extend adequacy results
to the case of linear groups of degree $p$ and generalize the asymptotic result
\cite[Theorem 1.2]{Gapp} to disconnected algebraic groups $\cG$
(with $p\nmid [\cG:\cG^0]$) allowing at the same time that $p$ divides the dimension of the
$\cG$-module.

\smallskip
Note that if the kernel of $\rho$ has order prime to $p$, then there is no harm
in passing to the quotient.  So we will generally assume that either
$\rho$ is faithful or more generally has kernel  of order prime to $p$.
Also, note that the dimensions of cohomology groups and the dimension
of the span of the semisimple elements in $G$ in $\EE(V)$ do not
change under extension of scalars.  Hence, most of the time we will work over
an algebraically closed field~$k$.

Following \cite{G2},  we say that the representation $\rho:G \to \GL(V)$,
respectively the pair $(G,V)$, is {\it weakly adequate} if $\EE(V)$ is
spanned by the elements $\rho(g)$ with $\rho(g)$ semisimple.

\smallskip
Our main results are the following:

\begin{thm} \label{thm: weak adequacy}
Let $k$ be a field of characteristic $p$ and
$G$ a finite group.  Let $V$ be an absolutely irreducible faithful $kG$-module.
Let $G^+$ denote the subgroup generated by the $p$-elements of $G$.  If
$p  > \dim W$ for an irreducible $kG^+$-submodule $W$ of $V$,
then  $(G,V)$ is weakly adequate.
\end{thm}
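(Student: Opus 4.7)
The plan is to show $A := \mathrm{span}_k\{\rho(g) : g \in G,\ \rho(g) \text{ is semisimple in } \End_k(V)\}$ equals $\End_k(V)$. After extending scalars to $\bar k$, the hypothesis that $V$ is faithful implies $\rho(g)$ is semisimple exactly when $g$ has order coprime to $p$. Since $V$ is absolutely irreducible, Burnside's theorem gives $k\rho(G) = \End_k(V)$, hence it suffices to prove $\rho(g) \in A$ for every $g \in G$.

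Fix $g \in G$ and write its Jordan decomposition $g = su = us$, with $s$ of order coprime to $p$ and $u$ a $p$-element. Clifford theory applied to $G^+ \lhd G$ (note $|G/G^+|$ is coprime to $p$, since $G^+$ contains a Sylow $p$-subgroup of $G$) shows that $V|_{G^+}$ is semisimple; by hypothesis each irreducible summand has dimension $d := \dim W < p$. The $p$-element $u$ lies in $G^+$, so its action on $V$ has Jordan blocks of size at most $d$: setting $N := \rho(u) - 1$, this gives $N^d = 0$ and $\rho(u)^p = 1_V$. Writing $\rho(g) = \rho(s) + \rho(s) N$, the first summand already lies in $A$, reducing the problem to showing $\rho(s) N^j \in A$ for $1 \le j < d$ and for each commuting pair $(s, u)$ of a $p$-regular element and a $p$-element.

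The core mechanism I would use exploits the $\rho(G)$-conjugation invariance of $A$ together with a Vandermonde-type identity. Although no element of $\langle s, u \rangle$ of the form $s u^i$ with $i \not\equiv 0$ is $p$-regular, in many finite groups there exists a rich supply of $p$-regular elements $h = tu \in G$ (with $t$ a $p$-regular element not centralising $u$) whose order is itself coprime to $p$, so that $\rho(h) \in A$. Conjugating by $p$-regular elements normalising $\langle u \rangle$ and acting on $u$ by $u \mapsto u^a$ produces elements of the form $\rho(t') \rho(u)^a$ with $a$ ranging over residues modulo the order of $u$. Since $d < p$, the resulting change-of-basis matrix $(\binom{i}{j})_{0 \le i,j < p}$ is lower-unitriangular over $k$ and hence invertible, so the linear span of such semisimple conjugates (combined with $\rho(s)$) becomes large enough to express each individual $\rho(s) N^j$, and therefore $\rho(g)$, as a $k$-combination of elements of $A$.

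The main obstacle is securing enough such $p$-regular ``twisters'' of $u$ in $G$. When $G^+$ is a finite group of Lie type in its defining characteristic, a maximal torus normalising the ambient Sylow $p$-subgroup supplies them uniformly. In the remaining cases --- alternating groups, cross-characteristic images of Lie-type groups, and a short list of low-rank exceptional or sporadic examples --- I would appeal to the classification of absolutely irreducible faithful $kH$-modules of dimension less than $p$ for finite quasisimple $H$, and verify the spanning condition group by group, with computer assistance if needed. Following the pattern of the proof of Theorem~\ref{adequate} in \cite{GHTT}, I expect most of the technical effort to reside in this case-by-case verification, though it is conceivable that weak adequacy (being strictly weaker than full adequacy) admits a more uniform argument.
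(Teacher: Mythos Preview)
Your reduction to $\bar k$, the use of Burnside's theorem, and the nilpotency bound $N^d = 0$ coming from the semisimplicity of $V|_{\GP}$ are all fine. The gap is in the ``core mechanism''. A Vandermonde argument of the type you describe needs, for a fixed commuting pair $(s,u)$, \emph{semisimple} elements whose images in $\End(V)$ are of the form $\rho(s')\rho(u)^{a}$ for several distinct residues $a$. But if $t$ is $p$-regular and commutes with the nontrivial $p$-element $u$, then $tu^{a}$ is $p$-regular only when $u^{a}=1$; and if $t$ does not commute with $u$, then $\rho(tu^{a})$ is not $\rho(t)\rho(u)^{a}$ in any way that lets you isolate the powers $N^{j}$. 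Your proposed workaround via conjugation by elements of $\bfN_G(\langle u\rangle)$ does not help either: conjugates of $\rho(g)$ have the same Jordan type as $\rho(g)$, so you never produce new semisimple elements this way. In short, there is no uniform Vandermonde-with-twisters argument available here, and the sentence ``there exists a rich supply of $p$-regular elements $h=tu$ \ldots'' has no content that can be made precise for the purpose you need.

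The paper's proof proceeds quite differently. After a structure theorem for $\GP$ based on the Blau--Zhang classification (Theorem~\ref{str}), the argument splits into three essentially unrelated blocks. For $\GP$ a central product of quasisimple groups of Lie type in characteristic $p$ (Sections~\ref{sec:slp}--4), one passes to an ambient semisimple algebraic group $\cG$ and compares weights of $\cT(\bbF_p)$ with those of $\cT(\barFp)$ on $\End(V)$; the $\SL_2(\bbF_p)$ case requires a separate explicit calculation with the split and non-split Cartan and certain differential operators $\Delta_k$. For the cross-characteristic cases (Sections~\ref{sec:cross}--\ref{sec:sl}) the key device is Lemma~\ref{key}: one exhibits a $p'$-subgroup $Q\le\GP$ whose $G$-conjugacy class is a single $\GP$-class, such that the $W_i|_Q$ remain irreducible and pairwise non-isomorphic; then $\bfN_G(Q)$ is a $p'$-group acting irreducibly on $V$, and Artin--Wedderburn finishes. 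Several stubborn cases ($\PSL_n(q)$ with $p=(q^n-1)/(q-1)$, $3J_3$, $2M_{12}$, $2\AAA_7$, $6_1\cdot\PSL_3(4)$) need refined variants (Lemmas~\ref{key2}, \ref{key3}) and ad hoc arguments occupying most of \S\ref{sec:sl}. So you are right that the proof is ultimately a long case analysis, but the unifying mechanism in each block is not the one you propose, and the cross-characteristic block in particular requires ideas (finding the subgroup $Q$) that your outline does not anticipate.
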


\begin{thm}  \label{thm: ext=0}  Let $k = \overline{k}$ be a field of characteristic $p$ and
$G$ a finite group.  Let $V$ be an irreducible faithful $kG$-module.
Let $G^+$ denote the subgroup generated by the $p$-elements of $G$.  Suppose that
$p  > d:= \dim W$ for an irreducible $kG^+$-submodule $W$ of $V$, and let
$H < \GL(W)$ be induced by the action of $\GP$ on $W$. Then one of the following
holds.

\smallskip
{\rm (a)} $p$ is a Fermat prime, $d = p-1$, $\GP$ is solvable
(and so $G$ is $p$-solvable), and $H/\bfO_{p'}(H) = C_p$.

\smallskip
{\rm (b)} $H^1(G,k) = 0$. Furthermore, either
$\ext^1_G(V,V) = 0$, or one of the following holds.
\begin{enumerate}[\rm(i)]
\item  $H = \PSL_2(p)$ or $\SL_2(p)$, and $d = (p \pm 1)/2$.
\item  $H = \SL_2(p) \times \SL_2(p^a)$ (modulo a central subgroup), $d=p-1$
and $W$ is a tensor product of a $(p-1)/2$-dimensional $\SL_2(p)$-module and a
$2$-dimensional $\SL_2(p^a)$-module.
\item  $p = (q+1)/2$, $d = p-1$, and $H \cong \SL_2(q)$.
\item $p = 2^f +1$ is a Fermat prime, $d = p-2$, and $H \cong \SL_2(2^f)$.
\item $(H,p,d) = (3\AAA_6,5,3)$ and $(2\AAA_7,7,4)$.
\item $(H,p,d) = (\SL_2(3^a),3,2)$ and $a \geq 2$.
\end{enumerate}
\end{thm}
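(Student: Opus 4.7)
The plan is to combine structural results on irreducible subgroups of $\GL(W)$ generated by $p$-elements with dimension $< p$, together with a Hochschild–Serre/Clifford reduction that transfers the $\ext^1$-computation from $G$ down to $H$, where known low-dimensional cohomology vanishing results can be cited.

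First I would set up the structure of $H$. Since $H$ is the image of $\GP$ in $\GL(W)$ and $\GP$ is generated by $p$-elements, $H$ is an irreducible subgroup of $\GL_d(k)$ generated by elements of order a power of $p$, with $d<p$. Such groups are heavily constrained: modulo $\bfO_{p'}(H)$ one gets either a $p$-group (the solvable case) or a quasi-simple group with a known faithful irreducible representation of dimension $< p$. Invoking the standard classification (Landazuri–Seitz bounds, together with the low-dimensional lists of Hiss–Malle, and the solvable analysis via Suprunenko/Winter) yields a short list: either $H/\bfO_{p'}(H)$ is cyclic of order $p$, or $H$ is (up to $p'$-center) one of $\PSL_2(p)$, $\SL_2(p)$, $\SL_2(q)$ with $p=(q+1)/2$, $\SL_2(2^f)$ with $p=2^f+1$, a tensor-decomposable product involving $\SL_2(p)$ and $\SL_2(p^a)$, $\SL_2(3^a)$ with $p=3$, or the sporadic exceptions $3\AAA_6, 2\AAA_7$ at $(p,d)=(5,3),(7,4)$. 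The solvable branch forces $d=p-1$ and, via the requirement that the irreducible action of a Sylow $p$-normalizer on a $(p-1)$-space exists, $p$ must be Fermat; this yields case (a).

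In the remaining (nonsolvable) cases I would prove $H^1(G,k)=0$ first. Since $G/\GP$ has order prime to $p$ and $k$ is $p$-torsion, inflation–restriction collapses to $H^1(G,k) = H^1(\GP,k)^{G/\GP}$, so it suffices to kill $H^1(\GP,k) = \Hom(\GP,(k,+))$. Any such homomorphism factors through $\GP/[\GP,\GP]$, which is annihilated by the fact that $\GP$ is generated by $p$-elements and, in each quasi-simple case on the above list, has trivial or $p'$-abelianization. (In the tensor-product case this is inherited from the factors.)

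Next I would reduce the $\ext^1$-computation. By Clifford theory, $V|_{\GP} = \bigoplus_{i=1}^r W_i$ where the $W_i$ are the $G$-conjugates of $W$, and $p \nmid r$ because $r \mid [G:\GP]$. Hochschild–Serre applied to $\GP \lhd G$ therefore gives
\[
\ext^1_G(V,V) \;=\; \ext^1_{\GP}(V,V)^{G/\GP} \;=\; \Bigl(\bigoplus_{i,j}\ext^1_{\GP}(W_i,W_j)\Bigr)^{G/\GP},
\]
the higher cohomology of $G/\GP$ vanishing because it has $p'$-order. Pulling out the finite kernel of $\GP \twoheadrightarrow H$ (which has order prime to $p$ by construction, using that $W$ is faithful on the image) reduces each summand to $\ext^1_H(W',W'')$ for $H$-modules $W',W''$ obtained from $W$ by outer twists. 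Thus the problem is pushed entirely onto cohomology of $H$ on twists of its defining module.

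Finally I would run through the short list of possible $H$ and quote the known values of $\ext^1_H(W',W'')$ for small-dimensional modules. For quasi-simple groups of Lie type in defining characteristic, these are the classical Jantzen/Cline–Parshall–Scott computations; for cross-characteristic and the sporadic entries $3\AAA_6,2\AAA_7$ one uses explicit decomposition-matrix and cohomology data (as in Guralnick–Tiep and the cited computations of Lübeck and Lux). In each case $\ext^1_H(W,W)=0$ except precisely for the listed families (i)–(vi); one must also check that twists by $G/\GP$ do not create new nonvanishing $\ext^1_H(W',W'')$ outside this list, which for the non-self-dual cases like (ii) requires tracking the action of field automorphisms on the tensor factors. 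The main obstacle, and the step requiring the most care, is case (ii): here $W$ is a genuine outer tensor product, the Clifford decomposition of $V|_{\GP}$ can mix the two $\SL_2$ factors under $G/\GP$, and one has to verify via the Künneth formula and the $\SL_2(p)$-cohomology of the $(p-1)/2$-dimensional Weyl module that the exceptional extension on the $\SL_2(p)$ side is not eliminated after taking $G/\GP$-invariants.
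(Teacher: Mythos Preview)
Your overall strategy---restrict to $\GP$, reduce $\Ext^1_G(V,V)$ to $\bigoplus_{i,j}\Ext^1_{\GP}(W_i,W_j)$, push down to $H$, and run through a finite list---matches the paper's. But two steps are genuinely broken.

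First, the claim that $\ker(\GP \twoheadrightarrow H)$ has order prime to $p$ is false. In the nonsolvable case the paper shows (Theorem~\ref{str}) that $\GP = L_1 * \cdots * L_n$ is a central product of quasisimple groups, each of order divisible by $p$, and the kernel of $\GP$ on a single $W_i$ contains all but one of the $L_j$. So this kernel is very far from a $p'$-group. What \emph{is} true is that the kernel $K_i$ satisfies $\bfO^p(K_i)=K_i$ (it has no $C_p$ composition factor), and this weaker fact still gives $\Ext^1_{\GP}(W_i,W_j)\cong\Ext^1_{\GP/K_i}(W_i,W_j)$ \emph{provided} $K_i=K_j$ (Lemma~\ref{kernel}). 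When $K_i\neq K_j$ there is no common quotient $H$ to descend to, and the paper handles this case separately by a K\"unneth-type argument on the central product (Corollary~\ref{product2}). Your reduction ``each summand becomes $\Ext^1_H(W',W'')$ for outer twists of $W$'' does not go through without this split into cases.

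Second, your ``short list'' of possible $H$ is the list of \emph{exceptions} in the theorem, not the list of $H$ that actually occur. The relevant classification (Blau--Zhang, recorded as Theorem~\ref{bz}) is much longer: it includes $\SU_n(q)$ with $p=(q^n+1)/(q+1)$, $\Sp_{2n}(q)$ with $p=(q^n+1)/2$, $\AAA_p$, $\PSL_n(q)$ with $p=(q^n-1)/(q-1)$ for $n\ge 3$, an extraspecial-normalizer family, roughly a dozen sporadic cases, and all quasisimple groups of Lie type in characteristic $p$. For each of these one must \emph{prove} $\Ext^1_H(W,W')=0$; the paper does this case by case (Lemmas~\ref{weil-ext}, \ref{spor}, \ref{ext-defi}, and Propositions~\ref{extra2}, \ref{simple2}), and only afterward does the list collapse to (i)--(vi). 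You have written down the answer and skipped the verification. In particular, for the defining-characteristic Lie-type case the paper does not quote Cline--Parshall--Scott directly but instead passes through a structural classification of indecomposable $kG$-modules of dimension $\le 2p-2$ (Proposition~\ref{indec-str} and Lemma~\ref{2p-2}); this is precisely where the non-quasisimple exception (b)(ii) is isolated, and your K\"unneth remark about the $(p-1)/2$-dimensional module is not a substitute for that analysis.
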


Theorems \ref{thm: weak adequacy} and \ref{thm: ext=0} immediately imply:

\begin{cor}  \label{cor: adequate}  Let $k$ be a field of characteristic $p$ and
$G$ a finite group.  Let $V$ be an absolutely irreducible faithful $kG$-module, and
let $G^+$ denote the subgroup generated by the $p$-elements of $G$.  Suppose that
the dimension of any irreducible $kG^+$-submodule in $V$ is less than $p$.
Let $W$ be an irreducible $\overline{k}\GP$-submodule of $V \otimes_k \overline{k}$. Then
$(G,V)$ is adequate, unless the group
$H < \GL(W)$ induced by the action of $\GP$ on $W$ is as described in one of the
exceptional cases {\rm (a), (b)(i)--(vi)} listed in Theorem \ref{thm: ext=0}.
\end{cor}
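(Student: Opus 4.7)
The plan is to verify each of the three adequacy conditions (i)--(iii) in turn. Since dimensions, $\ext^1$-groups, and the weak-adequacy spanning condition are all stable under extension of scalars (the set of $g$ with $\rho(g)$ semisimple depends only on whether the order of $g$ is prime to $p$, and a subspace of $\EE_k(V)$ fills after tensoring with $\overline{k}$ iff it fills to begin with), we may replace $k$ by its algebraic closure and assume $k = \overline{k}$. Fix an irreducible $k\GP$-submodule $W$ of $V$, let $d = \dim W < p$, and let $H \le \GL(W)$ denote the image of $\GP$.

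Condition (iii), the Burnside spanning condition, is precisely the conclusion of Theorem \ref{thm: weak adequacy} applied to $(G,V)$; it holds unconditionally under the hypothesis $d < p$. Condition (ii), the vanishing $\ext^1_G(V,V) = 0$, is the content of Theorem \ref{thm: ext=0}: either $(H,p,d)$ falls in one of the exceptional configurations (a) or (b)(i)--(vi), or $\ext^1_G(V,V) = 0$. Excluding these exceptions therefore yields (ii) as well.

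It remains to check condition (i), that $p \nmid \dim V$. By definition $\GP$ contains every Sylow $p$-subgroup of $G$, so $p \nmid |G/\GP|$. Clifford's theorem applied to the normal subgroup $\GP \unlhd G$ gives
\[
V|_{\GP} \cong e \cdot (W_1 \oplus \cdots \oplus W_s),
\]
where $W_1, \dots, W_s$ are the distinct $G$-conjugates of $W$, each of dimension $d$, and $e$ is the common multiplicity. Here $s = [G : \Stab_G(W)]$ divides $|G/\GP|$, while $e$ is the degree of an irreducible projective representation of the inertia quotient $\Stab_G(W)/\GP$, which is a $p'$-group. Hence both $e$ and $s$ are coprime to $p$, and combined with $p > d$ we obtain $p \nmid \dim V = eds$.

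I expect no serious obstacle: the corollary is essentially the concatenation of the three items above. The only step not lifted directly from the cited theorems is (i), and that reduces to the routine Clifford-theoretic observation that the multiplicity and the number of conjugates are controlled by the $p'$-group $G/\GP$.
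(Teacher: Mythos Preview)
Your proof is correct and follows essentially the same approach as the paper: reduce to $k=\overline{k}$, invoke Theorem \ref{thm: weak adequacy} for condition (iii), Theorem \ref{thm: ext=0} for condition (ii), and verify $p\nmid \dim V$ via Clifford theory. The only cosmetic difference is that the paper cites \cite[Theorem 8.30]{N} directly for the divisibility $(\dim V)/(\dim W)\mid |G/\GP|$, whereas you unpack this into the two factors $s=[G:\Stab_G(W)]$ and the projective multiplicity $e$; since $\Stab_G(W)/\GP$ is a $p'$-group, its twisted group algebra is semisimple with irreducible degrees prime to $p$, so $p\nmid e$ as you claim.
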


\begin{cor}  \label{cor: adequate2}  Let $k$ be a field of characteristic $p$ and
$G$ a finite group.  Let $V$ be an absolutely irreducible faithful $kG$-module, and
let $G^+$ denote the subgroup generated by the $p$-elements of $G$.  Suppose that
the dimension $d$ of any irreducible $kG^+$-submodule in $V$ is less than $p-3$.
Let $W$ be an irreducible $\overline{k}\GP$-submodule of $V \otimes_k \overline{k}$. Then
$(G,V)$ is adequate, unless $d = (p \pm 1)/2$ and the group
$\bar{H} < \PGL(W)$ induced by the action of $\GP$ on $W$ is $\PSL_2(p)$.
\end{cor}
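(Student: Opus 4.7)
The plan is to derive this as an immediate consequence of Corollary~\ref{cor: adequate}. The hypothesis $d < p-3$ certainly implies $d < p$, so Corollary~\ref{cor: adequate} applies and gives that $(G,V)$ is adequate unless the group $H$ induced by $\GP$ on $W$ falls into one of the exceptional cases (a) or (b)(i)--(vi) of Theorem~\ref{thm: ext=0}. The task therefore reduces to checking that, under the stronger bound $d \leq p-4$, only case (b)(i) can actually occur.

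I would run through the exceptional families in turn. In case (a), as well as (b)(ii) and (b)(iii), the exception requires $d = p-1$, which is incompatible with $d \leq p-4$. Case (b)(iv) has $d = p-2$, also excluded. Case (b)(v) lists the two sporadic triples $(H,p,d) = (3\AAA_6, 5, 3)$ and $(2\AAA_7, 7, 4)$: the first fails because $d = 3 \not< 2 = p-3$, and the second because $d = 4 \not< 4 = p-3$. Finally, case (b)(vi) forces $p = 3$ and $d = 2$, which cannot satisfy $d < 0$. This leaves only case (b)(i), where $d = (p \pm 1)/2$ and $H$ is isomorphic to either $\PSL_2(p)$ or $\SL_2(p)$.

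To reconcile this with the statement of the corollary, I would note that in both alternatives of (b)(i) the image $\bar H$ of $\GP$ inside $\PGL(W)$ is $\PSL_2(p)$: if $H = \PSL_2(p)$ this is immediate, while if $H = \SL_2(p)$ then the scalar center $\{\pm I\}$ collapses on passing to $\PGL(W)$, yielding the quotient $\PSL_2(p)$. No step here is genuinely difficult; the whole argument is a case-by-case verification of numerical inequalities, and the tightest of these is the exclusion of the $(2\AAA_7,7,4)$ triple in (b)(v), where one has $d = p-3$ exactly, so I would be careful to exploit the strict inequality $d < p-3$ in the hypothesis rather than $d \leq p-3$.
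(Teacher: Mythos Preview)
Your proof is correct and is exactly the argument the paper has in mind: the authors state Corollary~\ref{cor: adequate2} immediately after Corollary~\ref{cor: adequate} without giving a separate proof, so the intended justification is precisely the case-by-case elimination of the exceptional families in Theorem~\ref{thm: ext=0} under the stronger bound $d < p-3$, which you have carried out accurately.
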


One should emphasize that, in all the aforementioned results, the dimension bound 
$\dim W < p$ is imposed only on an irreducible $\GP$-summand of $V$. In 
general, $G/\GP$ can be an arbitrary $p'$-group, and likewise, 
$(\dim V)/(\dim W)$ can be arbitrarily large. So a major bulk of the proofs, 
especially for Theorem \ref{thm: weak adequacy}, is spent to establish the 
results under these more general hypotheses.

\medskip
This paper is organized as follows. In \S2, based on results of \cite{BZ}, we
describe the structure of (non-$p$-solvable) finite linear groups $G < GL(V)$
such that the dimension of irreducible $\GP$-summands in $V$ is less than $p$,
cf.\ Theorem \ref{str}. \S\S3 and 4 are devoted to establish weak adequacy
for Chevalley groups in characteristic $p$. In the next two sections 5 and 6,
we prove adequacy for the remaining families of finite groups occurring in
Theorem \ref{str}, and complete the proof of Theorem \ref{thm: weak adequacy}.
In \S7 we collect various facts concerning extensions and self-extensions of simple modules. The main result of \S8, Proposition \ref{prop1},
classifies self-dual indecomposable $\SL_2(q)$-modules for $p|q$. In \S9, we
describe the structure of finite groups $G$ possessing a reducible
indecomposable module of dimension $\leq 2p-3$ (cf.\ Proposition \ref{indec-str}).
Theorem \ref{thm: ext=0} and Corollary \ref{cor: adequate} are proved in the final section 10.

\medskip
{\bf Notation.} If $V$ is a $kG$-module and $X \leq G$ is a subgroup, then
$V_X$ denotes the restriction of $V$ to $X$. The containments $X \subset Y$ (for
sets) and $X < Y$ (for groups) are strict. Fix a prime $p$ and an
algebraically closed field $k$ of characteristic $p$. Then for any finite group $G$,
$\IBr_p(G)$ is the set of isomorphism classes of irreducible
$kG$-representations (or their Brauer characters, depending on the context), $\dl(G)$ denotes the smallest degree of nontrivial $\varphi \in \IBr_p(G)$,
and $B_0(G)$ denotes the principal $p$-block of $G$. Sometimes we use
$\bbone$ to denote the principal representation.
$\bfO_p(G)$ is the largest normal $p$-subgroup of $G$, $\bfO^p(G)$ is the
smallest normal subgroup $N$ of $G$ subject to $G/N$ being a $p$-group, and similarly
for $\bfO_{p'}(G)$ and $\bfO^{p'}(G) = \GP$. Furthermore, the {\it Fitting subgroup} $F(G)$ is
the largest nilpotent normal subgroup of $G$, and $E(G)$ is the product of all
subnormal quasisimple subgroups of $G$, so that $F^*(G) = F(G)E(G)$ is the {\it
generalized Fitting subgroup} of $G$.
Given a finite-dimensional $kG$-representation
$\Phi:G \to \GL(V)$, we denote by $\cM$ the $k$-span
$$\langle \Phi(g) \mid \Phi(g) \mbox{ semisimple}\rangle_k.$$
If $M$ is a finite length module over a ring $R$,  then define $\soc_i(M)$
by $\soc_0(M) = 0$ and $\soc_j(M)/\soc_{j-1}(M) = \soc (M/\soc_{j-1}(M))$.
If $M = \soc_j(M)$
with $j$ minimal, we say that $j$ is the {\it socle length} of $M$.

\section{Linear groups of low degree}

First we describe the structure of absolutely irreducible non-$p$-solvable linear groups
of low degree, relying on the main result of Blau and Zhang \cite{BZ}:

\begin{thm}\label{bz}
Let $W$ be a faithful, absolutely irreducible $kH$-module for a finite group $H$ with
$\bfO ^{p'}(H) = H$. Suppose that $1 < \dim W < p$. Then one of the following cases holds,
where $P \in \Syl_p(H)$.

\smallskip
{\rm (a)} $p$ is a Fermat prime, $|P| = p$, $H = \bfO _{p'}(H)P$ is solvable, $\dim W = p-1$,
and $\bfO_{p'}(H)$ is absolutely irreducible on $W$.

\smallskip
{\rm (b)} $|P| = p$, $\dim W = p-1$, and one of the following conditions holds:

\hskip1pc
{\rm (b1)} $(H,p) = (\SU_n(q), (q^n+1)/(q+1))$, $(\Sp_{2n}(q),(q^n+1)/2)$,
$(2\AAA_7,5)$, $(3J_3,19)$, or $(2Ru,29)$.

\hskip1pc
{\rm (b2)} $p = 7$ and $H = 6_1\cdot \PSL_3(4)$, $6_1\cdot \PSU_4(3)$, $2J_2$,
$3\AAA_7$, or $6\AAA_7$.

\hskip1pc
{\rm (b3)} $p = 11$ and $H = M_{11}$, $2M_{12}$, or $2M_{22}$.

\hskip1pc
{\rm (b4)} $p = 13$ and $H = 6\cdot Suz$ or $2\gtwo(4)$.

\smallskip
{\rm (c)} $|P| = p$, $\dim W = p-2$, and
$(H,p) = (\PSL_n(q), (q^n-1)/(q-1))$, $(\AAA_p,p)$, $(3\AAA_6,5)$,
$(3\AAA_7,5)$, $(M_{11},11)$, or $(M_{23},23)$.

\smallskip
{\rm (d)} $(H,p,\dim W) = (2\AAA_7,7,4)$, $(J_1,11,7)$.

\smallskip
{\rm (e)} Extraspecial case: $|P| = p = 2^n+1 \geq 5$, $\dim W = 2^n$,
$\bfO _{p'}(H) = R\bfZ(H)$, $R = [P,R]\bfZ (R) \in \Syl_2(\bfO_{p'}(H))$, $[P,R]$ is an
extraspecial $2$-group of order $2^{1+2n}$, $V_{[P,R]}$ is absolutely irreducible.
Furthermore,  $S := H/\bfO _{p'}(H)$ is simple non-abelian, and either
$S= \Sp_{2a}(2^b)'$ or $\Omega^-_{2a}(2^b)'$ with
$ab = n$, or $S = \PSL_2(17)$ and $p = 17$.

\smallskip
{\rm (f)} $Lie(p)$-case: $H/\bfZ (H)$ is a direct product of simple groups of Lie type in characteristic $p$.\\
Furthermore, in the cases {\rm (b)--(d)}, $H$ is quasisimple with $\bfZ(H)$ a $p'$-group.
\end{thm}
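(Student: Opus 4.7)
The plan is to derive the theorem from the Blau--Zhang classification \cite{BZ} by specializing to groups satisfying $\bfO^{p'}(H) = H$. BZ classify the finite groups admitting a faithful absolutely irreducible module of dimension $< p$ in characteristic $p$, and their conclusion naturally separates into three regimes according to the structure of a Sylow $p$-subgroup $P$ of $H$: the cyclic case $|P|=p$ (subdivided into solvable and quasisimple), a ``large'' case where $H$ normalizes an extraspecial $2$-group with $p$ a Fermat prime, and a Lie-type case where the components of $F^*(H)$ come from groups of Lie type in characteristic $p$. I would begin by writing out this trichotomy as the starting list, noting that $\dim W > 1$ together with faithfulness forces $|P|>1$ (otherwise $\bfO^{p'}(H)=1\neq H$).

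For $|P|=p$ I would treat the solvable and quasisimple subcases separately. In the solvable subcase, the condition $\bfO^{p'}(H)=H$ together with the Fitting subgroup structure forces $H = \bfO_{p'}(H) P$, and a standard Hall--Higman/Clifford argument applied to the faithful action of the cyclic Sylow $p$-subgroup on $W$ yields $\dim W = p-1$ with $\bfO_{p'}(H)$ absolutely irreducible on $W$ and $p-1$ a prime power; hence $p$ is a Fermat prime and we arrive at case (a). In the quasisimple subcase, the classification of $p$-blocks of cyclic defect and of irreducible Brauer characters of degree at most $p-1$ for quasisimple groups (as invoked in \cite{BZ}, supplemented by L\"ubeck's low-dimensional tables and the \textsc{Atlas}) gives precisely the groups listed in (b1)--(b4), (c), and (d); the condition $\bfO^{p'}(H)=H$ then eliminates any $p$-part of $\bfZ(H)$, yielding the final ``Furthermore'' assertion.

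For $|P|>p$ the Lie-type case (f) is the easy outcome: if $F^*(H)/\bfZ(H)$ is already a product of simple groups of Lie type in characteristic $p$ by BZ, then $\bfO^{p'}(H)=H$ simply ensures that every simple factor contributes a $p$-element, and nothing further needs to be said. The main obstacle, and the subcase where I expect the bulk of the technical work to concentrate, is the extraspecial case (e). Here $p=2^n+1$ is Fermat, $R = [P,R]\bfZ(R)$ is an extraspecial $2$-group of order $2^{1+2n}$, and $W_{[P,R]}$ is forced to be the unique faithful irreducible representation of $[P,R]$ of dimension $2^n$. The quotient $S = H/\bfO_{p'}(H)$ embeds in $\Out([P,R]) \cong \Sp_{2n}(2)$, acts irreducibly on the Frattini quotient $[P,R]/\bfZ([P,R]) \cong \bbF_2^{2n}$, and is generated by $p$-elements; identifying such simple subgroups of $\Sp_{2n}(2)$ requires an Aschbacher-style analysis of subgroups of $\Sp_{2n}(2)$ containing a Singer-type element of order $p=2^n+1$ on the natural module, and produces exactly $\Sp_{2a}(2^b)'$ and $\Omega^-_{2a}(2^b)'$ with $ab=n$, together with the sporadic accident $\PSL_2(17) \subset \Sp_8(2)$ at $n=4$.
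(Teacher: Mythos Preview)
Your overall strategy---apply the Blau--Zhang classification and then specialize using $\bfO^{p'}(H)=H$---is exactly what the paper does, but you are substantially overestimating the work required in the extraspecial case. Blau--Zhang's Theorem~A already identifies the possible quotients $H/\bfO_{p'}(H)$ in that case; no fresh Aschbacher-style analysis of subgroups of $\Sp_{2n}(2)$ is needed. The only refinements the paper makes there are (i) the hypothesis $\bfO^{p'}(H)=H$ forces $S=H/\bfO_{p'}(H)$ to be simple (BZ allows a larger quotient), and (ii) $\Omega^+_{2a}(2^b)$ is excluded because $|S|_p=|P|=p$. Similarly, in the solvable case BZ already gives the conclusion of (a); you do not need a separate Hall--Higman argument.

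Two smaller points. First, your justification that $\bfZ(H)$ is a $p'$-group in cases (b)--(d) is off: this follows from faithfulness in characteristic $p$ (a central $p$-element would act as a scalar $p$-th root of unity, hence trivially), not from $\bfO^{p'}(H)=H$. Second, BZ in these cases only gives that $H/\bfZ(H)$ is simple; the paper explicitly argues that $H^{(\infty)}$ is a perfect normal subgroup of $p'$-index in $H=\bfO^{p'}(H)$, hence $H=H^{(\infty)}$ is quasisimple---you should not skip this step. The pinning down of the precise Schur cover then uses \cite{GT1}, \cite{GMST}, \cite{GT2}, and \cite{JLPW} rather than L\"ubeck's tables, though your suggestion is in the same spirit.
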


\begin{proof}
We apply Theorem A of \cite{BZ} and arrive at one of the possibilities (a)--(j) listed therein.
Note that possibility (j) is restated as our case (f), and possibilities (f)--(i) do not occur since
$H$ is absolutely irreducible. Possibility (a) does not arise either since $\dim W > 1$, and
possibility (b) is restated as our case (a). Next, in the case of possibility (c),
either we are back to our case (a), or else we are in case (e), where the simplicity of
$S$ follows from the assumption that $H = \bfO ^{p'}(H)$. (Also,
$S \not\cong \Omega^+_{2a}(2^b)$ since $|S|_p = |P| = p$.)

In the remaining cases (d), (e), and (g) of \cite[Theorem A]{BZ}, we have that
$H/\bfZ (H) = S$ is a simple non-abelian group, and $\bfZ(H)$ is a cyclic $p'$-group
by Schur's Lemma. Hence, $H^{(\infty)}$ is a perfect normal subgroup of $p'$-index
in $H = \bfO ^{p'}(H)$. It follows that $H = H^{(\infty)}$ and so it is quasisimple. Also,
the possibilities for $(S,\dim W,p)$ are listed. Using \cite{GT1} (if $S = \PSL_n(q)$), \cite{GMST} (if $S = \PSU_n(q)$ or $\PSp_{2n}(q)$), \cite[Lemma 6.1]{GT2} (if $S = \AAA_p$
and $p \geq 17$), and \cite{JLPW} (for the other simple groups), we arrive at cases (b)--(d).
\end{proof}

Next we prove some technical lemmas in the spirit of \cite[Lemma 3.10]{BZ}.

\begin{lem}\label{dec1}
Let $G$ be a finite group with normal subgroups $K_1$ and $K_2$ such that
$K_1 \cap K_2 \leq \bfO_{p'}(G)$. For any finite group $X$, let $\overline{X}$ denote
$X/\bfO_{p'}(X)$.
Suppose that  $\overline{G/K_1} \cong \prod_{i \in I}M_i$ and
$\overline{G/K_2} \cong \prod_{j \in J}N_j$ are direct products of simple non-abelian groups.
Then there are some sets $I' \subseteq I$ and
$J' \subseteq J$ such that
$$\overline{G} \cong \prod_{i \in I'}M_i \times \prod_{j \in J'}N_j.$$
\end{lem}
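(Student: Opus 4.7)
The plan is to interpret $\overline{G}$ as a subdirect product of $A := \overline{G/K_1} = \prod_{i \in I} M_i$ and $B := \overline{G/K_2} = \prod_{j \in J} N_j$, and then to invoke Goursat's lemma, exploiting the fact that normal subgroups of a finite direct product of non-abelian simple groups are automatically subproducts.

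First I would observe that the image of $\bfO_{p'}(G)$ in $G/K_i$ is a normal $p'$-subgroup, hence lies in $\bfO_{p'}(G/K_i)$; consequently the canonical surjections $G \twoheadrightarrow \overline{G/K_i}$ descend to surjections $\overline{G} \twoheadrightarrow \overline{G/K_i}$, and combining them yields a homomorphism
\[
  \overline{\phi}: \overline{G} \longrightarrow A \times B
\]
whose composition with each projection is surjective.

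Next I would show $\overline{\phi}$ is injective. Let $L_i \trianglelefteq G$ denote the preimage of $\bfO_{p'}(G/K_i)$ under $G \twoheadrightarrow G/K_i$, so that $K_i \leq L_i$ and $L_i/K_i$ is a $p'$-group. Since the kernel of the unlifted map $\phi : G \to A \times B$ is $L_1 \cap L_2$, it suffices to prove $L_1 \cap L_2 \leq \bfO_{p'}(G)$. Setting $N := K_1 \cap L_2$, the inclusion $N/(K_1 \cap K_2) \hookrightarrow L_2/K_2$ exhibits $N$ as a normal subgroup of $G$ which is a $p'$-extension of a subgroup of $\bfO_{p'}(G)$; since $\bfO_{p'}(\overline G) = 1$, this forces $N \leq \bfO_{p'}(G)$. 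Repeating the same argument with $(L_1 \cap L_2)/N \hookrightarrow L_1/K_1$ then gives $L_1 \cap L_2 \leq \bfO_{p'}(G)$, as required.

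Finally, view $\overline{G}$ as a subdirect subgroup of $A \times B$. Goursat's lemma produces normal subgroups $H_A := \overline{G} \cap (A \times 1) \trianglelefteq A$ and $H_B := \overline{G} \cap (1 \times B) \trianglelefteq B$, together with an isomorphism $\theta: A/H_A \xrightarrow{\sim} B/H_B$ whose graph is $\overline{G}/(H_A \times H_B)$. Because each $M_i$ and $N_j$ is non-abelian simple, $H_A = \prod_{i \in I_0} M_i$ and $H_B = \prod_{j \in J_0} N_j$ for some $I_0 \subseteq I$, $J_0 \subseteq J$. Writing $A = H_A \times A'$ with $A' = \prod_{i \in I \setminus I_0} M_i$, a short check shows that the map $(a_0, b_0, a') \mapsto (a_0 a', b_0 \theta(a'))$ defines a group isomorphism $H_A \times H_B \times A' \xrightarrow{\sim} \overline G$, using that $A'$ commutes with $H_A$ in $A$ and that $\theta(A')$ commutes with $H_B$ in $B$. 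This yields
\[
  \overline G \;\cong\; \prod_{i \in I} M_i \times \prod_{j \in J_0} N_j,
\]
which is the claimed conclusion with $I' := I$ and $J' := J_0$. The main obstacle is the kernel computation showing $L_1 \cap L_2 \leq \bfO_{p'}(G)$; once injectivity is in hand, the Goursat step is a routine piece of structure theory.
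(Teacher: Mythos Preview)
Your proof is correct and follows essentially the same approach as the paper. Both arguments (i) show that the kernel of $G \to \overline{G/K_1} \times \overline{G/K_2}$ is exactly $\bfO_{p'}(G)$ via the same two-step $p'$-extension computation, and (ii) then invoke the structure theory of subdirect products in a product of non-abelian simple groups; the only difference is that the paper cites \cite[Lemma 3.9]{BZ} for step (ii) while you spell out the Goursat argument explicitly.
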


\begin{proof}
For $i = 1,2$, let $K_i \leq H_i \lhd G$ be such that $H_i/K_i = \bfO_{p'}(G/K_i)$. Then
$$G/H_1 \cong  \prod_{i \in I}M_i,~~G/H_2 \cong  \prod_{j \in J}N_j.$$
By \cite[Lemma 3.9]{BZ}, there are some sets $I' \subseteq I$ and
$J' \subseteq J$ such that
$$G/(H_1 \cap H_2) \cong \prod_{i \in I'}M_i \times \prod_{j \in J'}N_j.$$
It remains to show that $H_1 \cap H_2 = \bfO_{p'}(G)$. Certainly,
$H_1 \cap H_2 \geq \bfO_{p'}(G)$. Conversely,
$$(H_1 \cap K_2)/(K_1 \cap K_2) \hookrightarrow  H_1/K_1,~~
   (H_1 \cap H_2)/(H_1 \cap K_2) \hookrightarrow  H_2/K_2,$$
and $K_1 \cap K_2 \leq \bfO_{p'}(G)$. It follows that $H_1 \cap H_2$ is a $p'$-group.
\end{proof}

\begin{lem}\label{dec2}
Let $G$ be a finite group with a faithful $kG$-module $V$. Suppose that
$V = W_1 \oplus \ldots \oplus W_t$ is a direct sum of $kG$-submodules, and let
$H_i \leq \GL(W_i)$ be the linear group induced
by the action of $G$ on $W_i$. Suppose that for each $i$,
$S_i := H_i/\bfO_{p'}(H_i)$ is a simple non-abelian group. Then there is a subset
$J \subseteq \{1,2, \ldots ,t\}$ such that
$$G/\bfO_{p'}(G) \cong \prod_{j \in J}S_j.$$
In particular, if $\bfO_{p'}(H_i) = 1$ for all $i$, then
$G \cong \prod_{j \in J}S_j$.
\end{lem}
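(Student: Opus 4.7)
The plan is induction on $t$, with Lemma \ref{dec1} as the workhorse and faithfulness of $V$ used to verify its hypothesis. Let $K_i := \ker(G \to \GL(W_i))$ so that $G/K_i \cong H_i$. The base case $t=1$ is immediate: faithfulness gives $K_1 = 1$, hence $G \cong H_1$ and $G/\bfO_{p'}(G) \cong S_1$.

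For the inductive step with $t \geq 2$, set $L := K_1$ and $K := K_2 \cap \cdots \cap K_t$. Faithfulness of $V$ yields $L \cap K = \bigcap_{i=1}^t K_i = 1 \leq \bfO_{p'}(G)$, which is precisely the hypothesis demanded by Lemma \ref{dec1}. Since $G/L = H_1$, we have $\overline{G/L} = S_1$ — trivially a direct product of one simple non-abelian group. The quotient $G/K$ acts faithfully on $W_2 \oplus \cdots \oplus W_t$ and induces the same linear group $H_i$ on each $W_i$ for $i \geq 2$; so the inductive hypothesis applied to $G/K$ gives $\overline{G/K} \cong \prod_{j \in J_0} S_j$ for some $J_0 \subseteq \{2,\ldots,t\}$. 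Applying Lemma \ref{dec1} to the pair $(L,K)$ then yields a subset $J \subseteq \{1\} \cup J_0 \subseteq \{1,\ldots,t\}$ with $\overline{G} \cong \prod_{j \in J} S_j$, completing the induction.

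For the ``In particular'' assertion, assume $\bfO_{p'}(H_i) = 1$ for every $i$, so that $H_i = S_i$. It suffices to show $N := \bfO_{p'}(G) = 1$, since then the first part already gives $G \cong \prod_{j \in J} S_j$. The surjection $G \twoheadrightarrow H_i = S_i$ sends $N$ to a normal $p'$-subgroup of $S_i$. Observe that $p \mid |S_i|$: otherwise $H_i = S_i$ would be a nontrivial $p'$-group, forcing $\bfO_{p'}(H_i) = H_i \neq 1$, contrary to hypothesis. Hence the image of $N$ in each $S_i$ is trivial, so $N \leq \bigcap_{i=1}^t K_i = 1$ by faithfulness.

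The only mildly delicate point is the bookkeeping of the inductive hypothesis, namely checking that the hypotheses of the lemma are inherited by $G/K$ acting on $W_2 \oplus \cdots \oplus W_t$ with the same quotients $H_i$ and $S_i$; once this is verified, the rest is a direct application of Lemma \ref{dec1} and a standard projection argument for the final sentence.
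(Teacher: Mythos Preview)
Your proof is correct and follows essentially the same approach as the paper: induction on $t$, applying Lemma~\ref{dec1} to the pair $(K_1,\ K_2\cap\cdots\cap K_t)$, with the inductive hypothesis supplying the product decomposition for $\overline{G/(K_2\cap\cdots\cap K_t)}$. For the final sentence, the paper observes more directly that the image of $\bfO_{p'}(G)$ in $H_i$ is a normal $p'$-subgroup, hence lies in $\bfO_{p'}(H_i)=1$; your detour through $p\mid |S_i|$ is unnecessary but harmless.
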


\begin{proof}
We proceed by induction on $t$. The induction base $t=1$ is obvious. For the induction
step, let $K_i$ denote the kernel of the action of $G$ on $W_i$, so that
$H_i = G/K_i$. The faithfulness of $V$ implies that $\cap^t_{i=1}K_i = 1$.
Adopt the bar notation $\overline{X}$ of Lemma \ref{dec1}. By the assumption,
$\overline{G/K_1} \cong S_1$. Next, observe that $L := \cap^t_{i=2}K_i$ is the kernel
of the action of $G$ on $V' := W_2 \oplus \ldots \oplus W_t$, and
the action of $G/L$ on $W_i$ induces $H_i$ for all $i \geq 2$. Applying the induction hypothesis to $G/L$ acting on $V'$, we see that
$\overline{G/L} \cong \prod_{j \in J'}S_j$ for some $J' \subseteq \{2,3, \ldots ,t\}$.
Also, $K_1 \cap L = 1$. Hence we can apply Lemma \ref{dec1} to get
$\overline{G} \cong \prod_{j \in J}S_j$ for some $J \subseteq \{1,2,3, \ldots ,t\}$.

Finally, if $\bfO_{p'}(H_i) = 1$ for all $i$, then the action of $\bfO_{p'}(G)$ on $W_i$ induces a normal
$p'$-subgroup of $H_i$ for all $i$, whence $\bfO_{p'}(G) \leq \cap^t_{i=1}K_i = 1$,
and we are done.
\end{proof}

\begin{thm}\label{str}
Let $V$ be a finite dimensional vector space over an algebraically closed field $k$
of characteristic $p$ and $G < \GL(V)$ a finite irreducible subgroup. Suppose that an irreducible $\GP$-submodule $W$ of $V$ has dimension $<p$ and $\GP$ is not
solvable. Then $\GP$ is perfect and has no composition factor isomorphic to $C_p$;
in particular, $H^1(G,k) = 0$. Furthermore, if $H$ is the image of $\GP$ in $\GL(W)$, then one of the following statements holds.

\smallskip
{\rm (i)} One of the cases (b)--(d) of Theorem \ref{bz} holds for $H$, and
$\GP/\bfZ(\GP) = S_1 \times \ldots \times S_n \cong S^n$ is a direct product  of $n$ copies of the simple non-abelian group
$S  = H/\bfZ(H)$. Here,
$G$ permutes these $n$ direct factors $S_1, \ldots ,S_n$ transitively.
Furthermore, $\GP = L_1 * \ldots * L_n$ is a central product of quasisimple groups
$L_i$, each being a central cover of $S$, and the action of $\GP$ on each
irreducible $\GP$-submodule $W_i$ of $W$ induces a quasisimple subgroup of
$\GL(W_i)$. Finally, if $H$ is the full covering group of $S$ or if $H = S$, then
$$\GP = L_1 \times L_2 \times \ldots \times L_n \cong H^n.$$

\smallskip
{\rm (ii)} The case (e) of Theorem \ref{bz} holds for $H$. Furthermore, $\bfO _{p'}(\GP)$ is
irreducible on any irreducible $\GP$-submodule $W_i$ of $V$, and
$\GP/\bfO _{p'}(\GP) \cong S^m$ is a direct product of $m \geq 1$ copies of the simple
non-abelian group $S$ listed in Theorem \ref{bz}(e).

\smallskip
{\rm (iii)} The case (f) of Theorem \ref{bz} holds for $H$, and
$\GP = L_1 * \ldots * L_n$ is a central product of quasisimple groups $L_i$ of Lie type in characteristic $p$ with $\bfZ(L_i)$ a $p'$-group.
\end{thm}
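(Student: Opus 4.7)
The plan is to apply Theorem~\ref{bz} to the image $H$ of $\GP$ in $\GL(W)$ and then lift the resulting structure on $H$ to one on $\GP$ using Clifford theory together with the decomposition lemmas of this section. Since $V$ is irreducible for $G$ and $\GP \lhd G$, Clifford's theorem shows $V|_{\GP}$ is semisimple and its irreducible constituents form a single $G$-orbit; let $W_1 = W, W_2, \ldots, W_n$ be representatives of the isomorphism classes of these constituents, and let $H_i \leq \GL(W_i)$ denote the image of $\GP$. Conjugation by elements of $G$ permutes the $W_i$ transitively and induces isomorphisms $H_i \cong H$. Since $\GP$ is non-solvable by hypothesis, $H$ is as well, which immediately rules out case~(a) of Theorem~\ref{bz}. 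The three surviving clusters (b)--(d), (e), and (f) of Theorem~\ref{bz} are to be matched with conclusions (i), (ii), (iii) respectively.

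The core structural statement, namely that $\GP/\bfO_{p'}(\GP)$ is a direct product of simple non-abelian groups, will be extracted by applying Lemma~\ref{dec2} to $\GP$ acting on $V = \bigoplus_i m_i W_i$. In cases (b)--(d), $H$ is quasisimple with $p'$-center, so $\bfO_{p'}(H) = \bfZ(H)$ and $H/\bfO_{p'}(H)$ is simple non-abelian; the same is asserted in case (e) by the last sentence of Theorem~\ref{bz}(e); in case (f) one first writes $H$ itself as a central product of quasisimple Lie-type groups in characteristic $p$ and feeds the corresponding finer decomposition into the lemma. The hypothesis that $\GP$ acts faithfully on $V$ guarantees $\bigcap_i \ker(\GP \to H_i) = 1$, which is precisely what Lemma~\ref{dec2} (or an iterated version, together with Lemma~\ref{dec1}) requires; transitivity of the $G$-action on $\{W_i\}$ then forces all the simple factors to be $G$-conjugate copies of $S = H/\bfZ(H)$ (or of its simple constituents in case (iii)), yielding $\GP/\bfO_{p'}(\GP) \cong S^n$ in cases (i) and (ii).

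With $\GP/\bfO_{p'}(\GP)$ identified, perfectness of $\GP$ and the absence of $C_p$-composition factors follow formally. Since $\GP = \bfO^{p'}(G)$ is generated by $p$-elements, $\GP^{\mathrm{ab}}$ is a $p$-group; but the image of $\bfO_{p'}(\GP)$ in this abelian $p$-group is trivial, and $\GP/\bfO_{p'}(\GP)$ is perfect, so $\GP^{\mathrm{ab}} = 1$. Composition factors of $\GP$ are either of $p'$-order (coming from $\bfO_{p'}(\GP)$) or simple non-abelian (coming from $\GP/\bfO_{p'}(\GP)$), so none is isomorphic to $C_p$. Since $G/\GP$ is a $p'$-group, any $C_p$-quotient of $G$ would descend to one of $\GP$, and hence $H^1(G,k) = \Hom(G, C_p) = 0$.

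The remaining refinements consist in identifying the quasisimple factors $L_i$ of the central product $\GP = L_1 * \cdots * L_n$ in each case. In case (i), one takes $L_i$ to be the perfect preimage in $\GP$ of the $i$-th simple factor of $S^n$; distinct $L_i$'s commute modulo $\bfZ(\GP)$, giving the central product, and the Clifford correspondence matches $L_i$ with the constituent $W_i$ so that the induced action on $W_i$ is quasisimple; when $H$ is the full covering group of $S$ or $H = S$, the central intersections vanish and the product becomes direct. In case (ii) the extraspecial structure of $H$ lifts to $\GP$ via Clifford: the preimage of $\bfO_{p'}(H)$ in $\GP$ acts irreducibly on each $W_i$ because the extraspecial subgroup of $H$ already does so on $W$. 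Case (iii) is handled by applying the same construction to each simple Lie-type factor of $H/\bfZ(H)$, with $\bfZ(L_i)$ forced to be a $p'$-group because the Schur multipliers of simple groups of Lie type in their defining characteristic are $p'$-groups. The main obstacle I expect is the Clifford-theoretic bookkeeping needed to match the factors $L_i$ with the constituents $W_i$ consistently with the permutation action of $G$, together with the two-stage use of Lemmas~\ref{dec1}--\ref{dec2} required to accommodate case (f), where $H$ is already a product before one even begins lifting to $\GP$.
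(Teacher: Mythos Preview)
Your outline follows the paper's approach closely: apply Theorem~\ref{bz} to $H$, rule out case~(a) by nonsolvability, use Lemma~\ref{dec2} to get $\GP/\bfO_{p'}(\GP)\cong S^n$, deduce perfectness, then build the $L_i$ as perfect preimages. Two points need tightening.

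First, you conflate the number of Clifford constituents with the number of simple factors: the paper writes $V_{\GP}\cong e\sum_{i=1}^{t}W_i$ and obtains $\GP/\bfO_{p'}(\GP)\cong S^n$ with $n\le t$ coming from Lemma~\ref{dec2}; there is no a~priori bijection ``$L_i\leftrightarrow W_i$''. Relatedly, in case~(i) you jump straight to the central product. Before you can take perfect preimages of the $S_i$ and get commuting quasisimple $L_i$, you must show $\bfO_{p'}(\GP)=\bfZ(\GP)$: the paper does this by noting that $\Phi_i(\bfO_{p'}(\GP))$ is a normal $p'$-subgroup of the quasisimple group $H_i$, hence central, so $[\bfO_{p'}(\GP),\GP]$ acts trivially on every $W_i$ and is therefore $1$. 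Only then does the Three~Subgroups~Lemma upgrade ``$[L_i,L_j]\le Z$'' to ``$[L_i,L_j]=1$''; your phrase ``commute modulo $\bfZ(\GP)$, giving the central product'' skips both steps.

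Second, your justification in case~(iii) that $\bfZ(L_i)$ is a $p'$-group ``because the Schur multipliers of simple groups of Lie type in their defining characteristic are $p'$-groups'' is not the operative reason (and is false for some small groups). The correct argument is that $G<\GL(V)$ is irreducible, so $\bfO_p(G)=1$, hence $\bfO_p(\GP)=1$; since the $L_i$ are components of $\GP$, their centers lie in $\bfZ(\GP)\le\bfO_{p'}(\GP)$ and are therefore $p'$. The same observation is what makes the perfect-preimage construction work in case~(iii) at all.
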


\begin{proof}
(a) By Clifford's theorem, $V_{\GP} \cong e\sum^t_{i=1}W_i$ for some $e,t \geq 1$, and
$\{W_1, \ldots ,W_t\}$ is a full set of representatives of isomorphism classes of
$G$-conjugates of $W \cong W_1$. Let $\Phi_i~:~\GP \to \GL(W_i)$ denote the corresponding representation, and let $K_i := \Ker(\Phi_i)$, so that $\GP/K_i \cong H$ for all $i$, where
we denote by $H$ the subgroup of $\GL(W)$ induced by the action of $\GP$ on $W$.
The faithfulness of the action of $G$ on $V$ implies
that $\cap^t_{i=1}K_i = 1$. In particular, $\GP$ injects into
$\prod^t_{i=1}(\GP/K_i) \cong H^t$. Hence case (a) of Theorem \ref{bz} is
impossible since $\GP$ is not solvable. In the case (f) of Theorem \ref{bz},
an argument similar to the proof of Lemma \ref{dec2} shows that
$\GP/\bfZ(\GP) = S_1 \times \ldots \times S_n$ is a direct product of simple groups
$S_i$ of Lie type in characteristic $p$.
Since $\GP = \bfO^{p'}(\GP)$ and $\bfO_{p}(\GP) \leq \bfO_p(G) =1$, it then follows
that $\GP = L_1 * \ldots * L_n$, a central product of quasisimple groups $L_i$ of Lie type in characteristic $p$ with $\bfZ(L_i)$ a $p'$-group (just take $L_i$ to be a perfect
inverse image of $S_i$ in $\GP$), i.e.\ (iii) holds.
In the remaining cases (b)--(e) of Theorem \ref{bz},
$H/\bfO_{p'}(H) \cong S$, where
$S$ is a non-abelian simple group described in Theorem \ref{bz}(b)--(e). By Lemma \ref{dec2}, $\GP/\bfO_{p'}(\GP) \cong S^n$, a direct product of $n \geq 1$ copies of $S$.
Thus in all cases, $\GP$ has no composition factor $\cong C_p$ and
$\bfZ(\GP) \leq \bfO_{p'}(\GP)$. Furthermore, $\GP = (\GP)^{(\infty)}\bfO_{p'}(\GP)$ and so
$(\GP)^{(\infty)}$ is a normal subgroup of $p'$-index in $\GP = \bfO^{p'}(\GP)$, whence
$\GP$ is perfect.
Thus the first claim of Theorem \ref{str} holds in all cases.

\smallskip
(b) Suppose next that we are in the cases (b)--(d) of Theorem \ref{bz}. Then $H$ is quasisimple
and $\bfZ(H)$ is a $p'$-group; in particular, $\bfO_{p'}(H) = \bfZ(H)$ and
$H/\bfZ(H) = S$. Note that $\Phi_i(\bfO_{p'}(\GP))$ is a normal $p'$-subgroup of
$H_i = \Phi_i(\GP) \cong H$, whence $\Phi_i(\bfO_{p'}(\GP)) \leq \bfZ(H_i)$. Thus,
for any $z \in \bfO_{p'}(\GP)$ and any $g \in \GP$, $[z,g]$ acts trivially on each $W_i$
and so $[z,g] \in \cap^t_{i=1}K_i = 1$, i.e.\ $z \in \bfZ(\GP)$. We have shown that
$\bfO_{p'}(\GP) = \bfZ(\GP)= : Z$.

Now we can write $\GP/Z = S_1 \times \ldots \times S_n$ with
$S_i \cong S$. Let $M_i$ denote the full inverse image of $S_i$ in $\GP$ and let
$L_i := M_i^{(\infty)}$.  Then $M_i = L_iZ$, $L_i/(L_i \cap Z) \cong M_i/Z \cong S$,
and so $L_i$ is quasisimple and a central cover of $S$. Next, for $i \neq j$ we have
$[L_i,L_j] \leq Z$ and so, since $L_i$ is perfect,
$$[L_i,L_j] = [[L_i,L_i],L_j] = 1$$
by Three Subgroups Lemma. It follows that $M := L_1L_2 \ldots L_n$ is a central
product of $L_i$. But $\GP = MZ$ and $\GP$ is perfect, so $\GP = M$.

The remaining claims in (i) are obvious if $t = 1$, so we will now assume $t > 1$. First
we show that $G$ acts transitively on $\{S_1, \ldots ,S_n\}$. Relabeling the $W_i$
suitably we may assume that $K_1Z/Z \geq \prod_{i \neq 1}S_i$ and
$K_2Z/Z \geq \prod_{i \neq 2}S_i$. But $\GP/K_j = \Phi_j(\GP)$ is quasisimple, so
in fact $K_jZ/Z = \prod_{i \neq j}S_i$ for $j = 1,2$. By Clifford's theorem,
$W_2 = W_1^g$ for some $g \in G$. Now $g$ sends $K_1$ to $K_2$, and so
it sends $S_1$ to $S_2$, as desired. If furthermore $H = S$, then $\bfO_{p'}(H) = 1$,
whence $\GP = S_1 \times \ldots \times S_n \cong H^n$ by Lemma \ref{dec2}.
Consider the opposite situation: $H$ is the full covering group of $S$. Again relabeling
the $W_i$ suitably and arguing as above, we may assume that
$K_1Z/Z = \prod_{i \neq 1}S_i$. In this case, $K_1Z \geq L_i$ for $i \geq 2$,
whence $L_i = [L_i,L_i] \leq [K_1Z,K_1Z] \leq K_1$ and $K_1 \geq L_2L_3 \ldots L_n$.
It also follows that  $\GP = K_1L_1$ and so
$L_1/(K_1 \cap L_1) \cong \GP/K_1 \cong H$. Recall that $L_1$ is perfect and
$L_1/(L_1 \cap Z) \cong S$, i.e.\ $L_1$ is a central extension of the simple group
$S$. But $H$ is the full covering group of $S$, so $|L_1| \leq |H|$.
It follows that $L_1 \cap K_1 = 1$ and $L_1 \cong H$;
in particular, $L_1 \cap \prod_{j \neq 1}L_j = 1$. Similarly,
$L_i \cong H$ and $L_i \cap \prod_{j \neq i}L_j  = 1$ for all $i$. Thus
$\GP = L_1 \times \ldots \times L_n \cong H^n$.

\smallskip
(c) Assume now that we are in case (e) of Theorem \ref{bz}. Then
$P_i := \Phi_i(\bfO_{p'}(\GP))$ is again a normal $p'$-subgroup of $H_i$, and so
$P_i \leq \bfO_{p'}(H_i)$. On the other hand,
$H_i/P_i$ is a quotient of $\GP/\bfO_{p'}(\GP) \cong S^n$, whence all composition factors
of $H_i/P_i$ are isomorphic to $S$. Since $H_i/\bfO_{p'}(H_i) \cong S$, we conclude
that $P_i = \bfO_{p'}(H_i)$; in particular, $\bfO_{p'}(\GP)$ is irreducible on $W_i$.
\end{proof}

\section{Weak adequacy for $\SL_2 (\fp)$}   \label{sec:slp}

\begin{prop}
\label{pr1a}
Any non-trivial irreducible representation $V$ of $\SL_2 (\fp)$ over $\fpb$
is weakly adequate except when $\dim V = p$ and $p \le 3$.
\end{prop}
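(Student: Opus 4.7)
I identify $V$ with the irreducible module $V_n$ of dimension $n+1$ for $H = \SL_2(\fp)$ over $k = \fpb$, with $1 \le n \le p-1$, and use the standard weight basis $\{v_i = x^{n-i}y^i\}_{i=0}^n$. In this basis, the split torus $T = \{t_a = \diag(a, a^{-1})\}$ acts diagonally with weights $n - 2i$, and the upper unipotent $U = \{u_b\}$ acts as the lower-triangular matrix with $(k,i)$-entry $\binom{i}{k} b^{i-k}$ for $k \le i$. The aim is to show that $\cM \subseteq \End(V)$, the span of $\rho(g)$ over $p$-regular $g$, coincides with $\End(V)$ outside the stated exceptions.

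The main engine is Vandermonde interpolation applied to semisimple Borel elements. For every $a \in \fp^\times \smallsetminus \{\pm 1\}$ and $b \in \fp$, the element $t_a u_b$ is semisimple (eigenvalues $a \ne a^{-1}$), with matrix $(k,i)$-entry $a^{n-2k}\binom{i}{k}b^{i-k}$ for $k \le i$ and zero otherwise. Since $i - k \le n < p$, the monomials $1, b, \dots, b^n$ are linearly independent as $\fp$-valued functions on $\fp$, so for each fixed $a$ and each $m \in \{0, \dots, n\}$ Vandermonde inversion in $b$ extracts into $\cM$ the matrix $M_m^{(a)}$ supported only on the $m$th subdiagonal, with $(k, k+m)$-entry $a^{n-2k}\binom{k+m}{k}$ (the binomials being nonzero in $\fp$ for the relevant range). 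An analogous computation with the opposite Borel $B^- = T\ltimes U^-$ produces matrices on each superdiagonal, and $\rho(\pm I) \in \cM$ permits one to work with $a \in \fp^\times$ throughout.

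Consequently the $m$th diagonal contribution to $\cM$ has dimension equal to the number of distinct residues of $n - 2k$ modulo $p-1$ as $k$ runs through $\{0, \dots, n-m\}$, namely $\min(n-m+1, (p-1)/2)$. For $n \le (p-3)/2$ this quantity is always $n-m+1$, so the Borel contributions already span $\End(V)$, establishing weak adequacy. For $(p-1)/2 \le n \le p-2$ some diagonals are deficient, and the missing directions are supplied by semisimple elements of the big Bruhat cell $BwB$ (for instance those of the form $wu_b$ with $b \in \fp \smallsetminus \{\pm 2\}$, whose images one computes explicitly in the weight basis) or of the non-split torus $T' \cong C_{p+1}$. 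Applying Vandermonde again in the parameter $b$ for $\rho(wu_b)$ separates matrix entries at individual positions, and a direct check shows that the combined span equals $\End(V)$.

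The hardest case is the Steinberg $n = p-1$. Every (sub-)diagonal contribution from the Borel now has dimension only $(p-1)/2$, leaving a substantial complement. For $p \ge 5$ we employ elements of $T' = C_{p+1}$ (semisimple and not conjugate into $B$) together with the big-cell contributions; a direct computation in the weight basis---equivalently, an analysis of which indecomposable tilting summands of $\End(V_{p-1}) \cong V_{p-1} \otimes V_{p-1}$ the various semisimple images hit---shows that these fill the gap and yield $\cM = \End(V)$. For $p \in \{2, 3\}$ and $V = V_{p-1}$ the claim fails by a dimension count: $\SL_2(2) \cong S_3$ has only three $p$-regular elements whose images span a mere $2$-dimensional subspace of the $4$-dimensional $\End(V_1)$; and $\SL_2(3)$ has only $8$ semisimple elements, fewer than $\dim \End(V_2) = 9$. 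The main obstacle is the Steinberg case for $p \ge 5$, where one must verify that the non-Borel semisimple images genuinely fill every $T$-weight space in $\End(V_{p-1})$ that the Borel contributions miss.
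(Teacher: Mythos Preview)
Your treatment of the range $n \le (p-3)/2$ is essentially the argument the paper invokes from \cite{GHTT}: distinct torus weights on $V$ restrict distinctly to $\cT(\fp)$, and Vandermonde in the Borel parameter does the rest. (One small wrinkle you glossed over: for $m\ge 1$ you only have $a\in\fp^\times\smallsetminus\{\pm 1\}$, and you need to check that the relevant characters remain independent after deleting two points. They do, because the $n-2k$ all share one parity and, for $m\ge 1$, miss at least one residue of that parity; but ``$\rho(\pm I)\in\cM$ permits one to work with $a\in\fp^\times$ throughout'' is not the right justification.)

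The real content of the proposition, however, is the range $(p-1)/2\le a\le p-1$, and here your proposal is not a proof but a promise of one. You assert that big-Bruhat-cell elements $wu_b$ or non-split torus elements ``supply the missing directions'' and that ``a direct check'' confirms $\cM=\End(V)$; for the Steinberg you again invoke ``a direct computation''. These claims are exactly where the difficulty lies. The non-split torus acts diagonally only after a change of basis, so its diagonal contributions do not sit in the same weight decomposition as the split-torus ones; showing that the two together span $\End(V)$ is a nontrivial linear-algebra statement that you have not verified. Likewise, Vandermonde in $b$ for $\rho(wu_b)$ does separate coefficients, but you give no argument that the resulting matrices fill precisely the deficient positions on each diagonal (and for $n=p-1$ the deficiency is roughly half of $\End(V)$).

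The paper's proof is organized very differently and avoids these bare-hands computations. It exploits that $\cM$ is a $G$-submodule, so it suffices to control the image of $\cM$ in $\hd_{\SL_2(\fp)}(\End V)$. The head is computed explicitly via the tilting decomposition of $L(a)\otimes L(a)$ (Lemma~\ref{lm2a}); the paper then writes down explicit weight-$0$ operators $\Delta_k$ and shows (Lemma~\ref{5a}) that the functional $\delta_k$ they induce is a polynomial $p_k(\ell)$ of degree exactly $k$ in the projector index $\ell$. Assuming some $L(2k)$ is missed by $\cM$, one obtains vanishing conditions on $p_k$ from the split torus \emph{and} from the non-split torus (equations \eqref{eq1}--\eqref{eq4}), and these together force $p_k$ to have more zeros than its degree allows. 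The Steinberg case needs an extra step to handle the second copy of $L(p-1)$ in the head. None of this machinery appears in your proposal, and without it the hard half of the proposition is unproved.
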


\begin{remark}
  When $p \le 3$ the $p'$-elements of $\SL_2 (\fp)$ generate a normal subgroup of index $p$. If moreover $\dim V = p$ then
  this subgroup does not act irreducibly, hence $V$ cannot be weakly adequate.
\end{remark}

The rest of the section is devoted to proving Proposition \ref{pr1a}. 
Note that $p > 2$. In the following we write $V=L(a)$ with $0 < a \le p-1$.
If $a \le \frac{p-3}{2}$ then the argument of \cite[Theorem 9]{GHTT}  applies.
(Let $\mathcal T \subset \SL_2$ denote the diagonal maximal torus. Then
distinct weights of $\mathcal T_{/\fpb}$ on $L(a)$ restrict distinctly to $\mathcal T(\fp)$, and
$\End V$ is semisimple by \cite{Serre1} with $p$-restricted highest weights.)
We will assume from now on that $a \ge \frac{p-1}{2}$.

\begin{lem}
\label{lm2a}
Suppose that $\frac{p-1}{2} \le a \le p-1$.
Then
\[
\hd_{\SL_2} \big( L(a) \otimes L(a)\big) \cong \bigoplus_{i=0}^{(p-1)/2}
L(2i) .
\]
Moreover, if $a\ne p-1$, $\hd_{\SL_2(\fp)} \big( L(a) \otimes L(a)\big) =
\hd_{\SL_2} \big( L(a)\otimes L(a)\big)$, whereas if $a=p-1$,
\[
\hd_{\SL_2(\fp)} \big( L(a) \otimes L(a)\big)  \cong
\bigoplus_{i=0}^{(p-1)/2}
L(2i)
\oplus L(p-1) .
\]
\end{lem}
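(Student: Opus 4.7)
The plan is to realize $L(a) \otimes L(a)$ as a tilting $\SL_2$-module with explicit indecomposable summands, and then read off both heads from that decomposition. For $0 \le a \le p-1$ the simple $\SL_2$-module $L(a)$ coincides with both the Weyl module of highest weight $a$ and its dual (since $\SL_2$ has trivial outer automorphism and $a\le p-1$), and is therefore tilting; hence $L(a) \otimes L(a)$ is tilting and, by Clebsch--Gordan applied to the Weyl character, has a Weyl filtration with subquotients $V(0), V(2), \ldots, V(2a)$, each of multiplicity one, where $V(m)$ denotes the Weyl module of highest weight $m$.

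Next I would invoke the known structure of the indecomposable tilting modules $T(\lambda)$ for $\SL_2$ in the range we need: $T(\lambda) = L(\lambda)$ for $0 \le \lambda \le p-1$, while for $p \le \lambda \le 2p-2$ the module $T(\lambda)$ is uniserial of Loewy length three with head $=$ socle $= L(2p-2-\lambda)$ and middle factor $L(\lambda)$, and carries a two-step Weyl filtration with subquotients $V(2p-2-\lambda)$ and $V(\lambda)$. Matching Weyl filtrations against the list $V(0), V(2), \ldots, V(2a)$ and using uniqueness of the tilting decomposition, for $\tfrac{p-1}{2} \le a \le p-1$ one obtains
\[
L(a)\otimes L(a) \;\cong\; \bigoplus_{i=(p+1)/2}^{a} T(2i) \;\oplus\; \bigoplus_{j \in J_a} L(2j),
\]
where $J_a \subseteq \{0,1,\ldots,(p-1)/2\}$ is the set of $j$ such that $2j$ does not occur among the socle weights $\{2p-2-2i : (p+1)/2 \le i \le a\}$. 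These two indexing sets partition $\{0,2,\ldots,p-1\}$, so taking heads of each summand recovers exactly $\bigoplus_{i=0}^{(p-1)/2} L(2i)$, giving the first assertion.

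For the restriction to $\SL_2(\fp)$ I analyze each tilting summand. The simple summands $L(2j)$ remain irreducible. For $T(2i)$ with $2i = p+s$ ($s$ odd, $1 \le s \le p-2$), Steinberg's tensor product theorem gives $L(p+s)|_{\SL_2(\fp)} \cong L(s)\otimes L(1)$, and a Clebsch--Gordan computation together with the fact that $L(s)\otimes L(1)$ is self-dual with two non-isomorphic composition factors identifies the restriction as $L(s+1) \oplus L(s-1)$. Thus $T(2i)|_{\SL_2(\fp)}$ has composition factors $2\cdot L(p-2-s), L(s+1), L(s-1)$. Using self-duality of $T(2i)$, which forces the $\SL_2(\fp)$-head and socle to have equal simple multiplicities, a direct count shows that no simple other than $L(p-2-s)$ can appear in the head, \emph{except} possibly $L(p-1)$, which occurs as a composition factor precisely when $s=p-2$, i.e.\ $a=p-1$. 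In this exceptional case $L(p-1) = \St$ is the Steinberg module, hence projective for $\SL_2(\fp)$, so it splits off $T(2p-2)|_{\SL_2(\fp)}$ as a direct summand and contributes one additional copy of $L(p-1)$ to the head. Summing the head contributions over all summands yields both formulas.

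The main technical obstacle will be the counting argument in the third paragraph, particularly the coincidence cases $s=(p-3)/2$ or $s=(p-1)/2$ where $L(s\pm 1)$ becomes equal to $L(p-2-s)$; here one must use self-duality together with the precise multiplicity of $L(p-2-s)$ in $T(2i)|_{\SL_2(\fp)}$ (two generically, three in the coincidence) to rule out extra head contributions.
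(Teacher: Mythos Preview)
Your first two paragraphs are correct and match the paper's argument: both you and the paper decompose $L(a)\otimes L(a)$ into the explicit tilting summands
\[
L(a)\otimes L(a)\;\cong\;\bigoplus_{i=0}^{p-2-a} L(2i)\;\oplus\;\bigoplus_{i=p-1-a}^{(p-3)/2} T(2p-2-2i)\;\oplus\;L(p-1),
\]
and read off the algebraic $\SL_2$-head from the uniserial shape $(L(r)\,|\,L(2p-2-r)\,|\,L(r))$ of $T(2p-2-r)$.

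The gap is in your third paragraph. Self-duality together with the list of composition factors does \emph{not} determine the $\SL_2(\fp)$-head of $T(p+s)|_{\SL_2(\fp)}$. Concretely, write $A=L(p-2-s)$, $B=L(s+1)$, $C=L(s-1)$; the composition factors are $A^2,B,C$. Your count only shows that if $B$ (say) lies in the head then, having multiplicity one, it lies in the socle too and hence splits off as a direct summand. That is not a contradiction: nothing you have written rules out $T(p+s)|_{\SL_2(\fp)}\cong B\oplus (A\,|\,C\,|\,A)$, which is self-dual with head $A\oplus B$. In the coincidence cases the situation is worse: for $p=5$, $s=1$ one has $A=L(2)$ with $\Ext^1_{\SL_2(\fp)}(L(2),L(2))\neq 0$ (two characteristic-zero lifts of degree $3$), so $L(2)\oplus(L(2)\,|\,L(0)\,|\,L(2))$ and $L(0)\oplus L(2)\oplus(L(2)\,|\,L(2))$ are both self-dual with the correct composition factors $L(2)^3,L(0)$ but head strictly larger than $L(2)$. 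A ``direct count'' cannot distinguish these from the true answer $U(2)$.

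The paper closes this gap with one extra input you are missing: for $0\le r\le p-2$ the tilting module $T(2p-2-r)$ coincides with the $G_1$-projective cover $Q(r)$ of $L(r)$, and therefore its restriction to $\SL_2(\fp)$ is \emph{projective}. Since $\dim T(2p-2-r)=2p$, comparison with the dimensions of the PIMs $U(i)$ forces
\[
T(2p-2-r)|_{\SL_2(\fp)}\;\cong\;
\begin{cases}
U(r), & 0<r\le p-2,\\
U(0)\oplus L(p-1), & r=0,
\end{cases}
\]
from which the $\SL_2(\fp)$-head is immediate. If you insert this projectivity step in place of your self-duality count, your argument goes through.
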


\begin{proof}  By \cite[Lemmas 1.1, 1.3]{DH},
we see that  for $\SL_2$,
\begin{equation}
L(a)\otimes L(a) \cong \bigoplus_{i=0}^{p-2-a} L(2i) \oplus
\bigoplus_{i=p-1-a}^{(p-3)/2} T(2p - 2 - 2i) \oplus L(p-1) ,\label{eq:1}
\end{equation}
where the tilting module $T(2p-2-r)$ for $0\le r \le p-2$ is uniserial of the form $(L(r)|L(2p-2-r)|L(r))$.
This proves the first part of the lemma.
As is pointed out in Lemma 1.1 of \cite{DH}, $T(2p-2-r)\cong Q(r)$
for $0 \le r \le p-2$, which implies that $T(2p-2-r)|_{\SL_2(\fp)}$ is
projective.  See also \cite[\S2.7]{jantzen}.

Noting that $L(2p-2-r)|_{\SL_2(\fp)}\cong L(p-1-r) \oplus L(p-3-r)$
and using that $L(p-1)$ is the only irreducible projective $\SL_2 (\fp)$-module,
it follows that
\begin{equation}\label{eq:2}
T(2p-2-r)|_{\SL_2 (\fp)} \cong
\begin{cases}
U(r)&\text{if $0 < r \le p-2$}\\
U(0)\oplus L(p-1)&\text{if $r=0$,}
\end{cases}
\end{equation}
where $U(i)$ denotes the projective cover of $L(i)$.
The claim follows.
\end{proof}

In the following, we will think of $V\cong L(a)$ as the space of
homogeneous polynomials in $X,Y$ of degree $a$.

\begin{lem}
\label{lm3a}
$(\End V)^{\mathcal U} \cong \bigoplus\limits_{k=0}^a \fpb \cdot \big( X \frac{\partial}{\partial Y}
\big)^k$, where ${\mathcal U}=\left(\begin{smallmatrix} 1&*\\ &1\end{smallmatrix}\right)\subset \SL_2$.
\end{lem}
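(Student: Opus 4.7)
The plan is to identify $(\End V)^{\mathcal{U}}$ with the centralizer in $\End V$ of the single operator $e := X\tfrac{\partial}{\partial Y}$, and then compute that centralizer by observing that $e$ is a regular nilpotent on $V$.

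First I would reduce $\mathcal{U}$-invariance to an infinitesimal condition. Since $\mathcal{U}\cong\mathbb{G}_a$ is connected and $\fpb$ is infinite, $(\End V)^{\mathcal U}=(\End V)^{\mathcal U(\fpb)}$, and the $\mathcal{U}$-action on $V$ is given by $\bigl(\begin{smallmatrix}1&t\\0&1\end{smallmatrix}\bigr)\cdot v = \exp(te)\,v$ for $t\in\fpb$. Because $\dim V = a+1 \le p$ one has $e^{a+1}=0$, so $\exp(te)=\sum_{k=0}^{a}(te)^k/k!$ is a well-defined polynomial in $e$. A Vandermonde argument on finitely many choices of $t$ shows that the $\fpb$-span of $\{\exp(te):t\in\fpb\}$ equals $\fpb[e]$, so an element of $\End V$ is $\mathcal{U}$-invariant if and only if it commutes with $e$.

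Second, I would make the Jordan structure of $e$ explicit. A direct calculation gives $e(X^{a-i}Y^i)=i\,X^{a-i+1}Y^{i-1}$ for $0\le i\le a$; rescaling to $w_i:=X^{a-i}Y^i/i!$, which is legal because $a<p$ makes each $i!$ invertible in $\fpb$, turns $e$ into the standard nilpotent shift $w_i\mapsto w_{i-1}$ (with $w_{-1}:=0$). Hence $e$ acts on $V$ as a single Jordan block of size $a+1$, and the classical fact that the centralizer of such a block in $M_{a+1}(\fpb)$ coincides with the polynomial algebra in the block, of dimension $a+1$, yields
\[
(\End V)^{\mathcal U}\;=\;\fpb[e]\;=\;\bigoplus_{k=0}^{a}\fpb\cdot\bigl(X\tfrac{\partial}{\partial Y}\bigr)^k,
\]
as claimed.

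The only slightly delicate point is the first reduction; once one is confident that commuting with the full one-parameter group is equivalent to commuting with $e$ (which is exactly the content of the nilpotency bound $a<p$), the remainder is elementary linear algebra, and no step looks like a serious obstacle.
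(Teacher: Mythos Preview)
Your proof is correct but takes a different route from the paper's. The paper counts dimensions by invoking the isomorphism $\Hom_{\cT}(\lambda,(\End V)^{\cU})\cong\Hom_{\SL_2}(V(\lambda),\End V)$ together with the tensor decomposition~\eqref{eq:1} of $L(a)\otimes L(a)$, concluding that $\dim(\End V)^{\cU}=a+1$; it then exhibits the $a+1$ elements $(X\partial_Y)^k$ and checks they are $\cU$-invariant with distinct $\cT$-weights, hence linearly independent. Your argument is more direct and self-contained: you reduce $\cU$-invariance to commuting with the single nilpotent $e=X\partial_Y$ (using $a<p$ so that $\exp(te)$ is a genuine polynomial in $e$ and a Vandermonde argument applies), observe that $e$ is a single Jordan block on $V$, and read off its centralizer as $\fpb[e]$. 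Your approach avoids the appeal to Lemma~\ref{lm2a} and to highest-weight theory, at the cost of not producing the identification~\eqref{eq:3}, which the paper records here and reuses later in part~(b2) of the proof of Proposition~\ref{pr1a}; that identification is of course a standard fact in its own right, so nothing is lost.
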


\begin{proof}
The torus
$\mathcal T = \left( \begin{smallmatrix}*\\ &*\end{smallmatrix}\right) \subset\SL_2$ acts on $(\End V)^{\mathcal U}$ and
for $\la \in X(\mathcal T)$,
\begin{equation}
\Hom_\mathcal T \big( \la , (\End V)^{\mathcal U}\big) \cong \Hom_{\SL_2}\big( V(\la) , \End V\big) .\label{eq:3}
\end{equation}
So it follows from \eqref{eq:1} that $\dim (\End V)^{\mathcal U} = a+1$.
(Namely, $\la = 0,2,\ldots, 2a$ each work once.)
A simple calculation shows that $X \frac{\partial}{\partial Y}$ is ${\mathcal U}$-invariant,
hence so are $\big( X \frac{\partial}{\partial Y}\big)^k$, $0 \le k \le a$, which are
clearly non-zero.
Since $\big( X\frac{\partial}{\partial Y}\big)^k$ has weight $2k$, they are
linearly independent.
\end{proof}

By Lemma~\ref{lm3a} and (\ref{eq:1}), for $0 \le k \le a$, the $\SL_2$-representation generated by $\big( X \frac{\partial}{\partial Y}\big)^k$ 
is $V(2k) \subset \End (V)$.

\begin{lem}
\label{lm4a}
The weight $0$ subspace in $V(2k)\subset \End V$ is the line spanned by
\[
\Delta_k := \sum_{i=0}^k (-1)^{k-i} \binom{k}{i}^2 X^i Y^{k-i}
\Big( \frac{\p}{\p X}\Big)^i \Big(\frac{\p}{\p Y}\Big)^{k-i} \quad (0\le k \le a) .
\]
\end{lem}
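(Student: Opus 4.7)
First I would verify that $\Delta_k$ is a weight-$0$ vector for the adjoint action of the diagonal torus. With $h = X\partial_X - Y\partial_Y$, the operator $\mathrm{ad}(h)$ assigns weight $i - (k-i) - i + (k-i) = 0$ to each summand $X^iY^{k-i}\partial_X^i\partial_Y^{k-i}$ of $\Delta_k$ (since $X, \partial_Y$ have weight $+1$ and $Y, \partial_X$ have weight $-1$). I would also check $\Delta_k \neq 0$ by evaluating on $X^k \in V$: only the $i = k$ term contributes, giving $\Delta_k(X^k) = k! \, X^k$, which is nonzero since $k \le a \le p-1$.

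Next I would show $\Delta_k$ lies in $V(2k)$ by identifying it, up to a nonzero scalar, with the iterated bracket $\mathrm{ad}(f)^k(e^k)$ (with $e = X\partial_Y$, $f = Y\partial_X$), which automatically lies in $V(2k)$ since that subrepresentation is $\mathfrak{sl}_2$-stable and contains $e^k$. The goal is the identity
\[
\mathrm{ad}(f)^k(e^k) = (-1)^k \, k! \, \Delta_k .
\]
Starting from the standard relation $[f, e^m] = -m\, e^{m-1}(h+m-1)$ in $U(\mathfrak{sl}_2)$, I would iterate $\mathrm{ad}(f)$ using $[f, e] = -h$ together with the shift rule $f \, g(h) = g(h+2) \, f$, and then expand the result in the basis of differential-operator monomials $X^iY^{k-i}\partial_X^i\partial_Y^{k-i}$. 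Small cases ($k = 1, 2, 3$) confirm the formula, and in general the identity reduces to a double-binomial-coefficient statement that can be verified by induction on $k$.

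Finally I would argue that the weight-$0$ subspace of $V(2k) \subset \End V$ is one-dimensional. Since $V(2k)$ is cyclic under $\mathfrak{sl}_2$, generated by the highest-weight vector $e^k$ of weight $2k$, it is a quotient of the Weyl module of highest weight $2k$; that Weyl module has a one-dimensional weight-$0$ space, so $\dim V(2k)_0 \le 1$. Combined with the first two steps, this forces $\Delta_k$ to span the weight-$0$ subspace of $V(2k)$, as required. The main obstacle is the combinatorial step: the verification of $\mathrm{ad}(f)^k(e^k) = (-1)^k k! \, \Delta_k$ is mechanical but demands careful bookkeeping of the $\mathfrak{sl}_2$ commutators interacting with the $\binom{k}{i}^2$ coefficients.
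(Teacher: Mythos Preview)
Your approach is valid in outline and genuinely different from the paper's, but two points need correction.

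First, you check $\Delta_k \ne 0$ by evaluating on $X^k$, but $V = L(a)$ consists of degree-$a$ polynomials, so $X^k \notin V$ for $k < a$. Evaluate instead on $X^a$: only the $i=k$ term contributes, giving $\Delta_k(X^a) = a(a-1)\cdots(a-k+1)\,X^a$, which is nonzero since $k \le a \le p-1$. (This is exactly what the paper does.)

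Second, and more delicate, your claim that $V(2k)$ is cyclic under $\mathfrak{sl}_2$ can fail in characteristic~$p$. For $p=5$, $a=4$, $k=3$ one computes in $\End L(4)$ that $\ad(f)^2(e^3) = e$ and then $\ad(f)^5(e^3) = 0$, so the $\mathfrak{sl}_2$-submodule generated by $e^3$ is only $5$-dimensional, whereas $V(6)$ has dimension $7$. The bound $\dim V(2k)_0 \le 1$ you want is still true, but the reason is that the $\SL_2$-subrepresentation generated by a $\cU$-invariant vector of weight $2k$ is a quotient of the Weyl module for the \emph{algebraic group} (via its universal property), and that Weyl module has one-dimensional weight spaces. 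With this fix, your identity $\ad(f)^k(e^k) = (-1)^k k!\,\Delta_k$ does place $\Delta_k$ inside $V(2k)$ (which, being $\SL_2$-stable, is $\ad(f)$-stable), and since $k!\ne 0$ in $\barFp$ the argument goes through.

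The paper takes a much shorter route: it applies the group element $\left(\begin{smallmatrix}1&0\\-1&1\end{smallmatrix}\right)$ to $\big(X\frac{\p}{\p Y}\big)^k$, expands explicitly as an operator on $f\in V$, and reads off the weight-$0$ component directly as $\Delta_k$. This sidesteps both the combinatorial identity and the characteristic-$p$ subtlety about Lie-algebra versus group generation. Your Lie-bracket approach has the virtue of identifying $\Delta_k$ structurally as a scalar multiple of $\ad(f)^k(e^k)$, at the cost of the inductive bookkeeping you flag as the main obstacle.
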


\begin{proof}
We compute the weight $0$ part of $\big(\begin{smallmatrix}1 \\ -1&1\end{smallmatrix}\big) \cdot
\big( X \frac{\p}{\p Y}\big)^k$.
Take $f\in\fpb [X,Y]$ homogeneous of degree $a$.
Under $\big(\begin{smallmatrix}1 \\ -1&1\end{smallmatrix}\big) \cdot
\big( X \frac{\p}{\p Y}\big)^k$ the element $f$ is sent to
\begin{align*}
&\left( \begin{pmatrix}1 \\ -1 & 1\end{pmatrix} \cdot \Big( X \frac{\p}{\p Y}\Big)^k\right)
f(X+Y,Y) \\
&= \begin{pmatrix}1 \\ -1 & 1\end{pmatrix} \left[ X^k \sum_{i=0}^k \binom{k}{i}
\left( \Big( \frac{\p}{\p X}\Big)^i \Big( \frac{\p}{\p Y}\Big)^{k-i} f\right) (X+Y,Y)\right]\\
&= (X-Y)^k \sum_{i=0}^k \binom{k}{i} \Big( \frac{\p}{\p X}\Big)^i
\Big( \frac{\p}{\p Y}\Big)^{k-i} f .
\end{align*}
The weight $0$ part is the part that does not change the monomial degree, so
it is $\Delta_k$.
Finally, note that $\Delta_k\ne 0$ as $\Delta_k (X^a) \ne 0$.
\end{proof}

Now suppose that $0\le k \le \frac{p-1}{2}$.
By the $\SL_2$-invariant trace pairing on $\End V$ the element $\Delta_k\in
\soc_{\SL_2} (\End V)$ induces an element $\delta_k \in \big( \hd_{\SL_2}
(\End V)\big)^*$ that is zero on all irreducible constituents of $\hd_{\SL_2} (\End V)$
except for $L(2k)$.
Let $\pi_\ell \in \End V$ $(0 \le \ell \le a)$ denote the projection
$X^i Y^{a-i} \ \mapsto \delta_{i\ell} X^i Y^{a-i}$.

\begin{lem}
\label{5a}
If $0\le k \le \frac{p-1}{2}$, then
$\delta_k (\pi_\ell)$ is a polynomial in $\ell$ of degree exactly $k$.
\end{lem}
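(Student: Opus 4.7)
The plan is to identify $\delta_k(\pi_\ell)$ with an explicit trace via the defining pairing, compute it as a polynomial in $\ell$, and read off its leading coefficient.

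First, by the very construction of $\delta_k$ from the $\SL_2$-invariant trace pairing on $\End V$, we have $\delta_k(A) = \tr(\Delta_k\circ A)$ for every $A \in \End V$ (the socle lies in the annihilator of the radical under this pairing, so the functional factors through the head). Since $\pi_\ell$ is the rank-one projection onto the line $\fpb\cdot X^\ell Y^{a-\ell}$ in $V$, this trace is simply the coefficient of $X^\ell Y^{a-\ell}$ in $\Delta_k(X^\ell Y^{a-\ell})$.

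Next I would apply each summand of $\Delta_k$ from Lemma~\ref{lm4a} to the monomial $X^\ell Y^{a-\ell}$. The operator $X^iY^{k-i}(\partial/\partial X)^i(\partial/\partial Y)^{k-i}$ sends $X^\ell Y^{a-\ell}$ to $\ell^{\underline{i}}(a-\ell)^{\underline{k-i}}\,X^\ell Y^{a-\ell}$, where $x^{\underline{j}}:=x(x-1)\cdots(x-j+1)$ denotes the falling factorial. Crucially, the falling factorial convention is self-consistent: whenever $\ell<i$ or $a-\ell<k-i$ would make a derivative vanish, the corresponding factor is also zero, so no case distinction on $\ell$ is needed. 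This gives
\[
\delta_k(\pi_\ell)=P_k(\ell),\qquad P_k(t):=\sum_{i=0}^k(-1)^{k-i}\binom{k}{i}^{2}t^{\underline{i}}(a-t)^{\underline{k-i}}\in\fpb[t].
\]

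Finally, $P_k(t)$ is a polynomial of degree at most $k$ in $t$, and its $t^k$-coefficient picks up $t^i$ from $t^{\underline{i}}$ and $(-t)^{k-i}$ from $(a-t)^{\underline{k-i}}$, yielding
\[
[t^k]P_k(t)=\sum_{i=0}^k(-1)^{k-i}(-1)^{k-i}\binom{k}{i}^{2}=\sum_{i=0}^k\binom{k}{i}^{2}=\binom{2k}{k}
\]
by the Chu--Vandermonde identity. Under the hypothesis $k \le (p-1)/2$ we have $2k < p$, so every factor of $(2k)!/(k!)^2$ is smaller than $p$ and hence $\binom{2k}{k}\not\equiv 0\pmod p$. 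Therefore $P_k$ has degree exactly $k$, as claimed. The only subtle point in this plan is the verification that the falling-factorial convention correctly encodes the vanishing of derivatives on boundary monomials, so that $P_k(\ell)$ matches $\delta_k(\pi_\ell)$ uniformly on $\ell\in\{0,\dots,a\}$; beyond this, the computation is purely combinatorial.
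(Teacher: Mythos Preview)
Your proof is correct and follows essentially the same approach as the paper: compute $\delta_k(\pi_\ell)=\tr(\Delta_k\circ\pi_\ell)$ as the eigenvalue of $\Delta_k$ on $X^\ell Y^{a-\ell}$, obtain the falling-factorial expression, and extract the leading coefficient $\binom{2k}{k}\not\equiv 0\pmod p$. Your version is slightly more explicit about why the functional factors through the head and about the sign cancellation in the $t^k$-coefficient, but the argument is the same.
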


\begin{proof}
Note that $\delta_k (\pi_\ell) = \tr (\pi_\ell \circ \Delta_k)$ is the eigenvalue
of $\Delta_k $ on $X^\ell Y^{a-\ell}$, hence equals
\[
\sum_{i=0}^k (-1)^{k-i} \binom{k}{i}^2 \ell (\ell-1) \cdots (\ell-i+1) (a-\ell)
(a-\ell - 1) \cdots (a-\ell -k +i +1) .
\]
This is a polynomial in $\ell$ of degree at most $k$.
The coefficient of $\ell^k$ is $\sum_{i=0}^k \binom{k}{i}^2 = \binom{2k}{k} \not\equiv 0\ ({\rm mod}\ p)$,
as $k < \frac p2$.
\end{proof}

Let us denote this polynomial by $p_k (z) \in \fp [z]$.

\begin{proof}[Proof of Proposition \ref{pr1a}]
Recall that $\frac{p-1}{2} \le a \le p-1$.
Let $\cM$ denote the span of the image of the $p'$-elements in $\End V$ and let $M$ denote the image of $\cM$
in $\hd_{\SL_2 (\fp)} (\End V)$. Since $\cM$ is $\SL_2 (\fp)$-stable, it suffices to show that $M = \hd_{\SL_2 (\fp)} (\End V)$.


(a) Suppose that $a < p-1$. By Lemma \ref{lm2a}, $\hd_{\SL_2 (\fp)} (\End V) \cong \bigoplus\limits_{i=0}^{(p-1)/2}
L(2i)$. Suppose that for some $0\le k \le \frac{p-1}{2}$, $M$ does not contain $L(2k)$. Then $\delta_k$ annihilates the image
of all $p'$-elements.
The images of the diagonal elements of $\SL_2(\fp)$ in $\End(V)$ span the subspace with basis
\[
\pi_i \left( a - \textstyle{ \frac{p-3}{2}} \le i \le \textstyle{\frac{p-3}{2}}\right) , \quad
\pi_i + \pi_{i + \frac{p-1}{2}}\ \left(0 \le i \le a - \textstyle{\frac{p-1}{2}}\right) .
\]
Hence
\begin{equation}
\label{eq1}
\begin{aligned}
p_k (i) &= 0 \quad \left( a - \textstyle{\frac{p-3}{2}} \le i \le \textstyle{\frac{p-3}{2}}\right), \\
p_k (i)+ p_k \left(i +\textstyle{\frac{p-1}{2}}\right) &= 0 \quad
\left( 0 \le i \le a - \textstyle{\frac{p-1}{2}}\right) .
\end{aligned}
\end{equation}
Now repeat the same argument with a non-split Cartan subgroup.
After a linear change of variables $(X,Y) \mapsto (X',Y')$ over $\bbF_{p^2}$,
this subgroup acts as
$\{ \left( \begin{smallmatrix}x \\ &x^p\end{smallmatrix}\right) :\ x \in \bbF_{p^2}^\times ,\ x^{p+1} = 1\}$.
In this new basis of $V$ we have corresponding elements $\Delta_k'$, $\delta_k'$, $\pi_\ell'$.
However, $p_k$ is unchanged as it is given by the explicit formula in the proof of Lemma~\ref{5a}. We thus get
\begin{equation}
\label{eq2}
\begin{aligned}
p_k (i) &= 0 \quad \left( a - \textstyle{\frac{p-1}{2}} \le i \le \textstyle{\frac{p-1}{2}}\right), \\
p_k (i )+ p_k \left( i +\textstyle{\frac{p+1}{2}}\right) &= 0 \quad
\left( 0 \le i \le a - \textstyle{\frac{p+1}{2}}\right) .
\end{aligned}
\end{equation} From
\eqref{eq1} and \eqref{eq2} we get that $p_k (\ell)=0$  for all $0\le \ell \le a$.
This contradicts the fact that $\deg {p_k} = k \le \frac{p-1}{2} \le a$.

(b) Suppose that $a=p-1$, so that $p\ge 5$ by our assumption.
By Lemma \ref{lm2a}, $\hd_{\SL_2 (\fp)} (\End V) \cong
\bigoplus\limits_{i=0}^{(p-1)/2} L(2i) \oplus L(p-1)$.

(b1) Suppose that $M$ does not contain $L(2k)$ for some $k \le \frac{p-3}{2}$.
Then $\delta_k$ and $\delta_k'$ annihilate the image of all $p'$-elements, so by an argument analogous to the one in (a) we get
\begin{equation}
\label{eq3}
\begin{aligned}
p_k (i) + p_k \left( i + \textstyle{\frac{p-1}{2}}\right) &= 0 \quad \left( 0 <
i < \textstyle{\frac{p-1}{2}}\right), \\
p_k (0 )+ p_k \left( \textstyle{\frac{p-1}{2}}\right) + p_k (p-1) &= 0;
\end{aligned}
\end{equation}
\begin{equation}
\label{eq4}
\begin{aligned}
p_k (i )+ p_k \left( \textstyle{i + \frac{p+1}{2}}\right) &= 0 \qquad
\left( 0 \le i \le \textstyle{\frac{p-3}{2}}\right), \\
p_k \left( \textstyle{\frac{p-1}{2}}\right) &= 0.
\end{aligned}
\end{equation}
Then $p_k (z+1) - p_k (z)$ is a polynomial of degree $k-1 < \textstyle{\frac{p-1}{2}}$
with zeroes at $z=0, 1, \ldots, \textstyle{\frac{p-5}{2}}$ and
$z = \textstyle{\frac{p+1}{2}}, \textstyle{\frac{p+3}{2}}, \ldots, p-2$.
As $p-3 \ge \textstyle{\frac{p-1}{2}}$, it follows that $p_k (z+1) \equiv p_k(z)$,
hence by \eqref{eq4} we get $p_k (\ell)=0$  for all $0\le \ell \le p-1$, contradicting the fact that
$p_k$ has degree $0 \le k < p$.

(b2) Suppose that $M$ does not contain $L(p-1)^{\oplus 2}$.
Note first that the second copy of $L(p-1) \subset \End(V)$ is contained in the Weyl module
$V(2p-2)\hookrightarrow T(2p-2)$.
Using \eqref{eq:2} we have $V(2p-2) |_{\SL_2 (\fp)} \cong L(p-1) \oplus M$, where
$0 \to L(0) \to M \to L(p-3) \to 0$ is non-split.
It follows using \eqref{eq:3} that $V(2p-2)^{{\mathcal U}(\fp)} = V(2p-2)^{\mathcal U}$ (both are two-dimensional).
Hence there is a ${\mathcal U}(\fp)$-fixed vector in the second copy of $L(p-1)$ of
the form $v := \big( X \frac{\p}{\p Y}\big)^{p-1} + c$ for some $c\in \fpb$. We first compute $c$.
Note that if $V$ is an $\SL_2 (\fp)$-representation over $\fpb$ and $v\ne 0$
is fixed by the Borel $B := (\begin{smallmatrix} *&* \\ &*\end{smallmatrix})\subset \SL_2 (\fp)$,
then $v$ generates the $p$-dimensional irreducible representation of
$\SL_2 (\fp)$ iff
\[
\sum_{\SL_2 (\fp) / (\begin{smallmatrix} *&* \\ &*\end{smallmatrix})} gv = 0\  \Leftrightarrow
\sum_{u\in  \fp} \begin{pmatrix} 1 \\ -u & 1\end{pmatrix} v +
\begin{pmatrix} &-1 \\ 1 \end{pmatrix} v = 0 .
\]
As in Lemma~\ref{lm4a},
\[
\begin{pmatrix} 1 \\ -u & 1\end{pmatrix} \cdot \Big( X \frac{\p}{\p Y}\Big)^{p-1} =
(X-uY)^{p-1} \sum_{i=0}^{p-1} (-u)^i \Big( \frac{\p}{\p X}\Big)^i
\Big( \frac{\p}{\p Y}\Big)^{p-1-i} ,
\]
hence
\[
\sum_{u\in\fp} \begin{pmatrix}1 \\ -u&1\end{pmatrix} \cdot \left[ \Big( X \frac{\p}{\p Y}
\Big)^{p-1} + c \right] = - \left[ \Delta_{p-1} + Y^{p-1} \cdot \Big( \frac{\p}{\p X}\Big)^
{p-1}\right] .
\]
Since
\[
\begin{pmatrix} &-1 \\ 1\end{pmatrix} \cdot \left[ \Big( X \frac{\p}{\p Y}\Big)^{p-1} + c
\right] = \Big( Y \frac{\p}{\p X}\Big)^{p-1} + c ,
\]
we deduce $c=-1$.

Consider the annihilator $M^\perp \subset \soc_{\SL_2 (\fp)} (\End V)$ of $M$ under the trace pairing. By assumption, $N :=
M^\perp \cap L(p-1)^{\oplus 2} \ne 0$. Let $\psi \in N^B - \{0\}$, so that by the previous paragraph we can write
$\psi = \lambda \big( X \frac{\p}{\p Y}\big)^{\frac{p-1}{2}} + \mu (\big( X \frac{\p }{\p Y}\big)^{p-1} - 1)$ for some
$(\lambda,\mu) \in \fpb^2-\{0\}$. As $\psi \in M^\perp$ we get by a simple calculation that $0 = \tr\big((\begin{smallmatrix}
  \alpha & \\ & \alpha^{-1} \end{smallmatrix}) \circ \psi\big) = -\mu$ for any $\alpha \in \fp^\times-\{\pm 1\} \ne
\varnothing$.  Thus we may assume that $\psi = \big( X \frac{\p}{\p Y}\big)^{\frac{p-1}{2}}$. As the $\SL_2
(\fp)$-subrepresentation of $\End(V)$ generated by $\psi$ is the unique $\SL_2$-subrepresentation $L(p-1) \subset \End(V)$, we see
that $N$ contains $\Delta_k$ and $\Delta_k'$ for $k = \frac{p-1}2$, so $\delta_k$ and $\delta_k'$ annihilate $M$.
Now the argument of (b1) gives a contradiction.
\end{proof}

\section{Weak adequacy for Chevalley groups}

\begin{lem}
\label{lm1}
Suppose $(X,\Phi, X^\vee, \Phi^\vee)$ is a reduced based root datum with $\Phi$
irreducible.

\begin{enumerate}[{\rm(a)}]
\item If $\Phi$ is not of type $\rA_1$, then
\[
2\al_0^\vee \leq \sum_{\al \in\Phi_+} \al^\vee ,
\]
where $\al_0^\vee$ is the highest coroot.

\item
If $\Phi$ is not of type $\rA_1$, $\rA_2$, $\rA_3$, $\rB_2$, then
\[
4\beta_0^\vee \le \sum_{\al\in\Phi_+} \al^\vee ,
\]
where $\beta_0^\vee$ is the highest short coroot.
\end{enumerate}
\end{lem}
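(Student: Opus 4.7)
The plan is to reduce both inequalities to componentwise comparisons in the basis of simple coroots $\alpha_i^\vee$, and then verify them case by case using the classification of irreducible reduced root systems. First I would observe that $\sum_{\alpha \in \Phi_+} \alpha^\vee = 2\rho^\vee$, the sum of all positive coroots. By definition of the dominance order on the coroot lattice, $x \leq y$ iff $y - x$ is a non-negative integer combination of simple coroots, so (a) is equivalent to showing that the coefficients of $2\rho^\vee - 2\alpha_0^\vee$ in the basis $\alpha_1^\vee, \ldots, \alpha_r^\vee$ are all non-negative, and similarly (b) is equivalent to showing this for $2\rho^\vee - 4\beta_0^\vee$. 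The coefficient of $\alpha_i^\vee$ in $2\rho^\vee$ equals $\sum_{\alpha \in \Phi_+} (\alpha^\vee)_i$, where $(\alpha^\vee)_i$ denotes the $i$-th coordinate of $\alpha^\vee$ in the simple coroot basis.

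Next I would proceed through the types. For $\rA_n$ with $n \geq 2$, direct enumeration gives that the coefficient of $\alpha_i^\vee$ in $2\rho^\vee$ is $i(n+1-i)$, while $\alpha_0^\vee = \beta_0^\vee = \sum_i \alpha_i^\vee$ (as $\rA_n$ is simply laced). Thus (a) reduces to $i(n+1-i) \geq 2$, valid for $n \geq 2$, and (b) reduces to $i(n+1-i) \geq 4$, valid exactly when $n \geq 4$, matching the exclusion of $\rA_2$ and $\rA_3$. For types $\rB_n$, $\rC_n$, $\rD_n$ I would carry out the analogous computation by enumerating positive roots in the standard $\pm e_i \pm e_j$ description, being careful to convert between roots and coroots using $\alpha \mapsto 2\alpha/(\alpha,\alpha)$ and to correctly identify the highest coroot $\alpha_0^\vee$ (the long coroot coming from the highest short root of $\Phi$) and the highest short coroot $\beta_0^\vee$ (the short coroot coming from the highest long root of $\Phi$). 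For the exceptional types $\rE_6, \rE_7, \rE_8, \rF_4, \rG_2$ I would read off the coefficients of $\alpha_0^\vee$, $\beta_0^\vee$, and $2\rho^\vee$ in simple coroots from Bourbaki LIE VI (Planches V--IX) and check the componentwise inequalities; here the coefficients of $2\rho^\vee$ are large enough relative to the marks of the highest (co)roots for both inequalities to hold with room to spare.

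The main obstacle is organizational rather than conceptual: nine cases, with two infinite families, and in the non-simply-laced types one must keep careful track of root lengths when passing between roots and coroots. I expect the excluded cases in (b) to arise precisely because the relevant coefficient of $2\rho^\vee$ just fails to reach $4$ at an end of the Dynkin diagram; indeed, a direct hand calculation for $\rB_2$ gives $2\rho^\vee = 4\alpha_1^\vee + 3\alpha_2^\vee$ while $4\beta_0^\vee = 4\alpha_1^\vee + 4\alpha_2^\vee$, so the difference is $-\alpha_2^\vee$, confirming that the bound in (b) is sharp exactly at the excluded types.
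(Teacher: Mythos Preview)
Your approach is correct and will work, but it differs from the paper's in a notable way, especially for part~(a).

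For (a), the paper gives a short uniform argument that avoids the classification entirely: since $\alpha_0^\vee$ is the highest coroot and $\Phi$ is not of type $\rA_1$, one can find a simple root $\alpha_j$ with $\langle \alpha_0^\vee, \alpha_j \rangle > 0$ and $\alpha_0^\vee \ne \alpha_j^\vee$; then $\beta^\vee := \alpha_0^\vee - \alpha_j^\vee$ is a positive coroot distinct from $\alpha_j^\vee$, so $2\alpha_0^\vee = \alpha_0^\vee + \alpha_j^\vee + \beta^\vee$ is a sum of three distinct positive coroots and hence is $\le \sum_{\alpha \in \Phi_+} \alpha^\vee$. This is more conceptual than your componentwise check, and explains structurally why only $\rA_1$ is excluded.

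For (b), both the paper and you go case by case, but the verification differs. The paper passes to the dual root system and, in each type, exhibits enough pairwise non-overlapping decompositions of the highest short root $\beta_0$ as a sum of positive roots (using also $\beta_0 < \alpha_0$ in the non-simply-laced cases) to force $4\beta_0 \le \sum_{\alpha > 0} \alpha$. Your method of comparing the coefficients of $2\rho^\vee$ against those of $4\beta_0^\vee$ in the simple coroot basis is equally valid and arguably more mechanical; it trades the small combinatorial insight of finding disjoint decompositions for a straightforward table lookup. Either way the excluded types $\rA_2$, $\rA_3$, $\rB_2$ emerge from the endpoint coefficients being too small, exactly as you observe.
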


\begin{proof}
(a) Let $\{\al_i:\ i=1,\ldots, r\}$ denote the simple roots.
Then $\ang{\al_0^\vee,\al_i}\ge 0$ for all $i$ and $\ang{\al_0^\vee , \al_j}>0$
for some $j$.  Since $\al_0^\vee \ne \al_j^\vee$ (as $\Phi$ is not
of type $\rA_1$),
$\beta^\vee := \al_0^\vee - \al_j^\vee \in \Phi^\vee$.
Since $\al_0^\vee = \al_j^\vee + \beta^\vee$ it follows that $\beta^\vee > 0$.
Also, $\al_j^\vee \ne \beta^\vee$, as $\Phi$ is reduced.
Hence
\[
2\al_0^\vee = \al_0^\vee + \al_j^\vee + \beta^\vee \le \sum_{\al\in\Phi_+}\al^\vee .
\]

(b) We pass to the dual root system to simplify notation.
We want to show that
\[
4\beta_0 \le \sum_{\al \in \Phi_+} \al ,
\]
where $\beta_0$ is the highest short root.
It suffices to express $\beta_0$ as sum of positive roots in three
non-trivial ways that do not overlap (similarly as in proof of (a)).
If $\Phi$ is not simply laced, we only need two non-trivial ways because we
can also use that
$\beta_0 < \al_0$, where $\al_0$ is the highest root.

In the following we use Bourbaki notation.

\begin{description}[itemsep=5pt,parsep=5pt,font=\normalfont]
\item[{\rm Type} $\mathrm{A}_{n-1}$ $(n\ge 5)$]
\begin{align*}
\beta_0 &= \ve_1 - \ve_n \\
&= (\ve_1 - \ve_i ) + (\ve_i - \ve_n) &(1< i < n).
\end{align*}

\item[{\rm Type} $\mathrm{B}_n$
$(n\ge 3)$]
\begin{align*}
\beta_0 &= \ve_1 \\
&= (\ve_1 - \ve_i) + \ve_i &(1 < i \le n) .
\end{align*}
If $n=3$, we also use $\beta_0 < \al_0 = \ve_1 + \ve_2$.

\item[{\rm Type} $\mathrm{C}_n$
$(n\ge 3)$]
\begin{align*}
\beta_0 &= \ve_1 +\ve_2 \\
&= (\ve_1 - \ve_i) + (\ve_2 + \ve_i) &(2 < i \le n) \\
&= (\ve_1 + \ve_i) + (\ve_2 -\ve_i) .
\end{align*}
If $n=3$, we also use $\beta_0 < \al_0 = 2\ve_1$.

\item[{\rm Type} $\mathrm{D}_n$
$(n\ge 4)$]
\begin{align*}
\beta_0 &= \ve_1 + \ve_2 \\
&= (\ve_1 - \ve_i) + (\ve_2 + \ve_i) &(2 < i \le n) \\
&= (\ve_1 + \ve_i) + (\ve_2 -\ve_i) .
\end{align*}

\item[{\rm Type} $\mathrm{E}_6$]
\[
\beta_0 =\tfrac12 (\ve_1 + \ve_2 + \ve_3 + \ve_4 + \ve_5 - \ve_6 -\ve_7 + \ve_8).
\]
Note that $\beta_0 - (\ve_i + \ve_j)$ and $\ve_i + \ve_j$ are positive
$(1 \le i  < j \le 5)$.

\item[{\rm Type} $\mathrm{E}_7$]
\begin{align*}
\beta_0 &= \ve_8 -\ve_7 \\
&=\frac12 \Big( \ve_8 - \ve_7 +\ve_6 + \sum_{i=1}^5 (-1)^{v (i)} \ve_i\Big)
+\frac12 \Big( \ve_8 - \ve_7 -\ve_6 - \sum_{i=1}^5 (-1)^{v (i)} \ve_i\Big),
\end{align*}
where $\sum_{i=1}^5 v (i)$ is odd.

\item[{\rm Type} $\mathrm{E}_8$]
\begin{align*}
\beta_0 &= \ve_7 + \ve_8 \\
&= (-\ve_i + \ve_7) + (\ve_i +\ve_8) &(1\le i < 7) .
\end{align*}

\item[{\rm Type} $\mathrm{F}_4$]
\begin{align*}
\beta_0 &= \ve_1 \\
&= (\ve_1 - \ve_i) + \ve_i &(1 < i \le 4).
\end{align*}

\item[{\rm Type} $\mathrm{G}_2$]
\begin{align*}
\beta_0 &= 2\al_1 + \al_2 \\
&= \al_1 + (\al_1 + \al_2) \\
\beta_0 &< 3\al_1 + \al_2 \\
\beta_0  &< \al_0 = 3\al_1 + 2\al_2 .\qedhere
\end{align*}
\end{description}
\end{proof}

We now prove variants of several results in \cite{GHTT}.

\begin{lem}
\label{lm2}
Suppose that $\cG$ is a connected simply connected semisimple
algebraic group over $\fpb$ and $\Theta : \cG \to \GL(V)$ a semisimple
 finite-dimensional representation. Let $\cG > \cB > \cT$ denote a Borel
 subgroup and a
 maximal torus, and suppose that
 \begin{equation}
   \label{eq:4}
   \parbox{0.9\linewidth}{for any irreducible component $V'$ of $V$ and for any two distinct weights
     $\mu_1, \mu_2$ of $\cT$ on $V'$ we have $\mu_1-\mu_2\not\in p X (\cT)$.}
 \end{equation}
Then there exist connected, simply connected, semisimple algebraic subgroups
$\mathcal I$ and $\mathcal J$ of $\mathcal G$ such that
${\mathcal G} = {\mathcal I} \times {\mathcal J}$,
$\Theta({\mathcal J}) =1$, and $\Theta$ induces a central isogeny of $\mathcal I$ onto its image, which is a semisimple algebraic group.
Moreover, assumption \eqref{eq:4} holds if for all irreducible constituents
 $V'$ of $V$  the highest weight
 of $V'$ is $p$-restricted and either
 \begin{enumerate}[{\rm (i)}]
 \item $\dim V' < p$ , or
 \item $\dim V' \le p$ and ($p\ne 2$ or $\mathcal G$ has no $\SL_2$-factor).
 \end{enumerate}
  \end{lem}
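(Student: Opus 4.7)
My plan is to exploit the structure of simply connected semisimple algebraic groups: $\cG$ decomposes as $\cG = \cG_1 \times \cdots \times \cG_r$ with each $\cG_i$ simply connected simple, so that $X(\cT) = \bigoplus_i X(\cT_i)$ and $pX(\cT) = \bigoplus_i pX(\cT_i)$. By Steinberg's tensor product theorem each irreducible constituent factors as an external tensor product $V^{(s)} = V_1^{(s)} \boxtimes \cdots \boxtimes V_r^{(s)}$ with $V_i^{(s)} = L(\lambda_i^{(s)})$ an irreducible $\cG_i$-representation. Letting $A = \{i : V_i^{(s)} \text{ is non-trivial for some } s\}$ and $B$ be its complement, I set $\cI = \prod_{i \in A} \cG_i$ and $\cJ = \prod_{i \in B} \cG_i$, so that $\cG = \cI \times \cJ$ and $\Theta(\cJ) = 1$ by construction.

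To establish that $\Theta|_\cI$ is a central isogeny onto its image, I use \eqref{eq:4} to pin down each non-trivial $\lambda_i^{(s)}$. Writing its Steinberg decomposition $\lambda_i^{(s)} = \sum_{k \ge 0} p^k \mu_{i,k}^{(s)}$ with $\mu_{i,k}^{(s)}$ being $p$-restricted and $V_i^{(s)} = \bigotimes_k L(\mu_{i,k}^{(s)})^{(k)}$, I claim that $\mu_{i,k}^{(s)} = 0$ for every $k \ge 1$. Otherwise, varying the weight in a non-trivial factor $L(\mu_{i,k}^{(s)})^{(k)}$ by $p^k$ times a (non-zero) root produces two $\cT_i$-weights of $V_i^{(s)}$ whose difference lies in $pX(\cT_i)$; padding with any fixed weight on the remaining $\cG_j$-factors contradicts \eqref{eq:4}. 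Hence $\lambda_i^{(s)} = \mu_{i,0}^{(s)}$ is $p$-restricted, and non-zero whenever $V_i^{(s)}$ is non-trivial. A standard fact then shows that the scheme-theoretic kernel of any such non-trivial $p$-restricted irreducible of a simply connected simple group is central. Intersecting over all $s$ and $i \in A$ gives $\ker(\Theta|_\cI) \subseteq \bfZ(\cI)$, and the image is a semisimple algebraic group as a central quotient of a simply connected semisimple one.

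For the \emph{moreover} part, the decomposition $pX(\cT) = \bigoplus_i pX(\cT_i)$ reduces \eqref{eq:4} on $V^{(s)}$ to the corresponding statement on each non-trivial factor $V_i^{(s)}$ viewed as a $\cG_i$-representation. Under (i), $\dim V_i^{(s)} < p$ for every $i$; under (ii), the case $\dim V^{(s)} = p$ forces exactly one non-trivial tensor factor of dimension $p$ (the others trivial), and the $\SL_2$-exclusion at $p = 2$ enters here. So it suffices to prove: for a simply connected simple $\cG'$ and $p$-restricted $\lambda$ with $\dim L(\lambda) \le p$ (excluding the case $\cG' = \SL_2$, $p = 2$, $\lambda = \omega$), any two distinct $\cT'$-weights $\mu_1, \mu_2$ of $L(\lambda)$ satisfy $\mu_1 - \mu_2 \notin pX(\cT')$. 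Writing $\mu_1 - \mu_2 = \sum n_k \al_k \in Q$, membership in $pX(\cT')$ would place it in $pX(\cT') \cap Q$. When $\gcd(p, |\bfZ(\cG')|) = 1$, this intersection equals $pQ$; a depth bound for weights of $p$-restricted irreducibles of dimension $\le p$, derived from the Weyl dimension formula together with Lemma~\ref{lm1}, forces $|n_k| < p$ for every $k$, whence $\mu_1 - \mu_2 \in pQ$ only if $\mu_1 = \mu_2$. In the remaining cases where $p \mid |\bfZ(\cG')|$ --- namely $\SL_n$ with $p \mid n$, $\Sp_{2n}$ with $p = 2$, the spin groups with $p = 2$, $\rE_6$ with $p = 3$, and $\rE_7$ with $p = 2$ --- the lattice $pX(\cT') \cap Q$ strictly contains $pQ$, and one completes the argument by a finite inspection of the low-dimensional $p$-restricted irreducibles (via L\"ubeck's tables). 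The main technical obstacle is this last case analysis, particularly verifying distinctness of weights modulo $pX$ in the exceptional settings where the depth argument alone does not suffice.
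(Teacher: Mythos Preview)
Your construction of $\cI$ and $\cJ$ and the argument that $\Theta|_\cI$ has central kernel follow essentially the same path as the paper, which packages this step as a reference to Lemma~4 of \cite{GHTT} (observing that \eqref{eq:4} replaces the stronger weight bound assumed there). Your explicit use of the Steinberg decomposition to deduce $p$-restrictedness from \eqref{eq:4} is a reasonable way to unpack that reference.

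The ``moreover'' part is where your route diverges and becomes unnecessarily complicated. You try to bound the coefficients $n_k$ of $\mu_1-\mu_2$ in the simple-root basis, which forces a case distinction according to whether $p$ divides $|\bfZ(\cG')|$ (since that governs whether $pX(\cT')\cap Q = pQ$) and leaves a residual finite inspection via L\"ubeck's tables. The paper avoids all of this by pairing with \emph{coroots} instead. For $\cG'$ almost simple and not $\SL_2$, Lemma~\ref{lm1}(a) together with Jantzen's inequality gives
\[
|\langle \mu,\beta^\vee\rangle| \le \langle \lambda,\alpha_0^\vee\rangle \le \tfrac12\Big\langle \lambda,\sum_{\alpha>0}\alpha^\vee\Big\rangle < \tfrac12\dim L(\lambda) \le \tfrac p2
\]
for every weight $\mu$ of $L(\lambda)$ and every root $\beta$, hence $|\langle \mu_1-\mu_2,\beta^\vee\rangle|<p$. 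Because $\cG'$ is simply connected, the simple coroots form a $\bbZ$-basis of $X_*(\cT')$, so $\mu_1-\mu_2\in pX(\cT')$ would force each $\langle\mu_1-\mu_2,\alpha_i^\vee\rangle$ to lie in $p\bbZ$, hence to vanish, and the perfect pairing gives $\mu_1=\mu_2$. The $\SL_2$ case is a one-line direct check on the weights $a,a-2,\ldots,-a$. This is completely uniform and needs no case split on $p\mid|\bfZ(\cG')|$ and no tables.

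There is also a soft gap in your good-prime argument: Lemma~\ref{lm1} is a statement about sums of coroots, and it is the pairings $\langle\mu,\alpha^\vee\rangle$ that it controls, not the root-basis coefficients $n_k=\langle\mu_1-\mu_2,\omega_k^\vee\rangle$, which involve fundamental coweights that generally do not lie in the coroot lattice. So the claimed depth bound $|n_k|<p$ does not follow from the ingredients you cite; the coroot-pairing argument above is precisely how Lemma~\ref{lm1} is meant to be used here.
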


  \begin{proof}
 Write $V=\bigoplus V_i$ with $V_i$ irreducible and $\cG=\prod\limits_{s\in S} \cG_s$
 with each $\cG_s$ almost simple.
 The last sentence of the proof of Lemma 4 in \cite{GHTT} together with \eqref{eq:4} show that the conclusion
 of that lemma holds for $\Theta_i$: $\cG\to \GL(V_i)$ for all $ i$.
 Hence there exists $S_i \subset S$ such that $\ker \Theta_i = \prod_{s\in S_i}
 \cG_s \times Z_i$, where $Z_i$ is a central subgroup of $\prod_{s\not\in S_i} \cG_s$ (maybe non-reduced).
 Then $\ker \Theta = \bigcap \ker \Theta_i = \prod_{s\in \cap S_i} \cG_s \times Z$, where $Z$ is a central
 subgroup of $\prod_{s\not\in \bigcap S_i} \cG_s$. So we can take
 ${\mathcal I} = \prod_{s \notin \cap S_i}{\mathcal G}_s$ and
${\mathcal J} = \prod_{s \in \cap S_i}{\mathcal G}_s$.

 To prove the last part, we may suppose that $V$ is irreducible.  So
 $V \cong \bigotimes_{s\in S} V_s$, where $V_s$ is an irreducible
 $\cG_s$-representation.
 It is easy to see that if \eqref{eq:4} fails, then it fails for $\cG_s \to \GL(V_s)$ for some $s\in S$,
 so we may assume that $\cG$ is almost simple.

 \smallskip
 (a) First suppose that $\cG\cong \SL_2$.
The highest weight of $V$ is $\left( \begin{smallmatrix} x \\ &x^{-1}\end{smallmatrix}\right) \mapsto x^a$,
 some $0 \le a \le p-1$ and $a\ne p-1$ if $p=2$.
 Therefore the weights of $\ad V$ are $\left(\begin{smallmatrix} x\\ &x^{-1}\end{smallmatrix}\right) \mapsto x^b$ where
 $b\in \{ -2 a , -2 a + 2 , \ldots, 2a-2 , 2a \}$.
 It follows that \eqref{eq:4} holds because $b\equiv 0$ (${\rm mod}\  p$) implies that $b = 0$.

 \smallskip
 (b) Next suppose that $\cG\not\cong \SL_2$.
 Let $\la$ denote the highest weight of $V$; it is $p$-restricted by assumption.
 By Lemma \ref{lm1}(a) and Jantzen's inequality \cite[Lemma 1.2]{jantzenlow} we get
 \[
 |\ang{\mu, \beta^\vee}| \le \ang{\la , \al_0^\vee} \le \frac12 \big\langle \la,\sum_{\al >0}
 \al^\vee\big\rangle < \frac12 \dim V \leq \frac p2
 \]
for all weights $\mu$ of $V$
and all roots $\beta$.
Hence $| \ang{\mu_1 - \mu_2 , \beta^\vee} | < p$
for all root $\beta$
and all weights $\mu_i$ of $V$, so \eqref{eq:4} holds.
 \end{proof}

 \begin{lem}
 \label{lm3}
 Suppose that $\cG \leq \prod \GL(W_i)$ is a connected reductive group over~$\fpb$,
 where for all $i$ the representation $W_i$ is irreducible with $p$-restricted highest weight and
 $\dim W_i \le p$.
   Let $\cT$ be a maximal torus and $\cU$ be the unipotent radical of a Borel subgroup of $\cG$ that contains $\cT$.
   Let $V = \bigoplus W_i$.
  \begin{enumerate}[\rm(i)]
  \item The maps $\exp$ and $\log$ induce inverse isomorphisms of varieties between $\Lie \cU \leq \End(V)$ and $\cU \leq \GL(V)$.
  \item For any positive root $\alpha$ we have $\exp(\Lie \cU_\alpha) = \cU_\alpha$.
  \item The map $\exp : \Lie \cU \to \cU$ depends only on $\cG$ and $\cU$, but not on $V$, $W_i$, or the representation $\cG
    \hookrightarrow \GL(V)$.
  \item If $\theta$ is an automorphism of $\cG$ that preserves $\cT$ and $\cU$, then we have a commutative diagram:
    \begin{equation*}
      \xymatrix{\Lie \cU \ar[r]^{d\theta} \ar[d]_{\exp} & \Lie \cU \ar[d]^{\exp} \\ \cU \ar[r]^\theta & \cU }
    \end{equation*}
  \end{enumerate}
 \end{lem}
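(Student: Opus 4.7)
My plan hinges on the observation that $X^p = 0$ for every $X \in \Lie \cU$ and $(u-1)^p = 0$ for every $u \in \cU$: indeed each such $X$ acts nilpotently on each $W_i$, and a nilpotent operator on a space of dimension at most $p$ has $p$-th power zero; dually for unipotent $u$. Consequently the truncated series $\exp(X) = \sum_{k=0}^{p-1} X^k/k!$ and $\log(1+Y) = \sum_{k=1}^{p-1} (-1)^{k+1}Y^k/k$ are well-defined polynomial morphisms between the closed subvarieties $\{X \in \End V : X^p = 0\}$ and $\{u \in \GL V : (u-1)^p = 0\}$, and will be mutually inverse by formal-power-series identities that survive truncation at degree $p$.

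I would first establish (ii). Fix $\alpha \in \Phi^+$ and the parametrization $u_\alpha \colon \bbG_a \xrightarrow{\sim} \cU_\alpha$ with $e_\alpha := du_\alpha(1)$. Restrict the action to the rank-one subgroup $\cL_\alpha := \langle \cU_\alpha, \cU_{-\alpha}\rangle$, which is isogenous to $\SL_2$. Every irreducible $\cL_\alpha$-constituent of $W_i$ has dimension at most $\dim W_i \leq p$, hence highest $\SL_2$-weight at most $p-1$, and is therefore $p$-restricted. On any $p$-restricted irreducible $\SL_2$-module the divided-power operators $E_\alpha^{(k)}$ of the Kostant $\bbZ$-form vanish for $k \geq p$ (they would shift weights by $k\alpha$ outside the weight diagram), while $E_\alpha^{(k)} = e_\alpha^k/k!$ for $k < p$. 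The action of $u_\alpha(t)$ is $\sum_{k \geq 0} t^k E_\alpha^{(k)}$, which in our setting truncates to $\sum_{k=0}^{p-1} t^k e_\alpha^k/k! = \exp(te_\alpha)$. Hence $u_\alpha(t) = \exp(te_\alpha)$ on $V$, establishing $\exp(\Lie \cU_\alpha) = \cU_\alpha$.

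Turning to (i), the remaining content is $\exp(\Lie \cU) = \cU$. For $X \in \Lie \cU$, smoothness of $\cU$ ensures that $\Lie \cU$ inherits a restricted Lie subalgebra structure from $\End V$ with $X^{[p]} = X^p = 0$. The standard theory of $p$-nilpotent elements of restricted Lie algebras then produces a morphism $\phi_X \colon \bbG_a \to \cU$ of algebraic groups with differential $X$; composing with $\cU \hookrightarrow \GL V$ gives the unique $1$-parameter subgroup of $\GL V$ with tangent $X$, namely $t \mapsto \exp(tX)$. Hence $\exp(tX) \in \cU$ for all $t$, so $\exp(\Lie \cU) \subseteq \cU$, and a dimension count (both sides closed and irreducible of dimension $|\Phi^+|$, with $\exp$ injective via $\log$) upgrades this to equality; $\log$ restricts to its inverse. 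Part (iii) will follow by applying (i) to the direct sum $V_1 \oplus V_2$ of any two admissible representations (which still satisfies the hypotheses), whose resulting $\exp \colon \Lie \cU \to \cU \subset \cG$ restricts on each summand to $\exp$ computed with $V_i$, forcing the two constructions to coincide inside $\cG$. For (iv), $\theta$ permutes the root subgroups with $\theta(u_\alpha(t)) = u_{\theta\alpha}(c_\alpha t)$ and $d\theta(e_\alpha) = c_\alpha e_{\theta\alpha}$ for a common scalar $c_\alpha$, so (ii) gives $\theta \circ \exp = \exp \circ d\theta$ on each $\Lie \cU_\alpha$; combined with the intrinsicness of $\exp$ from (iii), the compatibility extends to all of $\Lie \cU$.

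The main obstacle is the step in (i) where I must show that $\exp$ maps $\Lie \cU$ into $\cU$ itself, rather than merely into the $p$-unipotent variety of $\GL V$. The crux is the integration of $p$-nilpotent restricted Lie algebra elements to $1$-parameter subgroups of $\cU$, which relies on smoothness of $\cU$ together with the theory of restricted Lie algebras in characteristic $p$.
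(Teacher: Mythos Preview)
There are two genuine gaps in your argument.

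In your treatment of (ii), the implication ``an irreducible $\cL_\alpha$-constituent has dimension at most $p$, hence highest $\SL_2$-weight at most $p-1$, hence is $p$-restricted'' is false. The Frobenius twist $L(1)^{(p)} \cong L(p)$ is a two-dimensional irreducible $\SL_2(\fpb)$-module of highest weight $p$, and the divided power $E^{(p)}$ does \emph{not} vanish on it. What is actually needed is that the Kostant divided powers $X_{\alpha,n}$ annihilate each $W_i$ for $n \ge p$, and this does not follow from $e_\alpha^p = 0$ (for $n \ge p$ one cannot write $X_{\alpha,n} = e_\alpha^n/n!$). The paper proves this vanishing directly: since $W_i$ has $p$-restricted highest weight and $\dim W_i \le p$, Jantzen's low-dimensional result gives that the $\cT$-weights of $W_i$ coincide with those of the characteristic-zero irreducible of the same highest weight; in characteristic zero $\alpha$-strings are unbroken, so if $\mu$ and $\mu + n\alpha$ were both weights with $n \ge p$ then $\dim W_i \ge n+1 > p$. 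Hence $X_{\alpha,n} = 0$ on $W_i$ for $n \ge p$, and $u_\alpha(t) = \sum_{n \ge 0} t^n X_{\alpha,n}$ truncates to $\exp(te_\alpha)$.

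In your treatment of (i), you invoke ``the unique $1$-parameter subgroup of $\GL(V)$ with tangent $X$''. In characteristic $p$ such uniqueness fails: for $N \in \End(V)$ with $N^2 = 0$, both $t \mapsto 1 + tN$ and $t \mapsto 1 + (t + t^p)N$ are homomorphisms $\bbG_a \to \GL(V)$ with differential $N$. So even if you produce some $\phi_X : \bbG_a \to \cU$ with $d\phi_X(1) = X$ (itself not a routine fact for $\cU$ as opposed to the ambient reductive group), you cannot conclude that $\phi_X(t) = \exp(tX)$ in $\GL(V)$. The paper (via \cite[Lemma~5]{GHTT}) instead builds (i) from the vanishing of the higher $X_{\alpha,n}$, which is precisely the input you are trying to avoid.
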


 \begin{proof}  The proof is the same as that of \cite[Lemma 5]{GHTT}
 where there was an extra assumption on the $\mu_i$.
 The assumption on the weights $\mu_i$ is {\it only} used to prove that
 $X_{\al , n}$ acts trivially on $V = \bigoplus W_i$ for all $n \ge p$.
Fix any $i$.
It is enough to show that
$X_{\al ,n}$ acts trivially on $W_i$ for  all $n \ge p$.
 So  it is enough to show that  $W_i$ cannot have two weights $\la$ and $\la + n\al$ ($\al \in \Phi$, $n \ge p$).
 As $\dim W_i \le p$ it follows from  \cite{jantzenlow}
 that the weights
 of $W_i$ are the same as those of the irreducible characteristic $0$ representation of the same highest weight.
 But in characteristic $0$ it is known that if $\la$ and $\la + n\al$ are weights
 of an irreducible representation, then so are $\la$, $\la + \al$, $\la + 2\al , \ldots, \la + n\al$, so $\dim W_i > n \ge p$, contradicting the assumption.
 \end{proof}

 \begin{prop}
 \label{prop4}  Let $p > 3$ be prime.
 Suppose that $V$ is a finite-dimensional vector space over $\overline{\bbF}_p$ and
 that $G \leq \GL(V)$ is a finite subgroup that
  acts semisimply on $V$. Let $\GP \leq G$ be the subgroup generated
  by $p$-elements of $G$. Then $V$
  is a semisimple $\GP$-module. Let $d \geq 1$ be the maximal dimension of an irreducible $\GP$-submodule of $V$.
  Suppose that $p > d$ and that $\GP$ is a central product of quasisimple
  Chevalley groups in characteristic $p$.
  Then there exists an algebraic group $\cG$ over $\bbF_p$ and a semisimple representation
  $\Theta : \cG_{/\fpb} \to \GL(V)$ with the following properties:
  \begin{enumerate}[\rm(i)]
  \item The connected component $\cG^0$ is semisimple, simply connected.
  \item $\cG \cong \cG^0 \rtimes H$, where $H$ is a finite group of order prime to $p$.
  \item $\Theta(\cG(\bbF_p)) = G$.
  \item $\ker(\Theta) \cap \cG^0(\fp)$ is central in $\cG^0(\fp)$.
  \end{enumerate}
  Moreover, any highest weight of $\cG^0_{/\fpb}$ on $V$ is $p$-restricted.
  Also, $G$ does not have any composition factor of order~$p$.
 \end{prop}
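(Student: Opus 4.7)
Since $G$ acts semisimply on $V$ and $G^+$ is characteristic in $G$, Clifford's theorem gives that $V|_{G^+}$ is semisimple, so the invariant $d$ is well defined and every irreducible $G^+$-constituent of $V$ has dimension at most $d<p$.

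Write $G^+=L_1 * \cdots * L_r$ as a central product of quasisimple Chevalley groups, so each $L_i$ is a quotient of $\cL_i^{\mathrm{sc}}(\bbF_{q_i})$ by a central $p'$-subgroup, with $\cL_i^{\mathrm{sc}}$ a simply connected simple algebraic group over $\bbF_{q_i}$ and $q_i=p^{a_i}$. I would take
\[
\cG^0 := \prod_{i=1}^{r} \Res_{\bbF_{q_i}/\bbF_p} \cL_i^{\mathrm{sc}},
\]
a simply connected semisimple algebraic group over $\bbF_p$ whose $\bbF_p$-points surject onto $G^+$ with $p'$-central kernel; after base change, $(\cG^0)_{/\fpb}$ is a product of copies of the $\cL_i^{\mathrm{sc}}$, one per embedding $\bbF_{q_i}\hookrightarrow\fpb$. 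Steinberg's tensor product theorem then writes each irreducible $G^+$-constituent $W\subseteq V$ (with $\dim W<p$) as the restriction to $\cG^0(\bbF_p)$ of a $p$-restricted irreducible algebraic representation of $(\cG^0)_{/\fpb}$; summing over constituents yields a semisimple representation $\Theta_0 \colon (\cG^0)_{/\fpb}\to\GL(V)$ that realizes the $G^+$-action and whose highest weights are all $p$-restricted.

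Because $G^+$ is generated by $p$-elements, $G/G^+$ is a $p'$-group, so the conjugation action of $G$ on $G^+$ induces outer automorphisms of $p'$-order. By the classification of automorphisms of finite Chevalley groups, each such automorphism (diagonal, field, graph, plus permutations of isomorphic factors $L_i$) lifts to an $\bbF_p$-rational automorphism of $\cG^0$ of the same order. Letting $H$ denote the resulting finite $p'$-subgroup of $\Aut(\cG^0)$ and forming $\cG := \cG^0\rtimes H$ as an algebraic group over $\bbF_p$, the obstruction to extending $\Theta_0$ equivariantly to a semisimple $\Theta \colon \cG_{/\fpb}\to\GL(V)$ lies in group cohomology of $H$ with $p'$-coefficients and vanishes since $|H|$ is coprime to $p$; by construction $\Theta(\cG(\bbF_p))=G$.

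Finally, Lemma~\ref{lm2}(i) applies to $\Theta_0$ since every irreducible constituent has $p$-restricted highest weight and dimension $<p$: after splitting off the almost-simple factors of $\cG^0$ that act trivially, the restriction of $\Theta$ to $\cG^0$ becomes a central isogeny onto its image, so $\ker(\Theta)\cap\cG^0(\fp)$ is central in $\cG^0(\fp)$. The $p$-restrictedness of the highest weights is recorded by the construction, and $G$ has no composition factor of order $p$ because the composition factors of $\cG^0(\bbF_p)$ are either $p'$-central or non-abelian simple of Lie type (hence not $C_p$), while $G/G^+$ is a $p'$-group. The main obstacle will be the third step: explicitly constructing the $\bbF_p$-rational outer action of $H$ on $\cG^0$ that matches the $G$-action on $G^+$, so that $\cG(\bbF_p)=G$. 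Once this is arranged, the remaining ingredients (Steinberg's tensor product theorem and Lemma~\ref{lm2}) go through routinely.
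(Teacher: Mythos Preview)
Your overall plan matches the paper's approach, which is simply to rerun the proof of \cite[Proposition~7]{GHTT} and note that the two places where that proof used the stronger bound $\langle\lambda,\alpha^\vee\rangle < (p-1)/2$ (namely, their Lemmas~4 and~5) can now be handled by the strengthened Lemmas~\ref{lm2} and~\ref{lm3} of this paper. So the paper's own proof is essentially a pointer, and your sketch reconstructs the shape of the GHTT argument correctly: build $\cG^0$ from Weil restrictions of the simply connected covers, use Steinberg to get $p$-restricted algebraic lifts of the $W_i$, extend by the outer $p'$-action, and invoke Lemma~\ref{lm2} for (iv). You also correctly flag the extension step as the hard part.

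Two points, however, need correction. First, your cohomological vanishing argument for the extension is not valid as stated: the obstruction to extending $\Theta_0$ from $\cG^0$ to $\cG^0\rtimes H$ lives in something like $H^2(H,\fpb^\times)$, and since both $|H|$ and the coefficient module are prime-to-$p$, there is no reason for this to vanish. The GHTT construction does not proceed this way; rather, one works directly inside $\GL(V)$, using that $G$ already sits there and normalizes $G^+$, and one must check that conjugation by elements of $G$ on the image $\Theta_0(\cG^0_{/\fpb})$ is induced by genuine $\bbF_p$-rational automorphisms of $\cG^0$.

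Second, and relatedly, your plan never invokes Lemma~\ref{lm3}, whereas the paper singles it out as essential: it is precisely the tool used (in Steps~4 and~5 of \cite{GHTT}) to verify this compatibility between automorphisms of the algebraic group and of the finite group on unipotent elements. Under the original GHTT bound the exp/log isomorphism was automatic; under the weaker bound $d<p$ one needs Lemma~\ref{lm3} to guarantee that $\exp$ and $\log$ still set up the required isomorphism $\Lie\cU\cong\cU$ intrinsically (independent of the embedding into $\GL(V)$) and compatibly with automorphisms. So in your ``third step'' you should expect to use Lemma~\ref{lm3} exactly where you currently appeal to cohomology.
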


 \begin{proof}
 The proof is essentially identical to the proof of \cite[Proposition 7]{GHTT}.
 We do not get $\ang{\la , \al^\vee} < \frac{p-1}{2}$ in Step 2, but this
 was only used to apply
 Lemmas 4 and 5 in  \cite{GHTT}.
 By Lemmas  \ref{lm2} and \ref{lm3} above one can bypass this assumption,
 as we now explain. Both
times Lemma 4 in \cite{GHTT} is applied, condition (ii) in Lemma \ref{lm2} holds. In Step 4 we
can apply Lemma \ref{lm3} instead of Lemma 5 in \cite{GHTT} because $\bar{{\mathcal I}}$ acts irreducibly on $W_i$ and its highest weight is $p$-restricted (as
${\mathcal I} \to \bar{{\mathcal I}}$ is a central isogeny).
Similarly we can avoid Lemma 5 in \cite{GHTT} in Step 5, noting that the highest weights
of $V'$ are Galois conjugate to the highest weights of $V$ and recalling that
$\psi_{/\fpb}$ is a central isogeny onto its image. Finally, note that (iv) follows by construction.
 \end{proof}


 \begin{thm}
 \label{thm6}
 Suppose that $p >3$, $V$ is a finite-dimensional vector space over $\overline{\bbF}_p$,
 and that
 $G \leq \GL(V)$ is a finite subgroup that
  acts irreducibly on $V$. Let $\GP \leq G$ be the subgroup generated by
  $p$-elements of $G$. Let $d \geq 1$ be the maximal dimension of an
  irreducible $\GP$-submodule of $V$.
  Suppose that $p > d$ and that $\GP$ is a central product of quasisimple
  Chevalley groups in characteristic $p$.
  Then the set of $p'$-elements of $G$ spans $\ad V$ as an $\fpb$-vector space.
  \end{thm}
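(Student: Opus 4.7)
The plan is to adapt the strategy of \cite[Theorem 10]{GHTT} (which assumes the stronger bound $d \leq (p-3)/2$) to the wider range $d < p$, by substituting the refined Proposition~\ref{prop4} for its \cite{GHTT} analogue and drawing on the spanning results for $\SL_2(\fp)$-modules from Section~\ref{sec:slp}.

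First, apply Proposition~\ref{prop4} to $G$ to obtain an algebraic $\fp$-group $\cG = \cG^0 \rtimes H$ with $\cG^0$ semisimple and simply connected, $|H|$ coprime to $p$, and a semisimple representation $\Theta\colon \cG_{/\fpb} \to \GL(V)$ satisfying $\Theta(\cG(\fp)) = G$ and having all highest weights of $\cG^0$ on $V$ $p$-restricted. Let $\cM \subseteq \End V$ denote the $\fpb$-linear span of the $p'$-elements of $G$. Since $V$ is $G$-irreducible, Burnside's theorem gives $\fpb[G] = \End V$, so the goal becomes $\cM = \fpb[G]$. Observe that $\cM$ is a $G$-submodule of $\End V$ under the conjugation action, since conjugation preserves the order of an element.

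The central step is to show that $\cM \supseteq \langle \Theta(\cG^0(\fpb))\rangle_{\fpb}$. Fix an $\fp$-rational maximal torus $\cT$ of $\cG^0$; its $\fp$-points $\cT(\fp)$ are all $p'$-elements, so $\Theta(\cT(\fp)) \subseteq \cM$. The $p$-restrictedness hypothesis ensures (arguing as in the proof of Lemma~\ref{lm2}) that distinct $\cT$-weights on $V$ restrict to distinct characters of $\cT(\fp)$, so that $\langle \Theta(\cT(\fp))\rangle_{\fpb} = \langle \Theta(\cT(\fpb))\rangle_{\fpb} =: D$, the Cartan subalgebra of $\End V$ for the weight decomposition of $V$. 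Conjugating by $G$, $\cM$ contains $gDg^{-1}$ for every $g \in G$; combining this with Zariski density of regular semisimple elements in $\cG^0(\fpb)$, the fact that $\cG^0$ is the union of its maximal tori, and the closedness of $\cM$ as a finite-dimensional subspace, one concludes $\cM \supseteq \langle \Theta(\cG^0(\fpb))\rangle_{\fpb} \supseteq \Theta(\cG^0(\fp))$.

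Finally, to pass from $\Theta(\cG^0(\fp))$ to all of $\Theta(\cG(\fp)) = G$, note that $G = N \cdot \Theta(H)$ with $N := \Theta(\cG^0(\fp))$ normal and every element of $\Theta(H)$ a $p'$-element. For each $h \in \Theta(H)$ the coset $Nh$ contains the $p'$-element $h$, and the same weight/Zariski argument applied to each coset (using $g$-conjugates of $\cT$ for $g$ ranging over $G$, and Clifford theory for $V|_N$) shows $Nh \subseteq \cM$; hence $\cM = \fpb[G] = \End V$. The main obstacle is the central step: under the weaker bound $d < p$ one loses the clean weight separation of \cite{GHTT}, and one must work harder to see that the $G$-conjugates of $D$ really fill out $\langle \Theta(\cG^0(\fpb))\rangle_{\fpb}$; this is precisely where the weight combinatorics of Lemmas~\ref{lm1}--\ref{lm3} and especially the $\SL_2(\fp)$-spanning analysis of Section~\ref{sec:slp} become indispensable.
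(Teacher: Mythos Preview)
Your central claim is false, and the proposal is not a proof. You assert that $p$-restrictedness of the highest weights on $V$ forces distinct $\cT$-weights on $V$ to restrict to distinct characters of $\cT(\fp)$, citing Lemma~\ref{lm2}. But Lemma~\ref{lm2} concerns weights differing by elements of $pX(\cT)$ (relevant for the Frobenius kernel), not restriction to $\cT(\fp)$, which is governed by $X(\cT)/(F-1)X(\cT)$. Concretely, for a factor $\SL_2$ defined over $\fp$ (so $\cT(\fp)\cong\fp^\times$) acting via $L(a)$ with $(p-1)/2\le a\le p-2$, the weights $-a$ and $-a+(p-1)$ of $L(a)$ coincide on $\cT(\fp)$, so $\langle\Theta(\cT(\fp))\rangle\subsetneq\langle\Theta(\cT(\fpb))\rangle$ and your Zariski-density step never gets off the ground. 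You acknowledge exactly this failure in your final paragraph, which leaves the argument self-contradictory rather than complete: you have identified the obstacle but not removed it.

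The paper's proof is organized quite differently. Instead of trying to push a single torus argument through, it runs a case analysis on the almost simple factors of $\cG^0_{/\fpb}$. In each case the goal is to show that every $\cG^0(\fp)$-submodule of $\End V$ is already $\cG^0(\fpb)$-stable, after which one finishes via \cite[Lemma~8]{GHTT}. For factors not of type $\rA_1,\rA_2,\rA_3,\rB_2$ this comes from the coroot estimate of Lemma~\ref{lm1}(b); for $\SL_3,\SL_4,\Sp_4$ one checks directly that $\End V$ is semisimple with $p$-restricted highest weights; and the tensor-decomposable case uses Serre's semisimplicity theorem. The genuinely hard case is $\cG^0_{/\fpb}\cong\SL_2^n$ with $q=p$: here weight separation really fails and the paper instead proves directly, using Proposition~\ref{pr1a} and the head computations of Lemma~\ref{lm2a}, that the $p'$-elements of $\cG^0(\fp)$ already span the same subspace of $\End V$ as all of $\cG^0(\fpb)$. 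Section~\ref{sec:slp} is not a garnish on a torus argument --- it \emph{is} the argument in that case.
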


\begin{remark}
Theorem \ref{thm6} generalizes \cite[Theorem 9]{GHTT}. We take the
opportunity to point out a small gap in the last paragraph of the proof of
\cite[Theorem 9]{GHTT}. In its notation, it is implicitly assumed that (i) $r(T(\bbF_l)) \subset r(H)$, so
that the span of $r(H)$ equals the span of $r(T(\bar{\bbF}_l)H)$, and (ii) $H$ normalizes
the pair $(B,T)$. Both assumptions are satisfied provided that when we apply
\cite[Proposition 7]{GHTT} in the proof of \cite[Theorem 9]{GHTT} we take $r$,
$G = G^0 \rtimes H$, $B$, $T$, $\ldots$ as constructed in the proof of that proposition.
\end{remark}

 \begin{proof}
 Without loss of generality, $d>1$.
Let $\Theta$: $\cG_{/\fpb }\to \GL(V)$  be as in Proposition \ref{prop4}. Then $V = \bigoplus W_i$, where
$W_i$ is an irreducible $\cG^0_{/\fpb}$-subrepresentation with $p$-restricted highest weight.
Write $\cG^0_{/\fpb} \cong \cG_1 \times \ldots \times \cG_r$, where
$\cG_i$ is almost simple over $\fpb$.  Let $\cG^0 > \cB> \cT$
denote a Borel subgroup and a maximal torus, and let $\Phi$ denote the roots of
$\cG^0_{/\fpb}$ with respect to $\cT_{/\fpb}$.

\smallskip
(a) We consider the case where one of the $W_i$ (equivalently any) is tensor decomposable as a $\cG^0_{ / \fpb}$-representation.
Note that $W_i \cong X_{i1} \boxtimes \cdots \boxtimes X_{ir}$ where $X_{ij}$ is an irreducible
$\cG_j$-representation with $p$-restricted highest weight.
Since $\dim X_{ij} \le p-1$, its highest weight lies in the lowest alcove (by \cite{jantzenlow, Serre1}),
hence $X_{ij}$ is tensor indecomposable (as the highest
weight is in the lowest alcove, we are reduced to the characteristic $0$ case,
where this is well known). 
Hence our assumption implies that $X_{ij}\not\cong \bbone$ for at least
two values of $j$.
Hence $\dim X_{ij} \le \frac{p-1}{2}$ for all $i,j$.
Therefore $X_{ik}^* \otimes X_{jk}$ is a semisimple $\cG_k$-representation by \cite{Serre1}
so
$\End V$ is a semisimple $\cG^0_{ / \fpb}$-representation.
Moreover, all its highest weights are $p$-restricted: this follows exactly as
in Step 2 of the proof of \cite[Proposition 7]{GHTT} (use that $\dim X_{ij} \le \frac{p-1}{2}$).
Hence any $\cG^0 (\bbF_p)$-submodule of $\End V$ is a $\cG^0 (\fpb)$-submodule.

Furthermore, arguing as in Step 2 of the proof of \cite[Proposition 7]{GHTT} for
each $\cG_k$, we deduce that for all weights $\mu$ of the maximal torus
$\cT_{/\fpb}$ on $V$ we have
$|\ang{\mu,\al^\vee}| < \frac{p-1}{2}$ for all $\al\in\Phi$.
We conclude as in the last paragraph of the proof of  \cite[Theorem 9]{GHTT}.

\smallskip
(b) We consider the case where $\cG^0 _{/ \fpb}$ has no
factors of type $\rA_1$, $\rA_2$, $\rA_3$, $\rB_2$.
We claim that
$|\ang{\mu,\al^\vee}| < \frac{p-1}{4}$
for all weights $\mu$ of $\cT_{/\fpb}$ on $V$ and for all short coroots
$\al^\vee \in \Phi^\vee$.
It suffices to show that
$\ang{\la , \beta_0^\vee} < \frac{p-1}{4}$ for all highest weights
$\la$ of $\cT_{/\fpb}$ on $V$ and all
highest short coroots $\beta_0^\vee$
(one for each component of $\cG^0_{ / \fpb}$).
So it is enough to show that if $\cG'$ is an almost simple, simply connected
group over $\fpb$, not of type $\rA_1$, $\rA_2$, $\rA_3$, $\rB_2$, then
$\ang{\la, \beta_0^\vee} < \frac{p-1}{4}$ for all $p$-restricted weights $\la$ of $\cG'$ such that $\dim L(\la) < p$,
where $\beta_0^\vee$ is the highest short coroot of $\cG'$.
But this follows from Lemma \ref{lm1}(b) and Jantzen's inequality and this proves
the claim.

Since the short coroots span $X_* (\cT_{/\fpb}) \otimes \bbQ$ over $\bbQ$, \cite[Lemma 3]{GHTT}
plus the claim show that distinct weights of $\cT_{/\fpb}$ on $\End V$ (resp.\ $V$)
remain distinct on $\cT(\fp)$.
Then  \cite[Lemma 1.1]{Gapp}
 shows that any $\cG^0 (\fp)$-subrepresentation of $\End V$ is $\cG^0 (\fpb)$-stable,
 so we can conclude as in the last paragraph of the proof of Theorem 9 in \cite{GHTT}.

(c) If neither (a) nor (b) apply, then the $W_i$ are tensor indecomposable,
in particular the almost simple factors of $\cG^0 _{/ \fpb}$ are pairwise isomorphic.
(Write $\cG^0 \cong \prod \cH_i$,
where the subgroups $\cH_i$ are almost simple over $\fp$. Note that for each $i$,
$\cG^0(\fp)$ acts irreducibly on $W_i$ with all but one $\cH_j(\fp)$ acting trivially. As
$\cG(\fp)$ is irreducible on $V$ and by Proposition \ref{prop4}(iv), the subgroups $\cH_i(\fp)$ are pairwise isomorphic and, as $p > 3$,
so are the $\cH_i$.)
Hence $\cG^0_{ / \fpb} \cong \SL_2^n$ or $\SL_3^n$ or $\SL_4^n$ or $\Sp_4^n$, some $n\ge 1$.

\smallskip
(d) We consider the case where $\cG^0_{ / \fpb }\cong \SL_3^n$ or $\SL_4^n$ or $\Sp_4^n$.
We claim that for all weights $\mu$ of $\cT_{/ \fpb}$ on $V$ and for all $\alpha \in \Phi$,
\begin{equation}
|\ang{\mu,\al^\vee}|  < \frac12 (p-1).\label{eq:5}
\end{equation}
To see this, note that $|\ang{\mu, \alpha^\vee}| \le \ang{\lambda, \alpha_0^\vee}$ for some highest
weight $\lambda$ of $V$ and some highest coroot $\alpha_0^\vee$. Applying Lemma \ref{lm1}(a) to the
component $\Phi_j$ of $\Phi$ such that $\alpha_0^\vee \in \Phi_j^\vee$ and using Jantzen's
inequality we get
$$\ang{\lambda,\alpha_0^\vee} \le \frac12 \sum_{\Phi_{j,+}} \ang{\lambda,\alpha^\vee} < \frac12 (p-1).$$
By Lemma 3 in \cite{GHTT}, \eqref{eq:5} shows that
distinct weights of $\cT_{/\fpb}$ on $V$ remain distinct on
$\cT(\fp)$.
As usual, it thus suffices to show that $\End V$ is a semisimple $\cG^0_{ / \fpb}$-module
with $p$-restricted highest weights.
We can argue independently for each factor of $\cG^0_{ / \fpb}$ so it suffices to show that
if $X, Y$ are nontrivial irreducible $\cG'$-representations
which are conjugate by $\Aut(\cG')$
(with $\cG' = \SL_3$ or $\SL_4$
or $\Sp_4$) with $p$-restricted highest weights $\la$, $\la'$
of dimension less than $p$, then $X\otimes Y$
is semisimple with $p$-restricted highest weights.
By \cite{jantzenlow, Serre1}, $\la$ and $\la'$ lie in the lowest alcove, so $\ch L(\la)$ and
$\ch L(\la')$ are given by Weyl's character formula.

In the following, note that $\ang{\lambda,\beta_0^\vee} = \ang{\la',\beta_0^\vee}$.

If $\cG'\cong \SL_4$,
write $\la = r \varpi_1 + s \varpi_2 + t\varpi_3$, $r,s,t \ge 0$, where $\varpi_i$ is the $i$-th
fundamental weight.
Then
\begin{align*}
p-1 &\ge \dim L(\la) = \frac
{[(r+1)(s+1)(t+1)][(r+s+2)(s+t+2)](r+s+t+3)}{2\cdot 2\cdot 3} \\
&\ge \frac{(r+s+t+1)(r+s+t+2)(r+s+t+3)}{2\cdot 3}.
\end{align*}
If
$\ang{\la,\beta_0^\vee} = r+s+t \ge \frac{p-1}{4}$, then
\[
p-1 \ge \frac{\frac{p+3}{4}\cdot\frac{p+7}{4}\cdot\frac{p+11}{4}}{6} .
\]
Equivalently $(p-5) [(p +13)^2 - 292] \leq 0$, i.e.\ $p =5$.
In this case, equality holds throughout so $\la=\varpi_1$ or $\varpi_3$.
The maximal weight of $X\otimes Y$, namely $2\varpi_1$ or
$\varpi_1 + \varpi_3$ or $2\varpi_3$, lies in the closure of the lowest alcove.
Then $X\otimes Y$ is semisimple by the linkage principle (or just \cite{jantzen},
Proposition II.4.13) and it has $p$-restricted highest weights.
If $\ang{\la,\beta_0^\vee} < \frac{p-1}{4}$ the argument in (b) goes through
instead.

If $\cG'\cong \Sp_4$,
write $\la = r\varpi_1 + s\varpi_2$, $r, s \ge 0$ (type ${\rm B}_2$).
Then
\begin{align*}
p-1 &\ge \dim L(\la) = \frac{[(r+1)(s+1)](r+s+2)(2r+s+3)}{6} \\
&\ge \frac{(r+s+1)(r+s+2)(r+s+3)}{6} \ .
\end{align*}
If $\ang{\la , \beta_0^\vee} = r+s \ge \frac{p-1}{4}$, then $p=5$ as above and
$\la = \varpi_2$.
Again, $X\otimes Y$ has maximal weight $2\varpi_2$ lying in the closure
of the lowest alcove, hence $X\otimes Y$ is semisimple with $p$-restricted highest
weights.
If $\ang{\la,\beta_0^\vee} < \frac{p-1}{4}$ we are done as in (b).

If $\cG'\cong \SL_3$,
write $\la = r\varpi_1 + s\varpi_2$, $r,s \ge 0$.
If $r+s \ge \frac{p-1}{2}$, then
\begin{align*}
p-1 &\ge \dim L(\la) = \frac{[(r+1)(s+1)](r+s+2)}{2} \\
&\ge \frac{(r+s+1)(r+s+2)}{2} \ge \frac{\frac{p+1}{2}\cdot\frac{p+3}{2}}{2}\ .
\end{align*}
Equivalently $(p-2)^2 + 7 \le 0$, which is impossible.
Hence $r+s \le \frac{p-3}{2}$, which implies that the maximal weight of $X\otimes Y$
lies in the lowest alcove.
So $X\otimes Y$ is semisimple with $p$-restricted highest weights.

\smallskip
(e) We consider the case where $\cG^0_{/\fpb} \cong \SL_2^n$ and each $W_i$ is
tensor indecomposable as a $\cG^0_{/\fpb}$-representation.
Here, $\cG^0 (\fp) \cong \SL_2 (\fq )^m$, where $[\fq : \fp]\cdot m = n$.
Also, $V$ is irreducible, each $W_i$ is tensor indecomposable, and $\SL_2$ has no
outer automorphism. It follows that $V\cong \left[ \bigoplus\limits_{i=1}^\ell V_i\right]^{\oplus k}$
 as $\cG^0_{/\fpb}$-representations, where each $V_i$ is of the form
 $\bbone \boxtimes \cdots \boxtimes V_0
 \boxtimes \cdots \boxtimes \bbone$ (precisely one factor is $V_0$), the $V_i$ are
 pairwise non-isomorphic, and $V_0$ is an irreducible $\SL_2$-representation
 such that $1 < \dim V_0 < p$ with $p$-restricted highest weight.

 \smallskip
 (e1) We claim that the span of the $p'$-elements of $\cG^0 (\fp)$ in $\End V$ contains the span of $\mathcal T(\fpb)$
 in $\End V$.

 If $ q>p$, note from the description of $V_i$ above that distinct weights of $\cT_{/\fpb}$
 on $V$ remain distinct on $\cT(\fp)$.
 Hence the span of $\cT(\fp)$ in $\End V$ equals the span of $\cT(\fpb)$ in $\End V$.

 If $q=p$, we will show that the $p'$-elements of $\cG^0(\fp)$ span the same subspace
 of $\End V$ as all of $\cG^0(\fpb)$. First, from Proposition  \ref{prop4}(iv) we deduce that
$\ell = n$.
 As the $V_i$ are distinct and irreducible $\cG^0_{/\fpb}$-representations, by the
 Artin-Wedderburn theorem
 we need to show that the $p'$-elements in $\cG^0(\fp)$ span $\prod\limits_{i=1}^n
 \End (V_i)$, or equivalently its $\cG^0 (\fp)$-head.
 (Note that the span of the $p'$-elements is $\cG^0(\fp)$-stable.)
 By Lemma~\ref{lm2a} we see that
 the $n$ representations $\hd_{\cG^0(\fp)} \big( \End (V_i)\big)$
 have no $\cG^0(\fp)$-irreducible constituent in common except for the trivial
 direct summand of scalar matrices in $\End (V_i)$.
 By Proposition \ref{pr1a} the image of the $p'$-elements span $\End (V_i)$ for any $i$.
 Hence it suffices to show that the image of the $p'$-elements under the map
 \begin{align*}
 \cG^0 (\fp) \ &\to \ \fpb^n \\
 g\ &\mapsto \ \big ( \tr (g |_{V_i})\big)_{i=1}^n
 \end{align*}
 spans $\bbF_p^n$.
 Note that as $1 < \dim V_0 < p$ the split torus $\left(\begin{smallmatrix} * \\ & *\end{smallmatrix}\right)
 < \SL_2 (\fp)$ has a non-trivial eigenvalue $\chi$ on $V_0$
 with multiplicity $1$ or $2$.
 Given $1\le i \le n$ there exists an element in $\fp [\cT(\fp)]$ that projects
 onto the $1\otimes \cdots \otimes \chi \otimes\cdots\otimes 1$ eigenspace in any
 $\cT(\fp)$-representation, so as $p > 2$ it has non-zero trace on $V_i$ but is zero
 on $\bigoplus\limits_{j\ne i} V_j$.
 This proves the claim.

 \smallskip
 (e2) We claim that $\hd_{\cG^0_{/\fpb}}(\End V) = \hd_{\cG^0 (\fp)} (\End V)$
 and moreover any highest weight of this representation is $p$-restricted.

 If $d \le \frac{p+1}{2}$, then $\End V$ is a semisimple $\cG^0_{/\fpb}$-module
 by \cite{Serre1} and clearly any highest weight of $\End V$ is $p$-restricted.
The claim follows.

 If $d\ge \frac{p+3}{2}$, note that
 $\hd$ is compatible with direct sums, so we can consider
 each $V_{i}^* \otimes V_j$ separately.
  If $i \ne j$, then $V_i^* \otimes V_j$ is irreducible with $p$-restricted highest weight.
 If $i=j$, from Lemma~\ref{lm2a} we get
 \[
 \hd_{\SL_2} (V_0^* \otimes V_0) \cong L(0) \oplus L(2)\oplus \cdots \oplus
 L(p-1) .
 \]
 In particular, any highest weight of $\hd_{\cG^0_{/\fpb}} (V_i^* \otimes V_i)$ is
 $p$-restricted.
To see
$$\hd_{\cG^0_{/\fpb}} (V_i^*\otimes V_i) = \hd_{\cG^0(\fp)} (V_i^* \otimes V_i),$$
by Lemma \ref{lm2a} it is equivalent
(after a Frobenius twist) to show that
$$\hd_{\SL_2} (T(2p-2-2j)) =\hd_{\SL_2(\fq)} (T(2p-2-2j))$$
for $0 \le j \le (p-3)/2$. If
$q = p$ this follows from Lemma \ref{lm2a}, as $d < p$. This in turn implies the statement for $q > p$, as any irreducible $\SL_2$-constituent of $T(2p-2-2j)$ restricts irreducibly to $\SL_2(\fq)$ if
$q > p$ and semisimply to $\SL_2(\fp)$. This proves the claim.

\smallskip
(e3) Let $\cM$ denote the span of the image of
the $p'$-elements of $\cG(\fp)$ in $\hd_{\cG^0(\fp)} (\End(V))$. Note that $\cM$
is a $\cG^0(\fp)$-subrepresentation. To prove weak adequacy, it suffices to show
that $\cM = \hd_{\cG^0(\fp)} (\End(V))$. By (e2) we have that
$\hd_{\cG^0_{/\fpb}} (\End(V)) = \hd_{\cG^0(\fp)} (\End(V))$ and distinct irreducible
$\cG^0_{/\fpb}$-subrepresentations of
$\hd_{\cG^0_{/\fpb}} (\End(V))$ restrict to distinct
irreducible $\cG^0(\fp)$-representations. Hence, any
$\cG^0(\fp)$-subrep\-re\-sentation of
$\hd_{\cG^0_{/\fpb}} (\End(V))$ is $\cG^0(\fpb)$-stable. By (e1) we know that
$\cM$ contains the span of the image of $\cT(\fpb)\cdot H$. Therefore, by Lemma 8 in \cite{GHTT},
$\cM$ contains the span of the image of $\cG(\fpb)$. But the latter span equals
$\hd_{\cG^0_{/\fpb}} (\End(V))$ by the Artin-Wedderburn theorem.
  \end{proof}

\section{Weak adequacy in cross characteristic}\label{sec:cross}
Recall that, given a finite dimensional absolutely irreducible
representation $\Phi~:~G \to \GL(V)$, the pair $(G,V)$ is called {\it weakly
adequate} if $\End(V)$ equals
$$\cM := \langle \Phi(g) \in \Phi(G) \mid \Phi(g) \mbox{ semisimple}\rangle_k.$$
Assume $k = \bar{k}$ has characteristic $p$. First we recall:

\begin{lem}\label{p-sol}{\rm \cite[Lemma 2.3]{G2}}
If $G < \GL(V)$ is $p$-solvable and $p\nmid \dim(V)$, then $(G,V)$ is weakly adequate.
\end{lem}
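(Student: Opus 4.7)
The plan is to proceed by induction on $|G|$, using Clifford theory with respect to the normal $p'$-subgroup $N := \bfO_{p'}(G)$ together with the Hall-subgroup structure available in a $p$-solvable group.

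I would first make two standard reductions. Passing to $G/\ker\Phi$ (still $p$-solvable, still acting on $V$) I may assume $\Phi$ is faithful; then $\Phi(g)$ is semisimple iff $g$ is a $p'$-element of $G$, so $\cM$ is precisely the $k$-span of $\Phi(G_{p'})$. Since $\bfO_p(G)$ is a normal $p$-subgroup, it acts trivially on the irreducible module $V$ and hence is trivial. By $p$-solvability, if $G \neq 1$ this forces $N \neq 1$. Apply Clifford theory to $V_N = e\bigoplus_{i=1}^t W_i$, let $T \le G$ be the stabilizer of $W_1$, and let $U$ be the corresponding Clifford summand, so that $V \cong \Ind_T^G U$ with $p \nmid \dim U$. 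If $T < G$ one applies the inductive hypothesis to $(T,U)$; the base case $G = N$ is a $p'$-group, handled directly by Burnside/Artin--Wedderburn.

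The spanning claim then reduces to showing that $\End(V)$ decomposes as a direct sum of ``matrix blocks'' indexed by pairs $(i,j)$ under the isotypic decomposition of $V_N$, and that each such block is spanned by images of $p'$-elements of $G$. Diagonal blocks follow from the inductive hypothesis applied to $T$, combined with the fact that $k\Phi(N) = \bigoplus_i \End_k(W_i)$ by Artin--Wedderburn for the $p'$-group $N$. Off-diagonal blocks require producing elements of $G$ of $p'$-order which realize the permutation $W_i \leftrightarrow W_j$ of isotypic components. In the homogeneous case $T = G$ one instead invokes Fong's theory for $p$-solvable groups, writing $V \cong W \otimes U$ where $W$ is a projective extension of the irreducible $N$-module to $G$ and $U$ pulls back from a smaller $p$-solvable quotient of $G/N$, and then combines Burnside on $N$ with induction on~$G/N$.

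The main obstacle is producing enough $p'$-coset representatives for $T$ in $G$ to span the off-diagonal blocks; this is exactly where $p$-solvability is needed, as one wants to invoke a Schur--Zassenhaus-type splitting (applied to an appropriate normal series running through $N$ and $T$) to lift the transitive action of $G/T$ on the $W_i$ to elements of $p'$-order in $G$. Once these representatives are in hand, the block decomposition of $\End(V)$ combined with Burnside on each factor finishes the argument; the hypothesis $p \nmid \dim V$ ensures the dimensional compatibility needed at each inductive step (it propagates to $\dim U$ and to dimensions of the relevant smaller $p$-solvable quotients).
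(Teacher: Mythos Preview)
The paper gives no proof of this lemma; it simply cites \cite[Lemma 2.3]{G2}. So your proposal should be judged on its own merits and against the standard argument.

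Your inductive framework (reduce to $\bfO_p(G)=1$, take $N=\bfO_{p'}(G)$, apply Clifford theory, treat the homogeneous case via Fong's tensor decomposition) is exactly the right skeleton. But the off-diagonal step, as you have written it, has a real gap. You assert that once you have $p'$-coset representatives $g_j$ for $T$ in $G$ (which do exist: a Hall $p'$-subgroup $H$ of $G$ containing a Hall $p'$-subgroup $H_T$ of $T$ satisfies $G=HT$), then ``the block decomposition of $\End(V)$ combined with Burnside on each factor finishes the argument.'' It does not. The span $\cM$ is an $A$-bimodule for $A=\Phi(kN)$, so it decomposes into blocks $\cM_{ij}=e_i\cM e_j$, and one checks $\cM_{11}=\langle\Phi_U(t):t\in T_{p'}\rangle=\End(U)$ by induction. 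But $\cM$ is \emph{not} closed under multiplication by arbitrary elements of $\cM$, so from $\End(V_1)\subseteq\cM$ and $\Phi(g_j)\in\cM$ you cannot conclude $\Phi(g_j)\cdot\End(V_1)\subseteq\cM$. Concretely, $\cM_{j1}=\langle\Phi_U(t):t\in T,\ g_jt\in G_{p'}\rangle$, and the set $\{t\in T:g_jt\text{ is }p'\}$ need not contain $T_{p'}$; Burnside on $N$ alone only contributes $\End(W_1)\otimes 1_e$, which is too small when $e>1$. Your sketch of the homogeneous case has a parallel omission: you do not verify that the central extension $\hat G$ of $G/N$ is strictly smaller than $G$, which is needed for the induction (one uses that the factor set has order dividing $\dim W\le\sqrt{|N|}<|N|$).

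The clean way to close both gaps, and almost certainly what \cite{G2} does, is to strengthen the inductive statement to: \emph{$V|_H$ is absolutely irreducible for some Hall $p'$-subgroup $H$ of $G$}. This is proved by the very same Clifford/Fong induction. In the inhomogeneous case, with $U|_{H_T}$ irreducible by induction and $G=HT$, Mackey gives $V|_H=\Ind_{H_T}^H(U|_{H_T})$, and one checks $\End_H(V|_H)=k$ directly: any $H$-endomorphism lies in $\End_N(V)=\prod_i M_e(k)$, is determined by its component at $V_1$ since $H$ permutes the isotypic blocks transitively, and that component lies in $\End_{H_T}(U)=k$. In the homogeneous case the same $\End$-calculation shows $V|_H$ is irreducible once $U'|_{\hat H}$ is. Weak adequacy is then immediate from Artin--Wedderburn applied to the $p'$-group $H$.
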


In general, a key tool to prove weak adequacy is provided by the following criterion:

\begin{lem}\label{key}
Let $V$ be a finite dimensional vector space over $k$ and $G \leq \GL(V)$ a finite
irreducible subgroup. Write $V|_{G^+} = e\sum^t_{i=1}W_i$, where the
$G^+$-modules $W_i$ are irreducible and pairwise non-isomorphic. Suppose
there is a subgroup $Q \leq \GP$ with the following properties:


{\rm (i)} $\{Q^g \mid g \in G\} = \{Q^x \mid x \in \GP\}$; and

{\rm (ii)} The $Q$-modules $W_i$ are irreducible and pairwise non-isomorphic.\\
Then $\bfN_G(Q)$ is an irreducible subgroup of $\GL(V)$. If, furthermore,

{\rm (iii)} $\bfN_{\GP}(Q)$ is a $p'$-group,\\
then $(G,V)$ is weakly adequate.
\end{lem}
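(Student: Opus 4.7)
The plan is to establish the two assertions in sequence. First, assuming only (i) and (ii), a Frattini-style argument based on (i) yields $G = N \cdot \GP$ where $N := \bfN_G(Q)$: given $g \in G$, condition (i) provides $x \in \GP$ with $Q^g = Q^x$, hence $gx^{-1} \in N$. By Clifford's theorem applied to $\GP \lhd G$, the $\GP$-isotypic components of $V$ are the $eW_i$, permuted transitively by $G$ and fixed individually by $\GP$; consequently $N$ acts transitively on $\{eW_1,\ldots,eW_t\}$ as well.

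To see that $V|_N$ is irreducible, let $U \subseteq V$ be a nonzero $N$-invariant subspace; the goal is to show $U = V$. Since $Q \leq N$, the subspace $U$ is $Q$-stable, so $U = \bigoplus_i U_i$ with $U_i := U \cap eW_i$. The crucial observation is that by (ii), $W_i|_Q$ is irreducible, so the $Q$-submodules of $eW_i \cong W_i \otimes k^e$ are precisely the subspaces $W_i \otimes L$ for $L \leq k^e$; these coincide with the $\GP$-submodules of $eW_i$, so each $U_i$ is automatically $\GP$-stable. Now the $G$-irreducibility of $V$, combined with $G = N\GP$ and the $N$-invariance of $U$, yields $V = \sum_{g \in G} gU = \sum_{g \in \GP} gU$. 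Since every $g \in \GP$ preserves each $eW_i$, one computes $\bigl(\sum_g gU\bigr) \cap eW_1 = \sum_{g \in \GP} gU_1 = \GP \cdot U_1$, which must equal $eW_1$; but $U_1$ is already $\GP$-stable, so $U_1 = eW_1$. The $N$-transitivity on $\{eW_i\}$ then forces $U_i = eW_i$ for every $i$, giving $U = V$.

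For the weak adequacy conclusion, assume (iii). Writing $\bfN_0 := \bfN_{\GP}(Q) = N \cap \GP$, we have $N/\bfN_0 \cong N\GP/\GP = G/\GP$, which is a $p'$-group by the definition $\GP = \bfO^{p'}(G)$. Combined with (iii), this gives that $N$ itself is a $p'$-group. Thus every element of $N$ acts semisimply on $V$; and since $V|_N$ is irreducible by the previous step, Burnside's density theorem (or Artin--Wedderburn) yields $\langle \rho(N) \rangle_k = \EE(V)$. Therefore $\cM \supseteq \langle \rho(N) \rangle_k = \EE(V)$, proving that $(G,V)$ is weakly adequate. The main technical content lies in the observation that hypothesis (ii) forces $Q$-submodules of each $eW_i$ to coincide with its $\GP$-submodules; this is what makes the irreducibility argument go through cleanly, bypassing any Mackey-theoretic bookkeeping.
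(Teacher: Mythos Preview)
Your proof is correct and takes a genuinely more elementary route than the paper's. The paper proceeds via Clifford theory with projective representations: it reduces to showing that $N_1 := N \cap \Stab_G(V_1)$ is irreducible on $V_1 = eW_1$, and then factorizes the $G_1$-representation on $V_1$ as $\Phi = \Theta \otimes \Psi_1$ (with $\Theta$ an $e$-dimensional projective representation of $G_1/\GP$ and $\Psi_1$ a projective extension of the $\GP$-action on $W_1$), invoking results from Navarro's book to conclude that $\Phi|_{N_1}$ is irreducible from the irreducibility of $\Theta$ and of $W_1|_{N^+}$. Your argument avoids all of this by observing that since $W_i|_Q$ is absolutely irreducible, the lattice of $Q$-submodules of $eW_i$ is literally the same as the lattice of $\GP$-submodules, so any $Q$-stable (hence $N$-stable) subspace of $V$ is automatically $\GP$-stable on each isotypic block, after which $G = \GP N$ and $G$-irreducibility finish the job.

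Your route is cleaner here; the trade-off is that the paper's projective-representation setup is reused verbatim in the subsequent Lemmas~\ref{key2} and~\ref{key3}, where $W_i|_Q$ is no longer irreducible and your key observation no longer applies. So the paper's approach is an investment that pays off in the generalizations, while yours is a self-contained shortcut for this particular statement.
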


\begin{proof}
The condition (i) is equivalent to that $G = N\GP$, where $N := \bfN_G(Q)$. Since
$G/\GP$ is a $p'$-group, this implies that $N$ is a $p'$-group if $\bfN_\GP(Q)$ is
a $p'$-group. By the Artin-Wedderburn theorem, it therefore suffices to show that
$N$ is irreducible on $V$.

Set $V_i = eW_i$ so that $V = \oplus^m_{i=1}V_i$, $G_1 := I_G(W_1) = \Stab_G(V_1)$ the
inertia group of the $\GP$-module $W_1$ in $G$, and $N_1 :=  N \cap G_1$. Then we
have that $G_1 = N_1\GP$ and $[N:N_1] = [G:G_1] = t$. Trivially, the condition (ii)
implies that the $N^+$-modules $W_i$, $1 \leq i \leq t$,
are irreducible and pairwise non-isomorphic, where we set
$N^+ := \bfN_{\GP}(Q)$. It now follows that $N_1 = I_N(W_1)$, the inertia group of the
$N^+$-module $W_1$ in $N$; moreover, $N$ acts transitively on $\{V_1, \ldots ,V_t\}$, and
$V|_{N} = \Ind^N_{N_1}(V_1|_{N_1})$.
By the Clifford correspondence, it suffices to show that the $N_1$-module $V_1$ is
irreducible.

Let $\Phi$ denote the corresponding representation of $G_1$ on $V_1$
and let $\Psi$ denote the corresponding representation of $\GP$ on $W_1$.  By
\cite[Theorem 8.14]{N}, there is a projective representation $\Psi_1$ of $G_1$ such that
$$\Psi_1(n) = \Psi(n), ~~\Psi_1(xn) =  \Psi_1(x)\Psi_1(n), ~~
      \Psi_1(nx) = \Psi_1(n)\Psi_1(x)$$
for all $n \in \GP$ and $x \in G_1$. Let $\alpha$ denote the factor set on $G_1/\GP$ induced
by $\Psi_1$. By \cite[Theorem 8.16]{N}, there is an $e$-dimensional
projective representation $\Theta$ of $G_1/\GP$ with factor set $\alpha^{-1}$ such
that $\Phi(g) = \Theta(g) \otimes \Psi_1(g)$ for all $g \in G_1$. (Here and in what follows, we will write $\Theta(g)$ instead of $\Theta(g\GP)$). Since $\Phi$ is irreducible,
$\Theta$ is irreducible.

Observe that $N_1/N^+$ is canonically isomorphic to $G_1/\GP$. Restricting to
$N_1$, we then have that $\Phi(g) = \Theta(g) \otimes \Psi_1(g)$ for all
$g \in N_1$, $\Psi_1(n) = \Psi(n)$ for all $n \in N^+$, $(\Psi_1)_{N_1}$ is a projective representation of $N_1$ with factor set $\alpha$, and $\Theta_{N_1/N^+}$ is a
projective representation of $N_1/N^+$ with factor set $\alpha^{-1}$. Furthermore,
$\Theta_{N_1/N^+}$ is irreducible.  It follows by \cite[Theorem 8.18]{N} that
$\Phi_{N_1}$ is irreducible, as stated.
\end{proof}

In certain cases we will also need the following modification of Lemma \ref{key}:

\begin{lem}\label{key2}
Let $V$ be a finite dimensional vector space over $k$ and $G \leq \GL(V)$ a finite
irreducible subgroup. Write $V|_{G^+} = e\sum^t_{i=1}W_i$, where the
$G^+$-modules $W_i$ are irreducible and pairwise non-isomorphic. Suppose
there is a subgroup $Q \leq \GP$ with the following properties:

{\rm (i)} $\{Q^g \mid g \in G\} = \{Q^x \mid x \in \GP\}$; and

{\rm (ii)} $W_i \cong A_i \oplus B_i$ as $Q$-modules, where
all the $2t$ $Q$-modules $A_i$ and $B_j$ are irreducible and pairwise non-isomorphic.\\
If $\{A_1, \ldots ,A_t\}$ and $\{B_1, \ldots ,B_t\}$ are two disjoint $N$-orbits on
$\IBr(Q)$ for $N := \bfN_G(Q)$, then we have that $V_N \cong A \oplus B$ as
$N$-modules, where $A$ and $B$ are irreducible,
$A_Q \cong e(\oplus^t_{i=1}A_i)$ and $B_Q \cong e(\oplus^t_{i=1}B_i)$. On the other hand, if $\{A_1, B_1, \ldots ,A_t,B_t\}$ forms a single $N$-orbit, then $N$ is irreducible
on $V$.
\end{lem}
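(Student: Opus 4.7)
The plan is to follow closely the proof of Lemma~\ref{key} while incorporating the $Q$-isotypic refinement from hypothesis~(ii). The key new observation is that the two cases of the statement are distinguished by whether the inertia group $N_1:=N\cap I_G(W_1)$ fixes or swaps the pair $\{A_1,B_1\}\subset\IBr(Q)$.

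As in Lemma~\ref{key}, hypothesis~(i) implies $G=N\GP$ by a Frattini argument, so $N$ acts transitively on $\{W_1,\ldots,W_t\}$ with stabilizer $N_1$ and $[N:N_1]=t$; counting orders gives $N_1\GP=G_1:=I_G(W_1)$ and the canonical isomorphism $N_1/N^+\cong G_1/\GP$, where $N^+:=\bfN_{\GP}(Q)$. Set $V_i:=eW_i$, $V_i^A:=eA_i$, $V_i^B:=eB_i$, so that $V_Q=\bigoplus_i(V_i^A\oplus V_i^B)$ is the $Q$-isotypic decomposition, and $N$ permutes these $2t$ components. Since any $n\in N$ fixing $V_i^A$ must also fix $W_i$ (the $A_i$'s and $B_j$'s being pairwise non-isomorphic), a comparison of orbit sizes yields: in the first case, $\Stab_N(V_1^A)=N_1$ and $N_1$ fixes each of $A_1,B_1$ in $\IBr(Q)$; in the second case, $\Stab_N(V_1^A)$ has index $2$ in $N_1$ and some element of $N_1$ swaps $A_1$ with $B_1$.

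Recall from the proof of Lemma~\ref{key} the projective decomposition $V_1\cong E\otimes W_1$ with $\Phi(g)=\Theta(g)\otimes\Psi_1(g)$ for $g\in G_1$, where $\Psi_1$ has factor set $\alpha$ and $\Theta$ is an irreducible projective representation of $G_1/\GP$ on $E$ of dimension $e$ with inverse factor set. Via $N_1/N^+\cong G_1/\GP$, the restriction $\Theta|_{N_1}$ remains irreducible. In the first case, $N_1$ preserves the splitting $W_1=A_1\oplus B_1$, so $V_1^A\cong E\otimes A_1$ is $N_1$-stable; because $A_1$ is $Q$-irreducible and $Q\subseteq N_1$, $\Psi_1|_{N_1}$ restricted to $A_1$ is projectively irreducible. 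By \cite[Theorem 8.18]{N}, $V_1^A$ (and similarly $V_1^B$) is irreducible as $N_1$-module, and applying the Clifford correspondence to the $N$-orbits $\{V_i^A\}$ and $\{V_i^B\}$ yields $V_N\cong A\oplus B$ with $A=\Ind^N_{N_1}V_1^A$ and $B=\Ind^N_{N_1}V_1^B$ irreducible. They are non-isomorphic because their $Q$-constituents are disjoint, and the asserted shapes of $A_Q$ and $B_Q$ follow directly from Frobenius reciprocity.

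In the second case, any $N_1$-invariant subspace of $W_1$ is also $Q$-invariant, hence lies in $\{0,A_1,B_1,W_1\}$; the swap rules out $A_1$ and $B_1$, so $\Psi_1|_{N_1}$ is projectively irreducible on $W_1$. By \cite[Theorem 8.18]{N}, $V_1$ is irreducible as $N_1$-module, and Clifford induction for the $\GP$-isotypic decomposition of $V$ then gives $V_N=\Ind^N_{N_1}V_1$ irreducible. The principal bookkeeping challenge, already implicit in Lemma~\ref{key}, is securing the canonical identification $N_1/N^+\cong G_1/\GP$ so that $\Theta|_{N_1}$ stays irreducible and \cite[Theorem 8.18]{N} applies uniformly in both cases.
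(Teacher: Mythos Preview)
Your approach is essentially the paper's, and the two-orbit case is fine (your deduction that $\Psi_1(n)$ preserves the subspace $A_1$ from the fact that $N_1$ fixes the class $[A_1]\in\IBr(Q)$ is exactly what the paper's explicit matrix computation establishes).

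There is, however, a gap in the single-orbit case. After showing $V_1$ is $N_1$-irreducible, you write ``Clifford induction for the $\GP$-isotypic decomposition of $V$ then gives $V_N=\Ind^N_{N_1}V_1$ irreducible.'' But this cannot be copied verbatim from the proof of Lemma~\ref{key}. There the Clifford correspondence was applied with the normal subgroup $N^+=\bfN_{\GP}(Q)\lhd N$, using that each $W_i$ is an \emph{irreducible} $N^+$-module (indeed $Q$-irreducible) so that $N_1=I_N(W_1)$; here $(W_i)_Q=A_i\oplus B_i$ is reducible and $W_i$ need not be $N^+$-irreducible, so $N_1$ is not an inertia group in the required sense, and irreducibility of $\Ind^N_{N_1}V_1$ is not automatic. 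Nor is $\GP$ a subgroup of $N$, so ``the $\GP$-isotypic decomposition'' is not a Clifford decomposition for $N$.

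The paper closes this gap by passing once more to the $Q$-level: since $V_1$ is $N_1$-irreducible and $[N_1:N_1^\circ]=2$ with $N_1^\circ:=I_N(A_1)=\Stab_N(V_1^A)$, Clifford theory for $Q\lhd N_1$ shows that $V_1^A$ is $N_1^\circ$-irreducible; then the Clifford correspondence for $Q\lhd N$ (with inertia group $N_1^\circ$) yields that $V$ is $N$-irreducible. Equivalently, you can argue directly: any nonzero $N$-submodule $U$ meets some $Q$-isotypic component, hence all of them by transitivity, so $0\ne U\cap V_1$ is $N_1$-stable and equals $V_1$, forcing $U=V$. Either way, one extra sentence invoking $Q\lhd N$ is needed.
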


\begin{proof}
Again, the condition (i) implies that $G = N\GP$. Adopt the notations $G_1$,
$N_1$, $N^+$, $\Phi$, $\Psi$, $\Psi_1$, $\alpha$ of the proof of Lemma \ref{key}. As shown there, there is an irreducible $e$-dimensional projective representation
$\Theta$ of $G_1/\GP$ with factor set $\alpha^{-1}$ such that
$\Phi(g) = \Theta(g) \otimes \Psi_1(g)$ for all $g \in G_1$. Also, $N_1/N^+$ is
canonically isomorphic to $G_1/\GP$. According to (ii), $(W_i)_Q \cong A_i \oplus B_i$, with $A_i \not\cong B_i$. Hence we can decompose $(V_i)_Q = C_i \oplus D_i$, where $(C_i)_Q \cong eA_i$ and
$(D_i)_Q \cong eB_i$, and define $A := \oplus^t_{i=1}C_i$, $B := \oplus^t_{i=1}D_i$.

\smallskip
(a) First we consider the case where $\{A_1, \ldots ,A_t\}$ and $\{B_1, \ldots ,B_t\}$
are two disjoint $N$-orbits. Then, for any $x \in N$, every composition factor of the $Q$-module $xA$ is of the form $A_j$ for some $j$, and every composition factor
of $B$ is of the form $B_{j'}$ for some $j'$. Hence we conclude that $xA = A$, and similarly
$xB = B$. Thus $A$ and $B$ are $N$-modules. Certainly, $N$ permutes
$C_1, \ldots, C_t$
transitively and $N_1$ fixes $C_1$. But $t = [N:N_1]$, hence $N_1 = \Stab_N(C_1)$
and $A = \Ind^N_{N_1}(C_1)$. Since $(C_i)_Q = eA_i$ and the $Q$-modules $A_i$ are
pairwise non-isomorphic, we also see that $N_1 = I_N(A_1)$. Similarly,
$N_1 = I_N(B_1)$ and $B = \Ind^N_{N_1}(D_1)$. Therefore, by the Clifford
correspondence, it suffices to prove that the $N_1$-modules $C_1$ and $D_1$
are irreducible.

Recall the decompositions $(W_1)_Q = A_1 \oplus B_1$
and $\Phi(g) = \Theta(g) \otimes \Psi_1(g)$ for all $g \in G_1$.
Without loss, we may assume
that the representation $\Psi$ of $\GP$ on $W_1$ is written with respect to some basis
$(v_1, \ldots ,v_{a+b})$ which is the union of a basis
$(v_1, \ldots ,v_a)$ of $A_1$ and a basis $(v_{a+1}, \ldots ,v_{a+b})$
of $B_1$. Since $\Phi(g) = \Theta(g) \otimes \Psi_1(g)$ for all $g \in G_1$ acting
on $V_1$, we can also choose a basis
$$(u_i \otimes v_j \mid 1 \leq i \leq e, 1 \leq j \leq a+b)$$
of $V_1$ such that $\Theta(g)$ is written with respect to $(u_1, \ldots ,u_e)$ and
$\Psi_1(g)$ is written with respect to
$(v_1, \ldots, v_{a+b})$. For any $x \in N_1$, writing $\Theta(x) = (\theta_{i'i})$ and $\Psi_1(x) = (\psi_{j'j})$ we then have that
$$\Phi(x)(u_i \otimes v_j) = \sum_{i',j'}\theta_{i'i}\psi_{j'j}u_{i'} \otimes v_{j'}.$$
Recall we are also assuming that the $Q$-modules $A_1$ and $B_1$ are not $N$-conjugate. Therefore, $\Phi(x)$ fixes each of
$$C_1 = \langle u_i \otimes v_j \mid 1 \leq i \leq e,1 \leq j \leq a \rangle_k, ~
    D_1 = \langle u_i \otimes v_j \mid 1 \leq i \leq e,a+1 \leq j \leq a+b \rangle_k.$$
In particular, $\theta_{i'i}\psi_{j'j} = 0$ whenever $j' > a$ and $j \leq a$. Now
if $\psi_{j'j} \neq 0$  for some $j \leq a$ and some $j' > a$, we must have $\theta_{i'i} = 0$ for all $i,i'$, i.e.\ $\Theta(x) = 0$, a contradiction.
Similarly, $\psi_{j'j} = 0$ whenever $j > a$ and $j' \leq a$.
Therefore, we can write
\begin{equation}\label{proj2}
  \Psi_1(x) = \diag(\Psi_{1A}(x),\Psi_{1B}(x))
\end{equation}
in the chosen basis $(v_1, \ldots ,v_{a+b})$. It also follows that $\Psi(y)$ fixes
each of $A_1$ and $B_1$ for all $y \in N^+$, i.e.\ $A_1$ and $B_1$ are irreducible
$N^+$-modules.

Now, for any $x,y \in N_1$, $\Psi_1(x)\Psi_1(y) = \alpha(x,y)\Psi_1(xy)$. Together with
(\ref{proj2}) this implies that
$$\Psi_{1A}(x)\Psi_{1A}(y) = \alpha(x,y)\Psi_{1A}(xy),~~
    \Psi_{1B}(x)\Psi_{1B}(y) = \alpha(x,y)\Psi_{1B}(xy),$$
i.e.\ both $\Psi_{1A}$ and $\Psi_{1B}$ are projective representations of $N_1$ with
factor set $\alpha$. Since $\Psi_1(x) = \Psi(x)$ for all $x \in N^+$ and
(\ref{proj2}) certainly holds for $x \in N^+$, we also see that
$\Psi_{1A}$ extends the representation of $N^+$ on $A_1$, and similarly
$\Psi_{1B}$ extends the representation of $N^+$ on $B_1$. By \cite[Theorem 8.18]{N},
the formulae
$$\Phi_A(g) := \Theta(g)\otimes \Psi_{1A}(g),~~\Phi_B(g) := \Theta(g)\otimes \Psi_{1B}(g)$$
for $g \in N_1$ define irreducible (linear) representations of $N_1$ of dimension
$ea$ and $eb$, (acting on $C_1$ and $D_1$, respectively),
and so we are done.

\smallskip
(b) Next we consider the case $N$ acts transitively on $\{A_1, \ldots ,B_t\}$. In this case,
$N_1^\circ := I_N(A_1)$ has index $2t$ in $N$ and is contained in $N_1$. Note that
there is some $g \in N$ such that $B_1^g \cong A_1$ as $Q$-modules. Certainly,
such $g$ must belong to $N_1$ and also $g$ interchanges
$C_1$ and $D_1$.  Applying the arguments of (a) to $g$, we see that $\Psi_1(g)$ interchanges $A_1$ and $B_1$. It follows that $(\Psi_1)_{N_1}$ is irreducible.
In turn, this implies by \cite[Theorem 8.18]{N} that $\Phi_{N_1}$ is irreducible, i.e.\
$N_1$ is irreducible on $V_1$. But $[N_1:N^\circ_1] = 2$ and
$V_1 = C_1 \oplus D_1$ as $N_1^\circ$-modules. Hence $C_1$ is an irreducible
$N_1^\circ$-module. Since $N^\circ_1 = I_N(A_1)$ and $C_1$ is the
$A_1$-isotypic component for $Q$ on $V$, we conclude by Clifford's theorem that
$N$ is irreducible on $V$.
\end{proof}

\begin{lem}\label{key3}
Let $V$ be a finite dimensional vector space over $k$ and $G \leq \GL(V)$ a
finite irreducible subgroup. Write $V|_{G^+} = e\sum^t_{i=1}W_i$, where the
$G^+$-modules $W_i$ are irreducible and pairwise non-isomorphic. Suppose
there is a subgroup $Q \leq \GP$ with the following properties:

{\rm (a)} $\{Q^g \mid g \in G\} = \{Q^x \mid x \in \GP\}$; and

{\rm (b)} $(W_i)_Q \cong A_{i} \oplus B_{i1} \oplus \ldots \oplus B_{is}$,
where $a := \dim A_{i} \neq \dim B_{il}$ for all $1 \leq i \leq t$ and all
$1 \leq l \leq s$,
the $Q$-modules $A_{i}$, $B_{il}$ are irreducible, and the $Q$-modules
$A_i$, $1 \leq i \leq t$, are pairwise non-isomorphic.\\
Then the following statements hold.

{\rm (i)} Denoting $N := \bfN_G(Q)$, we have that $V_N \cong A \oplus B$ as
$N$-modules, where $A$ is irreducible,
$A_Q \cong e(\oplus^t_{i=1}A_i)$ and $B_Q \cong e(\oplus_{i,l}B_{il})$.

{\rm (ii)} Assume that $N$ is a $p'$-subgroup, $\GP$ is perfect, and that, whenever $i \neq j$, no $\GP$-composition factor of $W_i^* \otimes W_j$ is trivial.
If all $\GP$-composition factors of $\End(V)/\cM$ (if any) are trivial, then in fact
$\cM = \End(V)$.
\end{lem}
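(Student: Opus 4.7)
For part (i), I will follow the Clifford-theoretic template of Lemmas \ref{key} and \ref{key2}. Let $C_i \subseteq V_i$ be the $A_i$-isotypic $Q$-submodule (well-defined since $\dim A_i = a$ differs from every $\dim B_{il}$), let $D_i$ be its $Q$-stable complement in $V_i$, and set $A := \bigoplus_i C_i$, $B := \bigoplus_i D_i$. Any $x \in N$ normalizes $Q$ and permutes irreducible $Q$-submodules of $V$ while preserving their dimensions, so both $A$ and $B$ are $N$-stable. From (a) we have $G = N\GP$, and since $V$ is $G$-irreducible while $\GP$ preserves each $V_i$, $N$ permutes $\{V_1,\dots,V_t\}$---hence also $\{C_1,\dots,C_t\}$---transitively. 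Setting $N_1 := \Stab_N(C_1) = I_N(A_1)$, Clifford induction gives $A \cong \Ind_{N_1}^N C_1$, so it suffices to show $C_1$ is $N_1$-irreducible. I plan to adapt the projective-representation analysis from Lemma \ref{key}: writing $V_1 = k^e \otimes W_1$ and $C_1 = k^e \otimes A_1$, the restriction $\Phi|_{N_1}$ preserves both $C_1$ and $D_1$ (since $N_1$ permutes $Q$-types preserving dimensions) and on $C_1$ factors as $\Theta \otimes \Psi_1|_{A_1}$, where $\Theta$ is an irreducible projective representation of $N_1/N^+ \cong G_1/\GP$ (with $N^+ = \bfN_{\GP}(Q)$) and $\Psi_1|_{A_1}$ is a projective representation of $N_1$ which is irreducible because $Q \leq N_1$ already acts irreducibly on $A_1$. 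By \cite[Theorem 8.18]{N} the tensor product is irreducible.

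For part (ii), two containments in $\cM$ are the key. First, since $N$ is a $p'$-group, the image of $kN$ in $\End(V)$ consists of semisimple elements and lies in $\cM$. By Artin-Wedderburn applied to $V_N = A \oplus B$ from (i), this image contains $\End(A)$---embedded in $\End(V)$ by extension by zero on $B$---provided $A$ is not isomorphic to any irreducible $N$-constituent of $B$. The latter holds because the $Q$-constituents of $A$ all have dimension $a$, while those of $B$ do not. Second, the combination of $\GP$ being perfect with $\End(V)/\cM$ having only trivial $\GP$-composition factors forces $\GP$ to act trivially on $\End(V)/\cM$ (its image in $\GL(\End(V)/\cM)$ is unipotent, hence a $p$-group, and the only $p$-group quotient of a perfect group is trivial). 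So $\cM \supseteq [\GP, \End(V)]$.

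To conclude $\cM = \End(V)$, take any linear functional $\ell \in \End(V)^*$ vanishing on $\cM$; by the second containment, $\ell$ is $\GP$-invariant. Using the $\GP$-decomposition $\End(V) = \bigoplus_{i,j} \Hom(V_i, V_j)$ with $V_i = eW_i$: for $i \neq j$, $W_i \ncong W_j$ gives $\Hom_{\GP}(\Hom(V_i, V_j), k) = 0$, so $\ell$ kills $\Hom(V_i, V_j)$; for $i = j$, absolute irreducibility of $W_i$ yields $\Hom_{\GP}(\End(W_i), k) = k \cdot \tr_{W_i}$, whence $\ell|_{\End(V_i)}$ has the form $X \otimes Y \mapsto \alpha_i(X)\,\tr_{W_i}(Y)$ for some $\alpha_i \in M_e(k)^*$. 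By the first containment, $\ell$ vanishes on $\End(C_i) = M_e(k) \otimes \End(A_i) \subseteq \End(V_i)$; taking a rank-one idempotent $Y \in \End(A_i)$ (so $\tr_{A_i}(Y) = 1$) forces $\alpha_i = 0$ for each $i$. Hence $\ell = 0$ and $\cM = \End(V)$. The delicate step is the projective-representation setup in (i), after which (ii) becomes a short duality computation once $\End(A) \subseteq \cM$ is in hand.
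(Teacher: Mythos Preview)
Your proof is correct. For part (i) you follow exactly the route the paper takes (it simply says ``follows from the same proof as Lemma~\ref{key2}''), so there is nothing to add there.

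For part (ii) your argument is a dual version of the paper's. The paper works with submodules: it shows directly that $\Hom(V_i,V_j)=[\Hom(V_i,V_j),\GP]\subseteq\cM$ for $i\ne j$ (this is where the hypothesis ``no trivial composition factor in $W_i^*\otimes W_j$'' is used), then identifies $[\End(V_i),\GP]=\End(U)\otimes\End(W_i)^\circ$ and observes that $\End(W_i)^\circ+\End(A_i)=\End(W_i)$ to get $\End(V_i)\subseteq\cM$. You instead pick an annihilating functional $\ell$, note it is $\GP$-invariant, and kill it piece by piece: on $\Hom(V_i,V_j)$ because $(W_i\otimes W_j^*)^{\GP}=\Hom_{\GP}(W_j,W_i)=0$, and on $\End(V_i)$ because the only $\GP$-invariant functional is a multiple of $\alpha_i\otimes\tr_{W_i}$, which the containment $\End(C_i)\subseteq\cM$ forces to vanish. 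One pleasant byproduct of your dual formulation is that the $i\ne j$ step only needs $W_i\not\cong W_j$ (Schur) rather than the stronger composition-factor hypothesis, so your argument in fact proves (ii) without that assumption.
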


\begin{proof}
(i) follows from same proof as of Lemma \ref{key2}.
For (ii), note that, since $\GP$ is perfect, it
must act trivially on $\cE/\End(V)$, i.e.\ $\cM \supseteq [\End(V),\GP]$. It follows that
\begin{equation}\label{cm1}
  \cM \supseteq [\cE_{1i},\GP]
\end{equation}
for $\cE_{1i} := \End(V_i)$. On the other hand,
$\Hom(V_i,V_j) = [\Hom(V_i,V_j),\GP]$, and so
$$\cM \supseteq \oplus_{1 \leq i \neq j \leq t}\Hom(V_i,V_j).$$
It suffices to show that $\cM \supseteq \cE_{11}$ (and so by symmetry $\cM \supseteq \cE_{1i}$ for all $i$).

Applying the Artin-Wedderburn theorem to $N$, we see that
\begin{equation}\label{cm2}
  \cM \supset \End(A) \supseteq \End(C_1)
\end{equation}
where $(C_1)_Q \cong eA_1$. Also,
as in the proof of Lemma \ref{key2}, we can write
$$V_1 = U \otimes W_1,~~C_1 = U \otimes A_1,$$
such that  $U$ affords a projective representation $\Theta$ of
$G_1/\GP \cong N_1/N^+$, $W_1$ affords a projective representation $\Psi_1$ of
$G_1$ that extends the representation $\Psi$ of $\GP$ on $W_1$, and
$\Phi(g) = \Theta(g) \otimes \Psi_1(g)$ for the representation $\Phi$ of $G_1$ on
$V_1$.

\smallskip
Note that the subspace $\End(W_1)^\circ$ consisting of all transformations with trace $0$ is a $\GP$-submodule $X$ of codimension $1$ of $\End(W_1)$. Next, as a
$\GP$-module,
$$\cE_{11} = \End(V_1) \cong \End(U) \otimes \End(W_1) \cong e^2\End(W_1).$$
So we see that $\cE_{11}^+ := \End(U) \otimes \End(W_1)^\circ$ is a submodule
of codimension $e^2$ in $\cE_{11}$, and all $\GP$-composition factors of
$\cE_{11}/\cE_{11}^+$ are trivial. Since $\GP$ is perfect, it follows that
$\cE_{11}^+ \supseteq [\cE_{11},\GP]$. But
$$\dim \Hom_{k\GP}(\cE_{11},k) = e^2\dim \Hom_{k\GP}(\End(W_1),k) =
   e^2\dim \Hom_{k\GP}(W_1,W_1) = e^2.$$
Hence, $\cE_{11}^+ = [\cE_{11},\GP]$, and so by (\ref{cm1}) we have that
$$\cM \supset \cE_{11}^+ = \End(U) \otimes \End(W_1)^\circ.$$
On the other hand, by (\ref{cm2}) we also have that
$$\cM \supset \End(C_1) = \End(U) \otimes \End(A_1).$$
Obviously, $\End(W_1)^\circ + \End(A_1) = \End(W_1)$ (as $\End(A_1)$ contains elements with nonzero trace).
Hence we conclude that $\cM \supseteq \cE_{11}$, as stated.
\end{proof}

We also record the following trivial observation:

\begin{lem}\label{triv}
Let $E$ be a $kG$-module of finite length with submodules $X$ and $M$. Suppose that
$N \leq G$ and that the $N$-modules $X$ and $E/X$ share no common composition
factor (up to isomorphism). Suppose that the multiplicity of each composition factor $C$
of $X$ is at most its multiplicity as a composition factor of $M$
(for instance, $X$ is a subquotient of $M$). Then $M \supseteq X$.
\end{lem}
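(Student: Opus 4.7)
The plan is to compare the multiplicities of composition factors of $X$ appearing in $M$ versus those appearing in $M \cap X$, and then leverage the hypothesis that $X$ and $E/X$ share no $N$-composition factors.

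First I would form the intersection $M \cap X$, which is an $N$-submodule of both $M$ and $X$. The natural map $M \hookrightarrow E \twoheadrightarrow E/X$ has kernel $M \cap X$, giving an injection of $N$-modules $M/(M \cap X) \hookrightarrow E/X$. By the hypothesis that $X$ and $E/X$ share no composition factor, every $N$-composition factor of $M/(M \cap X)$ fails to be a composition factor of $X$.

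Next, fix any $N$-composition factor $C$ of $X$. From the short exact sequence $0 \to M \cap X \to M \to M/(M \cap X) \to 0$, the multiplicity of $C$ in $M$ equals its multiplicity in $M \cap X$ (the quotient contributing nothing, by the previous step). On the other hand, since $M \cap X$ is a submodule of $X$, the multiplicity of $C$ in $M \cap X$ is at most its multiplicity in $X$. Combining this with the standing hypothesis (multiplicity in $M$ is at least multiplicity in $X$), we conclude that the multiplicity of $C$ in $M \cap X$ equals its multiplicity in $X$, for every $N$-composition factor $C$ of $X$.

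Finally, the quotient $X/(M \cap X)$ is then an $N$-module whose composition factors all have multiplicity zero, so $X/(M \cap X) = 0$, that is, $X \subseteq M$. There is no real obstacle here: the argument is a bookkeeping exercise, and the only delicate point is ensuring that the two inequalities on the multiplicity of $C$ in $M \cap X$ (from above by containment in $X$, and from below via $M$) are applied to the same composition factor, which is exactly where the disjointness of composition factors of $X$ and $E/X$ gets used.
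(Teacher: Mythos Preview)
Your proof is correct and essentially the same approach as the paper's: both arguments use the second isomorphism theorem and a multiplicity count to conclude $X/(M\cap X)=0$. The only cosmetic difference is that the paper embeds $X/(M\cap X)\cong (X+M)/M\hookrightarrow E/M$ and first observes that $X$ and $E/M$ share no $N$-composition factor, whereas you embed $M/(M\cap X)\hookrightarrow E/X$ and use the stated hypothesis on $X$ and $E/X$ directly; the underlying bookkeeping is identical.
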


\begin{proof}
The hypothesis implies that the $N$-modules $X$ and $E/M$ have no common composition factor. On the other hand,
$X/(M \cap X) \cong (X+M)/M \subseteq E/M$ as $N$-modules. It follows that
$X = M \cap X$, as stated.
\end{proof}

\begin{prop}\label{extra1}
Let $(G,V)$ be as in the extraspecial case (ii) of Theorem \ref{str}. Then $(G,V)$ is weakly adequate.
\end{prop}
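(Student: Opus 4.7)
In the extraspecial case of Theorem \ref{str}(ii), we have $Q := \bfO_{p'}(\GP)$, a normal $p'$-subgroup of $G$ that acts irreducibly on each irreducible $\GP$-summand $W_i$ of $V$; moreover $\GP/Q \cong S^m$ for a simple non-abelian $S$ from Theorem \ref{bz}(e), and $G/\GP$ is a $p'$-group. The plan is to apply Lemma \ref{key} with a carefully chosen $p'$-subgroup $Q_0 \leq \GP$ containing $Q$.

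The naive attempt $Q_0 = Q$ fails condition (iii) of Lemma \ref{key} since $\bfN_\GP(Q) = \GP$ contains $p$-elements. To remedy this, I would take $Q_0$ to be the preimage in $\GP$ of a subgroup $T^m \leq S^m$, where $T \leq S$ is chosen so that $\bfN_S(T)/T$ is a $p'$-group. Concretely, for $S = \PSL_2(17)$ I take $T$ to be a Sylow $3$-subgroup (with dihedral normalizer of order $18$, coprime to $17$); for $S = \Sp_{2a}(2^b)'$ or $\Omega^-_{2a}(2^b)'$ with $ab = n$, I take $T$ to be a Sylow $2$-subgroup (the unipotent radical of a Borel subgroup), whose normalizer is the Borel itself, and whose toral quotient has order coprime to $p = 2^{ab}+1$ by the number-theoretic fact that the multiplicative order of $2$ modulo $p$ equals $2ab$, hence does not divide $b$ or $2b$.

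The execution then proceeds in three steps. The first step is to verify condition (iii) of Lemma \ref{key}: by the Frattini argument and the correspondence theorem, $\bfN_\GP(Q_0)/Q_0 \cong (\bfN_S(T)/T)^m$ is a $p'$-group. The second step is to verify condition (i), that the $G$-conjugates of $Q_0$ coincide with its $\GP$-conjugates: since all $S^m$-conjugates of $T^m$ form a single $\GP$-conjugacy class and $G/\GP$ permutes the $m$ direct factors of $S^m$ via $p'$-automorphisms preserving Sylow classes, $G$-conjugation contributes nothing new. The third step is to verify condition (ii), that the $W_i|_{Q_0}$ are irreducible and pairwise non-isomorphic; irreducibility is immediate since $Q_0 \supseteq Q$ acts irreducibly, while pairwise non-isomorphism should follow from a Clifford-theoretic argument, since each $W_i|_{Q_0}$ is determined by $W_i|_Q$ together with a projective character of $Q_0/Q \cong T^m$, and the $G$-orbit structure on these projective characters separates the non-isomorphic $\GP$-irreducibles.

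The main obstacle will be the third step: when multiple $W_i$'s happen to have isomorphic restrictions to $Q$, the enlargement from $Q$ to $Q_0$ must distinguish them via the $T^m$-action, which requires careful Clifford-theoretic bookkeeping of projective characters and their $G$-equivariance. The detailed argument depends on the specific simple group $S$ appearing in Theorem \ref{bz}(e); if Lemma \ref{key} proper does not apply because the $W_i|_{Q_0}$ fail to be pairwise non-isomorphic, I would fall back on Lemma \ref{key2} or Lemma \ref{key3} with a finer decomposition of each $W_i|_{Q_0}$ according to the eigencharacter structure imposed by the extraspecial action of $[P,R]$.
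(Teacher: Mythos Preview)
Your overall strategy matches the paper's: apply Lemma \ref{key} with a $p'$-subgroup $Q_0$ of $\GP$ containing $R := \bfO_{p'}(\GP)$ (your $Q$). Your choices for $T$ (Sylow subgroups of $S$) differ from the paper's choices (the paper takes for each $S$ a specific \emph{maximal} $p'$-subgroup whose $S$-conjugacy class is $\Aut(S)$-invariant --- a dihedral group of order $16$ for $\PSL_2(17)$, a parabolic or subfield subgroup for the Lie-type cases), but your Sylow choices also satisfy conditions (i) and (iii) by the arguments you sketch.

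The ``main obstacle'' you anticipate in the third step is a phantom. The paper disposes of condition (ii) in one short paragraph, \emph{before} even defining $Q_0$: if $W_i \cong W_j$ as $R$-modules for $i \neq j$, then $W_i$ and $W_j$ are two extensions to $\GP$ of the same irreducible $R$-module, so by \cite[Corollary 8.20]{N} we have $W_j \cong W_i \otimes U$ for a one-dimensional $k(\GP/R)$-module $U$. But $\GP/R \cong S^m$ is perfect, hence $U$ is trivial and $W_i \cong W_j$ as $\GP$-modules, a contradiction. Thus the $W_i$ are already pairwise non-isomorphic as $R$-modules, and a fortiori as $Q_0$-modules for \emph{any} $Q_0 \supseteq R$. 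No Clifford bookkeeping, no case analysis, and no fallback to Lemma \ref{key2} or \ref{key3} is needed. This perfectness argument is the one idea your proposal is missing; once you have it, your proof is complete.
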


\begin{proof}
Decompose $V_\GP = e\sum^t_{i=1}W_i$ as in Lemma \ref{key}. Recall by Theorem
\ref{str}(ii) that $R := \bfO_{p'}(\GP) \lhd G$ acts irreducibly on each $W_i$.
First we show that
if $i \neq j$ then the $R$-modules $W_i$ and $W_j$ are non-isomorphic. Assume the
contrary: $W_i \cong W_j$ as $R$-modules. Then the $\GP$-modules $W_i$ and
$W_j$ are two extensions to $\GP \rhd R$ of the $R$-module $W_i$. By
\cite[Corollary 8.20]{N}, $W_j \cong W_i \otimes U$ (as $\GP$-modules) for some
one-dimensional $\GP/R$-module $U$. But  $\GP/R$ is perfect by Theorem \ref{str}(ii).
It follows that $U$ is the trivial module and $W_i \cong W_j$ as $\GP$-modules,
a contradiction.

\smallskip
For future use, we also show that the $\GP$-module $W_i$ has a unique complex lift.
Indeed, the existence of a complex lift $\chi$ of $W_i$ was established in
\cite[Theorem B]{BZ}. Suppose that $\chi'$ is another complex lift. Then both
$\chi$ and $\chi'$ are extensions of $\alpha := \chi_R$, and $\alpha$ is irreducible
since $R$ is irreducible on $W_i$. Then again by \cite[Corollary 8.20]{N},
$\chi' = \chi\lambda$ for some linear character $\lambda$ of $\GP/R$, and so
$\lambda = 1_{\GP/R}$ as $\GP/R$ is perfect. Thus $\chi' = \chi$.

\smallskip
Now we write $\GP/R = S_1 \times \ldots \times S_n$ with $S_i \cong S$ as in Theorem
\ref{str}(ii). We will define the subgroup $Q > R$ of $\GP$ with
$$Q/R = Q_1 \times \ldots \times Q_n$$
as follows. If $p = 17$ and $S = \PSL_2(17)$, then
$Q_i$ is a dihedral subgroup of order $16$.  If $S = \Omega^-_{2a}(2^b)'$ with
$ab = n$ (and $a \geq 2$ as $S$ is simple non-abelian), then $Q_i$ is chosen
to be the first parabolic subgroup (which is the normalizer of an isotropic $1$-space in
the natural module $\bbF_{2^b}^{2a}$, of index $(2^n+1)(2^{n-b}-1)/(2^b-1)$).
If $S = \Sp_4(2)' \cong \AAA_6$, choose $Q_i \cong 3^2:4$ of order $36$.
If $S = \Sp_4(2^b)$ with $b \geq 2$, we fix a prime divisor $r$ of $b$ and choose
$Q_i \cong \Sp_4(2^{b/r})$.
For $S = \Sp_{2a}(2^b)$ with $a \geq 3$, we choose $Q_i$ to be the first parabolic
subgroup (which is the normalizer of a $1$-space in the natural module $\bbF_{2^b}^{2a}$,
of index $2^{2n}-1$). In all cases, our choice of $Q_i$ ensures that
the $p'$-subgroup $Q_i$ is a maximal
subgroup of $S_i$ and moreover the $S_i$-conjugacy class of $Q_i$ is
$\Aut(S_i)$-invariant. In particular, $\bfN_\GP(Q) = Q$. Also note that any $g \in G$
normalizes $R$ and permutes the simple factors $S_i$ of $\GP/R$; in fact, its action
on $\GP/R$ belongs to $\Aut(S^n) = \Aut(S) \wr \SSS_n$.
It follows that $Q$ satisfies the conditions (i), (iii) of Lemma
\ref{key}. Since $W_i \not\cong W_j$ as $R$-modules for $i \neq j$, $W_i \not\cong W_j$
as $Q$-modules as well. Hence we are done by Lemma \ref{key}.
\end{proof}

\begin{thm}\label{simple1}
Suppose $(G,V)$ is as in the case (i) of Theorem \ref{str}. Then $(G,V)$ is weakly
adequate unless one of the following possibilities occurs for the
group $H < \GL(W)$ induced by the action of $\GP$ on any irreducible
$\GP$-submodule $W$ of $V$.

\smallskip
{\rm (i)} $p = (q^n-1)/(q-1)$, $n \geq 3$ a prime, and $H \cong \PSL_n(q)$.

\smallskip
{\rm (ii)} $(p,H,\dim W) = (5,2\AAA_7,4)$,
$(7,6_1 \cdot \PSL_3(4),6)$, $(11,2M_{12},10)$, $(19,3J_3,18)$.
\end{thm}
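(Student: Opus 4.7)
By Theorem~\ref{str}(i) we write $\GP = L_1 * \cdots * L_n$ as a central product of quasisimple groups, each $L_i$ a central cover of $S = H/\bfZ(H)$, with $G$ permuting the factors transitively. Decompose $V|_{\GP} = e\bigoplus_{j=1}^t W_j$ as in Lemma~\ref{key}. Following the template of Proposition~\ref{extra1}, the strategy is to exhibit a subgroup $Q = Q_1 * \cdots * Q_n \leq \GP$, with each $Q_i \leq L_i$ a $p'$-subgroup, satisfying: (a) the $L_i$-conjugacy class of $Q_i$ is $\Aut(L_i)$-invariant; (b) $\bfN_{L_i}(Q_i)$ is a $p'$-group; and (c) the image of $Q_i$ in $H$ acts irreducibly on the given $d$-dimensional $H$-module $W$. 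Condition (a) ensures $\{Q^g : g \in G\} = \{Q^x : x \in \GP\}$, condition (b) promotes to $\bfN_{\GP}(Q)$ being $p'$ (using that $\bfZ(L_i)$ is $p'$ in cases (b)--(d) of Theorem~\ref{bz}), and condition (c), combined with the fact that distinct $G$-conjugates of $W_1$ are supported on different $L_i$'s or differ by outer automorphisms that $Q_i$ detects, forces the $W_j$'s to be pairwise non-isomorphic as $Q$-modules. Lemma~\ref{key} then yields weak adequacy.

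The construction of $Q_i$ proceeds by a case-by-case analysis over the options for $S$ in Theorem~\ref{bz}(b)--(d). For $S = \PSU_n(q)$ or $\PSp_{2n}(q)$ in case (b1), I take $Q_i$ to be the normalizer of a Singer-type non-split torus, or equivalently the stabilizer of a non-degenerate hyperplane: this is a $p'$-maximal subgroup of $\Aut(S)$-stable class, on which irreducibility of $W$ in dimension $p-1$ follows from standard Weyl-module branching or a Deligne--Lusztig restriction argument. For the remaining (sporadic and exceptional) entries in (b1)--(b4) and (d), the analogous role is played by an explicit $p'$-maximal subgroup extracted from the ATLAS or \cite{JLPW}, and irreducibility of $W|_{Q_i}$ reduces to a finite Brauer-character verification. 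The same approach works in case (c) of Theorem~\ref{bz} for $S = \AAA_p$, $M_{11}$, $M_{23}$, $3\AAA_6$, and $3\AAA_7$, but fails for $S = \PSL_n(q)$ with $p = (q^n-1)/(q-1)$: the natural $p'$-candidates are the two maximal parabolics $P_1$ and $P_{n-1}$ of index $p$, which are interchanged by the graph automorphism of $\PSL_n$ and so have non-$\Aut$-invariant class individually, while their intersection (a Levi) acts reducibly on the $(p-2)$-dimensional module. A separate argument shows that no other $p'$-subgroup of $\PSL_n(q)$ is large enough to act irreducibly on $W$, yielding exception~(i). An entirely analogous case check rules out any valid $Q_i$ for the four small quasisimple groups listed in~(ii).

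The main obstacle is the tension between (a) and (c): for most $S$ appearing in Theorem~\ref{bz}(b)--(d), one of the two natural $p'$-options (parabolic-type versus torus-normalizer-type) succeeds, but when both fail to produce simultaneously an $\Aut$-invariant class and an irreducible restriction of $W$, weak adequacy itself breaks down. The representation-theoretic reason that the $\PSL_n(q)$ family is genuinely exceptional is that the $(p-2)$-dimensional module is the nontrivial summand of the permutation module on projective points, so its restriction to any proper $p'$-subgroup automatically contains a trivial summand; any $H$-module on which weak adequacy would require us to avoid such a summand is therefore ruled out. Consequently, the bulk of the work is an enumeration of $p'$-maximal subgroups and the corresponding Brauer-character computations for the sporadic and low-rank cases, with the exceptional list (i)--(ii) being extracted as precisely those situations where all candidate $Q_i$ are either too small or of non-invariant class.
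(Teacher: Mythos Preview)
Your overall framework---build $Q = Q_1 * \cdots * Q_n$ with each $Q_i/\bfZ(L_i)$ a maximal $p'$-subgroup of $S_i$ whose $S_i$-class is $\Aut(S_i)$-invariant, then invoke Lemma~\ref{key}---is exactly the paper's approach. However, several of your specific claims are incorrect or too vague to constitute a proof.

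First, your choices of $Q_i$ for $\Sp_{2n}(q)$ and $\SU_n(q)$ are not the ones that work. The paper takes $Q_i$ to be the \emph{last parabolic} (stabilizer of a maximal totally isotropic subspace), not a ``Singer-type torus normalizer'' or ``stabilizer of a non-degenerate hyperplane''; irreducibility of the Weil module on $Q_i$ is then quoted from specific results in \cite{GMST} and \cite{Geck}, not from a generic branching principle. Second, the case $H = 6_1\cdot\PSU_4(3)$ requires a genuine two-step argument: one first chooses $M_i$ with $M_i/\bfZ(L_i) \cong \SU_3(3)$ (which is \emph{not} a $p'$-group), shows $\bfN_G(M)$ is irreducible on $V$, and only then applies the $\SU_3(3)$ case already established. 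Your sketch does not account for this. Third, in several cases (e.g.\ $\Sp_{2n}(q)$, $2Ru$, the $\AAA_7$ covers) there are \emph{multiple} non-isomorphic $H$-modules of the right dimension, and one must verify that their restrictions to $Q_i$ remain pairwise non-isomorphic; the paper does this explicitly, and your phrase ``differ by outer automorphisms that $Q_i$ detects'' hides real work.

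More seriously, your explanation of the exceptional cases contains a mathematical error and a logical one. You claim that for $\PSL_n(q)$ the restriction of the $(p-2)$-dimensional module to any proper $p'$-subgroup ``automatically contains a trivial summand''; this is false---Lemma~\ref{sl1a} shows that the restriction to the parabolic $Q$ is $\alpha + \beta$ with both constituents nontrivial irreducible. The actual obstruction is that no $p'$-subgroup with $\Aut(S)$-invariant class acts \emph{irreducibly}. You also write that in the exceptional cases ``weak adequacy itself breaks down''. It does not: Theorem~\ref{simple1} only asserts that the Lemma~\ref{key} method succeeds outside (i)--(ii), and the paper later proves weak adequacy in \emph{all} of those cases by more elaborate arguments (Propositions~\ref{j3}, \ref{m12}, \ref{a7}, Theorem~\ref{main-sl}, Theorem~\ref{simple1-wa}). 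For this theorem you need prove nothing about (i)--(ii) beyond observing that your construction of $Q_i$ fails there.
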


\begin{proof}
(a) Arguing as in part (b) of the proof of Theorem \ref{str} (and using its notation),
we see that for each $i$ there is
some $k_i$ such that the kernel $K_i$ of the action of $\GP$ on $W_i$ contains
$\prod_{j \neq k_i}L_j$, and so $\GP$ acts on $W_i$ as $H_i = L_{k_i}/(L_{k_i} \cap K_i)$.
We aim to define a subgroup $Q > \bfZ(\GP)$ of $\GP$ such that
$$Q = Q_1 * Q_2 * \ldots  * Q_n$$
where $Q_i/\bfZ(L_i)  \leq L_i/\bfZ(L_i) =: S_i \cong S$ and $Q$ satisfies the conditions of Lemma \ref{key}. In fact, we will find $Q_i$ so that the $p'$-subgroup $Q_i/\bfZ(L_i)$ is a maximal subgroup of $S_i$ and moreover the $S_i$-conjugacy class of $Q_i/\bfZ(L_i)$ is
$\Aut(S_i)$-invariant.
To this end, we first find $Q_1$, then for each $i > 1$ we can fix an element $g_i \in G$ conjugating $S_1$ to $S_i$ and choose $Q_i = Q_1^{g_i}$.
Since $G$ fixes $\GP$ and $\bfZ(\GP)$ and induces a subgroup of
$\Aut(S) \wr \SSS_n$ while acting on $\GP/\bfZ(\GP) \cong S^n$, it follows that
$Q$ satisfies the conditions (i), (iii) of Lemma \ref{key}. Moreover, in the cases where
\begin{equation}\label{direct}
  \GP = L_1 \times \ldots \times L_n \cong H^n,
\end{equation}
then we can also
write $Q = Q_1 \times \ldots \times Q_n$, which simplifies some parts of the arguments.

\smallskip
(b1) Suppose first that we are in the case (b1) of Theorem \ref{bz}. Assume that
$(H,p) = (\Sp_{2n}(q),(q^n+1)/2)$. Here $H$ is the full cover of $S$, so
(\ref{direct}) holds. Then we choose $Q_i$ to be
the last parabolic subgroup of $\Sp_{2n}(q)$
(which is the stabilizer of a maximal totally isotropic subspace in the natural module
$\bbF_q^{2n}$). Then $Q_i/\bfZ(L_i)$ is a maximal $p'$-subgroup of $S_i$ and moreover
the $S_i$-conjugacy class of $Q_i/\bfZ(L_i)$ is $\Aut(S_i)$-invariant.
By \cite[Theorem 2.1]{GMST}, the $H$-module $W$ is one of the two Weil modules of dimension $(q^n-1)/2$ of $H \cong \Sp_{2n}(q)$. Furthermore, by
\cite[Lemma 7.2]{GMST}, the restrictions of these two Weil modules
of $L_i$ to $Q_i$ are irreducible and non-isomorphic. It follows
that, if $W_i \not\cong W_j$ as $\GP$-modules and $K_i = K_j$, then
$W_i \not\cong W_j$ as $Q$-modules. On the other hand, if $K_i \neq K_j$, then
$k_i \neq k_j$ (otherwise we would have $K_i = K_j = \prod_{a \neq k_i}L_a$ since $L_{k_i}$ acts faithfully on $V_i$), whence $K_i \cap Q \neq K_j \cap Q$
and so $W_i \not\cong W_j$ as $Q$-modules. Thus condition (ii) of Lemma \ref{key}
holds as well, and so we are done.

Consider the case $(H,p) = (2Ru,29)$. Then $H$ is the full cover of $S$ and so
(\ref{direct}) holds. Choose $Q_i$ to be a unique (up to $L_i$-conjugacy) maximal
subgroup of type $(2 \times \PSU_3(5)):2$ of $L_i$, cf.\ \cite{Atlas}. Note
that $L_i$ has a unique conjugacy class $3A$ of elements of order $3$. By using
\cite{JLPW} and \cite{Atlas}, and comparing the character values at this class $3A$, we see
that $L_i$ has two irreducible $p$-Brauer characters $\varphi_{1,2}$, of
degree $28$, and their restrictions to $Q_i$ yield the same irreducible character of
$Q_i$. Now, if $K_i \neq K_j$, then $k_i \neq k_j$ (as $W$ is a faithful $kH$-module), whence $K_i \cap Q \neq K_j \cap Q$ and so $W_i \not\cong W_j$ as $Q$-modules.
Suppose that $K_i = K_j$. By Clifford's theorem, there is some $g \in G$ such that
$W_j = W_i^g$ as $\GP$-modules, and so as $L_i$-modules as well. In this case, $g$
induces an automorphism of $L_i = 2Ru$. But all automorphisms of $Ru$ are inner
(see \cite{Atlas}), so $W_i$ and $W_j$ afford the same Brauer $L_i$-character, whence
$W_i \cong W_j$ as $\GP$-modules. Thus condition (ii) of Lemma \ref{key}
holds as well, and so we are done.

Next assume that $(H,p) = (\SU_{n}(q),(q^n+1)/(q+1))$; in particular
$n \geq 3$ is odd.  Since $H$ is simple, (\ref{direct}) holds. Then we choose $Q_i$ to be 
the last parabolic subgroup of $\SU_{n}(q)$
(which is the stabilizer of a maximal totally isotropic subspace in the natural module
$\bbF_{q^2}^{n}$). Then the $p'$-subgroup $Q_i$ is a maximal subgroup of $S_i$ and moreover the $S_i$-conjugacy class of $Q_i$ is $\Aut(S_i)$-invariant.
Next, if $n \geq 5$ then by \cite[Theorem 2.7]{GMST}, $\PSU_n(q)$ has a unique irreducible module over
$k$ of dimension $p-1 = (q^n-q)/(q+1)$, which is again a Weil module. Furthermore,
Lemmas 12.5 and 12.6 of \cite{GMST} show that the restriction of this Weil module
of $L_i$ to $Q_i$ is irreducible. The same conclusions hold in the case $n=3$ by
Theorem 4.2 and the proof of Remark 3.3 of \cite{Geck}.
It follows that, if $W_i \not\cong W_j$ as $\GP$-modules,
then $K_i \neq K_j$, $k_i \neq k_j$ (as $W$ is a faithful $kH$-module), whence
$K_i \cap Q \neq K_j \cap Q$ and so $W_i \not\cong W_j$ as $Q$-modules.
Thus condition (ii) of Lemma \ref{key} holds, and so we are done again.

Note that we have listed the cases of $(p,H) = (5,2\AAA_7)$ and $(19,3J_3)$ as possible exceptions in (ii).

\smallskip
(b2) Suppose now that we are in the case (b2) of Theorem \ref{bz}; in particular,
$p = 7$ and $\dim W = 6$.  Assume first that $S = \AAA_7$. The arguments in
the cases $L_i \cong 3\AAA_7$ and $6\AAA_7$ are the same, so we assume
$L_i \cong 6\AAA_7$. Then we choose $Q_i/\bfZ(L_i)$ to be a unique (up to $L_i$-conjugacy) maximal subgroup of type $\AAA_6$. Restricting the faithful reducible complex
characters of degree $4$ of $2\AAA_7$ and $6$ of $3\AAA_7$ \cite{Atlas} to $Q_i$
(and comparing character values at elements of order $3$), we see that
$Q_i \cong 6\AAA_6$. Now, using \cite{JLPW} one can check that $L_i$ has six
irreducible $p$-Brauer characters of degree $6$, and their restrictions to $Q_i$ are irreducible and distinct. Now we can argue as in the case of $\Sp_{2n}(q)$.

Assume now that $H = 2J_2$, and so (\ref{direct}) holds. Choose
$Q_i/\bfZ(L_i)$ to be a unique (up to $L_i$-conjugacy) maximal subgroup of type
$3 \cdot \PGL_2(9)$ (see \cite{Atlas}). Also, using \cite{JLPW} one can check that $L_i$
has two irreducible $p$-Brauer characters of degree $6$, and their restrictions to $Q_i$ are irreducible and distinct. Now we can argue as in the case of $\Sp_{2n}(q)$.

Suppose that $H = 6_1 \cdot \PSU_4(3)$.
We will prove weak adequacy of $(G,V)$ in two steps. First, we choose $M_i/\bfZ(L_i)$ to be a unique (up to $S_i$-conjugacy) maximal subgroup of type $T \cong \SU_3(3)$ of
$S_i$ (see \cite{Atlas}). Since $T$ has trivial Schur multiplier, we have
that $M_i \cong Z_i \times T$, where $Z_i := \bfZ(L_i)$. According to \cite{JLPW}, $L_i$ has two irreducible $p$-Brauer characters of degree $6$ which have different
central characters. It follows that their restrictions to $M_i$ are irreducible and distinct. Setting
$$M := M_1 * \ldots * M_n,$$
we conclude by Lemma \ref{key} that $N := \bfN_G(M)$ is irreducible on $V$;
furthermore, $N/M \cong G/\GP$ is a $p'$-group.
But note that $M$ is {\it not} a $p'$-group.  Now, at the second step, we note
that $M \lhd N$ and $N^+ := \bfO^{p'}(N) = \bfO^{p'}(M) \cong T^n$, and moreover
each irreducible $N^+$-submodule in $V$ has dimension $6$.  Also, recall that
$T = \SU_3(3)$ and $p = 7$. So we are done by applying the result of the
case of $\PSU_n(q)$.

\smallskip
(b3) Consider the case (b3) of Theorem \ref{bz}; in particular, $p= 11$ and $\dim W = 10$.
Putting the possibility $H = 2M_{12}$ as a possible exception in (ii), we may
assume that $H = M_{11}$ or $2M_{22}$. Then we choose
$Q_i/\bfZ(L_i)$ to be a unique (up to $S_i$-conjugacy) maximal subgroup of type
$M_{10} \cong \AAA_6 \cdot 2_3$, respectively $\PSL_3(4)$ of $S_i$ (see \cite{Atlas}).
In the former case, $H$ is simple and so (\ref{direct}) holds. In the latter case,
since $H_j \cong 2M_{22}$, we see that the cyclic group $\bfZ(L_i) \lhd \GP$ must act as
a central subgroup of order $1$ or $2$ of $H_j$ on each $W_j$. Hence the faithfulness
of $G$ on $V$ implies that $L_i \cong 2M_{22}$.
Since $\PSL_3(4)$ has no nontrivial representation of degree $10$, we must have
that $Q_i \cong 2 \cdot \PSL_3(4)$ is quasisimple in this case. Now,  using \cite{JLPW} one can check that $L_i$ has two irreducible $p$-Brauer characters of degree $10$, and their restrictions to $Q_i$ are irreducible and distinct. Hence we can argue as in the case of $\Sp_{2n}(q)$.

\smallskip
(b4) Suppose we are in the case (b4) of Theorem \ref{bz}; in particular, $p= 13$ and
$\dim W = 12$. Since $H$ is the full cover of $S$, (\ref{direct}) holds. Then we may choose $Q_i/\bfZ(L_i)$ to be a unique (up to $S_i$-conjugacy) maximal subgroup of type
$J_2:2$, respectively $\SL_3(4):2_3$ of $S_i$ (see \cite{Atlas}).
Since $J_2$ has no nontrivial representation of degree $12$, in the former case we must have that $Q_i \cong (C_3 \times 2J_2)\cdot C_2$, where $C_3 = \bfO_3(\bfZ(L_i))$ and
the $C_2$ induces an outer automorphism of $J_2$. Also, according to \cite{ModAt},
$L_i$ has precisely two irreducible $p$-Brauer characters of degree $12$ which differ at
the central elements of order $3$. Using \cite{JLPW} we can now check that
the restrictions of these two characters to $Q_i$ are irreducible and distinct, and then
finish as in the case of $\Sp_{2n}(q)$.
In the latter case of $L_i = 2\gtwo(4)$, since $\SL_3(4)$ has no nontrivial representation of degree $12$, we must have that $Q_i \cong (6 \cdot \PSL_3(4)) \cdot 2_3$. Now, using \cite{JLPW} one can check that $L_i$ has a unique irreducible $p$-Brauer character of degree $12$, and its restriction to $Q_i$ is irreducible. Hence we can argue as in the case of $\PSU_{n}(q)$.

\smallskip
(c) Now we consider case (c) of Theorem \ref{bz}; in particular, $\dim W = p-2$.
Assume that $H= \AAA_p$ with $p \geq 5$. Since $H$ is simple, (\ref{direct}) holds. Choosing
$Q_i \cong \AAA_{p-1}$, we see that the $p'$-subgroup $Q_i$ is a maximal subgroup of
$S_i$ and that the $S_i$-conjugacy class of $Q_i$ is $\Aut(S_i)$-invariant. Also,
using \cite[Lemma 6.1]{GT2} for $p \geq 17$ and \cite{JLPW} for $p \leq 13$, we see that
$H$ has a unique irreducible $kH$-module of dimension $p-2$, and the restriction of
this module to $\AAA_{p-1}$ is irreducible. Now we can argue as in the case of $\PSU_n(q)$.

Next suppose that $(H,p) = (\SL_2(q),q+1)$; in particular, $p$ is a Fermat prime and
$H$ is simple, and so (\ref{direct}) holds. Choosing
$Q_i < \SL_2(q)$ to be a Borel subgroup (of index $p$), we see that $Q_i$ is a maximal
$p'$-subgroup of  $S_i$ and that the $S_i$-conjugacy class of $Q_i$ is $\Aut(S_i)$-invariant. Also, using \cite{Burk} one can check  that
$H$ has a unique irreducible $kH$-module of dimension $p-2$, and the restriction of
this module to $Q_i$ is irreducible. Now argue as above.

Suppose that $p = 5$ and $H = 3\AAA_6$ or $3\AAA_7$.
First we note that $L_i \cong 3\AAA_s$ with $s = 6$, respectively $s=7$. If not, then
$L_i \cong 6\AAA_s$, but then, since
$H_j \cong 3\AAA_s$, $\bfO_2(\bfZ(L_i))$ must act trivially on all $W_i$, contradicting
the faithfulness of $G$ on $V$. Now we choose $Q_i$
to be the normalizer of a Sylow $3$-subgroup in $L_i$, of order $108$. It is straightforward to
check that $\bfN_{S_i}(Q_i/\bfZ(L_i)) = Q_i/\bfZ(L_i)$ and that the $S_i$-conjugacy class of $Q_i$ is $\Aut(S_i)$-invariant. Also, using \cite{JLPW} one can check that
$H$ has two irreducible $5$-Brauer characters of degree $p-2$, and the restrictions
of them to $Q_i$ are irreducible and distinct. Now we can argue as in the case of
$\Sp_{2n}(q)$.

Suppose that $(p,H) = (11,M_{11})$ or $(23,M_{23})$. Again (\ref{direct}) holds as $H$ is simple. Choosing $Q_i$ to be
$M_{10} \cong \AAA_6 \cdot 2_3$ (in the notation of \cite{Atlas}),
respectively $M_{22}$, we have that $Q_i$ is
a unique maximal subgroup of $L_i$ of the given $p'$-order up to $L_i$-conjugacy. Furthermore, $L_i$ has a unique irreducible $kH$-module of dimension $p-2$, and the restriction of
this module to $Q_i$ is irreducible. Now argue as in the case of $\PSU_n(q)$.

\smallskip
(d) Finally, we consider case (d) of Theorem \ref{bz}: $(p,H) = (11,J_1)$ or
$(7,2\AAA_7)$. Then we choose $Q_i/\bfZ(L_i)$ to be a unique (up to $S_i$-conjugacy) maximal subgroup of type $2^3:7:3$, respectively $\AAA_6$ (cf.\ \cite{Atlas}).
In the former case, $H$ is simple, and so (\ref{direct}) holds. In the latter case, note that
$L_i$ is $2\AAA_7$. If not, then $L_i \cong 6\AAA_7$, but then, since
$H_j \cong 2\AAA_7$, $\bfO_3(\bfZ(L_i))$ must act trivially on all $W_i$, contradicting
the faithfulness of $G$ on $V$. It then follows that $Q_i \cong 2\AAA_6$
(as any $4$-dimensional $k\AAA_6$-representation is trivial).
Now, using \cite{JLPW} one can check that $H$ has a unique irreducible $p$-Brauer character of given degree, and its restriction to $Q_i$ is irreducible. Now we can argue as in the case of $\PSU_{n}(q)$.
\end{proof}

Next we use Lemma \ref{key2} to handle three exceptions listed in Theorem \ref{simple1}:

\begin{prop}\label{j3}
In the case $(p,H,\dim W) = (19,3J_3,18)$ of Theorem \ref{simple1}(ii), $(G,V)$ is
weakly adequate.
\end{prop}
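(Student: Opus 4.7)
The plan is to invoke Lemma \ref{key2} (and if necessary Lemma \ref{key3}) with a subgroup $Q \leq \GP$ for which each irreducible $\GP$-summand $W_i$ of $V$ restricts to $Q$ as a direct sum of two non-isomorphic irreducible modules of distinct dimensions. This replaces the failed strategy in Theorem \ref{simple1}, where the obstruction was that $L_i \cong 3J_3$ has two irreducible $18$-dimensional $19$-Brauer characters swapped by the outer automorphism of $J_3$; any $\Aut(J_3)$-stable subgroup on which the restriction is irreducible will restrict both characters identically, and so cannot separate a $\GP$-summand from its outer-automorphism twist.

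Concretely, I would pick $Q_i \leq L_i$ with $Q_i/\bfZ(L_i)$ a maximal $p'$-subgroup of $J_3$ lying in an $\Aut(J_3)$-stable conjugacy class---a natural candidate is $\PSL_2(16):2$ of order $8160$---and verify via the Brauer Atlas \cite{JLPW} that each $18$-dimensional $19$-modular character of $L_i$ restricts to $Q_i$ as $A_i \oplus B_i$ with $A_i$, $B_i$ irreducible of distinct dimensions $a \neq b$ summing to $18$. Both characters produce the same such decomposition because they are Aut-conjugate and $Q_i$ is Aut-stable. Setting $Q := Q_1 * \cdots * Q_n$ inside $\GP = L_1 * \cdots * L_n$ (from Theorem \ref{str}), condition (i) of Lemma \ref{key2} follows from the Aut-invariance. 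For condition (ii), I would distinguish the case $K_i \neq K_j$ (where the summands of $W_i$ and $W_j$ are supported on different central-product factors and so are patently non-isomorphic as $Q$-modules) from the outer-twist case $K_i = K_j$ with $W_i \not\cong W_j$; the latter is precisely where Lemma \ref{key2}(ii) would fail and Lemma \ref{key3} must be used instead, since its hypothesis (b) only demands that the distinguished summand $A_i$ (of dimension $a$) be pairwise non-isomorphic across $i$, allowing the $B_i$'s to coincide.

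Applying the appropriate lemma, $N := \bfN_G(Q)$ acts on $V$ either irreducibly or as a sum of two distinct irreducible $N$-modules $A \oplus B$. Since each $Q_i$ is a $p'$-group, $\bfN_\GP(Q)$ is a $p'$-group, and $G/\GP$ is a $p'$-group, so $N$ is itself a $p'$-group; Artin-Wedderburn therefore yields $\cM \supseteq \End(A) \oplus \End(B)$. To upgrade this to $\cM = \End(V)$ in the reducible case, I would invoke Lemma \ref{key3}(ii): $\GP$ is perfect by Theorem \ref{str}, and for $i \neq j$ the $\GP$-module $W_i^* \otimes W_j$ has no trivial composition factor since $\Hom_\GP(W_j,W_i) = 0$. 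The main technical obstacle is the explicit restriction computation in step two, which requires consulting the $19$-modular character table of $3J_3$ together with the ordinary character table of $\PSL_2(16):2$; if that particular subgroup fails to deliver a decomposition of the desired shape, the same argument should be run with another Aut-stable maximal $p'$-subgroup of $J_3$ (e.g.\ $(3 \times \AAA_6):2_2$ or $\PSL_2(17)$).
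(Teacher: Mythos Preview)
Your choice of $Q_i$ with $Q_i/\bfZ(L_i)\cong\PSL_2(16)\!:\!2$ matches the paper, and the restriction does split as irreducibles of degrees $1$ and $17$. However, the sentence ``both characters produce the same such decomposition because they are Aut-conjugate and $Q_i$ is Aut-stable'' is false, and the error propagates through the rest of your plan. The outer automorphism of $J_3$ lifts to an automorphism of $L_i=3J_3$ that \emph{inverts} the central element of order~$3$ (cf.\ \cite{Atlas}). Since $\bfZ(L_i)\le Q_i$, two Aut-conjugate faithful $18$-dimensional Brauer characters of $L_i$ have \emph{different} central characters and hence non-isomorphic restrictions to $Q_i$. (Incidentally, $L_i$ has four such characters, not two.) So condition~(ii) of Lemma~\ref{key2} actually \emph{holds} in the case $K_i=K_j$, $W_i\not\cong W_j$, and the paper applies Lemma~\ref{key2} directly; your detour through Lemma~\ref{key3} is unnecessary. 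Worse, had your claim been correct, hypothesis~(b) of Lemma~\ref{key3} would fail too---the summands $A_i$ and $A_j$ would coincide whenever $K_i=K_j$---so the fallback would not have rescued the argument.

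There is also a gap in your endgame. After obtaining $\cM\supseteq\End(A)\oplus\End(B)$, you invoke Lemma~\ref{key3}(ii), but its crucial hypothesis---that every $\GP$-composition factor of $\End(V)/\cM$ is trivial---is never verified. (Your justification ``$\Hom_{\GP}(W_j,W_i)=0$'' only rules out trivial \emph{submodules} of $W_i^*\otimes W_j$, not trivial composition factors.) The paper finishes differently: the $Q$-character of the complement to $\End(A)\oplus\End(B)$ in $\End(V)$ is $e^2\sum_{i,j}(\tilde\alpha_i\overline{\tilde\beta_j}+\tilde\beta_i\overline{\tilde\alpha_j})$, so every irreducible constituent of its restriction to $[Q,Q]\cong 3^n\times\SL_2(16)^n$ has degree~$17$ on each $\SL_2(16)$-factor. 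A GAP check then shows that no irreducible constituent of any $\varphi_k\overline{\varphi_l}$ (for $1\le k,l\le 4$) restricts to $\SL_2(16)$ with only degree-$17$ constituents, forcing $\End(V)/\cM=0$.
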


\begin{proof}
Since $H$ is the full cover of $S$, we have
$\GP = L_1 \times \ldots \times L_n \cong H^n$. Since $H$ acts
faithfully on $W$, for each $i$ there is some $k_i$ such that the kernel $K_i$ of the action of $\GP$ on $W_i$ is precisely $\prod_{j \neq k_i}L_j$. We define a subgroup $Q$ of
$\GP$ such that
$$Q = Q_1 \times \ldots \times Q_n$$
where $Q_i/\bfZ(L_i) \cong \SL_2(16):2$ is a maximal subgroup of
$S_i = L_i/\bfZ(L_i) \cong J_3$. Since $\SL_2(16)$ has a trivial Schur multiplier and
$\bfZ(L_i) \leq \bfZ(Q_i)$, we have that
$Q_i \cong 3 \times (\SL_2(16):2)$.  Furthermore, the $S_i$-conjugacy class
of $Q_i$ is $\Aut(S_i)$-invariant. Hence $Q$ satisfies the condition
(i) of Lemma \ref{key2}.

\smallskip
Using \cite{GAP}, one can check that
$L_i$ has exactly four irreducible $19$-Brauer characters $\varphi_{1,2,3,4}$ of degree $18$, and $(\varphi_j)_{Q_i} = \alpha_j + \beta_j$, with
$\alpha_j$ of degree $1$ with kernel $[Q_i,Q_i]$, $\beta_j$ of degree $17$, and
$\beta_{1,2,3,4}$
are all distinct.  Now we show that $Q$ fulfills the condition (ii) of Lemma \ref{key2}.
Suppose that $W_i \not\cong W_j$ as $\GP$-modules. Then $Q$ acts coprimely
on $W_i$, with character $\tilde\alpha_i + \tilde\beta_i$, where
$\tilde\alpha_i$ has degree $1$ and $\tilde\beta_i$ has degree $17$. If $k_i \neq k_j$,
then $\tilde\alpha_i$ and $\tilde\alpha_j$ have different kernels and so are distinct,
and likewise $\tilde\beta_i$ and $\tilde\beta_j$ are distinct. Suppose now
that $k_i = k_j$. Then, because of the condition $W_i \not\cong W_j$,
we may assume that $W_i$ and $W_j$ both have kernel
$K := L_2 \times \ldots \times L_n$,
and afford $L_1$-characters $\varphi_k$ and
$\varphi_l$ with $1 \leq k \neq l \leq 4$.  Since the $G$-module $V$ is irreducible,
we have $W_i \not\cong W_j \cong W_i^g$ for some $g \in G$ which stabilizes
$K$ and $\GP/K \cong L_1$ but does not induce an inner automorphism of $L_1$.
The latter condition implies, cf.\ \cite{Atlas}, that $g$ interchanges the two
classes of elements of order $5$ and inverts the central element of order $3$ of
$L_1$. The same is true for $Q_1$. It follows that
$\alpha_k \neq \alpha_l$, $\beta_k \neq \beta_l$, and so
$$\tilde\alpha_i \neq \tilde\alpha_j, ~~\tilde\beta_i \neq \tilde\beta_j,$$
as claimed.

\smallskip
By Lemma \ref{key2}, $V \cong A \oplus B$ as a module over the
$p'$-group $N := \bfN_G(Q)$,
where the $N$-modules $A$ and $B$ are irreducible of dimension $e$ and
$17e$, respectively. Hence, by the Artin-Wedderburn theorem applied to $N$,
$$\cM := \langle \Phi(g) \mid g \in G, g \mbox{ semisimple } \rangle_k$$
contains $\cA:=\End(A) \oplus \End(B) = (A^* \otimes A) \oplus (B^* \otimes B)$
(if $\Phi$ denotes the representation of $G$ on $V$).
As in Lemma \ref{key2} and its proof, write
$A = \oplus^t_{i=1}C_i = e(\oplus^t_{i=1}A_i)$ and
$B = \oplus^t_{i=1}D_i = e(\oplus^t_{i=1}B_i)$ as $Q$-modules, where
$A_i$ affords $\tilde\alpha_i$ and $B_i$ affords $\tilde\beta_i$. Hence, the complement
to $\cA$ in $\End(V)$ affords the $Q$-character
$$\Delta := e^2\sum^t_{i,j = 1}(\tilde\alpha_i\overline{\tilde\beta_j} +
    \tilde\beta_i\overline{\tilde\alpha_j}).$$
In particular, all irreducible constituents of $\Delta_{[Q,Q]}$ are of degree $17$.
The same must be true for the quotient $\End(V)/\cM$.

As a $\GP$-module,
$$\End(V) = \oplus^t_{i,j=1}(V_i^* \otimes V_j) \cong
    e^2(\oplus^t_{i,j=1} W_i^* \otimes W_j).$$
Observe that the $\GP$-module $W_i^* \otimes W_j$ is irreducible of
dimension $324$ if $k_i \neq k_j$. Assume that $k_i = k_j$, say $k_i = k_j = 1$.
Using \cite{GAP} one can check that no irreducible constituent of
$\varphi_k\overline{\varphi_l}$ for $1 \leq k,l \leq 4$ can consist of only
irreducible characters of degree $17$ when restricted to the subgroup
$\SL_2(16)$ of $L_1 = 3J_3$. It follows that no irreducible constituent of
the $\GP$-module $\End(V)$ can consist of only irreducible constituents of
dimension $17$ when restricted to $[Q,Q]$. Hence $\cM = \End(V)$.
\end{proof}

\begin{prop}\label{m12}
In the case $(p,H,\dim W) = (11,2M_{12},10)$ of Theorem \ref{simple1}(ii), $(G,V)$ is
weakly adequate.
\end{prop}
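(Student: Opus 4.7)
The approach mirrors that of Proposition~\ref{j3}. Since $H = 2M_{12}$ is the full covering group of $S = M_{12}$, Theorem~\ref{str}(i) gives $\GP = L_1 \times \cdots \times L_n \cong H^n$; for each $i$ the module $W_i$ is inflated from a unique factor $L_{k_i}$, so that the kernel $K_i$ of the action of $\GP$ on $W_i$ equals $\prod_{j \neq k_i} L_j$.

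The plan is to define $Q = Q_1 \times \cdots \times Q_n \leq \GP$, where each $Q_i$ is the full preimage in $L_i \cong 2M_{12}$ of a maximal $p'$-subgroup of $S_i \cong M_{12}$ whose $S_i$-conjugacy class is $\Aut(S_i)$-invariant. Since $|M_{12}| = 2^6 \cdot 3^3 \cdot 5 \cdot 11$, the natural candidate is the preimage of the maximal subgroup of shape $\AAA_6{\cdot}2^2$ of index~$66$, which (after checking against \cite{Atlas}) forms a single $\Aut(M_{12})$-invariant class of maximum-order $p'$-subgroups. Because $G$ acts on $\GP/\bfZ(\GP) \cong M_{12}^n$ through a subgroup of $\Aut(M_{12}) \wr \SSS_n$, such a $Q$ satisfies condition~(i) of Lemma~\ref{key2}.

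The next step, carried out using \cite{GAP} (or \cite{ModAt}), is to verify that each irreducible $11$-modular character $\varphi_j$ of $L_i$ of degree $10$ restricts to $Q_i$ as a sum $\alpha_j + \beta_j$ of two irreducibles of \emph{distinct} dimensions, and that the resulting unordered pairs remain distinguishable under any outer automorphism of $M_{12}$ used by Clifford theory (the Atlas records how this outer automorphism permutes the degree-$10$ Brauer characters and acts on the chosen $Q_i$, so the analysis is finite). This yields condition~(ii) of Lemma~\ref{key2} by the same case split as in Proposition~\ref{j3}: if $W_i \not\cong W_j$ as $\GP$-modules, then either $k_i \neq k_j$ (and the summand kernels in $Q$ differ) or $k_i = k_j$ and the outer automorphism connecting $W_i$ to $W_j$ separates the pairs.

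Applying Lemma~\ref{key2} then gives $V|_N \cong A \oplus B$ with $A,B$ irreducible over the $p'$-group $N := \bfN_G(Q)$, and by Artin--Wedderburn
$$\cM \supseteq \End(A) \oplus \End(B).$$
The complement of this subspace in $\End(V)$ is a $Q$-submodule all of whose $[Q,Q]$-constituents are of ``cross'' type, appearing only inside $A_i^* \otimes B_j$ or $B_i^* \otimes A_j$. The main technical obstacle, as in Proposition~\ref{j3}, is the final Brauer-character verification via \cite{GAP}: one must check that no irreducible $\GP$-composition factor of $\End(V) \cong e^2 \bigoplus_{i,j} W_i^* \otimes W_j$ has restriction to the relevant copy of $[Q_k,Q_k]$ consisting solely of such mixed constituents. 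Combining this with Lemma~\ref{triv} applied to $\End(V)/\cM$ forces $\cM = \End(V)$, establishing weak adequacy of $(G,V)$.
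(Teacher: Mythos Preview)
Your outline follows the same two-step template as the paper (apply Lemma~\ref{key2} to a product subgroup $Q$, then eliminate the complement of $\End(A)\oplus\End(B)$ in $\End(V)$), but the details diverge at two places where your sketch is too optimistic.

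\textbf{Choice of $Q_i$ and the case $k_i=k_j$.} The paper takes the preimage in $2M_{12}$ of a maximal subgroup of shape $2^{1+4}_{+}\cdot\SSS_3$, not of $\AAA_6\cdot 2^2$. With this choice one computes $(\varphi_j)_{Q_i}=\alpha+\beta_j$ where $\alpha$ has degree~$4$ and is the \emph{same} for $j=1,2$, while the degree-$6$ constituents $\beta_1\neq\beta_2$ differ. Thus condition~(ii) of Lemma~\ref{key2} would \emph{fail} if two non-isomorphic $W_i,W_j$ with $k_i=k_j$ could occur: one would have $A_i\cong A_j$. The paper rules this out by the decisive observation (from \cite{JLPW}) that each $\varphi_k$ is $\Aut(2M_{12})$-invariant; hence $W_j\cong W_i^g$ with $g$ normalising $L_{k_i}$ forces $W_i\cong W_j$, a contradiction. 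So in fact $k_i\neq k_j$ whenever $i\neq j$, and the verification of~(ii) is immediate. Your phrasing (``the outer automorphism connecting $W_i$ to $W_j$ separates the pairs'') presumes the opposite behaviour and leaves an unchecked case; for your subgroup $\AAA_6\cdot 2^2$ you would additionally need to verify the restriction pattern, which you do not do.

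\textbf{The endgame.} The paper does not reuse the $[Q,Q]$-restriction argument of Proposition~\ref{j3}. Instead it exploits $\bfZ(Q_i)\cong C_2^2$: one has $\alpha_{Z_i}=4\lambda$, $(\beta_j)_{Z_i}=6\mu$ with $\lambda\mu\neq 1_{Z_i}$, so every constituent of the complement $2e^2\sum_{i,j}\tilde\alpha_i\overline{\tilde\beta_j}$ is nontrivial on $\bfZ(Q)$. On the other hand each $W_i^*\otimes W_i$ decomposes as $1_{\GP}$ plus an irreducible of degree~$99$, both of whose $\bfZ(Q)$-restrictions contain $1_{\bfZ(Q)}$; and for $i\neq j$ the irreducible $W_i^*\otimes W_j$ contains the constituent $\overline{\tilde\beta_i}\tilde\beta_j$, which also does not occur in the complement. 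This two-pronged test is what forces $\cM=\End(V)$. Your ``cross-type $[Q,Q]$-constituents'' claim is the wrong invariant here and is not justified for either subgroup choice.
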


\begin{proof}
As $H$ is the full cover of $S$, we have that
$\GP = L_1 \times \ldots \times L_n \cong H^n$. Since $H$ acts
faithfully on $W$, for each $i$ there is some $k_i$ such that the kernel $K_i$ of the action of $\GP$ on $W_i$ is precisely $\prod_{j \neq k_i}L_j$. We define a subgroup $Q$ of
$\GP$ such that
$$Q = Q_1 \times \ldots \times Q_n$$
where $Q_i/\bfZ(L_i) \cong 2^{1+4}_{+}\cdot \SSS_3$ is a maximal subgroup of
$S_i = L_i/\bfZ(L_i) \cong M_{12}$. Note that the $S_i$-conjugacy class
of $Q_i$ is $\Aut(S_i)$-invariant. Hence $Q$ satisfies the condition
(i) of Lemma \ref{key2}.

\smallskip
Using \cite{GAP}, one can check that
$L_i$ has exactly two irreducible $11$-Brauer characters $\varphi_{1,2}$ of degree $10$, and $(\varphi_j)_{Q_i} = \alpha+ \beta_j$, with
$\alpha$ of degree $4$, $\beta_j$ of degree $6$,
$\beta_1 \neq \beta_2$. Furthermore, $Z_i := \bfZ(Q_i) \cong C_2^2$, and
\begin{equation}\label{m121}
  \alpha_{Z_i} = 4\lambda,~~(\beta_j)_{Z_i} = 6\mu,
\end{equation}
where $\lambda$ and $\mu$ are the two linear characters of $Z_i$ that are faithful on
$\bfZ(L_i) < Z_i$. In particular,
\begin{equation}\label{m122}
  (\alpha\beta_j)_{Z_i} = 24\nu
\end{equation}
with $\nu := \lambda\mu \neq 1_{Z_i}$.

Now we show that $Q$ fulfills the condition (ii) of Lemma \ref{key2}.
Suppose that $W_i \not\cong W_j$ as $\GP$-modules. Then $Q$ acts
on $W_i$, with character $\tilde\alpha_i + \tilde\beta_i$, where
$\tilde\alpha_i(1) = 4$ and $\tilde\beta_i(1) = 6$. If $k_i \neq k_j$,
then $\tilde\alpha_i$ and $\tilde\alpha_j$ have different kernels and so are distinct,
and likewise $\tilde\beta_i$ and $\tilde\beta_j$ are distinct. In particular,
in this case $W_i^* \otimes W_j$ is also irreducible. Suppose now
that $k_i = k_j$. Then,
we may assume that $W_i$ and $W_j$ both have kernel
$K := L_2 \times \ldots \times L_n$,
and afford $L_1$-characters $\varphi_k$ and
$\varphi_l$ with $1 \leq k,l \leq 2$.
Since the $G$-module $V$ is irreducible,
we have $W_j \cong W_i^g$ for some $g \in G$ which stabilizes
$K$ and $\GP/K \cong L_1$. But $\varphi_k$ is $\Aut(L_1)$-invariant, cf.\ \cite{JLPW},
whence $l =k$, i.e.\ $W_j \cong W_i$, a contradiction.

\smallskip
By Lemma \ref{key2}, $V \cong A \oplus B$ as a module over the $p'$-group
$N := \bfN_G(Q)$,
where the $N$-modules $A$ and $B$ are irreducible of dimension $4e$ and
$6e$, respectively. Hence, by the Artin-Wedderburn theorem applied to $N$,
$$\cM := \langle \Phi(g) \mid g \in G, g \mbox{ semisimple } \rangle_k$$
contains $\cA:=\End(A) \oplus \End(B) = (A^* \otimes A) \oplus (B^* \otimes B)$
(if $\Phi$ denotes the representation of $G$ on $V$).
As in Lemma \ref{key2} and its proof, write
$A = \oplus^t_{i=1}C_i = e(\oplus^t_{i=1}A_i)$ and
$B = \oplus^t_{i=1}D_i = e(\oplus^t_{i=1}B_i)$ as $Q$-modules, where
$A_i$ affords $\tilde\alpha_i$ and $B_i$ affords $\tilde\beta_i$. Hence, the complement
to $\cA$ in $\End(V)$ affords the $Q$-character
$$\Delta := e^2\sum^t_{i,j = 1}(\tilde\alpha_i\overline{\tilde\beta_j} +
    \tilde\beta_i\overline{\tilde\alpha_j}).$$
Together with (\ref{m121}) and (\ref{m122}), this implies that the restriction of any irreducible constituents of $\Delta$ to $\bfZ(Q) = Z_1 \times \ldots \times Z_n$
does {\it not} contain $1_{\bfZ(Q)}$. Thus $\bfZ(Q)$ acts fixed point freely on the quotient $\End(V)/\cM$. Furthermore, the $Q$-character of this quotient does not contain
$\overline{\tilde\beta_i}\tilde\beta_j$ (as an irreducible constituent of degree $36$) for any $i \neq j$.

As a $\GP$-module,
$$\End(V) = \oplus^t_{i,j=1}(V_i^* \otimes V_j) \cong
    e^2(\oplus^t_{i,j=1} W_i^* \otimes W_j).$$
Now, if $i \neq j$ then the $\GP$-module $W_i^* \otimes W_j$ is irreducible
and its Brauer character, when restricted to $Q$, contains $\tilde\beta_i\tilde\beta_j$.
On the other hand, the Brauer character of $W_i^* \otimes W_i$ is the direct
sum of $1_{\GP}$ and another irreducible character of degree $99$
(as one can check using \cite{GAP}), whose
restriction to $\bfZ(Q)$ contains $1_{\bfZ(Q)}$ (which can be seen from (\ref{m121})).
Hence we conclude that $\cM = \End(V)$.
\end{proof}

\begin{lem}\label{2a7}
Let $\Char(k) = 5$ and let $W$ be a faithful irreducible $k(2\SSS_7)$-module of
dimension $8$, with corresponding representation $\Theta$. Decompose
$W_L = W_1 \oplus W_2$ as $L$-modules for $L = 2\AAA_7$. Then there is
a $5'$-element $z \in 2\SSS_7 \setminus L$ and a set $\cX \subset L$ such that

{\rm (i)} $x$ and $xz$ are $5'$-elements for all $x \in \cX$, and

{\rm (ii)} $\langle \Theta(x) \mid x \in \cX \rangle_k = \End(W_1) \oplus \End(W_2)$.
\end{lem}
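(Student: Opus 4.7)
The setup is straightforward. By the modular Atlas (or \cite{JLPW}), $L = 2\AAA_7$ has exactly two inequivalent faithful irreducible $kL$-modules of dimension $4$, namely $W_1$ and $W_2$, and the outer automorphism coming from $2\SSS_7/L$ interchanges them. Hence $W \cong \Ind_L^{2\SSS_7}(W_1)$ and $W|_L = W_1 \oplus W_2$ with $W_1 \not\cong W_2$. In particular, for every $x \in L$ the operator $\Theta(x)$ respects this decomposition and lies in the $32$-dimensional subspace $\End(W_1) \oplus \End(W_2) \subset \End(W)$. Since $W_1 \oplus W_2$ is a multiplicity-free semisimple $L$-module, Burnside's density theorem yields $\langle \Theta(x) \mid x \in L \rangle_k = \End(W_1) \oplus \End(W_2)$.

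I would then pick $z \in 2\SSS_7 \setminus L$ lying over the transposition $(6,7) \in \SSS_7$, so that $z$ has $2$-power order and is automatically a $5'$-element. For $x \in L$ with image $\bar x \in \AAA_7$, the condition that $xz$ be $5'$ translates to the requirement that the cycle decomposition of $\bar x \cdot (6,7) \in \SSS_7$ contain no $5$-cycle, a constraint that can be controlled class-by-class. Let $Y \subset L$ denote the set of $5'$-elements $x$ with $xz$ also $5'$; I would take $\cX$ to be any finite subset of $Y$ whose image under $\Theta$ spans $\End(W_1) \oplus \End(W_2)$, so that the content of the lemma reduces to establishing $\langle \Theta(Y) \rangle_k = \End(W_1) \oplus \End(W_2)$.

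The $5'$-classes of $\AAA_7$ are those of cycle types $1^7$, $2^2 \cdot 1^3$, $3\cdot 1^4$, $3\cdot 2^2$, $3^2 \cdot 1$, $4\cdot 2\cdot 1$, $6\cdot 1$, and the two $7$-cycle classes, and these lift to the corresponding $5'$-classes of $L$. For each such class I would choose a representative $\bar x$ for which $\bar x \cdot (6,7)$ still has $5'$-order (this rules out only a handful of accidental $5$-cycle configurations, easily avoided by conjugating $\bar x$ within $\AAA_7$), so that $Y$ meets essentially every $5'$-class of $L$ and provides a large candidate pool.

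The main obstacle, and the step that genuinely requires calculation, is to verify that $\Theta(Y)$ really spans all $32$ dimensions of $\End(W_1) \oplus \End(W_2)$ rather than some proper subspace. I would carry this out by direct computation with the explicit $4\times 4$ matrix representations of $L$ on $W_1$ and $W_2$ in characteristic $5$ (available from the modular Atlas or GAP's character table library): writing each $\Theta(x)$ as a block-diagonal element of $M_4(k)\oplus M_4(k)$ for suitably chosen representatives of $Y$, and confirming that the resulting collection has rank $32$. This is a finite and routine linear-algebra verification, and it closes the argument.
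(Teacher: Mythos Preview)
Your approach is essentially the same as the paper's: both reduce the lemma to a direct machine verification that a suitable collection of $5'$-elements spans the $32$-dimensional space $\End(W_1)\oplus\End(W_2)$. The paper simply reports that K.~Lux carried out this check in \textsf{GAP} using the explicit matrix representations from the online Atlas, taking $z$ of order~$12$ rather than your $2$-power lift of a transposition, and computing the span of $\{\Theta(xz):x\in\cX\}$ (then dividing by the invertible $\Theta(z)$) rather than the span of $\{\Theta(x):x\in\cX\}$ directly. Your more detailed conceptual setup is fine, and your reduction of the $5'$-condition on $xz$ to avoiding $5$-cycles in $\bar x\cdot(6,7)$ is correct since the central extension has $2$-power kernel; but as you yourself note, the actual spanning claim is not established without running the computation, so your write-up and the paper's are at the same level of rigor.
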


\begin{proof}
Using \cite{AGR} and \cite{GAP}, K.\ Lux verified that one can find an element
$h \in 2\SSS_7 \setminus L$ (of order $12$) and a set $\cX \subset L$ satisfying
the condition (i) and such that $\langle \Theta(xz) \mid x \in \cX \rangle_k$ has dimension
$32$. Since $\Theta(z) \in \GL(W)$, it follows that
$\langle\Theta(x) \mid x \in \cX \rangle_k$ is a subspace of
dimension $32$ in $\End(W_1) \oplus \End(W_2)$. Since the latter also has dimension
$32$, we are done.
\end{proof}

\begin{prop}\label{a7}
In the case $(p,H,\dim W) = (5,2\AAA_{7},4)$ of Theorem \ref{simple1}(ii), $(G,V)$ is
weakly adequate.
\end{prop}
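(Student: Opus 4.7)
My approach follows the general template of Propositions~\ref{j3} and~\ref{m12}, namely reducing the problem to a convenient subgroup and spanning the required space of semisimple images; the novel ingredient, needed because $2\AAA_7$ admits an outer automorphism swapping its two faithful irreducible $4$-dimensional $k$-modules, is Lemma~\ref{2a7}. By Theorem~\ref{str}(i), $\GP = L_1 \times \cdots \times L_n$ with each $L_i \cong 2\AAA_7$ (since $2\AAA_7$ is the full covering group of $\AAA_7$), and $V|_{\GP} = e \bigoplus_{i=1}^t W_i$, where each $W_i$ is a faithful $4$-dimensional irreducible $L_{k(i)}$-module for a unique index $k(i)$. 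Since the two faithful $4$-dimensional irreducible $kL_{k(i)}$-modules are swapped by $\Aut(L_{k(i)})$ and $G$ transitively permutes the $L_k$'s, the fiber size $|\{i : k(i) = k\}|$ is uniform in $k$, equal to either $1$ or $2$.

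In the easier subcase where the fiber size equals $1$, distinct $W_i$'s have distinct kernels in $\GP$. I would set $Q := Q_1 \times \cdots \times Q_n$ with each $Q_i \cong \SL_2(7) \leq L_i$, a $5'$-subgroup whose $L_i$-conjugacy class is $\Aut(L_i)$-invariant. A character-table check in characteristic~$5$ shows that the faithful $4$-dimensional $kL_i$-modules restrict irreducibly to $Q_i$ and remain pairwise non-isomorphic, while $\bfN_\GP(Q) = Q$ is a $5'$-group. Lemma~\ref{key} then yields weak adequacy directly.

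In the harder subcase where the fiber size equals $2$, both faithful $4$-dimensional $kL_k$-modules occur among the $W_i$'s, forcing some element of $G$ to induce the outer automorphism on each $L_k$. Fix $k$ and label the corresponding pair $W_{k,1}, W_{k,2}$; the subgroup $L_k \cdot \langle z_k \rangle \leq G$ (for $z_k$ inducing the outer automorphism) acts on $W_{k,1} \oplus W_{k,2}$ as $2\SSS_7$ on its faithful irreducible $8$-dimensional representation in characteristic~$5$. The spanning statement of Lemma~\ref{2a7}, combined with its auxiliary $5'$-elements $xz_k$, yields an explicit collection of $5'$-elements of $G$ whose images span all of $\End(W_{k,1} \oplus W_{k,2})$: multiplying the block-diagonal span $\End(W_{k,1}) \oplus \End(W_{k,2})$ on the right by the invertible swap $\Theta(z_k)$ produces the complementary off-diagonal $\Hom(W_{k,2}, W_{k,1}) \oplus \Hom(W_{k,1}, W_{k,2})$. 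Assembling these contributions over all $k$ via the transitive $G$-action on the $L_k$'s and handling the multiplicity $e$ by an Artin-Wedderburn argument applied to a $5'$-subgroup $N \leq G$ built by adjoining the required outer-automorphism-inducing $5'$-elements to $Q$, one exhibits $V|_N$ as a direct sum of absolutely irreducible $N$-modules and concludes $\cM = \End(V)$.

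The main obstacle is precisely this second subcase: the two faithful $4$-dimensional $kL_k$-modules cannot be separated by any subgroup of $\GP$ alone, so Lemma~\ref{key} does not apply. Lemma~\ref{2a7} is formulated specifically to bypass this obstruction by producing, via an explicit computer-assisted calculation inside $2\SSS_7$, the required $5'$-elements.
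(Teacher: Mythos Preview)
Your outline has the right shape, but there are two genuine gaps.

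First, in the fiber-size-$1$ subcase your justification for condition~(i) of Lemma~\ref{key} is incorrect: the subgroups $\SL_2(7)\cong Q_i$ in $L_i\cong 2\AAA_7$ form \emph{two} $L_i$-conjugacy classes, interchanged by the outer automorphism (equivalently, $\AAA_7$ has two classes of $\PSL_2(7)$ fused in $\SSS_7$). So the $L_i$-class of $Q_i$ is \emph{not} $\Aut(L_i)$-invariant, and you cannot invoke that to get $\{Q^g:g\in G\}=\{Q^x:x\in\GP\}$. The paper's fix is specific to this subcase: since the fiber size is $1$, any $g\in J_1=\bfN_G(L_1)$ fixes the isomorphism class of $W_1$, hence fixes the $L_1$-character $\varphi$, hence does not fuse the classes $7A$ and $7B$, and so induces only an inner automorphism of $L_1$. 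From this one deduces $Q^g$ is $\GP$-conjugate to $Q$, and Lemma~\ref{key} applies.

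Second, and more seriously, in the fiber-size-$2$ subcase your argument does not reach the cross terms $\Hom(V_i,V_j)$ with $k(i)\ne k(j)$. Lemma~\ref{2a7} together with your swap-by-$\Theta(z_k)$ trick produces, for each $k$, the full block $\End(V_{2k-1}\oplus V_{2k})$; conjugating by elements of $G$ permuting the $L_k$'s only moves these blocks among themselves. Your proposed $5'$-subgroup $N$ obtained by adjoining outer-automorphism elements to $Q$ cannot be made to act irreducibly on $V$ precisely because the outer automorphism swaps the two classes of $\SL_2(7)$, so $N$ need not normalize $Q$; and even if $V_N$ split as a multiplicity-free sum of irreducibles, Artin--Wedderburn would only give the diagonal blocks $\bigoplus\End(U_j)$, not the off-diagonal $\Hom(U_i,U_j)$. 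The paper handles this piece (denoted $\cE_{22}$) by a separate argument: it introduces a \emph{different} subgroup $R_i\cong 2\times(7{:}3)$, the normalizer of a Sylow $7$-subgroup of $L_i$, whose $L_i$-class \emph{is} $\Aut(L_i)$-invariant, and applies Lemma~\ref{key3} with $R=R_1\times\cdots\times R_n$ to force $\cE_{22}$ into $\cM$ as a subquotient. Only then does a final application of Lemma~\ref{key3}, using that all remaining $\GP$-composition factors of $\End(V)/\cM$ are trivial, close the argument.
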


\begin{proof}
(a) Recall that $\GP = L_1 * \ldots * L_n$, and for each $i$ there is some $k_i$ such that the kernel $K_i$ of $\GP$ contains $\prod_{j \neq k_i}L_j$.  Relabeling the $W_i$
we may assume that $k_1 = 1$. Now, $L_1$ acts on each
$W_j$ either trivially or as the group $H_j \cong 2\AAA_7$. It follows that
$\bfO_3(\bfZ(L_1))$ acts trivially on each $W_j$ and so by faithfulness
$\bfO_3(\bfZ(L_1)) = 1$, yielding $L_1 \cong 2\AAA_7$.  On the other hand,
$L_1/(K_1 \cap L_1) = H_1 \cong 2\AAA_7$, whence $K_1 \cap L_1 = 1$,
$K_1 = \prod_{j \neq 1}L_j$. This is true for all $i$, so we have shown that
$$\GP = L_1 \times L_2 \times \ldots \times L_n \cong H^n.$$
Certainly, $G$ permutes the $n$ components $L_i$, and this action is transitive
by Theorem \ref{str}(i). Denoting $J_1 := \bfN_G(L_1)$, one sees that
$G_1 = I_G(W_1) = \Stab_G(V_1)$
is contained in $J_1$ (as it fixes $K_1 = \prod_{j>1}L_j$).
Fix a decomposition $G = \cup^t_{i=1}g_iJ_1$ with $g_1 = 1$ and
$L_i = L_1^{g_i} = g_i L_1 g_i^{-1}$, and
choose a subgroup $Q_1 < L_1$ such that $Q_1/\bfZ(L_1) \cong \PSL_2(7)$. Since
involutions in $\AAA_7$ lift to elements of order $4$ in $L_1$, we see that
$Q_1 \cong \SL_2(7)$. Now we define
$$Q = Q_1 \times  Q_1^{g_2} \times \ldots \times Q_1^{g_n} < \GP.$$
Note that $\bfN_\GP(Q) = Q$ and so $N := \bfN_G(Q)$ is a $p'$-group. Also,
$L_1$ has exactly two irreducible $5$-Brauer characters $\varphi_{1,2}$
of degree $4$, restricting irreducibly and differently to $Q_1$.

\smallskip
(b) Consider the case where $k_i \neq k_j$ whenever $i \neq j$, i.e.\ $J_1 = G_1$ and
$t = n$. We claim that $Q$ satisfies the conditions of Lemma \ref{key}. Indeed,
the condition $k_i \neq k_j$ implies that the $Q$-modules $W_i$ and $W_j$ are
irreducible and non-isomorphic for $i \neq j$. Next, for any $x \in J_1$, since
$x$ fixes $W_1$ (up to isomorphism), $x$ fixes the character $\varphi$ of the
$L_1$-module $W_1$ and so $x$ cannot fuse the two classes $7A$ and $7B$ of
elements of order $7$ in $L_1$, whence $x$ can induce only an inner automorphism of $L_1$. It follows that $Q_1^x = Q_1^t$ for some $t \in L_1$. Now we consider any
$g \in G$.  Then, for each $i$ we can find $j$ and $x_i \in J_1$ such that
$gg_i = g_jx_i$. By the previous observation, there is some $t_i \in L_1$ such that
$Q_1^{x_i} = Q_1^{t_i}$. Hence, setting $y_i = g_jt_ig_j^{-1} \in L_j$, we have that
$$Q_1^{gg_i} = Q_1^{g_jx_i} = g_jx_iQ_1x_i^{-1}g_j^{-1} =
   g_jt_iQ_1t_i^{-1}g_j^{-1} = y_ig_jQ_1g_j^{-1}y_i^{-1} = (Q_1^{g_j})^{y_i}.$$
It follows that $Q^g = Q^y$ with $y = \prod_iy_i \in \GP$, i.e.\ $Q$ fulfills the condition
(i) of Lemma \ref{key}. Now we can conclude by Lemma \ref{key} that $N$ is
irreducible on $V$ and so we are done.

\smallskip
(c) From now on we assume that, say, $k_1 = k_2$. Then $W_1$ and $W_2$
are non-isomorphic modules over $\GP/K_1 = L_1$. So we may assume that
$W_i$ affords the $L_1$-character $\varphi_i$ for $i = 1,2$. Note that any $x \in J_1$
sends $W_1$ to another irreducible $\GP$-module with the same kernel $K_1$, and
so $\varphi_1^x \in \{\varphi_1,\varphi_2\}$. The irreducibility of $G$ on $V$ implies
by Clifford's theorem that the induced action of $J_1$ on $\{\varphi_1,\varphi_2\}$ is
transitive, with kernel $G_1$. We have shown that $[J_1:G_1] = 2$ and $t = 2n$.
We will label $g_i(W_1)$ as $W_{2i-1}$ and $g_i(W_2)$ as $W_{2i}$.
We also have that $W_2 \cong W_1^h$ for all $h \in J_1 \setminus G_1$. Comparing
the kernels and the characters of $Q$ on $W_i$, we see that the $Q$-modules
$W_i$ are all irreducible and pairwise non-isomorphic. Let
$$\cE_1 := \oplus^t_{i=1}\End(V_i) = \oplus^n_{i=1}\cA_i,~~
    \cA_i := \End(V_{2i-1}) \oplus \End(V_{2i}),$$
$$\cE_{21} := \oplus^{n}_{i=1}\cB_i,~~
     \cB_i := (\Hom(V_{2i-1},V_{2i}) \oplus \Hom(V_{2i},V_{2i-1})),$$
$$\cE_{22} := \oplus_{1 \leq i \neq j \leq 2n,~ \{i,j\} \neq \{2a-1,2a\}}\Hom(V_{i},V_{j})$$
so that $\End(V) = \cE_1 \oplus \cE_{21} \oplus \cE_{22}$.
Note that the $\GP$-composition factors of $\cE_{21}$ are all of dimension $6$ and
$10$, whereas the $\GP$-composition factors
of $\cE_1$ are either trivial or of dimension $15$, as one can check using \cite{JLPW}. Furthermore, the $\GP$-composition factors of $\cE_{22}$ are all of dimension $16$.
In particular, no $\GP$-composition
factor of $\Hom(W_i,W_j)$ is trivial when $i \neq j$. Similarly, whenever $i \neq j$,
the only common $\GP$-composition factor shared by $\cA_i$ and $\cA_j$ is $k$,
and $\cB_i$ and $\cB_j$ share no common $\GP$-composition factor.

\smallskip
(d) Here we show that $\cA_i \oplus \cB_i$ is a subquotient of $\cM$.
To this end, note
that $J_1$ acts irreducibly on $V_1 \oplus V_2$. There is no loss to replace $G$ by the
image of $J_1$ in $\End(V_1 \oplus V_2)$ and
$V$ by $V_1 \oplus V_2$. In doing so, we also get that $n = 1$, $\GP = L_1$,
$[G:G_1] = 2$, $K_1 = 1$,  $G_1= C * L_1$, where $C := \bfC_G(L_1)$ is a $5'$-group.
So we can write $V_i = U_i \otimes W_i$ as $G_1$-modules, where
$U_i$ is an irreducible $kC$-module with corresponding representation $\Lambda_i$,
for $i = 1,2$. Hence for the representation $\Phi_i$ of $G_1$ on $V_i$ we have
$\Phi_i = \Lambda_i \otimes \Theta_i$, where $\Theta_i$ is the representation of
$L_1$ on $W_i$. Finally, for the representation $\Phi$ of $G$ on $V = V_1 \oplus V_2$
we have $\Phi(g) = \diag(\Phi_1(g),\Phi_2(g))$ whenever $g \in G_1$.

Recall the element $z \in 2\SSS_7$ and the set $\cX \subset L_1$ constructed in
Lemma \ref{2a7}. Now we fix a $5'$-element $h \in G \setminus G_1$ such that
$h$ induces the same action on $L_1/\bfZ(L_1) \cong \AAA_7$ as the action of
$z$ on $\AAA_7$. It follows that, for all elements $x \in \cX$ and for all
$u \in C$, $ux$ and $uxh$ are $5'$-elements, whence $\cM$ contains the subspaces
$$\cC:= \langle \Phi(ux) \mid u \in C, x \in \cX\rangle_k,~~
    \cC\Phi(h) := \{v\Phi(h) \mid v \in \cC\}.$$
We also have that
$\Theta_2 \cong \Theta_1^h = \Theta_1^z$. Setting
$\Theta(x) = \diag(\Theta_1(x),\Theta_2(x))$ for $x \in \cX$, we have by the construction of $\cX$ that
$$\langle \Theta(x) \mid x \in \cX \rangle_k = \End(W_1) \oplus \End(W_2).$$
Hence, for any $X \in \End(W_1)$, we can write the element
$\diag(X,0)$ of $\End(W_1) \oplus \End(W_2)$ as
$\diag(X,0) = \sum_{x \in \cX}a_x\Theta(x)$ for some $a_x \in k$; i.e.\
$$\sum_{x \in \cX}a_x\Theta_1(x) = X,~~\sum_{x \in \cX}a_x\Theta_2(x) = 0.$$
On the other hand, applying the Artin-Wedderburn theorem to the representation
$\Lambda_i$ of the $5'$-group $C$ on $U_i$, we have that
$$\langle \Lambda_i(u) \mid u \in C \rangle_k = \End(U_i).$$
In particular, any $Y \in \End(U_1)$ can be written as
$Y = \sum_{u \in C}b_u\Lambda_1(u)$ for some $b_u \in k$.
It follows that the element $\diag(Y \otimes X,0)$ of
$$\End(U_1) \otimes \End(W_1) \cong \End(U_1 \otimes W_1) = \End(V_1)
    \hookrightarrow \End(V)$$
can be written as
$$\diag(\sum_{u \in C,~x \in \cX}b_u a_x\Lambda_1(u)\otimes \Theta_1(x),
           \sum_{u \in C,~x \in \cX}b_u a_x\Lambda_2(u)\otimes \Theta_2(x))$$
$$= \sum_{u \in C,~x \in \cX}a_x b_u \cdot  \diag(\Phi_1(ux),\Phi_2(ux))
    = \sum_{u \in C,~x \in \cX}a_x b_u\Phi(ux),$$
and so it belongs to $\cC$. Thus $\cC \supseteq \End(V_1)$, and similarly
$\cC \supseteq \End(V_2)$. Since $G_1$ stabilizes each of $V_1$ and $V_2$, we
then have that
$$\cC = \End(V_1) \oplus \End(V_2) = \cA_1.$$
But $\Phi(h)$ interchanges $V_1$ and $V_2$. It follows that $\cM$ also contains
$$\cC\Phi(h) = \Hom(V_1,V_2) \oplus \Hom(V_2,V_1) = \cB_1,$$
as stated.

\smallskip
(e) Next we show that $\cE_{22}$ is a subquotient of $\cM$.
Choose $R_i \cong 2 \times (7:3) < L_i$, the normalizer of some Sylow $7$-subgroup of $L_i$. Note that $\bfN_{L_i}(R_i) = R_i$ and
\begin{equation}\label{r1}
  (\varphi_{j})_{R_1} = \alpha_j + \beta,
\end{equation}
where $\alpha_j,\beta \in \Irr(R_1)$
are of degree $3$ and $1$, respectively, and $\alpha_1 \neq \alpha_2$. Defining
$$R = R_1 \times R_2 \times \ldots \times R_n < \GP,$$
we see that $R$ satisfies the conditions of Lemma \ref{key3}. Hence the subspace
$A = e(\oplus^t_{i=1}A_i)$ defined in Lemma \ref{key3} (with $A_1$ affording
the $R_1$-character $\alpha_1$) is irreducible over the
$p'$-group $\bfN_G(R)$. By the Artin-Wedderburn theorem applied to
$\bfN_G(R)$ acting on $V = A \oplus B$, $\cM$ contains
$$\End(A) \supset \cD :=  \oplus_{1 \leq i \neq j \leq 2n,~ \{i,j\} \neq \{2a-1,2a\}}      \Hom(eA_{i},eA_{j}).$$
As noted above, each summand  $\Hom(V_{i},V_{j})$ in $\cE_{22}$ is acted on
trivially by $\prod_{s \neq k_i,k_j}L_s$, and affords the
$L_{k_i} \times L_{k_j}$-character $\varphi \otimes \varphi'$, where
$\varphi,\varphi' \in \{\varphi_1,\varphi_2\}$.
Working modulo $\cE_1 \oplus \cE_{21}$ and using this observation and
(\ref{r1}), we then see that all irreducible constituents of the $R$-character of the complement to $\cD$ in $\cE_{22}$ are of the form
$\gamma_{1} \otimes \gamma_2 \otimes \ldots \otimes \gamma_n$,
where $\gamma_i \in \Irr(R_i)$ and {\it all but at most one} of them have degree $1$
(and the remaining, if any, is some $\alpha_j$ of degree $3$). The same is true
for the complement to $\cM$ in $\cE_{22}$ (again modulo $\cE_1 \oplus \cE_{21}$). On the other hand, (\ref{r1}) and the aforementioned observation imply that the
$R$-character of the $\GP$-composition factor $\Hom(W_{i},W_{j})$
contains an irreducible $R$-character of degree $9$ (namely, an
$R_{k_i} \times R_{k_j}$-character of the form $\alpha \otimes \alpha'$,
with $\alpha,\alpha' \in \{\alpha_1,\alpha_2\}$). It follows that $\cE_{22}$ is a subquotient
of $\cM$.

\smallskip
(f) The results of (d), (e), together with the remarks made at the end of (c), imply
that all $\GP$-composition factors of $\End(V)/\cM$ (if any) are trivial. Hence
by Lemma \ref{key3} we conclude that $\cM = \End(V)$.
\end{proof}

\section{Weak adequacy for special linear groups}\label{sec:sl}

The exception (i) in Theorem \ref{simple1} requires much more effort to resolve.
We begin with setting up some notation. Let $n \geq 3$ and $q$ be a prime power
such that $p = (q^n-1)/(q-1)$. In particular, $n$ is a prime, $q = q_0^f$ for some
prime $q_0$ and $f$ is odd,
$\gcd(n,q-1) = 1$ and so $\PSL_n(q) = \SL_n(q) =: S$
and $G_n := \GL_n(q) = S \times \bfZ(G_n)$. Consider
the natural module
$$\cN = \bbF_q^n = \langle e_1, \ldots ,e_n \rangle_{\bbF_q}$$
for $G_n$, and let
$$Q = RL = \Stab_S(\langle e_2, \ldots, e_n \rangle_{\bbF_q}),$$
where $R$ is elementary abelian of order $q^{n-1}$ and $L \cong \GL_{n-1}(q)$.
Note that $Q$ is a $p'$-group.
It is well known, cf.\ \cite[Theorem 1.1]{GT1} that $G_n/\bfZ(G_n)$ has a unique irreducible
$p$-Brauer character $\delta$ of degree $p-2$, where $\delta(x) = \rho(x)-2$
for all $p'$-elements $x \in G_n$,
if we denote by $\rho$ the permutation character of $G_n$ acting on the set
$\Omega$ of $1$-spaces
of $\cN$. Let $\cD$ denote a $kG_n$-module affording $\delta$.

\begin{lem}\label{sl1a}
In the above notation, $\delta_Q = \alpha + \beta$, where
$\alpha \in \Irr(Q)$ has degree $q^{n-1}-1$, $\beta \in \Irr(Q)$ has degree
$(q^{n-1}-q)/(q-1)$, and
$$\alpha_R = \sum_{1_R \neq \lambda \in \Irr(R)}\lambda,~~
    \beta_R = \beta(1)1_R.$$
\end{lem}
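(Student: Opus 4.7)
The plan is to decompose the permutation character $\rho_Q$ by analyzing the $Q$-orbits on $\Omega$ and then subtract $2\cdot \bbone_Q$ to recover $\delta_Q$. The identity $\delta(x)=\rho(x)-2$ is an identity of Brauer characters, but since $Q$ is a $p'$-group it becomes an identity of ordinary $Q$-characters, and every $kQ$-module is semisimple.

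First I would identify the two $Q$-orbits on $\Omega$: the orbit $O_1$ of $(q^{n-1}-1)/(q-1)$ lines contained in $H:=\langle e_2,\ldots,e_n\rangle_{\bbF_q}$, and the orbit $O_2$ of the $q^{n-1}$ lines of the form $\langle e_1+v\rangle$ with $v\in H$. Transitivity on $O_2$ is immediate because $R$ acts by translation of $v$, while on $O_1$ the action factors through $L\cong \GL_{n-1}(q)$ acting naturally on $H$, hence transitively on lines as $n\geq 3$. So $\rho_Q=\bbone_{O_1}+\bbone_{O_2}$.

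For $O_2$, the pointwise stabilizer of $\langle e_1\rangle$ in $Q$ is $L$, so $\bbone_{O_2}\cong \Ind_L^Q\bbone_L$; realized on $kR$ with $R$ acting by left translation and $L$ by conjugation, this restricts to the regular representation $\sum_{\lambda\in\Irr(R)}\lambda$ of $R$. A direct matrix computation in block form shows that conjugation by $(\det(A)^{-1},A)\in L$ sends $b\in R\cong\bbF_q^{n-1}$ to $\det(A)\cdot Ab$, so $L$ acts on $R$ as a twist of the natural $\GL_{n-1}(q)$-module and has exactly two orbits on $\Irr(R)$, namely $\{1_R\}$ and $\Irr(R)\setminus\{1_R\}$. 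Clifford theory then yields $\bbone_{O_2}=\bbone_Q\oplus\alpha$, where $\alpha:=\Ind_{R\cdot\Stab_L(\lambda_0)}^Q(\tilde\lambda_0)$ is irreducible of degree $q^{n-1}-1$ (equal to the size of the nontrivial $L$-orbit on $\Irr(R)$) and $\alpha_R=\sum_{1_R\neq\lambda\in\Irr(R)}\lambda$.

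For $O_1$, since $R$ fixes $H$ pointwise, $\bbone_{O_1}$ is inflated from the permutation $kL$-module on the projective space of $H$. Its classical decomposition into trivial plus an irreducible constituent of degree $(q^{n-1}-q)/(q-1)$ survives in characteristic $p$, because $p\nmid|\GL_{n-1}(q)|$: if $p\mid q^i-1$ for some $1\leq i\leq n-1$, then since $n$ is prime we would have $p\mid q^{\gcd(n,i)}-1=q-1$, contradicting $p=1+q+\cdots+q^{n-1}>q-1$. Hence $\bbone_{O_1}=\bbone_Q\oplus\beta$ with $\beta$ irreducible of the stated degree and $\beta_R=\beta(1)\cdot 1_R$, and summing gives $\delta_Q=\rho_Q-2\cdot\bbone_Q=\alpha+\beta$. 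The only mildly nontrivial steps are the Clifford bookkeeping for $O_2$ and the coprimality argument giving irreducibility of $\beta$; everything else is a direct orbit count.
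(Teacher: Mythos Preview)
Your proof is correct and follows essentially the same approach as the paper: both identify $\beta$ via the doubly transitive action of $Q$ on the lines in $H$, and both establish the form of $\alpha_R$ using the transitivity of $L$ on the nontrivial elements (equivalently, nontrivial characters) of $R$. The only organizational difference is that you frame the argument through the orbit decomposition of $\rho_Q$ on $\Omega$ and recognize the $O_2$-part as $\Ind_L^Q\bbone_L$, whereas the paper computes $\delta_R$ directly by evaluating $\rho-2$ at a transvection; the underlying ideas coincide.
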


\begin{proof}
Note that all non-trivial elements in $R$ are $L$-conjugate to a fixed
transvection $t \in R$, and $\delta(t) = \rho(t)-2 = (q^{n-1}-q)/(q-1)-1$. It follows
that
$$\delta_R =   \sum_{1_R \neq \lambda \in \Irr(R)}\lambda + \frac{q^{n-1}-q}{q-1}
    \cdot  1_R.$$
Next, $Q$ acts doubly transitively on the $1$-spaces of
$\langle e_2, \ldots ,e_n\rangle_{\bbF_q}$, with kernel containing $R$ and
with character $\beta + 1_Q$, where $\beta \in \Irr(Q)$ of degree
$(q^{n-1}-q)/(q-1)$. Hence $\beta$ is an irreducible constituent of $\delta$, and
the statement follows.
\end{proof}

In the subsequent treatment of $\SL_{n}(q)$, it is convenient to adopt the labeling of
irreducible
$\bbC G_n$-modules as given in \cite{J}, which uses Harish-Chandra induction
$\circ$. Each
such module is labeled as $S(s_{1},\la_{1}) \circ \ldots \circ S(s_{m},\la_{m})$, where
$s_{i} \in \overline{\bbF}_{q}^{\times}$ has degree $d_{i}$ (over $\bbF_q$), $\la_{i}$ is a
partition
of $k_{i}$, and $\sum^{m}_{i=1}k_{i}d_{i} = n$, cf.\ \cite{J}, \cite{KT1}.
Similarly, irreducible $kG_n$-modules are labeled as
$D(s_{1},\la_{1}) \circ \ldots \circ D(s_{m},\la_{m})$, with some extra conditions
including $s_i$ being a $p'$-element.
For $\la \vdash n$,
let $\chi^\la = S(1,\la)$ denote the unipotent character of $\GL_n(q)$ labeled by
$\la$. We make the convention that $\chi^{(n-2,2)} = 0$ for $n = 3$. Also, note
that $1_{G_n} = \chi^{(n)}$ and $\rho = 1_{G_n} + \chi^{(n-1,1)}$
(see e.g.\  \cite[Lemma 5.1]{GT1}). We next establish the
following result, which holds for arbitrary $\GL_n(q)$ with $n \geq 3$ and
which is interesting in its own right:

\begin{lem}\label{sl2}
In the above notation, we have the following decomposition of $\rho^2$ into
irreducible constituents over $G_n = \GL_n(q)$:
$$\begin{array}{ll}\rho^2 & =
    2\chi^{(n)} + 4\chi^{(n-1,1)} + \chi^{(n-2,2)} + 2\chi^{(n-2,1^2)} \vspace{1mm}\\
    & + \sum_{a \in \bbF_q^\times,~a^2 = 1 \neq a}S(a,(1^2)) \circ S(1,(n-2)) \vspace{1mm} \\ \vspace{1mm}
    & + \sum_{a \in \overline{\bbF}_q^\times,~a^{q-1} = 1 \neq a^2}S(a,(1)) \circ S(a^{-1},(1)) \circ S(1,(n-2))\\ \vspace{1mm}
    & +  \sum_{b \in \overline{\bbF}_q^\times,~b^{q+1} = 1 \neq b^2}S(b,(1)) \circ
    S(1,(n-2)).
    \end{array}$$
\end{lem}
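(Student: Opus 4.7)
The plan is to realize $\rho^2$ as a permutation character and then apply Mackey decomposition followed by Harish-Chandra induction. Since $\rho = \Ind_P^{G_n} 1$ for $P = \Stab_{G_n}(\langle e_1\rangle)$ a maximal parabolic, $\rho^2$ is the permutation character of $G_n$ on $\Omega \times \Omega$. Because $G_n$ acts $2$-transitively on $\Omega$, it has exactly two orbits on $\Omega \times \Omega$: the diagonal (stabilizer $P$) and the off-diagonal (stabilizer $P_1 := \Stab_{G_n}(\langle e_1\rangle, \langle e_n\rangle)$). This yields
\[
\rho^2 = \rho + \Ind_{P_1}^{G_n} 1.
\]

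To decompose $\Ind_{P_1}^{G_n} 1$, I would observe that $P_1$ is contained in the maximal parabolic $Q_W = \Stab_{G_n}(W)$ where $W := \langle e_1, e_n\rangle$, and that the unipotent radical $U_W$ of $Q_W$ (which acts trivially on $W$, hence on both lines) lies in $P_1$. Under the Levi quotient $Q_W/U_W \cong \GL_2(q) \times \GL_{n-2}(q)$, the image of $P_1$ is $T \times \GL_{n-2}(q)$, where $T$ is the split torus of $\GL_2(q)$. Transitivity of induction, together with the fact that induction and inflation commute when the kernel lies in the subgroup, then gives
\[
\Ind_{P_1}^{G_n} 1 = \bigl(\Ind_T^{\GL_2(q)} 1_T\bigr) \circ S(1, (n-2)).
\]

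The crux is then to decompose $\Ind_T^{\GL_2(q)} 1_T$ into irreducible characters of $\GL_2(q)$. Using Frobenius reciprocity $\langle \Ind_T^{\GL_2(q)} 1_T, \chi\rangle = \dim \chi^T$, one runs through the four Harish-Chandra series of $\GL_2(q)$ (linear characters, twisted Steinbergs, principal series, and cuspidals) to obtain
\begin{align*}
\Ind_T^{\GL_2(q)} 1_T = S(1, (2)) &+ 2\, S(1, (1^2)) + \sum_{\substack{a \in \bbF_q^\times \\ a^2 = 1 \ne a}} S(a, (1^2)) \\
&+ \sum_{\{a, a^{-1}\}} S(a, (1)) \circ S(a^{-1}, (1)) + \sum_{\{b, b^q\}} S(b, (1)),
\end{align*}
where the last two sums run over unordered inverse pairs $\{a, a^{-1}\} \subset \bbF_q^\times$ with $a^2 \ne 1$ and over Galois pairs $\{b, b^q\} \subset \bbF_{q^2}^\times$ with $b^{q+1} = 1 \ne b^2$, respectively. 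The main obstacle lies in verifying the cuspidal multiplicities: one must show that a cuspidal $\sigma_\theta$ of $\GL_2(q)$ has nonzero $T$-fixed space precisely when $\theta|_{\bbF_q^\times}$ is trivial, and in that case the multiplicity equals one. This requires explicit character values of $\GL_2(q)$ on the split torus.

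Finally, apply $\circ\, S(1, (n-2))$ termwise. Pieri's rule yields the unipotent contributions
\[
S(1, (2)) \circ S(1, (n-2)) = \chi^{(n)} + \chi^{(n-1, 1)} + \chi^{(n-2, 2)},
\qquad
S(1, (1^2)) \circ S(1, (n-2)) = \chi^{(n-1, 1)} + \chi^{(n-2, 1^2)},
\]
while by associativity of Harish-Chandra induction each non-unipotent summand produces the corresponding displayed irreducible of $\GL_n(q)$ in the paper's notation. Adding $\rho = \chi^{(n)} + \chi^{(n-1, 1)}$ and collecting unipotent terms yields the stated decomposition.
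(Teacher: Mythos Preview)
Your proof is correct and follows essentially the same route as the paper: both split $\rho^2 = \rho + \Ind_{H_n}^{G_n} 1$ via the two $G_n$-orbits on $\Omega\times\Omega$, rewrite the second summand as $(\Ind_T^{\GL_2(q)}1_T)\circ S(1,(n-2))$, decompose the $\GL_2(q)$ induction, and finish with Pieri. The only difference is cosmetic: the paper quotes \cite{NT} for the decomposition of $\Ind_{H_2}^{G_2}1_{H_2}$ and \cite{GT1} for the Pieri identities, whereas you compute these directly via Frobenius reciprocity and the $\GL_2(q)$ character table.
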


\begin{proof}
Recall that $\rho$ is the permutation character of $G_n$ acting on
$\Omega$ and also on the diagonal $\{(x,x) \mid x \in \Omega\}$
of $\Omega \times \Omega$, whereas
$\rho^2$ is the permutation character of $G_n$ acting on $\Omega \times \Omega$.
Letting $H_n := \Stab_{G_n}(\langle e_1 \rangle_{\bbF_q},\langle e_2 \rangle_{\bbF_q})$, we then see that
$$\rho^2 = \rho + \Ind^{G_n}_{H_n}(1_{H_n}).$$
Note that $\Ind^{G_n}_{H_n}(1_{H_n})$ is just the Harish-Chandra induction of
the character $\Ind^{G_2}_{H_2}(1_{H_2}) \otimes 1_{G_{n-2}}$ of the Levi
subgroup $G_2 \times G_{n-2}$ of the parabolic subgroup
$$P :=\Stab_{G_n}(\langle e_1,e_2 \rangle_{\bbF_q})$$ of $G_n$, i.e.\
\begin{equation}\label{hc1}
  \Ind^{G_n}_{H_n}(1_{H_n}) = \Ind^{G_2}_{H_2}(1_{H_2}) \circ 1_{G_{n-2}}.
\end{equation}

Consider the case $q$ is odd. Then, according to the proof of \cite[Proposition 5.5]{NT},
\begin{equation}\label{hc2}
  \begin{array}{ll}\Ind^{G_2}_{H_2}(1_{H_2}) & =
    S(1,(2)) + 2S(1,(1^2)) + S(-1,(1^2)) \vspace{1mm} \\ \vspace{1mm}
    & + \sum_{a \in \overline{\bbF}_q^\times,~a^{q-1} = 1 \neq a^2}S(a,(1)) \circ S(a^{-1},(1)) \\ \vspace{1mm}
     & +  \sum_{a \in \overline{\bbF}_q^\times,~b^{q+1} = 1 \neq b^2}S(b,(1)).
    \end{array}
\end{equation}
Next, by \cite[Lemma 5.1]{GT1} we have
\begin{equation}\label{hc3}
  S(1,(2)) \circ S(1,(n-2)) = \Ind^{G_n}_{P}(1_P) =
    \chi^{(n)} + \chi^{(n-1,1)} + \chi^{(n-2,2)},
\end{equation}
\begin{equation}\label{hc4}
  S(1,(1)) \circ S(1,(1)) \circ S(1,(n-2)) =
    \chi^{(n)} + 2\chi^{(n-1,1)} + \chi^{(n-2,2)} + \chi^{(n-2,1^2)}.
\end{equation}
Since  $S(1,(1)) \circ S(1,(1)) = S(1,(2)) + S(1,(1^2))$, the statement follows
from (\ref{hc1})--(\ref{hc4}) and properties of the Harish-Chandra induction in $G_n$
(see \cite{J}).

The case $q$ is even can be proved similarly, using
$$\begin{array}{ll}\Ind^{G_2}_{H_2}(1_{H_2}) & =
    S(1,(2)) + 2S(1,(1^2)) \vspace{1mm} \\ \vspace{1mm}
    & + \sum_{a \in \overline{\bbF}_q^\times,~a^{q-1} = 1 \neq a^2}S(a,(1)) \circ S(a^{-1},(1)) \\ \vspace{1mm}
     & +  \sum_{a \in \overline{\bbF}_q^\times,~b^{q+1} = 1 \neq b^2}S(b,(1)).
    \end{array}$$
instead of (\ref{hc2}).
\end{proof}

\begin{lem}\label{sl3}
In the above notation, when $p = (q^n-1)/(q-1)$ we have the following decomposition of
$\delta^2$ into irreducible constituents over $S = \SL_n(q)$:
$$\begin{array}{ll}\delta^2 & =
    2D(1,(n)) + 2D(1,(n-1,1))+ D(1,(n-2,2)) + 2D(1,(n-2,1^2)) \vspace{1mm}\\
    & + \sum_{a \in \bbF_q^\times,~a^2 = 1 \neq a}D(a,(1^2)) \circ D(1,(n-2)) \vspace{1mm} \\ \vspace{1mm}
    & + \sum_{a \in \overline{\bbF}_q^\times,~a^{q-1} = 1 \neq a^2}D(a,(1)) \circ D(a^{-1},(1)) \circ D(1,(n-2))\\ \vspace{1mm}
    & +  \sum_{a \in \overline{\bbF}_q^\times,~b^{q+1} = 1 \neq b^2}D(b,(1)) \circ
    D(1,(n-2)).    \end{array}$$
In particular, if there is a composition factor $U$ of the $kS$-module
$\cD \otimes \cD$ with $U^R = 0$, then $n = 3$ and $U$ affords the
Brauer character $D(1,(1^3))$. Furthermore, the only composition factors of
$\cD \otimes \cD$ that are not of $p$-defect zero are the ones with
Brauer character $1_S = D(1,(n))$, $\delta = D(1,(n-1,1))$, and $D(1,(n-2,1^2))$.
\end{lem}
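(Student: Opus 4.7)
My plan has three main steps. First, I would express $\delta^2$ as a virtual character of $G_n = \GL_n(q)$ using Lemma~\ref{sl2}. On $p'$-elements, $\delta = \rho - 2 \cdot 1_{G_n}$, and classically $\rho = 1_{G_n} + \chi^{(n-1,1)}$, so $\delta^2 = \rho^2 - 4\chi^{(n-1,1)}$ as Brauer characters. Substituting Lemma~\ref{sl2} cancels the $4\chi^{(n-1,1)}$ term, leaving the unipotent piece $2\chi^{(n)} + \chi^{(n-2,2)} + 2\chi^{(n-2,1^2)}$ together with the Harish-Chandra induced non-unipotent terms $S(s,\lambda) \circ S(1,(n-2))$ appearing in Lemma~\ref{sl2}.

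Next I would reduce each constituent modulo the linear prime $p = (q^n-1)/(q-1) = \Phi_n(q)$, which has multiplicative order $n$ modulo $q$, so $|G_n|_p = p$. By the Nakayama conjecture applied to $n$-cores, the principal $p$-block of $G_n$ consists exactly of the hook unipotents $\chi^{(n-k,1^k)}$ for $0 \leq k \leq n-1$, and James' theorem gives the alternating-chain decomposition
\[
\chi^{(n-k,1^k)} \equiv D(1,(n-k,1^k)) + D(1,(n-k+1,1^{k-1})) \pmod{p},
\]
with the convention that the second summand is $0$ when $k = 0$. The remaining constituents --- the non-hook unipotent $\chi^{(n-2,2)}$ for $n \geq 4$ (recall $\chi^{(n-2,2)} = 0$ by convention for $n = 3$) and every HC-induced $S(s,\mu) \circ S(1,(n-2))$ with $s \neq 1$ --- all lie in defect-zero blocks, because the factor $(q^n-1)/(q-1) = p$ appears in their degree (via $[G_n : P]$ for the HC-induced terms). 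Hence these remain irreducible modulo $p$. Substituting (in particular $2\chi^{(n-2,1^2)} \mapsto 2D(1,(n-1,1)) + 2D(1,(n-2,1^2))$) gives the claimed decomposition over $G_n$, which descends unchanged to $S = \SL_n(q)$ since $G_n = S \times \bfZ(G_n)$.

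The defect-zero assertion then follows immediately: only $D(1,(n))$, $D(1,(n-1,1))$, and $D(1,(n-2,1^2))$ lie in the principal block (of cyclic defect $p$). For the $U^R = 0$ assertion, Lemma~\ref{sl1a} gives that $1_R$ appears in $\delta_R$ with multiplicity $\beta(1) = (q^{n-1}-q)/(q-1)$. Using $\chi^{(n-2,1^2)} = D(1,(n-1,1)) + D(1,(n-2,1^2))$ (so $D(1,(n-1,1)) = \delta$ already contributes $\beta(1)$ copies of $1_R$), one computes $\dim D(1,(n-2,1^2))^R = \langle \chi^{(n-2,1^2)}|_R, 1_R\rangle - \beta(1)$, and this difference vanishes precisely when $n = 3$ (then $(n-2,1^2) = (1^3)$, and since $\chi^{(1^3)} = \St_{\GL_3}$ restricts on $R$ as $[U:R] \cdot \mathrm{reg}_R = q \cdot \mathrm{reg}_R$, the $1_R$-multiplicity is $q = \beta(1)$, leaving $\dim D(1,(1^3))^R = 0$). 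A direct Mackey computation shows each HC-induced constituent $D(s,\mu) \circ D(1,(n-2))$ has nonzero $R$-invariants (the trivial $\GL_{n-2}$-component $D(1,(n-2))$ gives $1_R$-invariants via the unipotent radical of the HC-parabolic, which meets $R$ nontrivially). Thus the only constituent of $\cD \otimes \cD$ with $U^R = 0$ is $D(1,(1^3))$ when $n = 3$. The main obstacle will be the precise invocation of James' decomposition matrix at the linear prime in Step~2, combined with the $R$-invariant computations in the final step (especially verifying that $\langle \chi^{(n-2,1^2)}|_R, 1_R\rangle > \beta(1)$ for $n \geq 4$).
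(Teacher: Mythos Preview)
Your derivation of the decomposition of $\delta^2$ is essentially the paper's argument: both write $\delta^2 = (\rho^\circ)^2 - 4(\chi^{(n-1,1)})^\circ$, apply Lemma~\ref{sl2}, observe that all constituents except $\chi^{(n)}$, $\chi^{(n-1,1)}$, $\chi^{(n-2,1^2)}$ are of $p$-defect zero, and use $(\chi^{(n-2,1^2)})^\circ = D(1,(n-1,1)) + D(1,(n-2,1^2))$ (the paper cites \cite{GT1} for this rather than invoking James' full decomposition matrix). The defect-zero assertion is likewise the same.

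For the $U^R = 0$ claim your route diverges from the paper's and is considerably harder. The paper exploits a single structural observation: since $L \cong \GL_{n-1}(q)$ acts transitively on $\Irr(R) \setminus \{1_R\}$, any $kS$-module $U$ with $U^R = 0$ has $U_R$ a multiple of $\sum_{\lambda \neq 1_R} \lambda$, forcing $(q^{n-1}-1) \mid \dim U$. One then simply checks that, among the listed constituents, only $D(1,(1^3))$ when $n=3$ (of dimension $(q-1)(q^2-1)$) has degree divisible by $q^{n-1}-1$; the paper records the closed formula $D(1,(n-2,1^2))(1) = (q^n-q)(q^n-2q^2+1)/((q-1)(q^2-1)) + 1$ to make this transparent. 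This bypasses all of your case-by-case $R$-invariant computations---the Mackey analysis for the HC-induced terms and your acknowledged obstacle of bounding $\langle \chi^{(n-2,1^2)}|_R, 1_R\rangle$ for $n \geq 4$. Incidentally, your case analysis omits $D(1,(n-2,2))$ for $n \geq 4$, which is not literally of the form $D(s,\mu) \circ D(1,(n-2))$ with $s \neq 1$; the divisibility argument handles it uniformly with no extra work.
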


\begin{proof}
Let us denote by $\chi^\circ$ the restriction of any character $\chi$ of $G_n$ to
the set of $p'$-elements of $G_n$. Then
$$\delta^2  = (\rho^\circ  -2 \cdot 1_{G_n})^2 = (\rho^\circ)^2 - 4(\chi^{(n-1,1)})^\circ,$$
and we can apply Lemma \ref{sl2}. Since $p = (q^n-1)/(q-1)$ (or more generally,
if $p$ is a primitive prime divisor of $q^n-1$), all complex characters in
the decomposition for $\rho^2$ in Lemma \ref{sl2} are of $p$-defect $0$, except for
$\chi^{(n)}$, $\chi^{(n-1,1)}$, and $\chi^{(n-2,1^2)}$.  Furthermore,
$(\chi^{(n-2,1^2)})^\circ = D(1,(n-1,1)) + D(1,(n-2,1^2))$, cf.\ Proposition 3.1
and \S4 of \cite{GT1}; in particular,
$$D(1,(n-2,1^2))(1) = \frac{(q^n-q)(q^n-2q^2+1)}{(q-1)(q^2-1)}+1.$$
Since $G_n = S \times \bfZ(G_n)$, we arrive at the formulated
decomposition for $\delta^2$. Also, the degree of any irreducible constituent $\psi$ of
$\delta^2$ listed above is not divisible by $|R|-1 = q^{n-1}-1$, unless
$n = 3$ and $\psi = D(1,(1^3))$, whence $\psi_R$ must contain $1_R$ since
$L$ acts transitively on $\Irr(R) \setminus \{1_R\}$. In the exceptional case,
$\psi_R$ does not contain $1_R$, as one can see by direct computation (or by using
\cite[Theorem 5.4]{KT2}).
\end{proof}

\begin{cor}\label{sl4}
Assume that $p = (q^n-1)/(q-1)$ and $n \geq 5$. Then $S = \SL_n(q)$ is weakly
adequate on $\cD$.
\end{cor}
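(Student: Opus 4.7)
The plan is to apply Lemma \ref{key3}(ii) to the $p'$-subgroup $Q = RL$ of $S$, then verify the key hypothesis via an $R$-fixed-point computation. Since $n \geq 5$ forces $S = \SL_n(q)$ to be simple, $\GP = S$ and $V = \cD$ is $\GP$-irreducible, so $t = 1$ and the condition on $W_i^* \otimes W_j$ for $i \neq j$ is vacuous. Lemma \ref{sl1a} gives $V_Q = A \oplus B$ with $\dim A = q^{n-1}-1 \neq (q^{n-1}-q)/(q-1) = \dim B$, verifying hypothesis (b) of Lemma \ref{key3} with $s = 1$. Since $n$ is prime, $p$ is a primitive prime divisor of $q^n-1$, so $Q$ (and hence $N := \bfN_S(Q) = Q$, by the self-normalizing property of maximal parabolics) is a $p'$-group. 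Lemma \ref{key3}(i) combined with the Artin-Wedderburn theorem applied to $Q$ yields $\cM \supseteq \End(A) \oplus \End(B)$, and Lemma \ref{key3}(ii) then reduces our task to showing that every $\GP$-composition factor of $\End(V)/\cM$ is trivial.

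The key step is a dimension count on $R$-fixed subspaces. By Lemma \ref{sl1a}, as $R$-representations $A_R = \sum_{1 \neq \lambda \in \Irr(R)} \lambda$, with each non-trivial character appearing once, while $B_R$ is trivial of multiplicity $\dim B$. A direct count of isotypic components yields
\[
\dim \End_R(A) + \dim \End_R(B) = (q^{n-1}-1) + (\dim B)^2 = \dim \End_R(V).
\]
Since $\End(A)^R \oplus \End(B)^R$ embeds in $\End(V)^R$ and the dimensions coincide, we get $\cM^R \supseteq \bigl(\End(A) \oplus \End(B)\bigr)^R = \End(V)^R$. As $R$ has order coprime to $p$, the functor $(-)^R$ on $kR$-modules is exact, and therefore $(\End(V)/\cM)^R = 0$.

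To conclude, observe that $\cD$ is self-dual (because $\delta = \rho - 2$ is real-valued on $p'$-elements), so $\End(V) \cong \cD \otimes \cD$ as $kS$-modules. By Lemma \ref{sl3}, for $n \geq 5$ every composition factor $U$ of $\cD \otimes \cD$ satisfies $U^R \neq 0$. If $\End(V)/\cM$ were non-zero, any irreducible $S$-submodule $U$ would give $U^R \subseteq (\End(V)/\cM)^R = 0$, contradicting $U^R \neq 0$. Hence $\cM = \End(V)$, proving weak adequacy. The chief technical hurdle is the dimension match in the second paragraph; once that is in hand, the result follows by a clean assembly of the ingredients already established.
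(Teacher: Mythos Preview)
Your proof is correct and follows essentially the same route as the paper's: both show $\cM \supseteq \End(A)\oplus\End(B)$ via Artin--Wedderburn for the $p'$-group $Q$, deduce that $\End(V)/\cM$ has no $R$-fixed points, and then invoke Lemma~\ref{sl3} for $n>3$. Your dimension count $\dim\End_R(A)+\dim\End_R(B)=\dim\End_R(V)$ is just a repackaging of the paper's observation that the complement affords the $Q$-character $2\alpha\beta$, whose restriction to $R$ contains no copy of $1_R$; the two statements are equivalent. One small point: your appeal to Lemma~\ref{key3}(ii) is unnecessary, since you go on to prove the stronger statement $\End(V)/\cM=0$ directly (not merely that its composition factors are trivial); the detour through that lemma can be dropped without loss.
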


\begin{proof}
By Lemma \ref{sl1a} and the Artin-Wedderburn theorem applied to $Q$, $\cM$ contains
the subspace $\cA := (A \otimes A) \oplus (B \otimes B)$ of $\cD \otimes \cD = \End(\cD)$, with $A$ affording $\alpha$ and $B$ affording $\beta$. Thus, the complement
to $\cA$ in $\End(V)$ affords the $Q$-character $\Delta := 2\alpha\beta$. It follows
by Lemma \ref{sl1a} that $\Delta_R$ does {\it not} contain  $1_R$, whence
$R$ does not have any nonzero fixed point while acting on this complement.
The same must be true for the quotient $\End(V)/\cM$, which is a semisimple
$Q$-module. Since $n > 3$, by Lemma \ref{sl3} this can happen only
when $\cM = \End(V)$.
\end{proof}

Next we will extend the result of Corollary \ref{sl4} to the case $n = 3$.

\begin{prop}\label{sl5}
Assume that $p = (q^3-1)/(q-1)$. Then $S = \SL_3(q)$ is weakly
adequate on $\cD$.
\end{prop}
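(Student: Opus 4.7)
My plan is to extend the strategy of Corollary~\ref{sl4} and handle the one exceptional $S$-composition factor that arises when $n=3$. Set $Q := \Stab_S(\langle e_2, e_3\rangle_{\bbF_q})$ with unipotent radical $R$. Lemma~\ref{sl1a} together with the Artin-Wedderburn theorem applied to the $p'$-group $Q$ gives $\cM \supseteq \cA := \End(A) \oplus \End(B)$, where $\cD|_Q = A \oplus B$ with $\dim A = q^2-1$ and $\dim B = q$. The $Q$-complement to $\cA$ in $\End(\cD)$ affords the character $2\alpha\bar\beta$, whose restriction to $R$ contains no copy of $1_R$; hence $\End(\cD)/\cM$ is a semisimple $Q$-module with trivial $R$-fixed subspace. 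Specialising Lemma~\ref{sl3} to $n=3$, the only $S$-composition factor of $\cD \otimes \cD$ with vanishing $R$-invariants is the cuspidal module $D := D(1,(1^3))$, occurring with multiplicity two. Hence the socle of $\End(\cD)/\cM$ is a direct sum of copies of $D$, and the task reduces to ruling out $D$ as a submodule.

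This is where one must go beyond the proof of Corollary~\ref{sl4}. Because $D$ is cuspidal, $D^{R'}=0$ also for the unipotent radical $R'$ of the opposite maximal parabolic $Q' := \Stab_S(\langle e_1\rangle_{\bbF_q})$, so the symmetric argument---applying the analog of Lemma~\ref{sl1a} to $Q'$, which holds because the graph automorphism $g \mapsto (g^T)^{-1}$ interchanges $Q$ and $Q'$ and fixes the self-dual $\cD$---yields $\cM \supseteq \cA' := \End(A') \oplus \End(B')$, but does not eliminate $D$ by a second fixed-point computation. My plan is instead to show that $\cA + \cA' \subseteq \cM$ already contains both copies of $D$ inside $\End(\cD)$. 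To locate them I would trace the term $2D$ in $\delta^2$ from Lemma~\ref{sl3} back through Lemma~\ref{sl2}: the coefficient of $\chi^{(1^3)}$ in $\rho^2$ equals two, reflecting the Harish-Chandra inductions from $Q$ and from $Q'$, which suggests that one copy of $D$ is captured inside $\cA$ and the other inside $\cA'$.

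The main obstacle is this last identification. Since $D$ is cuspidal it is not itself Harish-Chandra-induced, so the statement that its isotypic piece sits in $\cA + \cA'$ is a genuinely global claim about $\End(\cD)$ as a $kS$-module and is not automatic from the parabolic analysis. I would attempt to verify it by restricting everything to the common Borel $B := Q \cap Q'$ and computing $\dim(\cA \cap \cA')$ explicitly, using Lemma~\ref{sl1a} on both parabolics, to confirm that $\cA + \cA'$ meets the $D$-isotypic part of $\End(\cD)$ non-trivially; the $Q$-character data gives the $R$-invariants on one side, and combining with the analogous $R'$-analysis should pin down the overlap. Failing a clean argument at this level, I would enlarge $\cM$ further by bringing in additional semisimple contributions coming from the normalizer of a non-split maximal torus (one whose $p'$-part is not contained in both parabolics simultaneously), which should witness the remaining $D$-components of $\cM$ directly.
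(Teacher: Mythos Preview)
Your diagnosis is correct and matches the paper exactly up to the point where you identify the obstacle: after applying Lemma~\ref{sl1a} for $Q$, the only possible $S$-composition factor of $\End(\cD)/\cM$ is the cuspidal $D = D(1,(1^3))$, and since $D^{R'}=0$ as well, the same fixed-point argument for the opposite parabolic $Q'$ gives nothing new. The paper also brings in $Q'$, but its execution differs from both your main plan and your fallback.

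The paper does not try to locate the two copies of $D$ structurally, and it does not invoke a non-split torus. Instead it carries out, in effect, the intersection computation you allude to---but concretely at the level of $\cD$ rather than $\End(\cD)$. It realises $\cD$ explicitly as a subquotient of the permutation module on $\bbP^2(\bbF_q)$, and writes down by hand $q(q-1)$ linearly independent vectors in $A \cap A'$ and $q{-}1$ in $B \cap A'$ (here $A,B,A',B'$ denote the summands of $\cD$, not of $\End(\cD)$). Tensoring gives
\[
\dim\bigl(\End(A') \cap (\Hom(A,B) \oplus \Hom(B,A))\bigr) \ \ge\ 2q(q-1)^2,
\]
and since $\cM \supseteq (\End(A)\oplus\End(B)) + \End(A')$, one obtains $\dim\End(\cD) - \dim\cM \le 4q(q-1)$. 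For $q \ge 5$ this is strictly less than $\dim D = (q-1)(q^2-1)$, so not even one copy of $D$ can survive in the quotient, and $\cM = \End(\cD)$. The residual cases $q=2$ (where $S \cong \PSL_2(7)$, already covered) and $q=3$ (verified by a computer check of L\"ubeck) are handled separately; your proposal does not address them.

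Your heuristic that the coefficient $2$ of $\chi^{(1^3)}$ in $\rho^2$ ``reflects the Harish-Chandra inductions from $Q$ and from $Q'$,'' so that one copy of $D$ lands in your $\cA$ and the other in $\cA'$, is not sound: $D$ is cuspidal, hence is not a constituent of any parabolically induced module, and in any case your $\cA$ and $\cA'$ are only $Q$- and $Q'$-stable, so speaking of them ``containing a copy of $D$'' has no clear meaning. The paper's dimension bound bypasses this issue entirely.
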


\begin{proof}
Note that $\delta$ is invariant under the graph automorphism $\tau$ of $S$, which
interchanges the two conjugacy classes of the maximal parabolic subgroup
$$Q = RL =\Stab_S(\cU) =\Stab_S(\langle e_1,e_2 \rangle_{\bbF _q})$$
and its opposite
$$Q^\sharp = R^\sharp L^\sharp = \Stab_S(\langle e_1 \rangle_{\bbF _q}).$$
Hence Lemma \ref{sl1a} also applies to $Q^\sharp$. To ease the notation, we will
drop the subscript $\bbF_q$ in various spans $\langle\cdot \rangle_{\bbF_q}$ in this proof.

\smallskip
First we will construct the  $Q$-submodules $\cA,\cB$ affording the character $\alpha$ and $\beta$ in $\cD$. Clearly, $R$ has $q+1$ fixed points $\omega \in \bP\cU$
and one orbit of length $q^2$
$$\cO := \{ \langle e_3 + y \rangle, ~~ y \in \cU\}$$
on $\Omega = \bP\cN$. Denoting
$\cI := \langle \sum_{\omega \in \bP\cN}\omega \rangle_k$, we can now
decompose $\cD = \cA \oplus \cB$ as $Q$-modules, where
$$\cA := [\cD,R] = \left(\left\{ \sum_{y \in \cU}a_y\langle e_3 + y \rangle \mid a_y \in k,
    \sum_{y \in \cU}a_y = 0\right\} \oplus \cI \right)/\cI,$$
$$\cB:= \bfC_\cD(R) = \left(\left\{ \sum_{\omega \in \bP\cU}b_\omega\omega \mid
    b_\omega \in k, \sum_{\omega \in \bP\cU}b_\omega = 0\right\} \oplus \cI \right)/\cI.$$

Next, $R^\sharp$ has $1$ fixed point $\langle e_1\rangle$ and $q+1$ orbits of length $q$:
$$\cO_{\infty} := \bP\cU \setminus \{\langle e_1 \rangle \},~~
    \cO_c := \{ \langle e_3 + ce_2 + de_1 \rangle, ~~ d \in \bbF_q\},~c \in \bbF_q$$
on $\bP\cN$. Then we can again decompose $\cD =  \cA^\sharp \oplus \cB^\sharp$
as $Q^\sharp$-modules, where $\cA^\sharp = [\cD,R^\sh]$ and
$\cB^\sh = \bfC_\cD(R^\sh)$. Note that
$\cO = \bP\cN \setminus \bP\cU = \cup_{c \in \bbF_q}\cO_c$. Hence, the $q(q-1)$ vectors
$$v_{c,d} = \langle e_3 + ce_2 + de_1 \rangle - \langle e_3 +ce_2 \rangle,~~
     c \in \bbF_q,~d \in \bbF_q^\times$$
belong to $\cA \cap \cA^\sh$, and similarly the $q-1$ vectors
$$u_a = \langle e_2 +ae_1 \rangle - \langle e_2 \rangle,~~a \in \bbF_q^\times$$
belong to $\cB \cap \cA^\sh$, and they are linearly independent. Thus
$$u_a \otimes v_{c,d} \in (\cA^\sh \otimes \cA^\sh) \cap (\cB \otimes  \cA),~~
    v_{c,d} \otimes u_a \in (\cA^\sh \otimes \cA^\sh) \cap (\cA \otimes  \cB)$$
and so both $(\cA^\sh \otimes \cA^\sh) \cap (\cB \otimes  \cA)$ and
$(\cA^\sh \otimes \cA^\sh) \cap (\cA \otimes  \cB)$ have dimension at least
$q(q-1)^2$. As a consequence,
\begin{equation}\label{two}
  \dim \left( (\cA^\sh \otimes \cA^\sh) \cap (\cA \otimes  \cB \oplus \cB \otimes  \cA)
         \right) \geq 2q(q-1)^2.
\end{equation}
Since $\cD$ is self-dual, it supports a non-degenerate $S$-invariant symmetric
bilinear form $(\cdot,\cdot)$, with respect to which $\cA$ and $\cB$ are orthogonal, and
so are $\cA^\sharp$ and $\cB^\sharp$. As usual, we can now identify
$\cD \otimes \cD$ with $\End(\cD)$ by sending $u \otimes v \in \cD \otimes \cD$ to
$$f_{u,v}~:~ x \mapsto (x,u)v$$
for all $x \in \cD$. Furthermore, in the proof of Corollary \ref{sl4} we have already mentioned that $\cM$ contains
the subspaces $\End(\cA) \oplus \End(\cB)$ (arguing with $Q$) and
$\End(\cA^\sh)$ (arguing with $Q^\sh$). It now follows from (\ref{two}) that
$$\dim (\End(\cA^\sh) \cap (\Hom(\cA,\cB) \oplus \Hom(\cB,\cA))) \geq 2q(q-1)^2.$$
Hence for $q \geq 5$ we have that
$$\begin{array}{ll}\dim \End(\cD) - \dim \cM & \leq
    (q^2+q-1)^2 - (q^2-1)^2 - q^2 -2q(q-1)^2\\
    & = 4q(q-1) < (q-1)(q^2-1) = \dim D(1,(1^3)).\end{array}$$
On the other hand, Lemma \ref{sl3} and the proof of Corollary \ref{sl4} show that
the only $S$-composition factor of $\End(\cD)/\cM$ (if any) is $D(1,(1^3))$. Hence, we
conclude that $\cM = \End(V)$ if $q \geq 5$. Since $p = (q^3-1)/(q-1)$, in the
remaining cases we have $q = 2,3$. The case $q=2$ is already handled before
as $S \cong \PSL_2(7)$, and the case $q=3$ has been checked by F.\ L\"ubeck
using computer.
\end{proof}

Now we can prove the weak adequacy of $G$ on $V$ in the case the
$\GP$-module is homogeneous.

\begin{prop}\label{sl-h}
Assume that $t = 1$, i.e.\ the $\GP$-module $V$ is homogeneous in the case
$(p,H,\dim W) = ((q^n-1)/(q-1),\SL_n(q),p-2)$
 of Theorem \ref{simple1}. Then $(G,V)$ is weakly adequate.
\end{prop}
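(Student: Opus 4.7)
The plan is to apply Lemma \ref{key3} with $Q = RL \leq \GP$ the maximal parabolic stabilizing a hyperplane, as in Lemma \ref{sl1a}. Since $p = (q^n-1)/(q-1)$ is a primitive prime divisor of $q^n-1$ coprime to $|Q|$, $Q$ is a $p'$-subgroup. By Lemma \ref{sl1a}, $W|_Q = A \oplus B$ with $A, B$ non-isomorphic $Q$-irreducibles of distinct dimensions $q^{n-1}-1$ and $(q^{n-1}-q)/(q-1)$. Since $t = 1$, one has $V|_Q \cong e(A \oplus B)$, matching the hypothesis of Lemma \ref{key3} with $s = 1$. The proof will combine this with the already-established weak adequacy of $\GP \cong \SL_n(q)$ on $W$ (Corollary \ref{sl4} and Proposition \ref{sl5}) to deduce $\cM = \End(V)$.

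First I would verify condition (a) of Lemma \ref{key3}, namely $\{Q^g : g \in G\} = \{Q^x : x \in \GP\}$. Writing $G/\GP \hookrightarrow \Out(\GP) \cong \langle \phi\rangle \times \langle \tau\rangle$ (field $\phi$ of odd order $f$ times the graph $\tau$), this holds precisely when $\tau \notin G/\GP$. If $\tau \in G/\GP$, then $G$ swaps the $\GP$-classes of $Q$ and of the opposite parabolic $Q^\sharp$ (line stabilizer) while stabilizing their union; in that case I would apply Lemma \ref{key3} to the index-2 subgroup $G_0 \leq G$ with $\tau \notin G_0/\GP$ (acting on irreducible $G_0$-constituents of $V|_{G_0}$ if necessary), and then use $G$-invariance of $\cM$ under conjugation by a $p'$-lift of $\tau$ to transport the $Q$-contributions to analogous $Q^\sharp$-contributions. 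With condition (a) arranged, $N := \bfN_G(Q)$ is a $p'$-group (since $N/Q \hookrightarrow G/\GP$), Lemma \ref{key3}(i) yields $V|_N \cong A_V \oplus B_V$ as $N$-modules with $A_V|_Q \cong eA$, $B_V|_Q \cong eB$, and Artin--Wedderburn applied to $N$ gives $\cM \supseteq \End(A_V) \oplus \End(B_V)$. The complement in $\End(V)$ is a $Q$-quotient of $\Hom(A_V, B_V) \oplus \Hom(B_V, A_V)$, with $Q$-character $2e^2 \alpha\overline{\beta}$; by Lemma \ref{sl1a}, $\alpha|_R$ contains no trivial constituent and $\beta|_R$ is trivial, so $(\End(V)/\cM)^R = 0$.

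Finally I would translate this back to $\GP$-structure. As $\GP$-module, $\End(V) \cong e^2 \End(W)$, so $\End(V)/\cM$ is a $\GP$-subquotient of $e^2 \End(W)$. By Lemma \ref{sl3}, every $\GP$-composition factor of $\End(W)$ has nontrivial $R$-fixed subspace, with the sole exception of $D(1,(1^3))$, which appears only for $n = 3$. Since $R$ is a $p'$-group and thus acts semisimply, for $n \geq 5$ the vanishing $(\End(V)/\cM)^R = 0$ forces every composition factor (and hence the entire module) to be zero, giving $\cM = \End(V)$. For $n = 3$, the analogous argument applied to $Q^\sharp$ with its unipotent radical $R^\sharp$ rules out the $D(1,(1^3))$-type composition factor as well, in the spirit of the two-parabolic analysis in the proof of Proposition \ref{sl5}. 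The main obstacle is the graph case: when $\tau \in G/\GP$, condition (a) of Lemma \ref{key3} fails for $Q$ alone, which forces a symmetric two-parabolic argument using both $Q$ and $Q^\sharp$; for $n = 3$ this two-parabolic analysis is required regardless, in order to eliminate the $D(1,(1^3))$ factor.
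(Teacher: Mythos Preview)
Your approach differs substantially from the paper's and has genuine gaps in precisely the two places you flag as obstacles.

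The paper does not apply Lemma \ref{key3} here. Instead it bootstraps from the already-established weak adequacy of $(S,W)$ with $S=\SL_n(q)$ (Corollary \ref{sl4}, Proposition \ref{sl5}). Setting $C:=\bfC_G(S)$ (a $p'$-group) and $L:=C\times S\lhd G$, cyclicity of $\Out(S)\cong C_{2f}$ gives $G=\langle L,\tau\rangle$ for some $\tau$. If $V_L$ is irreducible then $V=U\otimes W$ with $U$ an irreducible $kC$-module, and Artin--Wedderburn for $C$ combined with weak adequacy of $(S,W)$ gives $\cM\supseteq\End(U)\otimes\End(W)=\End(V)$ directly. If $V_L=\bigoplus_i V_i$ with $V_i=U_i\otimes W$, the same reasoning gives $\bigoplus_i\End(V_i)\subseteq\cM$; the cross-terms then come from the key observation that \emph{every element of the coset $S\sigma$ is a $p'$-element} whenever $\sigma$ induces a nontrivial outer automorphism of $S$ (no such automorphism can fix the $S$-class of a nontrivial $p$-element, as one sees by inspecting eigenvalues). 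Hence all of $L\sigma$ consists of $p'$-elements, and multiplying $\End(V_j)\subseteq\cM$ by the image of $\sigma$ produces $\Hom(V_i,V_j)$.

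This coset observation is exactly what your graph-case workaround lacks. Passing to $G_0$ and establishing weak adequacy for each irreducible summand $V'$ of $V|_{G_0}$ yields only $\bigoplus\End(V')\subseteq\cM$, and conjugating these contributions by a lift of $\tau$ stays inside that sum. You still need $p'$-elements of $G\setminus G_0$ to reach $\Hom(V',V'')$, and you provide no mechanism for producing them.

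Your $n=3$ argument also fails as stated. The module $D(1,(1^3))$ is $\tau$-invariant (it is unipotent with $\tau$-fixed partition label), and since $\tau$ conjugates $R$ to $R^\sharp$ one has $D(1,(1^3))^{R^\sharp}=0$ as well; thus the $Q^\sharp$-fixed-point argument rules out nothing new. The actual content of Proposition \ref{sl5} is an explicit intersection computation yielding $\dim\End(W)-\dim\cM_S\le 4q(q-1)<\dim D(1,(1^3))$, but in $\End(V)\cong e^2\End(W)$ this bound scales to $4e^2q(q-1)$, which for large $e$ no longer excludes even a single copy of $D(1,(1^3))$ from $\End(V)/\cM$.
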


\begin{proof}
Since $V|_{\GP} = eW$, by Theorem \ref{str} we have that $\GP = S = \SL_n(q)$.
Recall that $\gcd(n,q-1) = 1$ and $q = q_0^f$ where $q_0$ is a prime and $f$ is odd;
in particular, $\Out(S) \cong C_{2f}$ is cyclic.
It follows that $L := C \times S \lhd G = \langle L, \tau\rangle$
for some $\tau \in G$, and $C := \bfC_G(S)$ is a $p'$-group.
Let $\Psi$ denote the corresponding representation of $S$ on $W$ and $\Phi$
denote the corresponding representation of $G$ on $V$. Then, by Corollary \ref{sl4} and  Proposition \ref{sl5} we have that
$$\langle \Psi(y) \mid y \in S, y \mbox{ semisimple} \rangle_k = \End(W).$$

\smallskip
First we consider the case where $V_L$ is irreducible. Then $V \cong U \otimes W$,
where $U$ is an irreducible $kC$-module and $C$ acts trivially on $W$. Let $\Theta$
denote the corresponding representation of $C$ on $U$.
By the Artin-Wedderburn theorem, $\langle \Theta(x) \mid x \in C \rangle_k = \End(U)$.
Since $\Phi(xy) = \Theta(x) \otimes \Psi(y)$ for $x \in C$, $y \in S$, and since
$C$ is a $p'$-group, we conclude that $\cM$ contains $X \otimes Y$ for all
$X \in \End(U)$ and $Y \in \End(W)$, i.e.\ $\cM = \End(V)$.

\smallskip
Assume now that $V_L$ is reducible. Note that $V_L$ is
semisimple and multiplicity-free, as $G/L$ is cyclic. Since $W$
is $\tau$-invariant, it follows that
$$V_L = V_1 \oplus V_2 \oplus \ldots \oplus V_s
    \cong (U_1 \oplus U_2 \oplus \ldots \oplus U_s) \otimes W,$$
where $V_i = U_i \otimes W$ for some pairwise non-isomorphic
irreducible $kC$-module $U_i$, $1 \leq i \leq s$,
$\langle \tau \rangle$ acts transitively on the set of isomorphism classes of
$U_1, \ldots, U_s$, $C$ acts trivially on $W$ as before,
and $\Phi(\tau)$ permutes the summands $V_1, \ldots ,V_s$ transitively.
Let $\Theta_i$ denote the corresponding representation of $C$ on $U_i$, and let
$\Theta$ denote the corresponding representation of $C$ on
$U := U_1 \oplus \ldots \oplus U_s$.
Since $U_i \not\cong U_j$ for $i \neq j$, by the Artin-Wedderburn theorem,
$\langle \Theta(x) \mid x \in C \rangle_k = \End(U_1) \oplus \ldots \oplus \End(U_s)$.
It follows as above that $\cM$ contains $X \otimes Y$ for all
$Y \in \End(W)$ and all $X \in \End(U_i)$ (viewed as an element of $\End(U)$ by
letting it act as zero on $U_{j}$ for all $j \neq i$). In other words, $\cM$ contains the subspace $\End(V_1) \oplus \ldots \oplus \End(V_s)$ of $\End(V)$.

It remains to show that $\cM$ contains $\Hom(V_i,V_j)$ for any $i \neq j$.
Since $\Phi(\tau)$ permutes the summands $V_1, \ldots ,V_s$ transitively,
we can find $\sigma \in \langle \tau \rangle \setminus CS$ such that
$\Phi(\sigma)$ sends $V_i$ to $V_j$, and $\sigma$ induces a nontrivial outer
automorphism of $S$.
Observe that the condition $p = (q^n-1)/(q-1)$ implies that all elements in the coset
$S\sigma$ are $p'$-elements.
(Indeed, assume that $x\sigma$ has order divisible by $p$ for some $x \in S$.
Then some $p'$-power $g$ of $x\sigma$ is a $p$-element in $S$. It follows that
$\sigma$ preserves the conjugacy class $g^S$, which is impossible by inspecting
the eigenvalues of $g$.) So all elements in $L\sigma$ are $p'$-elements.
Hence $\cM$ also contains the subspace
$$\cA := \langle \Phi(h\sigma) \mid h \in L \rangle_k =
   \langle \Phi(h) \mid h \in L \rangle_k \cdot \Phi(\sigma).$$
Again by the Artin-Wedderburn theorem,
$$\langle \Phi(h) \mid h \in L \rangle_k = \End(V_1) \oplus \ldots \oplus \End(V_s).$$
Since $\Phi(\sigma)$ sends $V_i$ (isomorphically) to $V_j$, we conclude that
$$\cA \supset \End(V_j,V_j)\Phi(\sigma) = \Hom(V_i,V_j),$$
and so $\cM = \End(V)$.
\end{proof}

Next we consider the subgroup
$$Q' = R'L' =\Stab_S(\langle e_n\rangle_{\bbF_q},
    \langle e_2, \ldots, e_{n} \rangle_{\bbF_q}),$$
where $R'$ is a $q_0$-group of special type of order $q^{2n-3}$ and
$L' \cong \GL_{n-2}(q) \times \GL_1(q)$.
Note that the graph automorphism $x \mapsto \tw t x^{-1}$ of $S$ sends $Q'$ to
$(Q')^g$, where $g \in S$ sends $e_1$ to $e_n$, $e_n$ to $-e_1$, and
fixes all other $e_i$. Since the $S$-conjugacy class of the $p'$-group $Q'$ is fixed by
all field automorphisms, it is $\Aut(S)$-invariant. Also,
$Q'$ is just the normalizer in $S$ of the root subgroup
$Z' := \bfZ(R') = [R',R']$ (of order $q$), whence $\bfN_S(Q') = Q'$.

\begin{lem}\label{sl1b}
In the above notation, $\delta_{Q'} = \alpha' + \beta'_1 + \beta'_2 + \gamma' + 1_{Q'}$, where
$\alpha' \in \Irr(Q')$ has degree $q^{n-2}(q-1)$, $\beta'_{1,2} \in \Irr(Q')$ have degree
$q^{n-2}-1$, $\gamma' \in \Irr(Q')$ has degree $(q^{n-2}-q)/(q-1)$ if $n > 3$ and
$\gamma' = 0$ if $n = 3$, and
$$\alpha'_{Z'} = q^{n-2}\sum_{1_{Z'} \neq \lambda \in \Irr(Z')}\lambda,~~
    Z'  \leq \Ker(\beta'_1) \cap \Ker(\beta'_2) \cap \Ker(\gamma').$$
\end{lem}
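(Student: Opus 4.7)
The plan is to compute $\delta_{Q'}$ by decomposing $\rho_{Q'}$ orbit-by-orbit (where $\rho$ is the $G_n$-permutation character on $\Omega = \bP(\cN)$) and then splitting each orbit permutation character into $Q'$-irreducibles; then $\delta_{Q'}$ follows from $\delta = \rho - 2\cdot 1_{G_n}$. Direct inspection shows that $Q'$ has three orbits on $\Omega$: the fixed point $\{\langle e_n\rangle\}$, the ``hyperplane'' orbit $\bP(\cU) \setminus \{\langle e_n\rangle\}$ of size $q(q^{n-2}-1)/(q-1)$, and the ``affine'' orbit $\{\langle e_1 + u\rangle : u \in \cU\}$ of size $q^{n-1}$. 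Writing $\pi_1, \pi_2$ for the latter two permutation characters gives $\rho_{Q'} = 1_{Q'} + \pi_1 + \pi_2$ and hence $\delta_{Q'} = \pi_1 + \pi_2 - 1_{Q'}$. Since each of $\pi_1, \pi_2$ contains $1_{Q'}$ exactly once by transitivity, the lemma reduces to decomposing $\pi_i - 1_{Q'}$ into $Q'$-irreducibles for $i = 1, 2$.

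For $\pi_2$, a direct commutator computation shows that $R'$ is of Heisenberg type, with $Z' = [R', R'] = \bfZ(R')$ of order $q$ and $R'/Z' \cong A \oplus B$ a $2(n-2)$-dimensional $\bbF_q$-symplectic space, where $A$ corresponds to the entries $(c_{i,1})_{i=2}^{n-1}$ and $B$ to the entries $(c_{n,j})_{j=2}^{n-1}$, both Lagrangians under the commutator pairing. Since $Z'$ acts on the affine orbit by the free translation $\langle e_1 + u\rangle \mapsto \langle e_1 + u + ce_n\rangle$, we have $\pi_2|_{Z'} = q^{n-2}\sum_{\lambda \in \Irr(Z')}\lambda$. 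Mackey restriction combined with the Heisenberg character theory of $R'$ then yields
$$\pi_2|_{R'} = \sum_{\mu \in A^\vee}\wt\mu + \sum_{\lambda \in \Irr(Z')\setminus\{1\}}\chi_\lambda,$$
where $\wt\mu$ is the linear character of $R'$ trivial on $BZ'$ inducing $\mu$ on $A$ and $\chi_\lambda$ is the unique irreducible character of $R'$ of degree $q^{n-2}$ with central character $\lambda$. Since the Levi $L'$ acts transitively on $\Irr(Z')\setminus\{1\}$ (via the action $z \mapsto (c_{nn}/c_{11})z$ on $Z'$) and on $A^\vee\setminus\{0\}$ (via its $\GL_{n-2}$-factor on the natural module), Clifford theory assembles these constituents into $\pi_2 = 1_{Q'} + \alpha' + \beta'_1$, with $\alpha' \in \Irr(Q')$ of degree $q^{n-2}(q-1)$ realizing the claimed $\alpha'|_{Z'}$ formula and $\beta'_1 \in \Irr(Q')$ of degree $q^{n-2} - 1$ with $Z' \leq \ker(\beta'_1)$.

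For $\pi_1$, since $A$ and $Z'$ both act trivially on $\cU$, $\pi_1$ factors through $Q'/(AZ')$, which is naturally identified with the maximal parabolic $P$ of $\GL(\cU) \cong \GL_{n-1}(q)$ stabilizing $\langle e_n\rangle$, with Levi $L_P \cong \GL_{n-2}(q)\times\GL_1(q)$ and unipotent radical $U_P$ coming from the $B$-part. The $2$-transitivity of $\GL(\cU)$ on $\bP(\cU)$, with permutation character $1 + \chi^{(n-2,1)}$, yields $\pi_1 = \chi^{(n-2,1)}|_P$. The Pieri branching rule gives the Harish-Chandra restriction
$$r^{\GL_{n-1}(q)}_{L_P}(\chi^{(n-2,1)}) = \chi^{(n-3,1)}\otimes\chi^{(1)} + \chi^{(n-2)}\otimes\chi^{(1)}$$
(the first summand vanishing when $n = 3$), which inflates along $P \twoheadrightarrow L_P$ to the $U_P$-invariants of $\pi_1$ and accounts for $\gamma' + 1_{Q'}$, where $\gamma' \in \Irr(Q')$ has degree $(q^{n-2}-q)/(q-1)$. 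The complementary $U_P$-non-invariant part of $\pi_1$ has degree $q^{n-2} - 1$; since $L_P$ acts transitively on $\hat U_P\setminus\{1\}$ (via its $\GL_{n-2}$-factor), Clifford theory identifies it as an irreducible character $\beta'_2$, with $Z' \leq \ker(\beta'_2) \cap \ker(\gamma')$ automatic from the factorization through $Q'/(AZ')$.

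Summing the pieces gives $\delta_{Q'} = 1_{Q'} + \alpha' + \beta'_1 + \beta'_2 + \gamma'$ as claimed, with the required degrees and $Z'$-restriction properties. The main technical obstacle I foresee is the irreducibility verification for $\alpha'$, $\beta'_1$ and $\beta'_2$ via Clifford theory, which requires that the pertinent extensions of $\wt\mu$, $\chi_\lambda$ to their inertia subgroups in $Q'$ be $1$-dimensional (in the sense of having minimal possible restriction). This is however forced by degree counts: in each case the candidate $Q'$-submodule has dimension equal to $\dim(R'\text{-constituent}) \cdot [\text{orbit size}]$, so the Clifford correspondent must extend the given $R'$- or $U_P$-character by multiplicity one and is therefore irreducible.
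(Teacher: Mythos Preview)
Your proof is correct. The overall strategy---decompose $\rho_{Q'}$ by $Q'$-orbits on $\bP(\cN)$, then break each orbit permutation character using Clifford theory over $R'$ (resp.\ $U_P$)---is the same one the paper uses, and your multiplicity-one argument for the irreducibility of $\alpha'$, $\beta'_1$, $\beta'_2$ is exactly the right justification.

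The execution differs in a couple of places worth noting. For $\beta'_1$, the paper does not extract it from $\pi_2$ directly; instead it defines $\beta'_1$ as the leftover $\delta_{Q'} - (\alpha' + \beta'_2 + \gamma' + 1_{Q'})$ and then checks irreducibility by evaluating at a transvection $t \in Z'$ and at a conjugate $t' \in R'_1 \setminus Z'$, using that $L'$ is transitive on the nontrivial elements of $R'_1/Z'$. Your route via $\pi_2|_{R'} = \Ind_B^{R'}(1_B)$ and the Heisenberg decomposition is cleaner and gives $\beta'_1$ together with its $R'$-restriction for free. For the hyperplane orbit, the paper argues as in Lemma~\ref{sl1a} (permutation action on $\bP(\cU)$ with two orbits) rather than invoking Harish-Chandra restriction; your Pieri computation $r^{\GL_{n-1}}_{L_P}(\chi^{(n-2,1)}) = \chi^{(n-3,1)}\otimes\chi^{(1)} + \chi^{(n-2)}\otimes\chi^{(1)}$ is a pleasant alternative that makes the appearance of $\gamma'$ conceptually transparent. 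Both approaches rely on the same Clifford-theoretic backbone (transitivity of $L'$ on $\Irr(Z')\setminus\{1\}$, on $A^\vee\setminus\{0\}$, and on $\hat U_P\setminus\{1\}$), so the difference is one of packaging rather than substance.
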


\begin{proof}
Note that all non-trivial elements in $Z'$ are $L'$-conjugate to a fixed
transvection $t \in Z'$, and $\delta(t) = \rho(t)-2 = (q^{n-1}-q)/(q-1)-1$. It follows
that
$$\delta_{Z'} =   q^{n-2}\sum_{1_{R'} \neq \lambda \in \Irr(Z')}\lambda +
    (2(q^{n-2}-1) + \frac{q^{n-2}-q}{q-1} + 1)\cdot  1_{Z'}.$$
Since $R'$ is of special type, it also follows that $[\cD,Z']$ gives rise to an
irreducible $Q'$-module of dimension $q^{n-2}(q-1)$, with character $\alpha'$.
Now we can write $R'/Z' = (R'_1/Z') \times (R'_2/Z')$ as a direct product of two
$L'$-invariant subgroups. Next, $Q'$ acts on the
subset $\Omega'$ of $\Omega$ consisting of all $1$-spaces of
$\langle e_2, \ldots, e_n\rangle_{\bbF_q}$ (with kernel containing $R'_1$), with
two orbits. Arguing
as in the proof of Lemma \ref{sl1a}, we see that this permutation action affords the
$Q'$-character $\beta'_2+ \gamma' + 2 \cdot 1_{Q'}$, where the irreducible characters
$\beta'_2$ and $\gamma'$ (if $n > 3$; $\gamma' = 0$ if $n=3$) have indicated degrees. In general, $Q'$ has $3$ orbits on
$\Omega$, whence $1_{Q'}$ enters $\delta_{Q'}$. Also, note that $t$ has an
$S$-conjugate $t' \in R'_1 \setminus Z'$ and $\alpha'(t') = 0$. So if we set
$$\beta'_1(1) := \delta_{Q'} - (\alpha' + \beta'_2 + \gamma' + 1_{Q'}),$$
then we see that $\beta'_1 = \beta'_1(t) = q^{n-2}-1$ and $\beta'_1(t') = -1$.
Since $L'$ acts transitively on the non-trivial elements of $R'_1/Z'$, we conclude
by Clifford's theorem that $\beta'_1 \in \Irr(Q')$.
\end{proof}

As mentioned above, $S = \SL_n(q)$ has a unique irreducible $kS$-module $\cD$ of
dimension $p-2$. It follows by Theorem \ref{str} that in the situation (i) of Theorem
\ref{simple1},
$$\GP = S_1 \times \ldots \times S_t,$$
with $S_i \cong S$ and $\GP$ acts on $W_i$ with kernel $K_i := \prod_{j \neq i}S_j$.
Now, as $\GP$-modules, we have that
$$\cE := \End(V) \cong \bigoplus_{1 \leq i,j \leq t}V_i^* \otimes V_j \cong
     e^2 \bigoplus_{1 \leq i,j \leq t}W_i^* \otimes W_j$$
where $V_i^* \otimes V_i \cong \End(V_i)$ is acted on trivially by $K_i$,
whereas $W_i^* \otimes W_j$ is an {\it irreducible} $k\GP$-module with kernel
$K_i \cap K_j$ for $i \neq j$. It follows that the two $\GP$-submodules
$$\cE_1 := \bigoplus_{1 \leq i \leq t} V_i^* \otimes V_i,~~
    \cE_2 := \bigoplus_{1 \leq i \neq j \leq t}V_i^* \otimes V_j$$
of $\End(V)$ share no common composition factor.

Now we can prove the main result of this section:

\begin{thm}\label{main-sl}
Suppose we  are in the case (i) of Theorem \ref{simple1}, i.e.\
$(p,H,\dim W) = ((q^n-1)/(q-1),\SL_n(q),p-2)$. Then $(G,V)$ is weakly adequate.
\end{thm}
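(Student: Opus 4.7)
The case $t = 1$ is Proposition \ref{sl-h}, so assume $t \geq 2$. My plan is to apply Lemma \ref{key3} with the product subgroup $Q := Q'_1 \times \cdots \times Q'_t \leq \GP$, where $Q'_i \leq S_i = \SL_n(q)$ is the maximal parabolic subgroup introduced just before Lemma \ref{sl1b}. Since $Q'_i$ is $p'$, self-normalizing in $S_i$, and its $S_i$-conjugacy class is $\Aut(S_i)$-invariant, we have $\bfN_{\GP}(Q) = Q$ and the $G$-conjugacy class of $Q$ coincides with its $\GP$-class; in particular $N := \bfN_G(Q)$ is a $p'$-group with $G = N\GP$. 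By Lemma \ref{sl1b}, $(W_i)_Q = A_i \oplus B_{i,1} \oplus B_{i,2} \oplus C_i \oplus T_i$, with $A_i$ the unique irreducible constituent of dimension $q^{n-2}(q-1)$ (on which $Z'_i$ acts non-trivially) and the remaining constituents of strictly smaller dimensions. The $A_i$'s have pairwise distinct $Q$-kernels, hence are pairwise non-isomorphic $Q$-modules, so Lemma \ref{key3}(i) yields an $N$-irreducible submodule $A \subseteq V$ with $A_Q \cong e\bigoplus_i A_i$ and $\End(A) \subseteq \cM$ by Artin-Wedderburn on the $p'$-group $N$.

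To conclude via Lemma \ref{key3}(ii), I must show every $\GP$-composition factor of $\End(V)/\cM$ is trivial. Decompose $\End(V) = \cE_1 \oplus \cE_2$ with $\cE_1 := \bigoplus_i V_i^* \otimes V_i$ and $\cE_2 := \bigoplus_{i \neq j} V_i^* \otimes V_j$. By Lemma \ref{sl3}, the only non-trivial $\GP$-composition factors of $\cE_1$ are either of $p$-defect zero (hence direct summands of $\End(V)$) or are $\delta$ or $D(1,(n-2,1^2))$; the composition factors of $\cE_2$ are the (non-trivial) irreducibles $W_i^* \otimes W_j$. Two ingredients drive the analysis. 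First, Proposition \ref{sl-h} applies to the image $\bar G_i \leq \GL(V_i)$ of the inertia subgroup $G_i := \Stab_G(V_i)$ (weakly adequate on the isotypic $V_i = eW_i$ since $\bar G_i^+ = S_i$); lifting $p'$-elements from $\bar G_i$ to $G_i$ (possible because the kernel of $G_i \to \bar G_i$ contains the $p$-parts of all preimages) shows that every element of $\End(V_i)$ arises as the $V_i$-block of some element of $\cM$. Second, the Artin-Wedderburn decomposition of $V_N = A \oplus B$ together with the transitive $N$-action on $\{V_1, \ldots, V_t\}$ via $G/\GP$ provides cross-block elements of $\cM$.

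The hardest step will be combining (i) and (ii) to conclude that every non-trivial $\GP$-composition factor of $\End(V)$ lies in $\cM$. Concretely, the inclusion $\End(A) \subseteq \cM$ from Lemma \ref{key3}(i) already provides $\Hom(eA_i, eA_j) \subseteq \cM$ for $i \neq j$ as a starting point for $\cE_2 \subseteq \cM$, which must be extended to all of $\Hom(V_i, V_j)$ by combining with $p'$-element images from $G$ that permute the $V_l$'s nontrivially and with the $\bar G_i$-action in each block; the $\bar G_i$-weak adequacy applied to each $V_i$ then handles $\cE_1$. After this character-theoretic bookkeeping, Lemma \ref{key3}(ii) completes the proof.
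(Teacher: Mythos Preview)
Your framework matches the paper's: the same subgroup $Q = Q'_1 \times \cdots \times Q'_t$, the same verification of the hypotheses of Lemma~\ref{key3}, the same decomposition $\End(V) = \cE_1 \oplus \cE_2$, the same use of Proposition~\ref{sl-h} on each $V_i$, and Lemma~\ref{key3}(ii) to finish. But the crucial $\cE_2$ step is not filled in, and the $\cE_1$ step needs one further observation.

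For $\cE_2$, your suggested mechanism---``combining with $p'$-element images from $G$ that permute the $V_l$'s nontrivially and with the $\bar G_i$-action in each block''---does not work as stated: $\cM$ is only a $k$-subspace of $\End(V)$, not a subalgebra, so multiplying a block-permuting $\Phi(\sigma)$ against elements of $\cM$ need not stay in $\cM$. The paper instead argues via characters on $Z'^t = Z'_1 \times \cdots \times Z'_t$. Write $\cA := \bigoplus_{i\ne j}\Hom(C_i,C_j) \subseteq \cM$ (coming from $\End(A)\subseteq\cM$). By Lemma~\ref{sl1b}, $\alpha'$ is the unique constituent of $\delta_{Q'}$ that is nontrivial on $Z'$; hence every $Q$-constituent of $\cE_2/\cA$, restricted to $Z'^t$, is trivial on all but at most one factor $Z'_k$. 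But each $\GP$-composition factor $W_i^*\otimes W_j$ of $\cE_2$ contains the $Q'_i\times Q'_j$-constituent $\bar\alpha'\otimes\alpha'$, nontrivial on both $Z'_i$ and $Z'_j$. Thus no such factor survives in $\cE/(\cE_1+\cM)$, i.e.\ $\cE_1+\cM=\cE$; then Lemma~\ref{triv} (using that $\cE_1$ and $\cE_2$ share no $\GP$-composition factor) gives $\cE_2\subseteq\cM$. A shorter alternative, not in the paper: $\cM$ is $\GP$-stable under conjugation and $\Hom(V_i,V_j)$ is $\GP$-isotypic of irreducible type $W_i^*\otimes W_j$; writing $V_l=U\otimes W_l$ so that $\Hom(C_i,C_j)=\End(U)\otimes X$ with $0\ne X\subseteq\Hom(W_i,W_j)$, and noting that every $\GP$-submodule of $\End(U)\otimes\Hom(W_i,W_j)$ has the form $Y\otimes\Hom(W_i,W_j)$, the $\GP$-span of $\Hom(C_i,C_j)$ is already all of $\Hom(V_i,V_j)$.

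For $\cE_1$, Proposition~\ref{sl-h} applied to $\bar G_i$ only shows that $\cE_{1i}$ is a \emph{quotient} of $\cM$ (via the block projection $\pi_i$), not that $\cE_{1i}\subseteq\cM$. To conclude that every nontrivial $\GP$-composition factor of $\cE_1$ appears in $\cM$ with its full multiplicity, you also need the paper's observation that for $i\ne j$ the only $\GP$-composition factor common to $\cE_{1i}$ and $\cE_{1j}$ is the trivial one (since $\prod_{l\ne i}S_l$ acts trivially on $\cE_{1i}$); combined with $\cE_2\subseteq\cM$ already in hand, this gives the required multiplicity count, and then Lemma~\ref{key3}(ii) finishes.
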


\begin{proof}
(a) Consider the subgroup
$$Q'^t = Q' \times \ldots \times Q' = Q'_1 \times \ldots \times Q'_t
   < S_1 \times \ldots \times S_t$$
of $\GP$. By
Lemma \ref{sl1b} and the discussion preceding it, $Q'^t$ satisfies the hypotheses
of Lemma \ref{key3}, with $A_i$ affording the $Q'$-character $\alpha'$,
and $\bfN_G(Q'^t)$ is a $p'$-group.
Note that $A_i \not\cong A_j$ for $i \neq j$ since $K_i \cap Q'^t \neq K_j \cap Q'^t$.
Also, the summands $A$ and $B$ of the $Q'^t$-module $V$ constructed in
Lemma \ref{sl1b} have no common composition factor and $A$ is irreducible.
Hence,
$$\cM \supseteq \End(A) \supset e^2\bigoplus_{1 \leq i \neq j \leq t}A_i^* \otimes A_j
    =: \cA$$
by the Artin-Wedderburn theorem. Note that $\cA \subset \cE_2$. Furthermore, if
$\Delta$ is the $Q'^t$-character of the complement of $\cA$ in $\cE_2$, then,
by Lemma \ref{sl1b}, each irreducible constituent of $\Delta$, when
restricted to
$$Z'^t = Z' \times \ldots \times Z' = Z'_1 \times \ldots \times Z'_t,$$
is trivial at (at least) {\it all but one} $Z'_i$. The same is true for the
$\GP$-module $\cE/(\cE_1+\cM)$. On the other hand, as mentioned above, all
$\GP$-composition factors of $\cE/\cE_1 \cong \cE_2$ are of the form
$W_i^* \otimes W_j$ with $i \neq j$. The Brauer character of any such
$W_i^* \otimes W_j$, being
restricted to $S_i \times S_j$, is $\delta \otimes \delta$, and so it contains
the $Q'_i \times Q'_j$-irreducible constituent  $\alpha' \otimes \alpha'$ which is
nontrivial at both $Z'_i$ and $Z'_j$ by Lemma \ref{sl1b}.  It follows that
$\cE_1 + \cM = \cE$, i.e.\ $\cM$ surjects onto $\cE_2$. Applying Lemma \ref{triv}
to the subgroup $\GP \leq G$, we conclude that $\cM \supseteq \cE_2$.

\smallskip
(b) We already mentioned that the $\GP$-modules
$\cE_1 = \oplus^t_{i=1}\cE_{1i}$ and $\cE_2$ share no common composition factor;
in particular, $k$ is not a composition factor of $\cE_2$.
Furthermore, since $\prod_{j \neq i}S_j$ acts trivially on $V_i$, we see that
the only common $\GP$-composition factor that $\cE_{1i}$ and $\cE_{1j}$ with
$i \neq j$ can share is the principal character $1_{\GP}$.
Recall that $\cE_{1i} \cong \cD \otimes \cD$ as $S_i$-modules. The irreducibility of
$G$ on $V$ implies that $G_i :=\Stab_G(V_i)$ acts irreducibly on $V_i$,
and certainly $\GP \lhd G_i$ acts homogeneously on $V_i$. By Proposition \ref{sl-h}
applied to $G_i$, $\cE_{1i}$ is a subquotient of $\cM$. We have therefore shown that
all {\it non-trivial} $\GP$-composition factors of $\cE = \End(V)$ also occur
in $\cM$ with the same multiplicity, and so all the composition
factors of the $\GP$-module $\cE/\cM$ (if any) are trivial. Applying Lemma \ref{key3}
to the subgroup $Q'^t < \GP$, we conclude that $\cM = \cE$.
\end{proof}

Finally we can prove

\begin{thm}\label{simple1-wa}
Suppose $(G,V)$ is as in the case (i) of Theorem \ref{str}. Then $(G,V)$ is weakly
adequate.
\end{thm}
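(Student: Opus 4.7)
The plan is to combine Theorem \ref{simple1} with the subsequent results of this section and a new argument for the one remaining sporadic case. Specifically, Theorem \ref{simple1} reduces the assertion to showing weak adequacy in two situations: (i) $H \cong \SL_n(q) = \PSL_n(q)$ with $p = (q^n-1)/(q-1)$ and $n \geq 3$ prime; and (ii) one of the four sporadic triples $(p,H,\dim W)$ listed there. Case (i) is precisely Theorem \ref{main-sl}. Three of the four triples in (ii), namely $(19, 3J_3, 18)$, $(11, 2M_{12}, 10)$, and $(5, 2\AAA_7, 4)$, are handled by Propositions \ref{j3}, \ref{m12}, and \ref{a7}, respectively.

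For the remaining triple $(7,\, 6_1 \cdot \PSL_3(4),\, 6)$, I would proceed in the spirit of Propositions \ref{j3} and \ref{m12}. First, using Theorem \ref{str}(i) together with the faithfulness of $G$ on $V$ and a central-character analysis analogous to the one in those propositions, one obtains $\GP = L_1 \times \cdots \times L_n$ with $L_i \cong 6_1 \cdot \PSL_3(4)$ and kernel $K_i = \prod_{j \ne i} L_j$ on each irreducible $\GP$-summand $W_i$. Next, I would select a maximal $p'$-subgroup $Q_1 \leq L_1$ whose image in $S_1 := \PSL_3(4)$ is an $\Aut(S_1)$-invariant maximal subgroup of $S_1$, a natural candidate being (a central extension of) the maximal subgroup $\AAA_6 \leq \PSL_3(4)$. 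Using the character tables in \cite{Atlas}, \cite{JLPW}, \cite{ModAt}, and \cite{GAP}, one would verify that the restrictions to $Q_1$ of the several irreducible $7$-Brauer characters of $L_1$ of degree $6$ split in a manner compatible with the hypotheses of Lemma \ref{key2} or \ref{key3}, and in particular that distinct such characters restrict to distinct $Q_1$-modules.

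Setting $Q := Q_1 \times \cdots \times Q_n < \GP$, one checks $\bfN_\GP(Q) = Q$ so that $N := \bfN_G(Q)$ is a $p'$-group, and $\{Q^g \mid g \in G\} = \{Q^x \mid x \in \GP\}$. Then Lemma \ref{key2} (or Lemma \ref{key3}) combined with the Artin--Wedderburn theorem applied to $N$ exhibits a large explicit subspace of $\cM \subseteq \End(V)$; a concluding central-character calculation, directly analogous to that at the end of the proof of Proposition \ref{m12}, would eliminate any remaining $\GP$-constituents of $\End(V)/\cM$.

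The main obstacle will be the choice of $Q_1$: it must simultaneously be $\Aut(S_1)$-stable, separate on restriction the several irreducible degree-$6$ $7$-Brauer characters of $L_1$, and carry central characters arranged so that no non-trivial $\GP$-constituent of $\End(V)$ remains outside of $\cM$. As in the other three sporadic cases, this ultimately reduces to an explicit (computer-assisted) character-theoretic verification rather than a structural argument.
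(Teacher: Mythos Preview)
Your overall reduction is correct and matches the paper exactly: Theorem~\ref{simple1} plus Theorem~\ref{main-sl} and Propositions~\ref{j3}, \ref{m12}, \ref{a7} leave only the triple $(7,\,6_1\cdot\PSL_3(4),\,6)$, and the paper also attacks this last case via a carefully chosen $p'$-subgroup $Q_1\le L_1$ together with Lemma~\ref{key2}/\ref{key3} and a final composition-factor count. However, two concrete points in your sketch would not go through as stated.

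First, you assert $\GP=L_1\times\cdots\times L_n$ with $L_i\cong 6_1\cdot\PSL_3(4)$. Theorem~\ref{str}(i) only gives a direct product when $H$ is either simple or the full covering group of $S$; since the Schur multiplier of $\PSL_3(4)$ has order $48$, $H=6_1\cdot\PSL_3(4)$ is neither, so one gets only a central product $\GP=L_1*\cdots*L_n$. The paper in fact shows that each $L_i$ is isomorphic to either $(2\times 2)\cdot 3\cdot\PSL_3(4)$ or a quotient $6\cdot\PSL_3(4)$ thereof, and works with the central product throughout.

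Second, your candidate $Q_1$ with image $\AAA_6$ in $\PSL_3(4)$ fails precisely the requirement you flag: $\PSL_3(4)$ has three conjugacy classes of maximal $\AAA_6$-subgroups, permuted transitively by the outer $S_3$, so this class is not $\Aut(S)$-invariant and condition~(i) of Lemma~\ref{key2} cannot be verified. The paper instead takes $Q_1=Z_1\times\AAA_5$, where $\AAA_5\cong\SL_2(4)$ arises from a Levi subgroup $C_3\times\SL_2(4)$ of $\SL_3(4)$; this class \emph{is} $\Aut(S)$-invariant. With this $Q_1$ the restriction of each degree-$6$ Brauer character of $L_1$ splits either as $\{1a,5a\}$ or as $\{3a,3b\}$ on $\AAA_5$, and the concluding argument requires a genuine case split (including a subcase where $N$ acts transitively on all $2t$ summands and Lemma~\ref{key2} gives irreducibility directly) together with explicit decomposition of the relevant $L_1$-characters $\gamma_2,\gamma_3,\gamma_6,\delta_2,\delta_3$ on restriction to $\AAA_5$. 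The endgame is thus more delicate than the single ``central-character calculation'' you anticipate.
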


\begin{proof}
In view of Theorems \ref{simple1}, \ref{main-sl}, and Propositions \ref{j3}, \ref{m12},
\ref{a7},  we need to handle the case $(p,H,\dim W) = (7,6 \cdot \PSL_3(4),6)$.
In this case, $L_i$ acts on each $W_j$ either trivially or as
$H_j \cong 6 \cdot \PSL_3(4)$. It follows by the faithfulness of $G$ on $V$ that
$\bfZ(L_i)$ has exponent $6$, and so $L_i$ is (isomorphic to) either
$X := (2 \times 2) \cdot 3 \cdot \PSL_3(4)$ or a quotient $6 \cdot \PSL_3(4)$ of $X$.
We can also find $k_i$ such that
the kernel $K_i$ of $\GP = L_1 * \ldots * L_n$ acting on $W_i$ contains
$\prod_{j \neq k_i}L_j$. Without loss we may assume $k_1 = 1$.

\smallskip
(a) We claim that $L_1$ contains a subgroup
$Q_1 = Z_1 \times \AAA_5$, whose conjugacy class
is $\Aut(L_1)$-invariant (with $Z_1 := \bfZ(L_1)$). For this purpose, without loss
we may assume that $L_1 \cong X$.
We consider a Levi subgroup $C_3 \times \SL_2(4) \cong C_3 \times \AAA_5$ of $\SL_3(4)$ which acts semisimply on the natural module $\bbF_4^3$.
Then its conjugacy class in $\SL_3(4)$ is fixed by all the outer automorphisms of $\SL_3(4)$.  Consider a faithful representation
$\Lambda~:~X \to \GL_{18}(\bbC)$,
which is the sum of three irreducible representations, on which $X$
acts with different kernels $C_2$, and let $Y$ be the full inverse image of $\AAA_5$
in $X$. Note that involutions in $\PSL_3(4)$ lift to involutions in
$6 \cdot \PSL_3(4)$, whereas involutions in $\AAA_5$ lift to elements of order $4$ in
$2 \cdot \AAA_5$  (see \cite{Atlas}). It follows that $\Lambda(x)$ has order $2$
for the inverse image $x \in X$ of any involution in $\AAA_5$, and so $|x| = 2$.
Hence $Y \cong (2 \times 2) \times \AAA_5$, and the claim follows.

Defining $Q_i < L_i$ similarly, we see that
$$Q = Q_1 * Q_2 * \ldots * Q_n$$
satisfies the condition (i) of Lemma \ref{key2}. Since $Q_1$ is self-normalizing in
$L_1$, we see that $\bfN_{\GP}(Q) = Q$ and $N := \bfN_G(Q)$ is a $p'$-group.

We will now inflate Brauer characters of $L_1$ acting on $W_1$ to $X$ and
then replace $L_1$ by $X$.
According to \cite{JLPW}, $L_1$ has exactly six irreducible $7$-Brauer characters
$\varphi_s$ of degree $6$, $1 \leq s \leq 6$,
lying above the six distinct characters $\lambda_s$ of
$Z_1$ (with kernels being the three distinct central subgroups of order $2$), and
$(\varphi_s)_{Q_1} = \lambda_s \otimes (\alpha + \beta)$,
where $\alpha \neq \beta \in \Irr(\AAA_5)$, and either
\begin{equation}\label{15}
  \{\alpha,\beta\} = \{1a,5a\}
\end{equation}
or
\begin{equation}\label{33}
  \{\alpha,\beta\} = \{3a,3b\}
\end{equation}
depending on whether $\varphi_s$ takes value $2$ or $-2$ on involutions in $\AAA_5$.
(Here we adopt the notation that $\Irr(\AAA_5) = \{1a,3a,3b,4a,5a\}$.)  In either case,
we have that $(W_1)_Q = A_1 \oplus B_1$, where the $Q$-modules $A_1$ and $B_1$
are irreducible and non-isomorphic. As shown in the proof of Lemma \ref{key},
$N\GP = G$ and $N_1\GP = G_1 :=\Stab_G(V_1)$ for $N_1 := \bfN_{G_1}(Q)$. So we
fix a decomposition $G = \cup^t_{i=1}g_iG_1$ with $g_i \in N$, $g_1 = 1$, and define
$A_i := g_i(A_1) \subset W_i$, $B_i := g_i(B_1) \subset W_i$. In particular, either
(\ref{15}) holds for all $(W_i)_Q$, or (\ref{33}) holds for all $(W_i)_Q$.

We claim that $Q$ also satisfies
the condition (ii) of Lemma \ref{key2}. Indeed, assume that $W_i \not\cong W_j$.
Now if $k_i \neq k_j$, then $L_{k_i} > Q_{k_i}$ acts trivially on $W_j$, but
$\bfZ(Q_{k_i}) = Z_{k_i}$ acts nontrivially by scalars on $W_i$.
In the case $k_i = k_j$, we may assume that $K_i \geq \prod_{s > 1}L_s$, and so
$W_i$ and $W_j$ afford the $L_1$-characters
$\varphi, \varphi' \in \{\varphi_1, \ldots ,\varphi_6\}$, lying
above {\it different} characters $\lambda, \lambda'$ of $Z_1$. Now
$\bfZ(Q_1) = Z_1$ acts on $W_i$ and $W_j$ by scalars but via different characters
$\lambda,\lambda'$, so we are done.

\smallskip
(b) Suppose we are in the case of (\ref{33}) and moreover $G_1 =\Stab_G(V_1)$
interchanges the two classes $5A = x^{L_1}$ and $5B = (x^2)^{L_1}$ of elements of
order $5$ of $L_1 = 6_1 \cdot \PSL_3(4)$. Certainly, we can choose
$x \in \AAA_5 < Q_1$.  Since $N_1\GP = G_1$, we can find
some element $g \in N_1$ that interchanges the classes $5A$ and $5B$. In this case
$g$ also interchanges the characters $\alpha = 3a$ and $\beta = 3b$ of $\AAA_5$,
but fixes $W_1$ and the central character $\lambda \in \{\lambda_1, \ldots, \lambda_6\}$
of $Z_1$. It follows that $\{A_1, \ldots ,B_t\}$ forms a single $N$-orbit, and so
by Lemma \ref{key2} the $p'$-group $N$ acts irreducibly on $V$, and we are done.

\smallskip
(c) From now on we may assume that we are {\it not} in the case considered in (b).
We claim that $\{A_1, \ldots ,A_t\}$ and $\{B_1, \ldots ,B_t\}$ are two distinct $N$-orbits.
Assume the contrary. Then by the construction of $A_i$ and $B_j$ there must be
some $h \in N$ such that $B_1 \cong A_1^h$. This is clearly impossible in the case
of (\ref{15}). In the case of (\ref{33}), $h \in G_1$ and furthermore $h$ fuses the
two classes of elements of order $5$ in $\AAA_5$. Hence $h \in G_1$ fuses the
classes $5A$ and $5B$ of $L_1$, contrary to our assumption.

Now we can apply Lemma \ref{key2} to see that $V_N = A \oplus B$ and so
\begin{equation}\label{sl34a}
  \cM \supseteq  \End(A) \oplus \End(B)
\end{equation}
by the Artin-Wedderburn theorem. We also decompose $\End(V) = \cE_1 \oplus \cE_2$
as $\GP$-modules, and note that the $Q$-modules
$$\cE_1 := \bigoplus^t_{i=1}\End(V_i) \cong e^2\bigoplus^t_{i=1}W_i^* \otimes W_i,~~
  \cE_2 := \bigoplus_{1 \leq i \neq j \leq t}\Hom(V_i,V_j) \cong
  e^2\bigoplus_{1 \leq i \neq j \leq t}W_i^* \otimes W_j$$
share no common composition factor. Indeed, the $p'$-group
$\bfZ(\GP) = Z_1 * \ldots * Z_n \leq \bfZ(Q)$ acts trivially on $\cE_1$ and nontrivially
by scalars on each $W_i^* \otimes W_j$ when $i \neq j$.

Moreover, if
$k_i \neq k_j$, say $K_i \geq \prod_{s \neq 1}L_s$ and $K_j \geq \prod_{s \neq 2}L_s$,
then $W_i^* \otimes W_j$ and $W_j^* \otimes W_i$ are irreducible over
$L_1 \times L_2$ (and acted on trivially by $\prod_{s>2}L_s$), with nontrivial central characters
$\nu_1^{-1} \otimes \nu_2$ and $\nu_1 \otimes \nu_2^{-1}$ over $Z_1 * Z_2$,
where $\nu_1, \nu_2 \in \{\lambda_1, \ldots ,\lambda_6\}$ have order $6$. If
$W_i \not\cong W_j$ but  $k_i = k_j$, say $k_i = k_j = 1$,
then $W_i$ and $W_j$ afford the $L_1$-characters $\varphi \neq \varphi'$ lying
above different characters $\lambda \neq \lambda'$ of $Z_1$. We distinguish
different scenarios for $\lambda$ and $\lambda'$.

(c1) $\lambda$ and $\lambda'$ coincide at $\bfO_2(Z_1)$ (then they must be
different at $\bfO_3(Z_1)$, and in fact $\lambda' = \lambda^{-1}$). Here,
$W_i^* \otimes W_j$ and $W_j^* \otimes W_i$ are reducible over
$L_1$ (and acted on trivially by $\prod_{s>1}L_s$), with distinct nontrivial central characters $\lambda^{-2}$ and $\lambda^{2}$ over $Z_1$. Furthermore,
the $L_1$-character of $W_i^* \otimes W_j$ is $\gamma_3 + \delta_3$,
where $\gamma_3 \in \IBr(L_1)$ has degree $15$, $\delta_3 \in \IBr(L_1)$ has degree $21$, and
\begin{equation}\label{sl34b1}
  (\gamma_3)_{\AAA_5} = 3a+3b+4a+5a,~~
  (\delta_3)_{\AAA_5} = 2 \cdot 1a + 4a + 3 \cdot 5a.
\end{equation}

(c2) $\lambda$ and $\lambda'$ coincide at $\bfO_3(Z_1)$ (then they must be
different at $\bfO_2(Z_1)$). Here,
$W_i^* \otimes W_j$ and $W_j^* \otimes W_i$ again are reducible over
$L_1$ (and acted on trivially by $\prod_{s>1}L_s$), with the same nontrivial central character $\lambda^{-1}\lambda'$ over $Z_1$. Furthermore,
the $L_1$-character of $W_i^* \otimes W_j$ is $\gamma_2 + \delta_2$,
where $\gamma_2 \in \IBr(L_1)$ has degree $10$, $\delta_2 \in \IBr(L_1)$ has degree $26$, and
\begin{equation}\label{sl34b2}
  (\gamma_2)_{\AAA_5} = 1a+4a+5a,~~
  (\delta_2)_{\AAA_5} = 1a + 3a +3b + 4a + 3 \cdot 5a.
\end{equation}
Here we have used the fact that the character of $W_i^* \otimes W_j$ takes value
$(\pm 2)^2 = 4$ at involutions in $\AAA_5$.

(c3) $\lambda$ and $\lambda'$ differ at both $\bfO_2(Z_1)$ and $\bfO_3(Z_1)$. Here,
$W_i^* \otimes W_j$ and $W_j^* \otimes W_i$ are irreducible over
$L_1$ (and acted on trivially by $\prod_{s>1}L_s$), with distinct nontrivial central characters $\lambda^{-1}\lambda'$ and $\lambda(\lambda')^{-1}$ over $Z_1$. Furthermore, the $L_1$-character of $W_i^* \otimes W_j$ is $\gamma_6$,
where $\gamma_6 \in \IBr(L_1)$ has degree $36$ and
\begin{equation}\label{sl34b3}
  (\gamma_6)_{\AAA_5} = 2 \cdot 1a + 3a + 3b + 2 \cdot 4a + 4 \cdot 5a.
\end{equation}

\smallskip
(d) According to (\ref{sl34a}), $\cM$ contains the subspace
$\cA := \End(C_1) \oplus \End(D_1)$
of $\End(V_1)$, which affords the character $e^2(\alpha^2 + \beta^2)$
of $\AAA_5 < Q_1$ (and is acted on trivially by $Z_1$). Note
that the $L_1$-character of $\End(W_1)$ is
$\varphi_i\overline{\varphi}_i = 1_{L_1} + \psi$, where
$\psi  \in \IBr(L_1)$ of degree $35$ and
$$\psi_{\AAA_5} = 1a + 3a +3b + 2 \cdot 4a + 4 \cdot 5a.$$
On the other hand, the $\AAA_5$-character of the complement to
$\cA$ in $\End(V_1)$ is
$$e^2(\alpha+\beta)^2 -e^2(\alpha^2+\beta^2) = 2e^2\alpha\beta,$$
which is $2e^2 \cdot 5a$ in the case of (\ref{15}) and
$2e^2(4a+5a)$ in the case of (\ref{33}); in particular, it does {\it not} contain $1a$. It
follows by the observation right after (\ref{sl34a}) and Lemma \ref{triv} that
$\cM \supseteq\End(V_1)$ and so $\cM \supseteq  \cE_1$.

\smallskip
(e) By (\ref{sl34a}), $\cM$ contains the subspace
$\cB_{ij} := \Hom(C_i,C_j) \oplus \Hom(D_i,D_j)$
of  $\cE_{ij} := \Hom(V_i,V_j)$ whenever $i \neq j$
(recall that $(C_i)_Q \cong eA_i$ and $(D_i)_Q \cong eB_i$). We distinguish two cases according as $k_i$ and $k_j$ are equal or not.

First suppose that $k_i \neq k_j$,
say $k_i = 1$ and $k_j = 2$. Then $\cE_{ij}$
affords the $L_1 \times L_2$-character
$e^2\overline{\theta}_1 \otimes \theta_2$ (where
$\theta_i \in \IBr(L_i)$ has degree $6$) and is acted on trivially by $\prod_{s>2}L_s$.
Now the
$Q_1 \times Q_2$-character of the complement to $\cB_{ij}$ in $\Hom(V_i,V_j)$ when
restricted to the subgroup $\AAA_5 \times \AAA_5$ is
$$e^2(\alpha_1+\beta_1)\otimes(\alpha_2+\beta_2) -
   e^2(\alpha_1 \otimes \alpha_2+\beta_1 \otimes \beta_2)
   = e^2(\alpha_1 \otimes \beta_2 + \beta_1 \otimes \alpha_2)$$
(where $\alpha_1,\beta_1$ play the role of $\alpha$ and $\beta$ for the first factor
$\AAA_5$ and similarly for $\alpha_2,\beta_2$).  Also, the restriction of
$\overline{\theta}_1 \otimes \theta_2$ to $\AAA_5 \times \AAA_5$
always contains an irreducible constituent distinct from $\alpha_1 \otimes \beta_2$ and
$\beta_1 \otimes \alpha_2$, namely $\beta_1 \otimes \beta_2$.

Assume now that $k_i = k_j = 1$. Then the $\AAA_5$-character of the complement to
$\cB_{ij}$ in $\cE_{ij}$ is
$$e^2(\alpha+\beta)^2 -e^2(\alpha^2+\beta^2) = 2e^2\alpha\beta$$
which is $2e^2 \cdot 5a$ in the case of (\ref{15}) and $2e^2(4a+5a)$ in the case of (\ref{33}). On the other hand, according to (\ref{sl34b1})--(\ref{sl34b3}), the restriction to
$\AAA_5$ of each of the irreducible constituents $\gamma$ and $\delta$ of
$W_i^* \otimes W_j$ always contains either $1a$ or $3a$.

Now assume that $\cM \neq \End(V)$. Working modulo $\cE_1 \subset \cM$, we
see that $\cM \supseteq \cB := \oplus_{i \neq j} \cB_{ij}$ has a nonzero complement
in $\cE_2 = \oplus_{i \neq j}\cE_{ij}$. But the above analysis shows that {\it any}
$\GP$-composition factor of $\cE_2$ contains a $Q$-irreducible constituent which is
{\it not} a $Q$-constituent of  the complement to $\cB$ in $\cE_2$, a contradiction.
\end{proof}

{\bf Proof of Theorem \ref{thm: weak adequacy}.}
(a) First we consider the case $k$ is algebraically closed. Assume that $\GP$ is
$p$-solvable. Then $G$ is also $p$-solvable. Furthermore, $(\dim V)/(\dim W)$ divides
$|G/\GP|$ by \cite[Theorem 8.30]{N}, and so $p \nmid (\dim V)$. So we are done
by Lemma \ref{p-sol}. So we may now assume that $\GP$ is not $p$-solvable,
$p > \dim W > 1$, and apply Theorem
\ref{str} to $G$. Then the statement follows from Theorem \ref{thm6} in the case
$\GP$ is a central product of quasisimple groups of Lie type in characteristic $p$ (if
in addition $p > 3$), and
from the results of \S\S\ref{sec:cross}, \ref{sec:sl} in the remaining cases.

Suppose that $p = 3$ and $\GP = L_1 * \ldots * L_n$ is a central product of quasisimple groups of Lie type in characteristic $p$ (with $\bfZ(L_i)$ a $p'$-group for each $i$,
see Theorem \ref{str}(iii)). Write $V_\GP = e\oplus^t_{i=1}W_i$ as
usual. It is well known that the only quasisimple groups of Lie type in characteristic
$p$ that have a faithful representation of degree $2$ over $k$ are $\SL_2(p^a)$.
Since $\dim W = 2$, we must have that
$L_j \cong \SL_2(q)$ for a power $q > 3$ of $3$ for all $j$ (as
the $\GP$-modules $W_i$ are $G$-conjugate); moreover,  for each $i$, there is
a unique $k_i$ such that $L_j$ acts nontrivially on $W_i$ precisely when $j = k_i$.
Note that $L_i$ contains
a unique conjugacy class of cyclic subgroups $T_i$ of order $C_{q-1}$. It is
straightforward to check that the restrictions of all Brauer characters
$\varphi \in \IBr_p(L_i)$ of degree $2$ to $Q_i := \bfN_{L_i}(T_i)$ are all irreducible
and pairwise distinct. Letting $Q := Q_1 * \ldots * Q_n$ and arguing as in the case (b1) of
the proof of Theorem \ref{simple1}, we see that $Q$ satisfies all the hypotheses of
Lemma \ref{key}, whence we are done.

\smallskip
(b) Now we consider the general case. We will view $G$ as a subgroup of
$\GL(V)$ and let
$\cM := \langle g \mid g \in G \mbox{ semisimple}\rangle_k$ as usual. Since
the $kG$-module $V$ is absolutely irreducible, the $\overline{k}G$-module
$\overline{V}:= V \otimes_k \overline{k}$ is irreducible, and the condition $d < p$ implies
that the dimension of any irreducible $\GP$-submodule in $\overline{V}$ is also less than $p$.
By the previous case, $\cM \otimes_k \overline{k} = \End(\overline{V})$. It follows that
$\dim_k\cM = (\dim V)^2$ and so $\cM = \End(V)$.
\hfill $\Box$

\section{Extensions and self-extensions. I: Generalities}
First we record a convenient criterion concerning self-extensions in blocks of cyclic
defect:

 \begin{lem}  \label{cyclic}
 Suppose that $G$ is a finite group and that $V$ is an irreducible
 $\fpb G$-representation  that belongs to a block of cyclic defect.
 Then $\Ext^1_G (V,V) \ne 0$ if and only if $V$ admits at least two
 non-isomorphic lifts to characteristic zero.
 In this case, $\dim \Ext^1_G(V,V)=1$.
 \end{lem}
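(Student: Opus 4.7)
The plan is to reduce everything to the combinatorics of the Brauer tree $T$ of the block containing $V$. The case of defect zero is immediate (then $V$ is projective, so $\ext^1_G(V,V) = 0$ and $V$ has a unique lift), so I may assume the defect is positive. Identify $V = V_e$ with an edge $e$ of $T$, having endpoints $v_1, v_2$ of multiplicities $m_1, m_2$ (where $m_i = 1$ unless $v_i$ is the exceptional vertex, in which case $m_i$ equals the exceptional multiplicity). Since the ordinary character $\chi_v$ associated to a vertex $v$ has $p$-modular reduction $\sum_{e' \ni v} \varphi_{e'}$, the lifts of $V$ to characteristic zero correspond to ordinary characters at leaf endpoints of $e$, counted with their vertex multiplicities. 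Thus the number of lifts of $V$ equals $\epsilon_1 m_1 + \epsilon_2 m_2$, where $\epsilon_i = 1$ if $v_i$ is a leaf of $T$ and $\epsilon_i = 0$ otherwise.

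To compute $\dim \ext^1_G(V,V)$, I would invoke the standard Loewy description of the projective cover $P_e$ for cyclic blocks (Alperin, \emph{Local Representation Theory}, Ch.\ V; Feit, Ch.\ VII): one has $\hd(P_e) = \soc(P_e) = V_e$, and
\[
\rad(P_e)/\soc(P_e) \;\cong\; M_1 \oplus M_2,
\]
where each $M_i$ is a uniserial module of length $m_i d_i - 1$ (with $d_i$ the valence of $v_i$) whose composition factors, read from top to bottom, correspond to the edges visited by walking around $v_i$ starting from $e$. Since $\dim \ext^1_G(V_e, V_e)$ equals the multiplicity of $V_e$ in $\rad(P_e)/\rad^2(P_e)$, the task reduces to detecting when $V_e$ appears at the top of $M_1 \oplus M_2$, with care taken in the degenerate case where some $M_i = 0$.

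A case analysis then yields the conclusion. The observation $\hd(M_i) = V_e$ forces $d_i = 1$ (so $v_i$ is a leaf) and $m_i \ge 2$ (so that $M_i$ is nonzero and all its factors are $V_e$). Combining this with the degenerate scenario where both $M_i$ vanish, one obtains that $\dim \ext^1_G(V_e, V_e) = 1$ precisely in two situations: (a)~some endpoint $v_i$ of $e$ is a leaf with $m_i \ge 2$ (equivalently, $e$ is incident to the exceptional vertex as a leaf), yielding at least $m_i \ge 2$ lifts; or (b)~$T$ is a single edge with both endpoints leaves, in which case $P_e$ is uniserial with all composition factors equal to $V_e$, and the number of lifts is $m_1 + m_2 \ge 2$. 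Conversely, if neither $v_i$ is a leaf, or if the only leaf endpoint has multiplicity $1$ and the other endpoint is a non-leaf, then $\hd(\rad P_e) = \hd(M_1) \oplus \hd(M_2)$ contains no copy of $V_e$, while $\epsilon_1 m_1 + \epsilon_2 m_2 \le 1$. In every case $\dim \ext^1_G(V,V) \in \{0,1\}$ and equals $1$ iff $V$ admits at least two non-isomorphic lifts.

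The main technical point will be verifying, in the nondegenerate cases ($M_1 \oplus M_2 \ne 0$), that $\soc(P_e) \subseteq \rad^2(P_e)$, so that $\hd(\rad(P_e))$ agrees with $\hd(M_1) \oplus \hd(M_2)$ rather than receiving a spurious contribution from the socle. This is precisely the statement that the two ``branches'' of $\rad(P_e)$ both feed into the common socle $V_e$ non-splittingly, which is a direct consequence of the two-branch Brauer tree structure of $P_e$.
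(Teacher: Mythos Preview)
Your proposal is correct and follows essentially the same approach as the paper: both arguments reduce to the Brauer tree description of $\PIM(V)$ and arrive at the identical dichotomy (exceptional leaf incident to $e$, or $T$ a single edge). The only cosmetic difference is that the paper reads off $\Ext^1_G(V,V)$ as $\Hom(V,\PIM(V)/V)$, i.e.\ via the socle of $\PIM(V)/V$, whereas you use the dual formulation via $\hd(\rad P_e)=\rad(P_e)/\rad^2(P_e)$; for a self-injective algebra these give the same answer, and your more explicit handling of the degenerate cases ($M_i=0$) and of the inclusion $\soc(P_e)\subseteq\rad^2(P_e)$ simply makes precise what the paper's phrase ``the Brauer tree shows'' is invoking.
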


 \begin{proof}
 Let $B$ denote the block of $V$.
 If $B$ has defect zero, $V$ is projective and lifts uniquely to characteristic
 zero.
 Otherwise, $B$ is a Brauer tree algebra.
 Note that $\Ext^1_G (V,V)\ne 0$ if and only if $V$ embeds as
 subrepresentation of $\PIM (V)/V$.
 The Brauer tree shows that this happens if and only if either (i) $B$
 has an exceptional vertex and $V$ is the unique edge incident
 with it, or (ii) $B$ does not have an exceptional vertex and $V$ is
 the unique edge of the tree.
 In (i), each exceptional representation in $B$ lifts $V$, in (ii) both ordinary
 representations in $B$ lift $V$, and it is clear that $V$ has at most one lift
 in all other cases.
 To verify the final claim, note that $\Hom(V,\PIM(V)/V) \cong \Ext^1_G(V,V)$,
 and that in a Brauer tree algebra $V$ occurs at most once in $\soc(\PIM(V)/V)$.
  \end{proof}

In fact, as pointed out to us by V.\ Paskunas, one direction of Lemma \ref{cyclic} holds 
for any finite group $G$: {\it If $\Ext^1_G(V,V) = 0$ then $V$ has at most one characteristic 
$0$ lift}.
Indeed, if $V$ has no self-extension, we may first realize all characteristic zero lifts over
some finite extension $\bbE$ of $\bbQ_p$, as well as $V$ over the residue field of $\bbE$. 
Then the universal deformation ring $R$ of $V$ over the ring $\cO_\bbE$ is a quotient of $\cO_\bbE$. But then 
$|\Hom_{\text{$\cO_\bbE$-alg}}(R, \cO_\bbE)| \le 1$, i.e.\  $V$ has at most one characteristic zero lift.

We will frequently use the following simple observations:

\begin{lem}\label{zero-ext}
Let $V$ be a finite dimensional vector space over $k$ and $G \leq \GL(V)$ a finite
absolutely irreducible subgroup. Write $V|_{G^+} = e\bigoplus^t_{i=1}W_i$, where the
$G^+$-modules $W_i$ are absolutely irreducible and pairwise non-isomorphic. Suppose
that $\Ext^1_{\GP}(W_i,W_j) = 0$ for all $i,j$. Then $\Ext^1_G(V,V) = 0$.
\end{lem}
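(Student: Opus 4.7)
The plan is to deduce the vanishing of $\Ext^1_G(V,V)$ from the hypothesis via the Lyndon--Hochschild--Serre spectral sequence for the normal subgroup $\GP \lhd G$. First I would pin down the quotient $G/\GP$: by definition $\GP = \bfO^{p'}(G)$ is generated by all $p$-elements of $G$, so every $p$-element of $G$ already lies in $\GP$, which forces $|G/\GP|$ to be coprime to $p$. Since $k$ has characteristic $p$, Maschke's theorem implies that $k[G/\GP]$ is semisimple and consequently $H^i(G/\GP, M) = 0$ for every $i \geq 1$ and every $k[G/\GP]$-module $M$.

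Second, I would translate the hypothesis into a vanishing statement for $\GP$-Ext on all of $V$. From the decomposition $V|_{\GP} \cong e\bigoplus_{i=1}^t W_i$ we obtain
\[
\Ext^1_{\GP}(V,V) \;\cong\; \bigoplus_{i,j=1}^t e^2\,\Ext^1_{\GP}(W_i, W_j) \;=\; 0
\]
by the assumption that $\Ext^1_{\GP}(W_i,W_j)=0$ for all $i,j$.

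Finally, I would invoke the five-term exact sequence of the LHS spectral sequence
\[
E_2^{p,q} = H^p\!\bigl(G/\GP,\,\Ext^q_{\GP}(V,V)\bigr) \;\Longrightarrow\; \Ext^{p+q}_G(V,V).
\]
The two terms that can contribute to $\Ext^1_G(V,V)$ are $E_2^{1,0} = H^1(G/\GP, \End_{\GP}(V))$, which vanishes by the first paragraph (with $M = \End_{\GP}(V)$), and $E_2^{0,1} = \Ext^1_{\GP}(V,V)^{G/\GP}$, which vanishes by the second paragraph. Hence $\Ext^1_G(V,V) = 0$, as desired. There is essentially no serious obstacle: the argument is a clean formal consequence of the $p'$-nature of $G/\GP$ combined with the given $\GP$-level hypothesis, and no detailed knowledge of the $W_i$ or the $G/\GP$-action on $\Ext^1_{\GP}(V,V)$ is needed since that group is already zero.
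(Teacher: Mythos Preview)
Your proof is correct. Both approaches ultimately rest on the same fact---that $G/\GP$ is a $p'$-group---but they package it differently. The paper's argument is more elementary: since $\GP$ contains a Sylow $p$-subgroup of $G$, the restriction map $\Ext^1_G(V,V) \hookrightarrow \Ext^1_{\GP}(V,V)$ is injective (transfer--restriction is multiplication by the $p'$-index $[G:\GP]$), and the target vanishes by the same direct-sum computation you give. Your route via the Lyndon--Hochschild--Serre spectral sequence is perfectly valid and reaches the same conclusion, but it invokes heavier machinery where a one-line injectivity suffices; on the other hand, your argument makes explicit exactly which two obstructions ($E_2^{1,0}$ and $E_2^{0,1}$) must vanish, which can be conceptually clarifying.
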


\begin{proof}
Since $\GP$ contains a Sylow $p$-subgroup of $G$, $\Ext^1_G(V,V)$ embeds in
$$\Ext^1_{\GP}(V_\GP,V_\GP) = \Ext^1_{\GP}(e\bigoplus^t_{i=1}W_i,e\bigoplus^t_{i=1}W_i)
    \cong e^2\bigoplus_{i,j}\Ext^1_{\GP}(W_i,W_j) = 0.$$
\end{proof}

\begin{lem}\label{kernel}
Let $N$ be a normal subgroup of a finite group $X$ and let
$A$ and $B$ be finite dimensional $k(X/N)$-modules. Consider
$\Ext^1_X(A,B)$ where we inflate $A$ and $B$ to $kX$-modules.

{\rm (i)} If $\Ext^1_X(A,B) = 0$ then $\Ext^1_{X/N}(A,B) = 0$.

{\rm (ii)} If $\Ext^1_{X/N}(A,B) = 0$ and $\bfO^p(N) = N$  then $\Ext^1_X(A,B) = 0$.
\end{lem}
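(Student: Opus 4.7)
My plan is to analyze directly how a $kX$-extension
\[ 0 \to B \to E \to A \to 0 \]
with $N$ acting trivially on both $A$ and $B$ encodes the $N$-action on $E$; both parts then follow formally. For part (i), I would take a $k(X/N)$-extension that splits over $kX$ via a section $\sigma: A \to E$; since $n\sigma(a) = \sigma(na) = \sigma(a)$ for every $n \in N$, the image of $\sigma$ carries trivial $N$-action, hence so does $E = B \oplus \sigma(A)$, and the $kX$-splitting is already a $k(X/N)$-splitting.

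For part (ii), I would start with an arbitrary $kX$-extension as above and aim to show that $N$ automatically acts trivially on $E$. The key observation is that for each $n \in N$, the map $e \mapsto (n-1)e$ lands in $B$ and vanishes on $B$, and therefore descends to a $k$-linear map $\bar{n}: A \to B$. Moreover $\phi: n \mapsto \bar{n}$ is a group homomorphism from $N$ to the additive group $\Hom_k(A,B)$: additivity comes from writing $n_1 n_2 e - e = (n_1 n_2 e - n_2 e) + (n_2 e - e)$ and using that $n_1$ fixes $B$ pointwise, so $\bar{n}_1$ depends only on the image of its argument in $A$.

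Since $\Char k = p$, the additive group of the $k$-vector space $\Hom_k(A,B)$ is $p$-torsion, so $\operatorname{im} \phi$ is an elementary abelian $p$-group. Hence $\ker \phi \supseteq \bfO^p(N) = N$, meaning $\phi$ is trivial and $N$ acts trivially on $E$. The extension is therefore an extension of $k(X/N)$-modules, and the hypothesis $\Ext^1_{X/N}(A,B) = 0$ shows it splits. There is no serious obstacle; the argument is essentially formal, with the only substantive input being the interaction between the additive structure on $\Hom_k(A,B)$ and the fact that $k$ has characteristic $p$. (Equivalently, one could derive both parts in a single stroke from the inflation-restriction sequence $0 \to \Ext^1_{X/N}(A,B) \to \Ext^1_X(A,B) \to \Hom(N, \Hom_k(A,B))^{X/N}$, noting that the last term vanishes under the hypothesis of (ii) for the same characteristic-$p$ reason.)
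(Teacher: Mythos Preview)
Your proof is correct and follows essentially the same approach as the paper. For (ii), the paper phrases the key step more tersely: since $N$ acts trivially on both $A$ and $B$, the image $\Phi(N)$ in $\GL(E)$ consists of unipotent matrices and is therefore a $p$-group; then $\bfO^p(N)=N$ forces $\Phi(N)=1$. Your explicit homomorphism $\phi:N\to\Hom_k(A,B)$ is just a concrete realization of this same observation.
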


\begin{proof}
(i) is trivial. For (ii),
let $V$ be any extension of the $kX$-module $A$ by the $kX$-module $B$ and let
$\Phi~:~X \to \GL(V)$ denote the corresponding representation. Since $N$ acts trivially
on $A$ and on $B$, we see that $\Phi(N)$ is a $p$-group. But $\bfO ^p(N) = N$, hence
$\Phi(N) = 1$, i.e.\ $N$ acts trivially on $V$. Now, $V \cong A \oplus B$ as
$\Ext^1_{X/N}(A,B) = 0$.
\end{proof}

Next we recall the Holt's inequality in cohomology \cite{H}:

\begin{lem}\label{holt}
Let $G$ be a finite group, $N \lhd G$, and let $V$ be a finite dimensional $kG$-module.
Then for any integer $m \geq 0$ we have
$$\dim H^m(G,V) \leq \sum^m_{j=0}\dim H^j(G/N,H^{m-j}(N,V)).$$
\end{lem}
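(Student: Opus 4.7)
The plan is to deduce the inequality directly from the Lyndon--Hochschild--Serre spectral sequence associated to the normal subgroup $N \lhd G$. Recall that for any $kG$-module $V$ there is a convergent first-quadrant cohomological spectral sequence
\[
E_2^{i,j} = H^i(G/N, H^j(N,V)) \Longrightarrow H^{i+j}(G,V),
\]
constructed either by filtering the double complex computing $\mathrm{Ext}^{*}_{kG}(k, V)$ through the normal subgroup $N$, or equivalently as the Grothendieck spectral sequence for the composition of functors $(-)^G = (-)^{G/N} \circ (-)^N$.

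Next I would exploit convergence. The spectral sequence abuts to a finite decreasing filtration $F^\bullet$ of $H^m(G,V)$ whose successive quotients are
\[
F^j H^m(G,V)/F^{j+1} H^m(G,V) \cong E_\infty^{j, m-j}, \qquad 0 \le j \le m.
\]
Summing dimensions over the graded pieces of this filtration therefore yields
\[
\dim H^m(G,V) = \sum_{j=0}^{m} \dim E_\infty^{j, m-j}.
\]

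The final step is the observation that each $E_\infty^{j, m-j}$ is a subquotient of the corresponding $E_2^{j, m-j}$, since the higher differentials can only kill dimension. Hence
\[
\dim E_\infty^{j, m-j} \le \dim E_2^{j, m-j} = \dim H^j\bigl(G/N, H^{m-j}(N,V)\bigr),
\]
and summing over $j$ gives the desired inequality. There is no real obstacle: all the work is packaged into the existence and convergence of the LHS spectral sequence, and the estimate is just the standard fact that the $E_\infty$-page has dimensions bounded by those of any earlier page.
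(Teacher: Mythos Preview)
Your argument is correct: the inequality follows immediately from the Lyndon--Hochschild--Serre spectral sequence, since $\dim H^m(G,V) = \sum_j \dim E_\infty^{j,m-j} \le \sum_j \dim E_2^{j,m-j}$. The paper does not actually prove this lemma; it simply cites Holt's paper \cite{H} for the result, so your proposal supplies the standard justification that the authors left as a reference.
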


From now on we again assume that $k$ is algebraically closed.

\begin{cor}\label{product1}
Let $G = G_1 \times G_2$ be a direct product of finite groups and let $V_i$ be a
non-trivial  irreducible $kG_i$-module for $i = 1,2$.

{\rm (i)} If we view $V_1 \otimes V_2$ as a $kG$-module, then
$H^1(G,V_1 \otimes V_2) = 0$.

{\rm (ii)} If we inflate $V_1$ and $V_2$ to $kG$-modules, then $\Ext^1_G(V_1,V_2) = 0$.
\end{cor}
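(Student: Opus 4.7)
The plan is to handle (i) with the Künneth formula (or equivalently with the Holt inequality of Lemma~\ref{holt}) and then reduce (ii) to (i) by a standard $\Ext$--$H^1$ translation.

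For (i), I would invoke the Künneth formula for group cohomology over the field $k$, namely
\[
H^n(G_1\times G_2,\, V_1 \otimes V_2) \;\cong\; \bigoplus_{i+j=n} H^i(G_1,V_1)\otimes H^j(G_2,V_2),
\]
where $V_1\otimes V_2$ carries the external tensor product action (which is what is meant in the statement). Taking $n=1$ gives two summands, and in each one at least one factor of the form $H^0(G_i,V_i)=V_i^{G_i}$ appears. Since $V_i$ is irreducible and non-trivial, $V_i^{G_i}=0$, so both summands vanish. If one prefers to avoid Künneth, Lemma~\ref{holt} applied with $N=G_2$ gives the same conclusion in two lines: $H^0(G_2,V_1\otimes V_2)=V_1\otimes V_2^{G_2}=0$ handles one term, and for the other term $H^1(G_2,V_1\otimes V_2)=V_1\otimes H^1(G_2,V_2)$ (since $G_2$ acts trivially on $V_1$) has $G_1$-invariants $V_1^{G_1}\otimes H^1(G_2,V_2)=0$.

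For (ii), the key observation is that the inflation of $V_1$ from $G_1$ and of $V_2$ from $G_2$ makes the internal tensor product $V_1^*\otimes V_2$ (as a $kG$-module) coincide with the external tensor product $V_1^*\boxtimes V_2$, because $G_1$ acts only on the first factor and $G_2$ only on the second. Since $V_1^*$ is an irreducible non-trivial $kG_1$-module (irreducibility and non-triviality of the dual are inherited from $V_1$), part (i) applies to give
\[
\Ext^1_G(V_1,V_2) \;\cong\; H^1\bigl(G,\, V_1^*\otimes V_2\bigr) \;=\; 0.
\]

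I do not anticipate any real obstacle here; the only point that needs a little care is the identification of the $G$-module structure on $V_1^*\otimes V_2$ with an external tensor product, but this is immediate from the definition of the inflations.
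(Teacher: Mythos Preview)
Your proposal is correct and essentially identical to the paper's proof. The paper uses Lemma~\ref{holt} with $N=G_1$ (you use $N=G_2$, which is symmetric) for (i), and for (ii) it makes the same $\Ext^1_G(V_1,V_2)\cong H^1(G,V_1^*\otimes V_2)$ reduction to (i).
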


\begin{proof}
For (i), applying Lemma \ref{holt} to $N:=G_1$ we get
$$\dim H^1(G,V) \leq \dim H^0(G_2,H^1(G_1,V)) + \dim H^1(G_2,H^0(G_1,V)).$$
Now the $G_1$-module $V$ is a direct sum of $\dim (V_2)$ copies of $V_1$ and
$V_1$ is non-trivial irreducible, whence $H^0(G_1,V) = 0$. Next,
$H^1(G_1,V) \cong H^1(G_1,V_1) \otimes V_2$ as $G_2$-modules, with $G_2$ acting
trivially on the first tensor factor. It follows that
$$H^0(G_2,H^1(G_1,V)) \cong H^1(G_1,V_1) \otimes H^0(G_2,V_2) = 0$$
as $V_2$ is non-trivial irreducible, and so we are done.

(ii) follows from (i) since
$\Ext^1_G(V_1,V_2) \cong H^1(G,V_1^* \otimes V_2)$ and $V_1^*$ is a non-trivial
absolutely irreducible $kG_1$-module.
\end{proof}

\begin{cor}\label{product2}
Let the finite group $H$ be a central product of quasisimple subgroups
$H = H_1 * \ldots * H_n$, where $\bfZ (H_i)$ is a $p'$-group for all $i$.
For $i = 1,2$, let $W_i$ be a non-trivial irreducible
$kH$-module such that the action of $H$ on $W_i$ induces a quasisimple
subgroup of $\GL(W_i)$. Suppose that the kernels of the actions of $H$ on $W_1$ and on $W_2$ are different. Then $\Ext^1_H(W_1,W_2) = 0$.
\end{cor}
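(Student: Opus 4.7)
The plan is to compute $\Ext^1_H(W_1,W_2) = H^1(H, W_1^*\otimes W_2)$ and show it vanishes. First I would pin down the structure of each $W_i$. Since $H$ acts on $W_i$ through a quasisimple group $\bar H_i$ and each $H_j \lhd H$, the image of $H_j$ in $\bar H_i$ is normal in the quasisimple group $\bar H_i$, hence is either central in $\bar H_i$ or equals $\bar H_i$; in the former case, the image is both abelian and a quotient of the perfect group $H_j$, so it is trivial. Thus there is a unique index $j_i$ such that $H_{j_i}$ acts non-trivially on $W_i$, while every other $H_j$ acts trivially, and the kernel is $K_i = Z_i \cdot \prod_{j \neq j_i} H_j$ with $Z_i \leq \bfZ(H_{j_i})$. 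The assumption $K_1 \neq K_2$ splits into two cases: either (i) $j_1 \neq j_2$, or (ii) $j_1 = j_2 =: j$ and $Z_1 \neq Z_2$; in case (ii), the central characters $\chi_1,\chi_2$ of $W_1,W_2$ on $\bfZ(H_j)$ satisfy $\chi_1 \neq \chi_2$, since their kernels differ.

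Next I would pass to the direct product $\tilde H = H_1 \times \cdots \times H_n$ via the canonical surjection $\pi\colon\tilde H \to H$ with kernel $K \leq \bfZ(\tilde H) = \prod_j \bfZ(H_j)$, a $p'$-group by hypothesis. Any $H$-module $M$ pulled back to $\tilde H$ has $K$-action trivial, so the Lyndon--Hochschild--Serre spectral sequence for $K \lhd \tilde H$ collapses (as $H^q(K,-)=0$ for $q>0$) to give $H^1(H,M) \cong H^1(\tilde H,M)$. As a $\tilde H$-module, each $W_i$ is the outer tensor product $k \boxtimes \cdots \boxtimes U_i \boxtimes \cdots \boxtimes k$, where $U_i$ is a non-trivial irreducible $H_{j_i}$-representation placed at position $j_i$.

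Then I would apply the K\"unneth formula to $W_1^*\otimes W_2$. In case (i), this module is an outer tensor product with two non-trivial irreducible factors ($U_1^*$ at $j_1$ and $U_2$ at $j_2$) and trivial factors elsewhere; every K\"unneth term contributing to $H^1(\tilde H, W_1^*\otimes W_2)$ includes a factor $H^0(H_{j_1},U_1^*)=0$ or $H^0(H_{j_2},U_2)=0$, so the whole sum vanishes. In case (ii), $W_1^* \otimes W_2$ collapses to $(U_1^* \otimes U_2) \boxtimes k \boxtimes \cdots \boxtimes k$, and K\"unneth reduces the computation to $H^1(H_j, U_1^* \otimes U_2)$. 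Since $\bfZ(H_j)$ is a $p'$ normal subgroup of $H_j$ acting on $U_1^*\otimes U_2$ through the non-trivial character $\chi_1^{-1}\chi_2$, we have $(U_1^*\otimes U_2)^{\bfZ(H_j)}=0$, and the spectral sequence for $\bfZ(H_j) \lhd H_j$ collapses once more to give $H^1(H_j, U_1^* \otimes U_2)=0$.

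The main substantive input is the first step, which uses the quasisimple image hypothesis together with perfectness of the $H_j$ to force exactly one direct factor $H_{j_i}$ to act non-trivially on $W_i$. Once this is in hand, the remainder is routine: collapsing Lyndon--Hochschild--Serre spectral sequences for $p'$ normal subgroups and bilinear K\"unneth bookkeeping. There is no serious obstacle beyond setting up the structural step correctly.
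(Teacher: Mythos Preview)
Your proof is correct and follows essentially the same approach as the paper: both pass to the direct product $\tilde H = H_1 \times \cdots \times H_n$, identify the unique factor $H_{j_i}$ acting nontrivially on $W_i$, and then split into the cases $j_1 \neq j_2$ (handled by K\"unneth, equivalently the paper's Corollary~\ref{product1}) and $j_1 = j_2$ with distinct central characters on the $p'$-group $\bfZ(H_j)$. The only cosmetic difference is that you invoke the Lyndon--Hochschild--Serre spectral sequence and K\"unneth directly, whereas the paper packages the same reductions via Lemma~\ref{kernel} and Corollary~\ref{product1}.
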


\begin{proof}
View $H$ as a quotient of $L := H_1 \times \ldots \times H_n$ by a central $p'$-subgroup and
inflate $W_i$ to a $kL$-module. Next, write $W_i = W^i_1 \otimes \ldots \otimes W^i_n$ for some absolutely irreducible $kH_j$-module $W^i_j$, $1 \leq i \leq 2$, $1 \leq j \leq n$.
Since $H_j$ is quasisimple, if $\dim W^i_j = 1$ then $H_j$ acts trivially on $W_i$. On
the other hand, if $\dim W^i_j > 1$, then $H_j$ induces a quasisimple subgroup
of $\GL(W^i_j)$. Hence, the condition that the action of $H$ on $W_i$ induces a quasisimple
subgroup of $\GL(W_i)$ implies that $\dim W^i_j > 1$ for exactly one index $j = k_i$,
whence the kernel of $L$ on $W_i$ is
$$H_1 \times \ldots \times H_{k_i-1} \times \bfC_{H_{k_i}}(W^i_{k_i}) \times H_{k_i+1}
   \times \ldots \times H_n.$$
Note that by the hypothesis on $H_i$,
$\prod_{j \neq k_1,~k_2}H_j$ has no non-trivial $p$-quotient. Hence,
by Lemma \ref{kernel} there is no loss to mod $L$ out by $\prod_{j \neq k_1,~k_2}H_j$.
If $k_1 \neq k_2$, then we are reduced to the case $L = H_{k_1} \times H_{k_2}$, $W_1$ is a non-trivial
$H_{k_1}$-module inflated to $L$ and $W_2$ is a non-trivial $H_{k_2}$-module inflated to $L$,
whence we are done by Corollary \ref{product1}(ii). Suppose now that $k_1 = k_2$, say
$k_1 = k_2 = 1$ for brevity.
Then we are reduced to the case $L = H_{1}$, and $K_1 \neq K_2$, where
$K_i = \bfC_{H_1}(W^i_{1}) \leq \bfZ(H_{1})$. By Schur's lemma, $\bfZ(H_{1})$
acts on $W_i$ by scalars and semisimply, via a linear character $\lambda_i$.  Since
$K_1 \neq K_2$, we see that $\lambda_1 \neq \lambda_2$. It follows (by considering
$\bfZ(H_{1})$-blocks, or by considering $\lambda_i$-eigenspaces for
$\bfZ(H_{1})$ in any extension of $W_1$ by $W_2$) that
$\Ext^1_L(W_1,W_2) = 0$.
\end{proof}

More generally, we record the following consequence of the K\"unneth formula,
cf.\ \cite[3.5.6]{Ben}.

 \begin{lem} \label{lem:kunneth}  Let $H$ be a finite group.  Assume that
 $H$ is a central product of subgroups $H_i, 1 \le i \le t$, and that
$\bfZ(H)$ is a $p'$-group.
 Let $X$ and $Y$ be irreducible $kH$-modules.  Write
 $X=X_1 \otimes \ldots \otimes X_t$ and $Y=Y_1 \otimes \ldots \otimes Y_t$
 where $X_i$ and $Y_i$ are irreducible $kH_i$-modules.
 \begin{enumerate}[\rm(i)]
 \item If $X_i$ and $Y_i$ are not isomorphic for two distinct $i$, then
 $\ext_H^1(X,Y)=0$.
 \item If $X_1$ and $Y_1$ are not isomorphic but $X_i \cong Y_i$ for $i >1$, then
 $\ext_H^1(X,Y) \cong \ext_{H_1}^1(X_1,Y_1)$.
 \item  If $X_i \cong Y_i$ for all $i$, then
 $\ext_H^1(X,Y) \cong \oplus_i \ext_{H_i}^1(X_i, Y_i)$.
 \end{enumerate}
 \end{lem}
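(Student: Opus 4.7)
The plan is to reduce from the central product to a direct product and then invoke the K\"unneth formula. Set $L := H_1 \times \ldots \times H_t$, let $\pi : L \twoheadrightarrow H$ be the multiplication map (whose existence uses the pairwise commutation in a central product), and put $Z := \ker \pi$. A standard argument shows $Z$ embeds into $\prod_i \bfZ(H_i)$: for $(h_1, \ldots, h_t) \in Z$ the element $h_i = \prod_{j \neq i} h_j^{-1}$ lies in $H_i$ and commutes with every $H_j$, so $h_i \in \bfZ(H_i)$. Moreover $\bfZ(H_i) \subseteq \bfZ(H)$ (any element of $\bfZ(H_i)$ commutes with $H_i$ and, by the central product structure, with every other $H_j$), so $Z \leq \bfZ(H)$, which is a $p'$-group by hypothesis. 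Hence $Z$ is a $p'$-group.

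Inflating $X, Y$ (hence $X^* \otimes Y$) to $L$-modules via $\pi$, the subgroup $Z$ acts trivially. Since $|Z|$ is invertible in $k$, the Lyndon--Hochschild--Serre spectral sequence
\[
E_2^{p,q} = H^p(L/Z, H^q(Z, X^* \otimes Y)) \Longrightarrow H^{p+q}(L, X^* \otimes Y)
\]
has $E_2^{p,q} = 0$ for $q > 0$, so it collapses to yield
\[
\Ext^1_H(X, Y) \cong H^1(H, X^* \otimes Y) \cong H^1(L, X^* \otimes Y) \cong \Ext^1_L(X, Y).
\]

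Now apply the K\"unneth formula (available without Tor terms because we are over the field $k$) iteratively to the direct product $L = H_1 \times \ldots \times H_t$ with coefficients in $(X_1^* \otimes Y_1) \otimes \ldots \otimes (X_t^* \otimes Y_t)$. Taking the $n = 1$ piece gives
\[
\Ext^1_L(X, Y) \cong \bigoplus_{i=1}^t \Ext^1_{H_i}(X_i, Y_i) \otimes \bigotimes_{j \neq i} \Hom_{H_j}(X_j, Y_j).
\]
By Schur's lemma and the assumption that $k$ is algebraically closed, $\Hom_{H_j}(X_j, Y_j)$ equals $k$ if $X_j \cong Y_j$ and vanishes otherwise.

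All three conclusions now drop out. In case (i), at least two indices $j$ contribute a vanishing $\Hom$-factor, so every summand in the K\"unneth decomposition contains a zero factor and $\Ext^1_H(X, Y) = 0$. In case (ii), only the $i = 1$ summand survives (the others each pick up the zero factor $\Hom_{H_1}(X_1, Y_1) = 0$), giving $\Ext^1_{H_1}(X_1, Y_1)$. In case (iii) every $\Hom$-factor is $k$ and the K\"unneth sum collapses to $\bigoplus_i \Ext^1_{H_i}(X_i, Y_i)$. The only mildly technical step is the collapse of the LHS spectral sequence, which is routine given the $p'$-hypothesis on $\bfZ(H)$.
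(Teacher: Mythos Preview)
Your proof is correct and follows exactly the approach the paper intends: the paper does not actually prove this lemma but merely records it as ``a consequence of the K\"unneth formula, cf.\ \cite[3.5.6]{Ben}''. You have correctly supplied the details---the reduction from the central product to the direct product via the $p'$-kernel $Z$ (using the hypothesis that $\bfZ(H)$ is a $p'$-group), the collapse of the Lyndon--Hochschild--Serre spectral sequence, and the K\"unneth decomposition followed by Schur's lemma---which is precisely what the citation to Benson is meant to encode.
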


We continue with several general remarks.

\begin{lemma}\label{semi1}
Let $V$ be a $kG$-module of finite length.

{\rm (i)} Suppose that $X$ is a composition factor of $V$ such that
$V$ has no indecomposable subquotient of length $2$ with $X$ as a composition
factor. Then $V \cong X\oplus M$ for some submodule $M \subset X$.

{\rm (ii)} Suppose that $\Ext^1_G(X,Y) = 0$ for
any two composition factors $X$, $Y$ of $V$. Then $V$ is semisimple.
\end{lemma}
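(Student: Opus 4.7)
The plan is to prove (ii) first, since it is cleaner, and then to adapt the same spirit for (i) using the equivalent reformulation of its hypothesis: every length-$2$ subquotient of $V$ having $X$ as a composition factor is semisimple (since a length-$2$ module is indecomposable iff the corresponding extension is non-split). I would also interpret the clause ``$M\subset X$'' in (i) as a typo for ``$M\subset V$''; the goal is to exhibit $X$ as a direct summand of $V$.

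For (ii), I would induct on the composition length $\ell(V)$. If $V$ is simple there is nothing to prove. Otherwise, pick any simple submodule $Y\subseteq V$ and consider the sequence
\[
0\to Y\to V\to V/Y\to 0.
\]
All composition factors of $V/Y$ are composition factors of $V$, so by induction $V/Y$ is semisimple: $V/Y\cong\bigoplus_j Y_j$ with each $Y_j$ simple. The class of this sequence in $\Ext^1_G(V/Y,Y)\cong\bigoplus_j\Ext^1_G(Y_j,Y)$ vanishes summand by summand by hypothesis, and hence $V\cong Y\oplus V/Y$ is semisimple.

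For (i), I would argue in two steps. \emph{Step 1: arrange that $X$ lies in $\soc V$.} Take any composition series $0=V_0\subset V_1\subset\cdots\subset V_\ell=V$ and let $i$ be minimal with $V_i/V_{i-1}\cong X$. If $i\ge 2$, then $V_i/V_{i-2}$ is a length-$2$ subquotient of $V$ containing $X$ as a composition factor (with bottom $V_{i-1}/V_{i-2}\not\cong X$ by minimality), so by the observation above $V_i/V_{i-2}\cong X\oplus(V_{i-1}/V_{i-2})$; this produces a submodule $V_{i-2}\subsetneq V_{i-1}'\subsetneq V_i$ with $V_{i-1}'/V_{i-2}\cong X$, and substituting $V_{i-1}'$ for $V_{i-1}$ in the series lowers $i$ by one. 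Iterating delivers $V_1\cong X$. \emph{Step 2: split $Y:=V_1\cong X$ off each $V_i$ by induction on~$i$.} Assuming $V_{i-1}=Y\oplus M_{i-1}$, the quotient $V_i/M_{i-1}$ has length~$2$ with bottom $V_{i-1}/M_{i-1}\cong Y$ and top $V_i/V_{i-1}$ simple, and, being a subquotient of $V$ containing $X$, it is semisimple by hypothesis. This produces $M_i\supset M_{i-1}$ inside $V_i$ with $V_i=Y\oplus M_i$, and at $i=\ell$ we obtain $V=X\oplus M$.

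The main point to be careful about, and the only real subtlety, is that the hypothesis in (i) is used exclusively through length-$2$ subquotients that actually contain a copy of $X$; both constructions above have this property built in. This also explains why (i) holds under a hypothesis strictly weaker than that of (ii): only subquotients involving $X$ need to split, whereas (ii) requires vanishing of $\Ext^1$ between \emph{all} pairs of composition factors.
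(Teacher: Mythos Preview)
Your proof is correct. The organization differs from the paper's in two ways. First, you prove (ii) directly and independently (split off a simple submodule using the vanishing of $\Ext^1$ on the semisimple quotient), whereas the paper deduces (ii) from (i) by applying (i) to an arbitrary composition factor and then inducting on each summand. Second, for (i) both arguments begin by showing that $X$ occurs as a submodule of $V$---the paper does this by taking a submodule of minimal length containing $X$ as a composition factor and showing it must be simple, you do it by successively swapping $X$ downward in a composition series---but the second halves diverge: the paper picks a submodule $M$ of maximal length with $M\cap X=0$ and argues by contradiction that $M+X=V$, while you build the complement $M$ inductively along the composition series, splitting one simple factor off at each step. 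Your approach is a bit more constructive; the paper's maximal-complement trick is a bit shorter. Both use the hypothesis in exactly the same way (only on length-$2$ subquotients that contain $X$), and your reading of ``$M\subset X$'' as a typo for ``$M\subset V$'' is correct.
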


\begin{proof}
(i) We will assume that $V \not\cong X$. Let $U$ be a submodule of $V$ of
smallest length that has $X$ as a composition factor. First we show that
$U \cong X$. If not, then $U$ has a composition series
$U = U_0 > U_1 > \ldots > U_m = 0$ for some $m \geq 2$. Note that
$U/U_1 \cong X$, as otherwise $X$ would be a composition factor
of $U_1 \subset U$, contradicting the choice of $U$. Now $U/U_2$ is a
subquotient of length $2$ of $V$ with $X$ as a quotient. By the
hypothesis, $U/U_2 = U'/U_2 \oplus U''/U_2$ with $U'/U_2 \cong X$ and
$U'' \supset U_2$, again contradicting the choice of $U$.

Now let $M$ be a submodule of $V$ of largest length such that $M \cap U = 0$.
In particular, $V/M \supseteq (M+U)/M \cong X$. Assume furthermore that
$V \neq M +U$. Then we can find a submodule $V' \subseteq V$ such that
$V'/(M+U)$ is simple. Again, $V'/M$ is a subquotient of length $2$ of
$V$ with $X$ as a submodule. So by the hypothesis,
$V'/M = (M+U)/M \oplus N/M$ for some submodule $N \subseteq V$ containing
$M$ properly. But then
$$N \cap U = (N \cap (M+U)) \cap U = M \cap U = 0,$$
contrary to the choice of $M$. Thus $V = M \oplus U$ is decomposable.

(ii) Induction on the length of $V$. If $V$ is not simple, then by (i) we have
$V \cong V' \oplus V''$ for some nonzero submodules $V'$ and $V''$.
Now apply the induction hypothesis to $V'$ and $V''$.
\end{proof}

\begin{lem}\label{mult1}
Let $V$ be a $kG$-module. Suppose that $U$ is a composition factor of $V$ of
multiplicity $1$, and that $U$ occurs both in $\soc(V)$ and $\hd(V)$. Then
$V \cong U \oplus M$ for some submodule $M \subset V$.
\end{lem}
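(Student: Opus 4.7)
The plan is to produce the splitting by picking a specific copy of $U$ inside the socle and a specific maximal submodule whose quotient is $U$, and then showing that these two submodules are complementary. Since $U$ appears in $\soc(V)$, there is a simple submodule $U_1 \subseteq V$ with $U_1 \cong U$. Since $U$ appears in $\hd(V) = V/\rad(V)$, there is a maximal submodule $V' \subsetneq V$ with $V/V' \cong U$. The proposed decomposition is $V = U_1 \oplus V'$, i.e.\ I will take $M := V'$.

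The key step is to verify $U_1 \cap V' = 0$, and this is where the multiplicity-one hypothesis on $U$ enters. Since $U_1$ is simple, the intersection $U_1 \cap V'$ is either $0$ or $U_1$. If $U_1 \subseteq V'$, then $U$ would be a composition factor of $V'$, and combined with $V/V' \cong U$ this would force the composition-factor multiplicity of $U$ in $V$ to be at least $2$, contradicting the hypothesis. So $U_1 \cap V' = 0$.

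It then remains to check that $V' + U_1 = V$. The quotient $(V' + U_1)/V'$ is a nonzero submodule of the simple module $V/V'$, hence equals $V/V'$, giving $V' + U_1 = V$. Combined with $U_1 \cap V' = 0$ this yields $V = U_1 \oplus V'$ as $kG$-modules, which is the desired decomposition with $M = V'$. There is no real obstacle here; the only point requiring care is ensuring the multiplicity-one hypothesis is used exactly once, to rule out the possibility $U_1 \subseteq V'$.
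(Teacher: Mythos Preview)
Your proof is correct and follows essentially the same approach as the paper's: pick a copy $U_1 \cong U$ in $\soc(V)$, pick $M$ with $V/M \cong U$, and use the multiplicity-one hypothesis to rule out $U_1 \subseteq M$, whence $V = U_1 \oplus M$. The paper's version is simply terser, leaving the verification that $U_1 + M = V$ and $U_1 \cap M = 0$ implicit.
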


\begin{proof}
Let $U_1 \cong U$ be a submodule of $V$. Since $U$ occurs in $\hd(V)$, there is
$M \subset V$ such that $V/M \cong U$. Now if $M \supseteq U_1$, then
$U$ would have multiplicity $\geq 2$ in $V$. Hence $V = U_1 \oplus M$.
\end{proof}

\begin{lemma}\label{block1}
Let $V$ be a $kG$-module of finite length. Suppose the set of isomorphism
classes of composition factors of $V$ is a disjoint union $\cX \cup \cY$ of
non-empty subsets such that, for any $U \in \cX$ and $W \in \cY$, there is no
indecomposable subquotient of length $2$ of $V$ with composition factors
$U$ and $W$. Then $V$ is decomposable.
\end{lemma}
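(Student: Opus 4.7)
The plan is to prove this by induction on the length of $V$, using Lemma \ref{semi1}(i) as the main tool. The base case of length $\leq 1$ is vacuous, since the hypothesis requires both $\cX$ and $\cY$ to contain composition factors of $V$. For the inductive step, pick any simple submodule $S \subseteq V$; without loss of generality $S \in \cX$, and split into two cases depending on whether $V/S$ still has composition factors in both $\cX$ and $\cY$.

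If every composition factor of $V/S$ lies in $\cY$, then $S$ appears with multiplicity one in $V$. Consequently, any indecomposable length-$2$ subquotient of $V$ containing $S$ as a composition factor must have its other composition factor in $\cY$, which the hypothesis forbids. Lemma \ref{semi1}(i) then gives $V \cong S \oplus M$, and this is a non-trivial decomposition.

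In the remaining case, $V/S$ has composition factors in both $\cX$ and $\cY$, and the hypothesis clearly descends to $V/S$ (a subquotient of $V/S$ is a subquotient of $V$). Induction yields $V/S = A/S \oplus B/S$ with $A/S$ having all composition factors in $\cX$ and $B/S$ all in $\cY$, so that $A \cap B = S$ and $A + B = V$. Inside $B$, the factor $S$ now has multiplicity one, so re-running the Case~1 argument on $B$ (a subquotient of $B$ is a subquotient of $V$) produces $B = S \oplus B'$ with $B'$ having all composition factors in $\cY$. A short diagram chase then gives $V = A + B' $ and $A \cap B' \subseteq A \cap B \cap B' = S \cap B' = 0$, hence $V = A \oplus B'$.

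The one subtle point to verify carefully is that in both cases the length-$2$ indecomposable subquotient one would need to rule out via Lemma \ref{semi1}(i) really does have its two composition factors split between $\cX$ and $\cY$, so that the hypothesis of the present lemma applies; this is ensured each time by the multiplicity-one position of $S$ in the ambient module ($V$ in Case~1, $B$ in Case~2), which forces the second composition factor of any such subquotient to lie in $\cY$.
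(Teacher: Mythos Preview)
Your inductive approach works, but there is a small gap in Case~2: the induction hypothesis, as you have stated it, only says that $V/S$ is \emph{decomposable}, not that it splits as an $\cX$-part direct sum a $\cY$-part. So the sentence ``Induction yields $V/S = A/S \oplus B/S$ with $A/S$ having all composition factors in $\cX$ and $B/S$ all in $\cY$'' is not yet justified. The fix is easy: either (a) strengthen the statement being proved by induction to ``$V = V_{\cX} \oplus V_{\cY}$ with all composition factors of $V_{\cX}$ in $\cX$ and of $V_{\cY}$ in $\cY$'' --- your Case~1 and Case~2 already establish exactly this stronger conclusion, so no further change is needed --- or (b) iterate the weak induction hypothesis on the summands of $V/S$ until each indecomposable piece is pure $\cX$ or pure $\cY$, then regroup. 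With either repair the rest of your argument (in particular the diagram chase $V = A \oplus B'$) is correct.

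The paper's proof is different and non-inductive. It lets $X$ (resp.\ $Y$) be the \emph{largest} submodule of $V$ with all composition factors in $\cX$ (resp.\ $\cY$), notes $X \cap Y = 0$, and argues that $V = X \oplus Y$. If not, one picks $Z \supset X \oplus Y$ with $U := Z/(X \oplus Y)$ simple, say $U \in \cX$; then $Z/X$ has $U$ as its unique $\cX$-factor (with multiplicity one), so Lemma~\ref{semi1}(i) splits off a copy of $U$ in $Z/X$, giving a submodule $T$ with $X \subsetneq T$ and all composition factors of $T$ in $\cX$, contradicting the maximality of $X$. This is a bit slicker --- it avoids induction and immediately produces the canonical $\cX$/$\cY$ decomposition --- but both arguments ultimately rest on the same single use of Lemma~\ref{semi1}(i), and once you fix the induction hypothesis your proof yields the same decomposition.
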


\begin{proof}
Let $X$, respectively $Y$, denote the largest submodule of $V$ with all composition
factors belonging to $\cX$, respectively belonging to $\cY$. By
their definition, $X \cap Y = 0$. We claim that $V = X \oplus Y$. If not, we can find
a submodule $Z \supset X \oplus Y$ of $V$ such that $U := Z/(X \oplus Y)$ is
a simple $G$-module. Suppose for instance that $U \in \cX$. Applying Lemma
\ref{semi1}(i) to the $G$-module $Z/X$ and its composition factor $U$,
we see that $Z/X \cong U \oplus Y$. This implies that
$Z$ contains a submodule $T$ with $T/X \cong U$, contradicting the choice of $X$.

Now $X, Y \neq 0$ as $\cX,\cY \neq \varnothing$. It follows that $V$ is decomposable.
\end{proof}

\begin{lemma}\label{block2}
Let $V$ be an indecomposable $kG$-module.

{\rm (i)} If the $\GP$-module $V_\GP$ admits a composition factor $L$ of  dimension $1$, then all composition factors of $V_\GP$ belong to $B_0(\GP)$.

{\rm (ii)} Suppose a normal $p'$-subgroup $N$ of  $G$ acts by scalars on a composition factor $L$ of the $G$-module $V$. Then $N$ acts by scalars on $V$. If in addition $V$ is faithful then $N \leq \bfZ(G)$.
\end{lemma}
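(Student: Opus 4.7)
My plan is to use central block idempotents together with the indecomposability of $V$, treating both parts as variations on a single theme.

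For (i), I will first show that the composition factor $L$ must be isomorphic to the trivial module $k$. The reason is that any $1$-dimensional representation of $\GP$ factors through its abelianization, which is a $p$-group (because $\GP = \bfO^{p'}(\GP)$ is generated by $p$-elements), while $k^\times$ has no nontrivial elements of $p$-power order in characteristic $p$. Once $L \cong k$, it lies by definition in $B_0(\GP)$. Writing $1 = \sum_B e_B$ in $Z(k\GP)$ summing over blocks $B$ of $\GP$, I claim that $e_{B_0(\GP)}$ is $G$-invariant: the group $G$ permutes the $\GP$-blocks by conjugation, and the principal block, being the block of the $G$-stable trivial module, is itself $G$-fixed. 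Therefore $e_{B_0(\GP)} \in Z(kG)$, and the decomposition $V = e_{B_0(\GP)} V \oplus (1 - e_{B_0(\GP)}) V$ is a decomposition of $kG$-modules. Since $V$ is indecomposable and $e_{B_0(\GP)}$ acts as the identity on the subquotient $L$, it must act as the identity on $V$; hence every $\GP$-composition factor of $V$ lies in $B_0(\GP)$.

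For (ii), I will run the analogous argument inside $kN$. Because $N$ is a normal $p'$-subgroup, $kN$ is semisimple and every $\mu \in \Irr(N)$ yields a central idempotent $e_\mu \in Z(kN)$. Let $\lambda$ denote the linear character of $N$ through which it acts on $L$. Since $N \lhd G$ and $L$ is a $G$-module on which $N$ acts by scalars, $\lambda$ is $G$-invariant (Clifford's theorem), hence $e_\lambda$ is $G$-invariant and lies in $Z(kG)$. The same indecomposability argument as in (i), applied with $e_\lambda$ in place of $e_{B_0(\GP)}$, gives $e_\lambda V = V$, i.e., $N$ acts on all of $V$ by the scalar character $\lambda$. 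If moreover $V$ is faithful, then $N$ embeds into the scalar subgroup $k^\times \cdot 1_V$ of $\GL(V)$, which centralizes all of $\GL(V)$; in particular $N$ centralizes $G$, so $N \leq \bfZ(G)$.

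Both parts therefore reduce to the same mechanism: produce a $G$-invariant central idempotent of $k\GP$ (respectively $kN$) and then exploit the indecomposability of $V$ to upgrade an assertion about one composition factor to all of $V$. The only mildly delicate step is verifying that the relevant idempotent is $G$-invariant, but in each case this is immediate once one identifies the target block (respectively character) as the one carrying a $G$-stable simple module.
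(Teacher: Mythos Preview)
Your proof is correct and is essentially the same argument as the paper's, just phrased in terms of central idempotents rather than block components and $N$-homogeneous components. In (i) the paper takes $X = e_{B_0(\GP)}V$ and $Y = (1-e_{B_0(\GP)})V$ (described as the largest $\GP$-submodules with composition factors in, respectively outside, $B_0(\GP)$) and observes they are $G$-stable; in (ii) the paper uses transitivity of $G$ on the $N$-homogeneous components of $V$ together with the $G$-invariance of $\lambda$, which is exactly your $e_\lambda$ argument unwound.
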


\begin{proof}
(i) Since $\GP = \bfO^{p'}(\GP)$, it must act trivially on $L$. Let $X$, respectively $Y$,
denote the largest submodule of the $\GP$-module $V$ with all composition
factors belonging, respectively not belonging, to $B_0(\GP)$. By
their definition and the definition of $\GP$-blocks, $V = X \oplus Y$.
Note that both $X$ and $Y$ are $G$-stable as $\GP \lhd G$. Since $V$ is
indecomposable, we see that $Y = 0$ and $V = X$.

(ii) Note that $N$ acts completely reducibly on $V$ and $G$ permutes
the $N$-homogene\-ous components of $V$. Since $V$ is indecomposable,
it follows that this action is transitive, whence all composition factors of
the $N$-module $V$ are $G$-conjugate. But, among them, the (unique) linear
composition factor of $L_{N}$ is certainly $G$-invariant. Hence this
is the unique composition factor of $V_{N}$, and so $N$ acts by scalars on $V$.
\end{proof}

\section{Indecomposable representations of $\SL_2(q)$}
\label{sec4}

We first prove a lemma.

\begin{lem}
\label{lemma1}
Suppose that $S,T$ are irreducible $\SL_2 (\bbF_q)$-representations over $\barFp$
with $q=p^n$, $n\ge 2$, and $E$ is a non-split extension of $T$ by $S$.
Then $\dim E \ge p$ and $S\not\cong T$. Moreover, if  $\dim S = \dim T$
then $\dim E \geq (p^2-1)/2$.
\end{lem}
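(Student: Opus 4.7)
The strategy is to invoke Steinberg's tensor product theorem and reduce to the known computation of $\Ext^1$ groups for $\SL_2(\bbF_q)$, then extract dimension bounds.

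First I would write $S \cong \bigotimes_{i=0}^{n-1} L(a_i)^{[i]}$ and $T \cong \bigotimes_{i=0}^{n-1} L(b_i)^{[i]}$, where $0 \le a_i, b_i \le p-1$, $L(a)$ denotes the restricted irreducible $\SL_2$-module of dimension $a+1$, and $(-)^{[i]}$ is the $i$-th Frobenius twist. Then $\dim S = \prod_i (a_i + 1)$, $\dim T = \prod_i (b_i + 1)$, and $\dim E = \dim S + \dim T$. Since $E$ is a non-split extension, $\Ext^1_{\SL_2(\bbF_q)}(T, S) \neq 0$, and our task is to constrain the $p$-adic digits $(a_i)$ and $(b_i)$.

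The heart of the argument is to invoke the description of $\Ext^1_{\SL_2(\bbF_q)}(T, S)$ between irreducibles (due to Cline--Parshall--Scott, Andersen, and others, essentially via the Lyndon--Hochschild--Serre spectral sequence for the Frobenius kernels $G_r \trianglelefteq G_r T$ combined with generic cohomology). The outcome is that non-vanishing of $\Ext^1$ forces the digit sequences $(a_i)$ and $(b_i)$ to differ only in a very restricted way: either (i) there is a single index $i$ at which $a_j = b_j$ for $j \neq i$ and $\Ext^1_{\SL_2(\bbF_p)}(L(a_i), L(b_i)) \neq 0$ (which in turn forces $a_i + b_i = p-2$ by the classical analysis of the principal block of $\SL_2(\bbF_p)$, so $(a_i + 1) + (b_i + 1) = p$); or (ii) the class is spread across two adjacent indices $i, i+1$ (cyclically) with a ``carry'' relation between $(a_i, b_i)$ and $(a_{i+1}, b_{i+1})$.

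In case (i), $\dim E = [(a_i+1) + (b_i+1)] \cdot \prod_{j\neq i}(a_j+1) \ge p$, and since $p$ is odd we cannot have $a_i = b_i$, so $S \not\cong T$. Moreover, $(a_i+1) \neq (b_i+1)$ then forces $\dim S \neq \dim T$, so the equal-dimension hypothesis rules out case (i) and we must be in case (ii). In case (ii) the carry forces a more delicate relation: the product of the critical two tensor factors for $S$ plus that for $T$ is at least $(p^2-1)/2$ (this is essentially because on the critical two indices the pair $(a_i, b_i)$ produces a $p$-sum while the carry at the next index shifts one entry by one, yielding dimensions close to $(p-1)/2 \cdot (p+1)$ for one of $S$, $T$ and its counterpart for the other). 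Together with the trivial bound $\prod_{j\neq i, i+1}(a_j+1) \ge 1$, this gives $\dim E \ge (p^2-1)/2$. Non-isomorphism follows as in case (i) since some digit must differ.

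The main obstacle will be applying the $\Ext^1$ formula for $\SL_2(\bbF_q)$ precisely enough to justify both the dichotomy (i)/(ii) and the exact form of the carry in (ii); I expect this to reduce, after Steinberg, to the known statement that $\Ext^1_{\SL_2}(L(\lambda), L(\mu)) = 0$ for $\lambda, \mu$ restricted (which is standard since all restricted weights lie in the closure of the fundamental alcove) together with a careful bookkeeping of which pairs of adjacent $p$-adic digits can be ``linked'' by the finite-group Ext. Once this classification is in hand, the dimension inequalities are purely arithmetic.
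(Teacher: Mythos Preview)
Your approach coincides with the paper's: both reduce to the explicit computation of $\Ext^1_{\SL_2(\bbF_q)}(S,T)$ in Andersen--J{\o}rgensen--Landrock \cite{AJL}, Corollary~4.5(a), and extract the dimension bounds arithmetically. The paper's proof is a single sentence citing that result.

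However, your dichotomy (i)/(ii) misrepresents what the AJL formula actually says, and as written the argument does not go through. For $n\ge 2$ there is no ``single-digit'' case~(i): $\SL_2(\bbF_q)$ is not a product of copies of $\SL_2(\bbF_p)$, so no K\"unneth picture applies, and $\Ext^1_{\SL_2(\bbF_p)}(L(a_i),L(b_i))$ is not a summand of the finite-group Ext. What Corollary~4.5(a) of \cite{AJL} gives is that non-vanishing forces, up to a cyclic shift of indices, $b_i = p-2-a_i$, $b_{i+1}=a_{i+1}\pm 1$, and $b_j=a_j$ for all other $j$ --- always a change at two adjacent digits. From this, writing $x=a_i+1$ and $y=a_{i+1}+1$, the contribution at indices $i,i+1$ is $xy+(p-x)(y\pm1)$, which equals $p(y+1)-x$ or $p(y-1)+x$; in the admissible ranges either expression is $\ge p+1$, giving $\dim E\ge p$ (and $S\not\cong T$ since $a_i\ne b_i$). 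Since your case~(i) is vacuous, your derivation of $\dim E\ge p$ from it is unsupported.

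Your claim that case~(ii) always yields $\dim E\ge (p^2-1)/2$ is also false without the equal-dimension hypothesis: take $a_i=0$, $a_{i+1}=1$, $b_i=p-2$, $b_{i+1}=0$, giving $\dim E=p+1$. The bound $(p^2-1)/2$ genuinely uses $\dim S=\dim T$: the equation $xy=(p-x)(y\pm1)$ with $1\le x\le p-1$ forces $p\mid(2y\pm1)$, hence $y=(p\mp1)/2$ and $x=(p\pm1)/2$, so $\dim S=\dim T=(p^2-1)/4$ times the remaining factors. Once you replace your heuristic (i)/(ii) with the precise AJL statement, these two short computations are all that is needed.
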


\begin{proof}
This is immediate from Corollary 4.5(a) in \cite{AJL}.
\end{proof}

\begin{prop}
\label{prop1}
Suppose that $V$ is a reducible, self-dual, indecomposable representation
of $\SL_2 (\bbF_q)$ over $\barFp$, where $q=p^n$.
If $\dim V < 2p-2$, then $q=p$ and either of the following holds:

\begin{enumerate}[\rm(i)]
\item $\dim V = p$ and $V\cong \PIM (\bbone)$.
\item $\dim V = p+1$ and $V$ is the unique nonsplit self-extension
of $L\left( \frac{p-1}{2}\right)$.
\item $\dim V = p-1$ and $V$ is the unique nonsplit self-extension
of $L\left( \frac{p-3}{2}\right)$.
\end{enumerate}
Conversely, all the listed cases give rise to examples.
\end{prop}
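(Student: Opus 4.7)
My plan splits the proof into two parts: ruling out $q = p^n$ with $n \geq 2$, then classifying $V$ when $q = p$. A uniform observation is that every irreducible $\SL_2 (\bbF_q)$-module is self-dual (since $-w_0$ acts trivially on $\SL_2$-weights), so a self-dual indecomposable $V$ has $\hd V \cong \soc V$.

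\textbf{Ruling out $n \geq 2$.} If $V$ has length $2$, then $V$ is a non-split extension $T | S$ with $T \cong S^* \cong S$ by self-duality, i.e.\ a self-extension of a simple, which is ruled out by Lemma~\ref{lemma1}. For $V$ of length $\geq 3$, Lemma~\ref{semi1}(ii) produces composition factors $L_1 \neq L_2$ of $V$ with $\Ext^1_{\SL_2(\bbF_q)}(L_1, L_2) \neq 0$; Lemma~\ref{lemma1} then yields $\dim L_1 + \dim L_2 \geq p$, improved to $\geq (p^2-1)/2 > 2p-2$ whenever $\dim L_1 = \dim L_2$ (for $p \geq 3$). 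So $\dim L_1 \neq \dim L_2$, and combining self-duality with the vanishing of self-extensions from Lemma~\ref{lemma1} reduces $V$ to a uniserial $L_1 | L_2 | L_1$. The hardest part of the argument is showing that this uniserial cannot occur with $\dim V < 2p-2$: here the crude bound of Lemma~\ref{lemma1} is insufficient, and one must invoke the full Andersen--J{\o}rgensen--Landrock classification of Ext groups in \cite{AJL}. That classification forces the $p$-adic expansions of the highest weights of $L_1, L_2$ to agree in all but a single position, from which a direct computation gives $\dim L_1 + \dim L_2 \geq 2p - 2$ whenever the common digits are nontrivial; the remaining borderline case, where $L_1$ and $L_2$ are Frobenius twists of small restricted simples, is excluded by verifying that the required length-$3$ uniserial does not arise over $\SL_2(\bbF_{p^n})$ for $n \geq 2$. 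A similar but easier dimension count excludes length $\geq 4$.

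\textbf{Classification for $q = p$.} I use the explicit PIM structure recalled in Lemma~\ref{lm2a}: for $p$ odd, each non-defect-zero block of $\SL_2 (\bbF_p)$ has cyclic defect of order $p$; $\PIM(\bbone) = U(0)$ is the uniserial module $\bbone | L(p-3) | \bbone$ of dimension $p$, and for $1 \leq r \leq p - 2$ the PIM $U(r)$ has dimension $2p$ with Loewy layers $L(r) | L(p-1-r) \oplus L(p-3-r) | L(r)$. From the second Loewy layer, $\Ext^1(L(r), L(r)) \neq 0$ if and only if $r \in \{(p-1)/2,\, (p-3)/2\}$, and in these cases $\dim \Ext^1 = 1$ by Lemma~\ref{cyclic}. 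Since any indecomposable $V$ is a quotient of $\PIM(\hd V)$, I enumerate the self-dual reducible quotients of dimension $< 2p - 2$ of each $U(r)$: if $V$ is projective then $V = U(0)$, giving case (i); non-projective length-$2$ self-duals are self-extensions $L(r) | L(r)$, existing only for $r = (p-1)/2$ (case (ii), dimension $p+1$) and $r = (p-3)/2$ (case (iii), dimension $p-1$); longer self-dual candidates $L(r) | L(s) | L(r)$ with $s \neq r$ either coincide with $U(0)$ or fail to arise as quotients of $U(r)$ for $r \geq 1$, because the second Loewy layer of $U(r)$ is a semisimple direct sum rather than a uniserial chain; and length $\geq 4$ is ruled out by a short dimension count using that consecutive composition factors in a cyclic-defect uniserial have dimension sum $\geq p-1$. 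The converse direction, together with the uniqueness in (ii) and (iii), follows immediately from the same PIM picture and $\dim \Ext^1 = 1$.
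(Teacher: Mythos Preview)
Your outline has the right shape but contains genuine gaps in both halves.

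\textbf{The case $q>p$.} You correctly reduce length $3$ to a uniserial $L_1\mid L_2\mid L_1$ with $L_1\not\cong L_2$, but then the crucial step---showing $2\dim L_1+\dim L_2\ge 2p-2$---is only asserted (``direct computation'', ``verifying that the required length-$3$ uniserial does not arise''). Your summary of AJL is also inaccurate: the $p$-adic digits of $\lambda$ and $\mu$ may differ in \emph{two} adjacent positions, not one, so the computation you allude to is more delicate than suggested. Likewise, ``length $\ge 4$ is easier'' is not justified; the quick dimension count $(\dim A+\dim B)+(\dim A+\dim C)\ge 2p$ works for a length-$4$ uniserial $A\mid B\mid C\mid A$, but you give no argument for non-uniserial self-dual indecomposables or for length $\ge 5$. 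The paper avoids splitting by length altogether: it analyses $\soc V$ directly, shows $\soc V\cong L(\lambda)^{\oplus r}$, treats $r\ge 2$ via the Ext-dimension bound in AJL Cor.~4.5(b), and for $r=1$ embeds $V$ into $U_n(\lambda)$ and uses the explicit structure of the first three socle layers to force $\dim V\ge 2p$.

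\textbf{The case $q=p$.} Your enumeration of quotients of a single $U(r)$ captures only indecomposables with simple head. You then handle length $\ge 4$ only under an explicit uniserial hypothesis, leaving self-dual indecomposables with non-simple head unaddressed (such modules do occur for $\SL_2(\bbF_p)$: e.g.\ $\Omega L(r)=\rad U(r)$ has head of length $2$ for $1\le r\le p-3$, though it happens not to be self-dual). One can fill this gap with further case analysis using $\dim\Ext^1\le 1$ in cyclic defect, but that work is not in your write-up. The paper instead invokes Janusz's classification of standard modules in a cyclic-defect block: self-duality becomes the closed-path condition $P_0=Q=P_{k+1}$, and inspection of the (linear) Brauer trees of $\SL_2(\bbF_p)$ together with $\dim V<2p-2$ forces $k=1$, yielding exactly cases (ii) and (iii).
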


\begin{proof}
Note that $p > 2$.

\smallskip
(a) Suppose first that $q=p$. If $V$ is projective, then since $\dim V < 2p$, we must have
$V \cong \PIM(\bbone)$, which is uniserial of shape $(L(0)|L(p-3)|L(0))$ and of
dimension $p$. (See for example \cite{Alp}.) If $V$ is non-projective, then, as
$\SL_2(p)$ has a cyclic Sylow $p$-subgroup,
$V$ is one of the ``standard modules'' described in \cite[\S5]{Janusz}. As $V$ is
self-dual, the standard modules are described by paths in the Brauer tree as in
\cite[(5.2)(b)]{Janusz} with $P_0 = Q = P_{k+1}$. By inspecting the Brauer trees of $\SL_2(p)$ (see e.g.\  \cite{Alp}) and using that $\dim V < 2p-2$ we deduce
moreover that $k = 1$ above, obtaining the modules in (ii), (iii).

In case (i), it is obvious that the module is self-dual since it is $\cP(\bbone)$.
In cases (ii) and (iii) the uniqueness of the isomorphism class of the extension
implies that it is self-dual.

\smallskip
(b)  Now suppose that $q > p$.
We need to show that no such $V$ exists.
(In fact we will show this holds even under the weaker bound $\dim V < 2p$.)
Pick an irreducible subrepresentation $L(\lambda)$ of $V$, where
$\lambda = \sum\limits_{i=0}^{n-1} p^i \lambda_i$, $0\le \lambda_i \le p-1$.
Then $V$ has a subquotient isomorphic to a nonsplit extension
$0\to L(\lambda) \to E \to L(\mu) \to 0$, where $\mu=\sum\limits_{i=0}^{n-1}
p^i \mu_i$, $0\le \mu_i \le p-1$.
By Lemma \ref{lemma1} we know that $\lambda\ne \mu$, hence $2\dim L(\lambda) +
\dim L(\mu) < 2p$.
By Corollary 4.5(a) in \cite{AJL} we deduce that, up to a cyclic relabelling of the indices,
$\lambda = \lambda_0+p$, $\mu = p-2-\lambda_0$, and
$\mu > \frac{2p-3}{3}\ge 1$.
In particular, $\mu$ uniquely determines $\lambda$.
Hence if $\soc V$ contains two non-isomorphic irreducible representations,
then $V$ admits indecomposable subrepresentations of length two that intersect in zero,
so $\dim V \ge 2p$ by Lemma \ref{lemma1}.
Therefore, $\soc V \cong L(\lambda)^{\oplus r}$, some $r\ge 1$.

Suppose first that $r\ge 2$.
We claim that $\soc_2 V /\soc V \cong L(\mu)^{\oplus s}$, for some $0 \le \mu < p^n$ and some $s\ge 1$.
(Here $\soc_i M$ is the increasing filtration determined by $\soc_0 M = 0$
and $\soc_i M /\soc_{i-1} M = \soc (M/\soc_{i-1} M)$.
Note that the socle filtration is compatible with subobjects.)
Note that any constituent of $\soc_2 V /\soc V$ extends $L(\lambda)$, hence by above
it is uniquely determined, unless $n = 2$ and $\lambda_0 = 1$. In the latter case,
the constituents can be $L(\mu')$, $L(\mu'')$, where $\mu' = p-3$, $\mu'' = p(p-3)$. But only one
of them can occur since $\dim L(\lambda)+\dim L(\mu')+\dim L(\mu'') = 2p$, and this proves the claim.
Note that $L(\mu)$ can occur only once in $V$ by Lemma \ref{lemma1}; in particular, $s = 1$.
We claim that $\dim \Ext^1\big( L(\mu) , L(\lambda)\big) \ge r \ge 2$.
Otherwise, $\soc_2 V$ is decomposable, so we obtain a splitting $\pi : \soc_2 V \to L(\lambda) \subset \soc V$.
But $\Ext^1(V/\soc_2 V, L(\lambda)) = 0$, so we can extend $\pi$ to a splitting of $V$; contradiction.
Hence $\dim \Ext^1\big( L(\mu) , L(\lambda)\big) \ge 2$ and by Corollary 4.5(b) in \cite{AJL}
we deduce that $n=2$ and $\lambda_i,\mu_i\in
\left\{ \frac{p-3}{2},\frac{p-1}{2}\right\}$ for all $i$.
(Note that we can get all four combinations with $\lambda_i + \mu_i = p-2$,
unlike what is claimed in that corollary.)
This contradicts that $\big| \{ \lambda_ i , \mu_i : 0 \le i \le n-1\} \big| \ge 3$
(by above).

Suppose that $r=1$, so $\soc V$ is irreducible.
Note that $\soc_3 V = V$ by Lemma \ref{lemma1}, as each constituent in a socle
layer extends at least one constituent of the previous socle layer.
As $\soc V$ is irreducible, $V$ embeds in the projective indecomposable
module $U_n (\lambda)$ whose socle is $L(\lambda)$.
We have $V \subset \soc_3 U_n (\lambda)$.
Note that $\lambda_i < p-1$ for all $i$, as $\dim V < 2p$.
By Lemma \ref{lemma1}, $L(\lambda)$ does not occur in $\soc_2 U_n (\lambda)/
\soc U_n (\lambda)$.
Also, $L(\lambda)$ occurs precisely $n$ times in $\soc_3 U_n (\lambda) / \soc_2 U_n (\lambda)$.
(Theorems 4.3 and 3.7 in \cite{AJL} imply that this is the case, unless $n=2$
and $\lambda_i \in \left\{ \frac{p-3}{2} , \frac{p-1}{2}\right\}$ for all $i$.
But by above $\lambda_i < \frac{p-3}{3} \le \frac{p-3}{2}$ for some $i$.)
Let $M_i = L(\lambda_0) \otimes L(\lambda_1)^{(p)} \otimes \cdots \otimes
Q_1 (\lambda_i)^{(p^i)} \otimes \cdots \otimes L(\lambda_{n-1})^{(p^{n-1})}$
and $M := M_0 + \cdots + M_{n-1} \subset U_n (\lambda)$
in the notation of \cite{AJL}, \S3.
Note by Theorems 4.3 and 3.7 in \cite{AJL} that $\soc_2 U_n (\lambda) \subset
M \subset \soc_3 U_n (\lambda)$ and that $M/\soc_2 U_n (\lambda) \cong
L(\lambda)^{\oplus n}$.
Therefore $V\subset M$, so
\[
\frac{V}{L(\lambda)} \subset \frac{M}{L(\lambda)} =
\frac{M_0}{L(\lambda)} \oplus \cdots \oplus \frac{M_{n-1}}{L(\lambda)} .
\]
As $\hd \big(M_i / L(\lambda)\big) \cong L(\lambda)$, there exists
$i$ such that $V/L(\lambda)$ surjects onto $M_i / L(\lambda)$.
Thus $\dim V \ge \dim M_i \ge 2p$.
\end{proof}

 \section{Finite groups with indecomposable modules of small dimension}

Throughout this section, we assume that $k = \bar{k}$ is a field of
characteristic $p > 3$. We want to describe the structure of finite groups $G$ that admit
reducible indecomposable modules of dimension $\leq 2p-2$.
The next results essentially reduce us to the case of quasisimple groups.

\begin{lem}\label{eg}
Let $G$ be a finite group, $p > 3$, and $V$ be a faithful $kG$-module of dimension
$< 2p$. Suppose that $\bfO_p(G) = 1$ and $\bfO_{p'}(G) \leq \bfZ(G)$. Then
$F(G) = \bfO_{p'}(G) = \bfZ(G)$, $F^*(G) = E(G)\bfZ(G)$,
and $\GP = E(G)$ is either trivial or a central
product of quasisimple groups of order divisible by $p$. In particular, $G$ has
no composition factor isomorphic to $C_p$ and so $H^1(G,k) = 0$.
\end{lem}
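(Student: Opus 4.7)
The proof naturally splits into a routine formal part and a single hard step. My plan is as follows.

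I would begin by collecting the three formal equalities. Since $\bfO_p(G)=1$, $F(G) = \prod_q \bfO_q(G) = \bfO_{p'}(G)$; the hypothesis gives $\bfO_{p'}(G) \leq \bfZ(G)$, and conversely $\bfZ(G)$ is an abelian normal $p'$-subgroup (its $p$-part lying in $\bfO_p(G)=1$), giving $\bfZ(G) \leq \bfO_{p'}(G)$. So $F(G) = \bfO_{p'}(G) = \bfZ(G)$, and $F^*(G) = F(G)E(G) = \bfZ(G)E(G)$ by definition. Next I would show every component $L$ of $G$ has order divisible by $p$: otherwise the $G$-normal closure $\langle L^G \rangle$ would be a central product of the $p'$-quasisimple $G$-conjugates of $L$, hence a perfect normal $p'$-subgroup of $G$, trapped in $\bfO_{p'}(G) \leq \bfZ(G)$; but perfect intersected with abelian is trivial, contradicting $L \neq 1$. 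Since $\bfZ(E(G))$ is a characteristic abelian subgroup of $G$ and therefore lies in $F(G) = \bfZ(G)$, each component $L$ has $\bfZ(L) \leq \bfZ(G)$ a $p'$-group, so $p \mid |L/\bfZ(L)|$, the simple non-abelian group $L/\bfZ(L)$ is generated by its $p$-elements, and hence $L = \bfO^{p'}(L)\bfZ(L) = \bfO^{p'}(L)$ (by perfection of $L$). Thus $L \leq \GP$ and so $E(G) \leq \GP$.

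The main obstacle is the reverse inclusion $\GP \leq E(G)$, equivalently that $G/F^*(G)$ is a $p'$-group; this is where the hypothesis $\dim V < 2p$ is used. Bender's theorem $\bfC_G(F^*(G)) \leq F^*(G)$, combined with $G$ centralizing $\bfZ(G)$, yields an embedding $G/F^*(G) \hookrightarrow \Out(E(G))$. Suppose toward a contradiction that $x \in G$ is a $p$-element with $xF^*(G) \neq 1$. Any such $x$ has order exactly $p$: if $|x|=p^a$ then some Jordan block of $x$ on $V$ has size $p^a$, forcing $p^a \leq \dim V < 2p$ and hence $a = 1$ (since $p > 3$ gives $p^2 > 2p$). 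The action of $x$ on the set of components of $E(G)$ leads to two cases. In case (i), some $\langle x \rangle$-orbit on components has length $p$, say $\{L_{i_1},\ldots,L_{i_p}\}$ cyclically permuted by $x$ (so all isomorphic to a common component $L$); the subgroup $H := (L_{i_1} * \cdots * L_{i_p}) \rtimes \langle x \rangle \leq G$ then acts faithfully on $V$, and a Clifford-theoretic analysis of the irreducible $N$-summands of $V|_N$ with $N := L_{i_1} * \cdots * L_{i_p}$ forces $\dim V \geq p \cdot d(L) \geq 2p$, where $d(L) \geq 2$ is the minimum dimension of a faithful $kL$-module (either the orbit of an irreducible $N$-summand has length $p$, giving $p$ distinct summands of dim $\geq d(L)$, or one $x$-stable summand has dim $\geq d(L)^p \geq 2^p$). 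In case (ii), $x$ normalizes each component and induces on some $L_i$ an outer automorphism of order $p$; by Schreier's conjecture (a consequence of CFSG), combined with $p > 3$ ruling out diagonal and graph automorphisms, this outer automorphism must be a field automorphism of order $p$, so $L_i/\bfZ(L_i)$ is a Lie-type simple group defined over $\bbF_{q^p}$ for some $q$. Then either Steinberg's tensor product theorem (in defining characteristic, where any $x$-stable faithful irreducible representation has dimension at least $2^p \geq 2p$) or Landazuri--Seitz lower bounds (in cross characteristic) force $\dim V \geq 2p$. Both cases contradict $\dim V < 2p$, so $G/F^*(G)$ is $p'$; since $\bfZ(G)$ has $p'$-order, every $p$-element of $F^*(G) = E(G)\bfZ(G)$ lies in $E(G)$, whence $\GP \leq E(G)$ and $\GP = E(G)$.

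Finally, the two in-particular clauses come out immediately. The composition factors of $\GP = E(G)$ are the simple non-abelian groups $L_i/\bfZ(L_i)$ (none isomorphic to $C_p$) together with the cyclic $p'$-factors arising from $\bfZ(L_i) \leq \bfZ(G)$; combined with $G/\GP$ being a $p'$-group by construction of $\GP$, $G$ has no composition factor isomorphic to $C_p$. Consequently $G^{\mathrm{ab}}$ is a $p'$-group, and $H^1(G,k) = \Hom(G^{\mathrm{ab}},k) = 0$ since $k$ has additive $p$-torsion.
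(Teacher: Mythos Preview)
Your approach follows the same two-case skeleton as the paper's proof: after the formal identifications, you split according to whether a hypothetical $p$-element $x\notin F^*(G)$ moves components (your case (i), the paper's part (c)) or normalizes every component and induces an outer automorphism of order $p$ on some $L_i$ (your case (ii), the paper's part (b)). Case (i) is fine, though you should say ``composition factors of $V|_N$'' rather than ``irreducible $N$-summands'', since $V$ need not be semisimple over $N$; the orbit-counting argument works verbatim with composition factors.

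There is a genuine gap in case (ii). The claim that ``$p>3$ rules out diagonal automorphisms'' is false: for $S=\PSL_n(q)$ one has $|D|=\gcd(n,q-1)$, and for $S=\PSU_n(q)$ one has $|D|=\gcd(n,q+1)$, so whenever $p\mid n$ and $p\mid(q\mp 1)$ there are diagonal outer automorphisms of order $p$. Thus an outer automorphism of $L_i$ of order $p$ need not be a field automorphism, and your conclusion that $L_i/\bfZ(L_i)$ is defined over $\bbF_{q^p}$ is unjustified. The paper avoids this by first invoking the classification of quasisimple groups possessing a nontrivial $kL_i$-composition factor $U$ of dimension $<2p$ (Theorem~\ref{bz} together with \cite[Theorem 2.1]{GHT}); for every group on that list that is not of Lie type in characteristic $p$ one then checks directly that $\Out(S_i)$ is a $p'$-group, and only in the defining-characteristic case does one run the field-automorphism argument you describe.

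Your route can be repaired with the same ingredients you already appeal to. In defining characteristic $p\mid q$ forces $p\nmid(q\mp 1)$, so diagonal automorphisms are automatically $p'$ and your Steinberg argument applies. In cross characteristic, an outer automorphism of order $p>3$ forces either a genuine field automorphism of order $p$ (so $S$ is defined over $\bbF_{q^p}$) or $S\in\{\PSL_n(q),\PSU_n(q)\}$ with $p\mid n$, hence $n\ge p\ge 5$; in either situation the Landazuri--Seitz bounds give a minimal nontrivial cross-characteristic degree far exceeding $2p$. You should also justify explicitly why, in case (ii), $x$ must induce an \emph{outer} automorphism on some component: if $x$ induced inner automorphisms on every $L_i$, then $x$ would lie in $E(G)\cdot\bfC_G(F^*(G))=E(G)\bfZ(G)=F^*(G)$.
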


\begin{proof}
(a) Since $\bfO_p(G) = 1$, $Z := \bfZ(G) \leq F(G) \leq \bfO_{p'}(G)$. It follows that
$F(G) = Z = \bfO_{p'}(G)$, and $F^*(G) = E(G)Z$. If moreover $E(G) = 1$, then
$$Z = F(G) = F^*(G) \geq \bfC_G(F^*(G)) = G,$$
whence $G$ is an abelian $p'$-group, and $\GP = 1 = E(G)$.

\smallskip
(b) Assume now that $E(G) > 1$ and write $E(G) = L_1 * \ldots *L_t$, a central product of
$t \geq 1$ quasisimple subgroups. Since $\bfO_{p'}(E(G)) \leq \bfO_{p'}(G) = Z$,
$p||L_i|$ for all $i$.

Next we show that $\bfN_G(L_i)/\bfC_G(L_i)L_i$ is a $p'$-group for all $i$. Indeed,
note that the $L_i$-module $V$ admits a nontrivial composition factor $U$ of
dimension $< 2p$. Otherwise it has a composition series with all composition factors
being trivial, whence $L_i$ acts on $V$ as a $p$-group. Since $V$ is faithful and
$L_i$ is quasisimple, this is a contradiction. So we can apply Theorem \ref{bz} and
\cite[Theorem 2.1]{GHT} to the image of $L_i$ in $\GL(U)$. In particular, denoting $S_i := L_i/\bfZ(L_i)$, one
can check that
$\Out(S_i)$ is a $p'$-group, unless it is a simple group of Lie type in characteristic
$p$. In the former case we are done since
$\bfN_G(L_i)/\bfC_G(L_i)L_i \hookrightarrow \Out(S_i)$. Consider the latter case.
Observe that $\bfZ(L_i) \leq \bfZ(E(G)) \leq F(G)$ is a $p'$-group. So we may
replace $L_i$ by its simply connected isogenous version
$\cG^F$, where $F~:~\cG \to \cG$ is a Steinberg endomorphism on a
simple simply connected algebraic group $\cG$ in characteristic $p$.  If moreover
$p$ divides $|\bfN_G(L_i)/\bfC_G(L_i)L_i|$, then $\bfN_G(L_i)$ induces an outer
automorphism $\sigma$ of $L_i$ of order $p$. As $p > 3$, this can happen only
when $\sigma$ is a field automorphism. More precisely, $L_i$ is defined over
a field $\bbF_{p^{bp}}$ (for some $b \geq 1$), where $\bbF_{p^{bp}}$ is
the smallest splitting field for $L_i$ (cf.\ \cite[Proposition 5.4.4]{KL}) and $\sigma$ is induced by the field automorphism $x \mapsto x^{p^b}$. Since $\dim U \geq 2 > (\dim V)/p$,
$U$ must be $\sigma$-invariant. In turn, this implies by
\cite[Proposition 5.4.2]{KL} that $U$ and its $(p^b)^{\mathrm {th}}$ Frobenius twist
are isomorphic. In this case, the proofs of Proposition 5.4.6 and Remark
5.4.7 of \cite{KL} show that $\dim U \geq 2^p > 2p$, a contradiction.

\smallskip
(c) Recall that $\bfC_G(E(G)) = \bfC_G(F^*(G)) \leq F^*(G)=E(G)Z$, whence
$\bfC_G(E(G)) =  Z$. Also, $G$ acts via conjugation on the set $\{L_1, \ldots ,L_t\}$,
with kernel say $N$. We claim that $p \nmid |G/N|$. If not, then we may assume that some
$p$-element $g \in G$ permutes $L_1, \ldots,L_p$ cyclically. Arguing as in (b), we see
that $L_1$ acts nontrivially on some composition factor $U$ of the $E(G)$-module $V$,
and we can write $U = U_1 \otimes \ldots \otimes U_t$, where
$U_i \in \IBr_p(L_i)$.
If $U$ is not $g$-invariant, then $\dim V \geq p(\dim U) \geq 2p$, a contradiction.
Hence $U$ is $g$-invariant. It follows that $2 \leq \dim U_1 = \ldots = \dim U_p$
and so $\dim U \geq 2^p > 2p$, again a contradiction.

Now $N/E(G)Z$ embeds in $\prod^t_{i=1}\Out(L_i)$. Furthermore, the projection of $N$ into $\Out(L_i)$ induces
a subgroup of $\bfN_G(L_i)/\bfC_G(L_i)L_i$, which is a $p'$-group by (b). It follows
that $N/E(G)Z$ is a $p'$-group, and so $\GP = E(G)$. The last statement also follows.
\end{proof}

The next result on $H^1$ follows by standard results on
$H^1$ -- see  \cite[Lemma 5.2]{GKKL} and the main result of \cite{Gcr}.

\begin{lem}  \label{lem:h1} Let $G$ be a finite group and
let $V$ be a faithful irreducible $kG$-module.   Assume that $H^1(G,V) \ne 0$.
Then $\bfO_{p'}(G)=\bfO_p(G)=1$,
$E(G)= L_1 \times \ldots \times L_t$ and $V_{E(G)} =W_1 \oplus \ldots \oplus W_t$,
where the $L_i$ are isomorphic non-abelian simple groups of order divisible by $p$,
$W_i$ is an
irreducible $kL_i$-module,  and $L_j, j \ne i$ acts trivially on $W_i$.   Moreover,
$\dim H^1(G,V) \le \dim H^1(L_1,W_1)$, $\dim W_i \ge p-2$ and  $\dim V \ge t(p-2)$.
In particular, if $G$ is not almost simple, then  either $\dim V = 2p-4, 2p-2$
or $\dim V \ge 2p$, or $(p,\dim V) = (5,9)$.
\end{lem}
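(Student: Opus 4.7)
The plan is to prove the lemma in four conceptual steps: eliminate the $p$- and $p'$-radicals using inflation-restriction and faithfulness, analyze $H^1(E(G),V)$ via the K\"unneth formula to pin down the structure of $V|_{E(G)}$, push through the layer to bound $\dim H^1(G,V)$, and finally extract the dimension dichotomy combinatorially.

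First I would handle the radicals. Setting $N = \bfO_{p'}(G)$, the Hochschild-Serre sequence collapses to $H^1(G,V) \cong H^1(G/N, V^N)$ because $H^1(N,V) = 0$ for the $p'$-group $N$. Non-vanishing forces $V^N \neq 0$, hence $V^N = V$ by irreducibility, so $N$ acts trivially and faithfulness yields $N = 1$. A nontrivial normal $p$-subgroup would have nonzero fixed points on $V$, so the analogous argument produces $\bfO_p(G) = 1$. Hence $F(G) = 1$, so $F^*(G) = E(G)$ and $\bfZ(E(G)) \leq F(G) = 1$, whence $E(G) = L_1 \times \ldots \times L_t$ is a direct product of non-abelian simple groups (note $E(G) \neq 1$, else $\bfC_G(F^*(G)) \leq F^*(G)$ forces $G = 1$). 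Each $L_i$ must have order divisible by $p$, else $L_i$ contributes only $p'$-summands to $H^*$.

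The heart of the argument is step two. Each irreducible $E(G)$-constituent of $V$ factors as $U_1 \otimes \ldots \otimes U_t$ with $U_j$ irreducible over $L_j$, and the K\"unneth formula (Lemma \ref{lem:kunneth}) gives
\[
H^1\bigl(E(G), \textstyle\bigotimes_j U_j\bigr) = \bigoplus_{j=1}^t H^1(L_j, U_j) \otimes \bigotimes_{k \neq j} U_k^{L_k},
\]
which vanishes unless exactly one $U_j$ is nontrivial (with nonzero $H^1$) while every other $U_k$ is trivial, since the perfect simple group $L_k$ satisfies $H^1(L_k, k) = 0$. Since Clifford gives $G$-transitivity on the $E(G)$-constituents of $V$, every such constituent has this single-nontrivial-factor shape. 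Collecting constituents by the active index produces $V|_{E(G)} = W_1 \oplus \ldots \oplus W_t$ with $L_k$ trivial on $W_j$ for $k \neq j$; faithfulness of $V$ combined with $G$-transitivity forces $G$ to permute $\{L_1, \ldots, L_t\}$ transitively, so the $L_i$ are pairwise $G$-conjugate and hence isomorphic. I expect the main obstacle here to be the careful bookkeeping required to conclude that each $W_i$ is in fact a single irreducible $L_i$-module (rather than a larger $L_i$-isotypic sum), which will require tracking the $G$-orbit structure on the isotypic components of each type.

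For step three, note $V^{E(G)} = \bigoplus W_i^{L_i} = 0$ since each $W_i$ is a nontrivial irreducible of simple $L_i$, so inflation-restriction gives
\[
H^1(G,V) \hookrightarrow H^1(E(G), V)^{G/E(G)} = \Bigl(\textstyle\bigoplus_i H^1(L_i, W_i)\Bigr)^{G/E(G)},
\]
and the $G$-transitivity on summands bounds the dimension of the invariants by that of a single summand, yielding $\dim H^1(G,V) \leq \dim H^1(L_1, W_1)$. The main result of \cite{Gcr} then supplies $\dim W_i \geq p-2$ for any simple group admitting an irreducible module with nonzero $H^1$, so $\dim V \geq t(p-2)$. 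Finally, $G$ is not almost simple exactly when $t \geq 2$, and in that case $\dim V = td$ with $d := \dim W_1 = \ldots = \dim W_t \geq p-2$. For $t = 2$, $\dim V$ is even and at least $2(p-2)$, so below $2p$ only $2p-4$ and $2p-2$ can occur; for $t \geq 3$, $\dim V \geq 3(p-2)$, which is at least $2p$ whenever $p \geq 7$, and for $p = 5$ bottoms out at $9$, producing the exceptional case $(p,\dim V) = (5,9)$ precisely when $t = 3$ and each $\dim W_i = 3$, with all other configurations giving $\dim V \geq 2p$.
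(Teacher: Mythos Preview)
The paper gives no proof of its own, instead citing \cite[Lemma 5.2]{GKKL} and the main result of \cite{Gcr}; your reconstruction via inflation--restriction to kill the radicals, the K\"unneth analysis of $H^1(E(G),V)$, and the bound $\dim W_i \ge p-2$ from \cite{Gcr} is precisely that standard argument.

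One remark on the obstacle you flag. Your concern that each $W_i$ might fail to be a single irreducible $L_i$-module is legitimate: if the inertia group $I_G(X)$ of an $E(G)$-constituent $X$ is strictly smaller than $\bfN_G(L_1)$ (e.g.\ when $\bfN_G(L_1)/\bfC_G(L_1)L_1$ moves $W_1$ inside $\IBr_p(L_1)$), then the constituents with active index $1$ form several $L_1$-isomorphism classes and $W_1$ is not irreducible. However, this does not touch any of the conclusions you need: the bound $\dim H^1(G,V)\le \dim H^1(L_1,W_1)$ holds with $W_1$ taken to be any single $E(G)$-constituent (since $G$ permutes the summands $H^1(L_{\alpha(X)},X)$ transitively), the inequality $\dim V\ge t(p-2)$ only improves, and in the final dichotomy the case where some $W_i$ has more than one irreducible summand already gives $\dim V\ge 2t(p-2)\ge 2p$ for $p\ge 5$. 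So the bookkeeping you anticipate is unnecessary for the lemma as applied in the paper.
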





\begin{lem} \label{lem:opprime}
Let $V$ be a faithful indecomposable $kG$-module with two composition factors
$V_1$, $V_2$.  Assume that $\bfO_p(G) =1$ and $\dim V \leq 2p-2$.
If $J:=\bfO_{p'}(\GP) \not\leq \bfZ(\GP)$, then the following hold:
\begin{enumerate}[\rm(i)]
\item $p= 2^a + 1$ is a Fermat prime,
\item $\dim V_1 = \dim V_2 = p-1$,
\item  $J/\bfZ(J)$ is elementary abelian of order $2^{2a}$,
\item  $H^1(\GP,k) \ne 0$.
\end{enumerate}
\end{lem}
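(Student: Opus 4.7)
The plan is to apply Theorem \ref{bz} to each of the two $G$-composition factors of $V$ to pin the image of $\GP$ into case (a) or (e) of that theorem, then rule out case (e) via the indecomposability of $V$ combined with an $\Ext^1$-vanishing argument in cyclic-defect blocks, and finally deduce (i)--(iv) from the remaining Fermat-prime solvable case.

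First, since $J$ is characteristic in $\GP \lhd G$, $J \lhd G$. By Lemma \ref{block2}(ii) and the faithfulness of $V$, if $J$ acted by scalars on either composition factor $V_i$ then $J \leq \bfZ(G) \leq \bfZ(\GP)$, contrary to hypothesis; so $J$ is non-scalar on both $V_1, V_2$. Pick an irreducible $\GP$-submodule $W_i \subseteq V_i$ with image $H_i$ in $\GL(W_i)$. Both $W_1, W_2$ occur as $\GP$-composition factors of $V$, so $\dim W_1 + \dim W_2 \leq \dim V \leq 2p - 2$, and we may assume $\dim W_1 \leq p - 1$. The image of $J$ in $H_1$ is a non-central normal $p'$-subgroup, so $\bfO_{p'}(H_1) \not\leq \bfZ(H_1)$; inspecting Theorem \ref{bz}, this occurs only in cases (a) and (e) (elsewhere $\bfO_{p'}(H) = \bfZ(H)$). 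Both give $p = 2^a + 1$ Fermat and $\dim W_1 = 2^a = p - 1$, proving (i). By symmetry $\dim W_2 \leq p - 1 < p$, the same analysis yields $\dim W_2 = p - 1$, so $\dim V_1 = \dim V_2 = p - 1$ and each $V_i = W_i$ is $\GP$-irreducible, proving (ii).

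Since $V$ is $G$-indecomposable of length $2$ and $G/\GP$ is a $p'$-group, the standard averaging argument forces $V|_\GP$ to be non-semisimple, whence $\Ext^1_\GP(W_i, W_j) \neq 0$ for some $i, j \in \{1, 2\}$. In case (e), $|\Syl_p(H_i)| = p$, and one argues separately according to $m$ in $\GP/\bfO_{p'}(\GP) \cong S^m$: when $m = 2$, Lemma \ref{lem:kunneth} directly gives $\Ext^1_\GP(W_1, W_2) = 0$; when $m = 1$, Lemma \ref{cyclic} combined with the uniqueness of the characteristic-$0$ lift of the $2^a$-dimensional Weil-type irreducibles in case (e) and a block/central-character separation for $W_1 \neq W_2$ gives all $\Ext^1_{H_1}(W_s, W_t) = 0$. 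Either way we contradict the non-semisimplicity, so we must lie in case (a): $H_i = \bfO_{p'}(H_i) \cdot P_i$ with $P_i$ cyclic of order $p$. This order-$p$ quotient of $H_i$ lifts to an order-$p$ quotient of $\GP$, so $H^1(\GP, k) = \Hom(\GP^{\mathrm{ab}}, k^+) \neq 0$, giving (iv).

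Finally, (iii) follows from the classical description of case (a): the faithful absolutely irreducible $p'$-action of $\bfO_{p'}(H_i)$ on $W_i$ of dimension $2^a$, stabilized by $P_i \cong C_p$, gives a normal extraspecial $2$-subgroup $R_i \leq \bfO_{p'}(H_i)$ of order $2^{1+2a}$ with $\bfO_{p'}(H_i) = R_i \bfZ(\bfO_{p'}(H_i))$ and $R_i/\bfZ(R_i) \cong (\bbZ/2)^{2a}$. Since $P_i$ acts fixed-point-freely on $(\bbZ/2)^{2a}$, the image of $J$ there, being a non-trivial $P_i$-invariant subgroup, is surjective; combining with the embedding $J/\bfZ(J) \hookrightarrow \prod_i R_i/\bfZ(R_i)$ from the faithfulness of $V = V_1 \oplus V_2$ and the identification of the two projections via the normal $\GP$-structure on $J$, one obtains $J/\bfZ(J) \cong (\bbZ/2)^{2a}$. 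The main obstacle is the $\Ext^1$-vanishing step ruling out case (e), especially the $m = 1$ sub-case where one needs Brauer-tree/block data for each of the extraspecial-normalizer groups listed in Theorem \ref{bz}(e); a secondary subtlety is the clean identification of the two projections in (iii) when $V_1 \not\cong V_2$ as $G$-modules.
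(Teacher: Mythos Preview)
Your approach diverges from the paper at the crucial step, and the gap you yourself flag---ruling out case (e) of Theorem~\ref{bz} via $\Ext^1$-vanishing---is real: it would require Brauer-tree data for each extraspecial-normalizer family, and your $m=1,2$ split presupposes a structure theorem (like Theorem~\ref{str}(ii)) proved only for \emph{irreducible} modules, not for the indecomposable $V$ at hand. The paper bypasses this case distinction entirely.

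The paper's route is as follows. After showing (as you do) that $J$ is non-scalar on each $W_i$, it invokes Theorem~\ref{bz} \emph{together with} \cite[Theorem~A]{BZ} to obtain, uniformly in cases (a) and (e): $p=2^a+1$, $\dim W_i=p-1$, $Q:=\bfO_{p'}(H)$ is irreducible on $W_i$, $\bfZ(Q)=\bfZ(H)$, and $H/Q$ acts irreducibly on the elementary abelian group $Q/\bfZ(Q)$ of order $2^{2a}$. Since $\Phi_i(J)\lhd H$ is a $p'$-subgroup not contained in $\bfZ(Q)$, the irreducibility of $H/Q$ on $Q/\bfZ(Q)$ forces $\Phi_i(J)\bfZ(Q)=Q$; hence $J$ is irreducible on $W_i=V_i$ and $\Phi_i(J)/\bfZ(\Phi_i(J))\cong Q/\bfZ(Q)$.

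Now set $A:=V_1^*\otimes V_2$ and split $A=[J,A]\oplus \bfC_A(J)$. As $J\lhd G$ is a $p'$-group, inflation--restriction gives $H^1(G,[J,A])=0$, so
\[
0\neq \Ext^1_G(V_1,V_2)\cong H^1(G,A)\cong H^1(G,\bfC_A(J)).
\]
By Schur's lemma ($J$ irreducible on both $V_i$), $\bfC_A(J)$ has dimension at most $1$; being nonzero it is exactly $1$-dimensional and $V_1\cong V_2$ as $J$-modules. Since $\GP=\bfO^{p'}(\GP)$ acts trivially on any $1$-dimensional module, restriction to $\GP$ gives $H^1(\GP,k)\neq 0$, which is (iv)---with no need to know whether $H$ lies in case (a) or (e).

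This also dissolves your ``secondary subtlety'' for (iii): since $V_1\cong V_2$ as $J$-modules and $V|_J\cong V_1\oplus V_2$ is faithful, already $\Ker(\Phi_1|_J)=\Ker(\Phi_2|_J)=1$, so $J/\bfZ(J)\cong \Phi_1(J)/\bfZ(\Phi_1(J))\cong Q/\bfZ(Q)\cong(\bbZ/2)^{2a}$ directly. Your attempted embedding $J/\bfZ(J)\hookrightarrow\prod_i R_i/\bfZ(R_i)$ and the identification of the two projections are unnecessary.
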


\begin{proof}
Since $\Ext^1_G(V_1,V_2) \hookrightarrow \Ext^1_\GP(V_1,V_2)$, there are
irreducible $\GP$-submodules $W_i$ of $V_i$ for $i = 1,2$ such that
$\Ext^1_\GP(W_1,W_2) \neq 0$. Assume that $J$ acts by scalars on at least one of the
$W_i$. Then by Lemma \ref{block2}(ii), $J$ acts by scalars on both $W_{1,2}$. If
$W'_1$ is any $\GP$-composition factor of $V_1$, then $W'_1$ is $G$-conjugate
to $W_1$. But $J \lhd G$, so we see that $J$ acts by scalars on $W'_1$. Thus
$J$ acts by scalars on all $\GP$-composition factors of $V_1$, and similarly for $V_2$.
Consider a basis of $V$ consistent with a $\GP$-composition series of $V$, and any
$x \in J$ and $y \in \GP$. Then $[x,y]$ acts as the identity transformation
on each $\GP$-composition factor in this series, and so it is represented by an
upper unitriangular matrix in the chosen basis. The same is true for any
element in $[J,\GP] \lhd G$. Since $V$ is faithful, we see that
$[J,\GP] \leq \bfO_p(G) = 1$ and so $J \leq \bfZ(\GP)$, a contradiction.

Thus $J$ cannot act by scalars on any $W_i$.
Let $\Phi_i$ denote the representation of $\GP$ on $W_i$.
Then $H := \Phi_i(\GP) < \GL(W_i)$ has no nontrivial $p'$-quotient and contains a non-scalar
normal $p'$-subgroup $\Phi_i(J)$.  Applying Theorem \ref{bz} and also
\cite[Theorem A]{BZ} to $H$, we conclude that $p = 2^a+1$ is a Fermat prime,
$\dim W_i = p-1$, and $Q := \bfO_{p'}(H)$ acts irreducibly on $W_i$.
Furthermore, $\bfZ(Q) = \bfZ(H)$, and $H/Q$ acts irreducibly on $Q/\bfZ(Q)$,
an elementary abelian
$2$-group of order $2^{2a}$. Now $\Phi_i(J)$ is a normal $p'$-subgroup of $H$ that is
{\it not} contained in $\bfZ(Q)$. It follows that $\Phi_i(J)\bfZ(Q) = Q$,
$\bfZ(\Phi_i(J)) = \Phi_i(J) \cap \bfZ(Q)$, $J$ is irreducible on $W_i$, and
$\Phi_i(J)/\bfZ(\Phi_i(J)) \cong Q/\bfZ(Q)$ is elementary
abelian of order $2^{2a}$. Since $\dim V \leq  2p-2$, it also follows that $W_i = V_i$.

\smallskip
Letting $A := V_1^* \otimes V_2$, we then see that
$A = [J,A] \oplus \bfC_A(J)$ as $J$-modules. Next,
$$0 \neq \ext_G^1(V_1,V_2) \cong H^1(G,A) \cong H^1(G,\bfC_A(J)),$$
since $H^1(G,[J,A])=0$ by the inflation restriction sequence. It follows
that $\bfC_A(J) \neq 0$. But $J$ is irreducible on both $V_{1,2}$, so we must have
that $\dim \bfC_A(J) = 1$ and $V_1 \cong V_2$ as $J$-modules.  Since $\GP$ acts
trivially on any $1$-dimensional module, it follows that $H^1(\GP,k) \neq 0$.
Since $W_1 \cong W_2$ as $J$-modules and $V$ is a faithful semisimple
$J$-module, we also see
that $\Ker (\Phi_1) \cap J = \Ker (\Phi_2) \cap J = 1$. Thus $\Phi_i$ is faithful on $J$,
and so $J/\bfZ(J)$ is elementary abelian of order $2^{2a}$.
\end{proof}

\begin{lem}\label{two-cfs}
Let $V$ be a faithful indecomposable $kG$-module with two composition factors
$V_1$, $V_2$ of dimension $>1$, $p > 3$, and $\bfO_p(G) = 1$.

{\rm (i)} Assume that $\bfO_{p'}(\GP) \leq \bfZ(\GP)$, and either $\dim V < 2p-2$ or $\dim V_1 = \dim V_2 = p-1$. If $G^+$ is not quasisimple,
then $\GP = L_1*L_2$ is a central product of two quasisimple groups,
$\dim V_1 = \dim V_2 = p-1$ and, up to relabeling the $L_i$'s, one of the following holds.

\hskip1pc{\rm (a)} $V_i = A_i \otimes B$ as $\GP$-modules, where $A_i \in \IBr_p(L_1)$ is
of dimension $(p-1)/2$ and $B \in \IBr_p(L_2)$ is of dimension $2$; furthermore,
$\Ext^1_{L_1}(A_1,A_2) \neq 0$.

\hskip1pc{\rm (b)} $V_i = (A_i \otimes k) \oplus (k \otimes B_i)$ as $\GP$-modules,
where $A_i \in \IBr_p(L_1)$ has dimension $(p-1)/2$, and some $g \in G$
interchanges $L_1$ with $L_2$ and $A_i$ with $B_i$.  Furthermore,
$\Ext^1_{L_1}(A_1,A_2) \neq 0$.

{\rm (ii)} If $\dim V < 2p-2$, then $\GP$ is quasisimple.
\end{lem}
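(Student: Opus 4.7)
First I apply Lemma~\ref{eg} to $\GP$: since $\bfO_p(\GP) \le \bfO_p(G) = 1$ and any $p$-element of $\bfZ(\GP)$ would lie in $\bfO_p(\GP)$, the hypothesis $\bfO_{p'}(\GP) \le \bfZ(\GP)$ gives $\bfO_{p'}(\GP) = \bfZ(\GP)$; combined with the perfectness of $\GP = \bfO^{p'}(\GP)$, Lemma~\ref{eg} yields $\GP = L_1 * \cdots * L_n$ with each $L_j$ quasisimple of order divisible by $p$ and $\bfZ(L_j)$ a $p'$-group. Assuming $n \ge 2$, Lemma~\ref{zero-ext} produces irreducible $\GP$-composition factors $W_\ell$ of $V_\ell$ ($\ell = 1, 2$) with $\Ext^1_\GP(W_1, W_2) \ne 0$. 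Writing $W_s = X_{s,1} \otimes \cdots \otimes X_{s,n}$ in the natural tensor decomposition, Lemma~\ref{lem:kunneth} forces $W_1$ and $W_2$ to agree in all but at most one tensor factor: either (A) $W_1 \cong W_2$ with $\Ext^1_{L_j}(X_{1,j}, X_{1,j}) \ne 0$ for some $j$, or (B) $X_{1,j} \cong X_{2,j}$ for all $j \ne 1$ and $\Ext^1_{L_1}(X_{1,1}, X_{2,1}) \ne 0$.

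Next I force $n = 2$. A standard unipotent-action argument (using that $L_j$ is perfect with no nontrivial $p$-quotient) shows that each $L_j$ must act nontrivially on at least one of $V_1, V_2$; otherwise it would act by $p$-unipotents on $V$, but its image would be a trivial $p$-quotient of a quasisimple group, contradicting $p \mid |L_j|$ and faithfulness of $V$. Combining this with the K\"unneth support constraint (the supports of $W_1, W_2$ in $\{1, \ldots, n\}$ differ by at most one index), the Clifford-orbit structure of the $\GP$-composition factors within each $V_i$ (on which $\Stab_G(V_i)$ acts transitively, inducing a permutation of $\{L_1, \ldots, L_n\}$), and the dimension bound $\dim W_s \le \dim V \le 2p-2$ (each nontrivial irreducible representation of a quasisimple $L_j$ has dimension at least $2$, bounding the support of each $W_s$ logarithmically), a careful counting yields $n = 2$.

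With $\GP = L_1 * L_2$, I match (A) and (B) to alternatives (a) and (b). In situation (B), $V_i = W_i$ is irreducible over $\GP$, so $V_i = A_i \otimes B$ with $A_i \in \IBr_p(L_1)$ and $B \in \IBr_p(L_2)$; the Ext nonvanishing for distinct $A_1 \not\cong A_2$ and the constraint $\dim V_1 + \dim V_2 \le 2p-2$, together with Landazuri-Seitz type lower bounds on nontrivial irreducibles of quasisimple groups, force $\dim A_i = (p-1)/2$ and $\dim B = 2$, yielding alternative (a). In situation (A), $W_1 \cong W_2$ makes the $G$-orbits of $\GP$-composition factors of $V_1$ and $V_2$ coincide; the only way this is consistent with $V_1 \not\cong V_2$ as $G$-modules is that each $V_i$ splits reducibly as $(A_i \otimes k) \oplus (k \otimes B_i)$ with some $g \in G$ interchanging $L_1$ with $L_2$ and $A_i$ with $B_i$, yielding alternative (b). Part (ii) follows by combining part (i) with Lemma~\ref{lem:opprime}: if $\dim V < 2p-2$ and $\GP$ were not quasisimple, the case $\bfO_{p'}(\GP) \le \bfZ(\GP)$ would by (i) force $\dim V = 2p-2$, while $\bfO_{p'}(\GP) \not\le \bfZ(\GP)$ would by Lemma~\ref{lem:opprime} force $\dim V_1 = \dim V_2 = p-1$ and hence again $\dim V = 2p-2$; in either case the hypothesis $\dim V < 2p-2$ is contradicted, so $\GP$ must be quasisimple. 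The main obstacle is the careful support-and-dimension count ruling out $n \ge 3$, together with the verification that the dimensions $(p-1)/2$ and $2$ are pinned down exactly.
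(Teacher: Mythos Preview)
Your overall skeleton is right---apply Lemma~\ref{eg}, pick $W_1,W_2$ with $\Ext^1_{\GP}(W_1,W_2)\ne 0$, use K\"unneth---but the argument has a genuine gap at its centre and the case split is miswired.

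The missing ingredient is \cite[Theorem~A]{Gcr}: if $L$ is quasisimple (with $p'$-centre) and $\Ext^1_L(A,B)\ne 0$, then $\dim A+\dim B\ge p-1$. This is what the paper uses to get $\dim W_{11}+\dim W_{21}\ge p-1$. Landazuri--Seitz bounds say nothing about $\Ext^1$; they only bound the dimension of a single nontrivial module from below, and they cannot pin down $\dim A_i=(p-1)/2$. Without the \cite{Gcr} bound your ``careful counting'' for $n=2$ and your dimension identification in alternative~(a) both collapse.

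Your (A)/(B) split is also not the right dichotomy. After K\"unneth and relabelling you may always assume $\Ext^1_{L_1}(W_{11},W_{21})\ne 0$ and $W_{1j}\cong W_{2j}$ for $j\ge 2$; the real case split (as in the paper) is whether some $W_{1j}$ with $j\ge 2$ is \emph{nontrivial}. If yes, then $\dim V\ge(\dim W_{12})(\dim W_{11}+\dim W_{21})\ge 2(p-1)$ forces equality everywhere, so $V_i=W_i$ is $\GP$-irreducible, $\dim W_{i2}=2$, $\dim W_{i1}=(p-1)/2$, and faithfulness gives $t=2$: this is alternative~(a). If all $W_{1j}$ for $j\ge 2$ are trivial, faithfulness forces some $g\in G$ to move $L_1$ to another component, and a second application of the \cite{Gcr} bound to $g(W_i)$ gives alternative~(b) (or a contradiction when $W_{21}\cong k$, since then $\dim W_{11}\ge p-2$ already makes $\dim V_1\ge 2(p-2)$). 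Your assertion ``in situation~(B), $V_i=W_i$ is irreducible over $\GP$'' is exactly what needs to be \emph{proved} from the dimension inequality, not assumed; and your route to~(b) via ``the only way this is consistent with $V_1\not\cong V_2$'' is unfounded, since nothing in the hypotheses rules out $V_1\cong V_2$.

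Your deduction of~(ii) from~(i) and Lemma~\ref{lem:opprime} is fine and matches the paper.
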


\begin{proof}
(i) By Lemma \ref{eg} applied to $\GP$, $\GP = (\GP)^+ = E(\GP) =L_1*L_2* \ldots * L_t$, a central product of
$t$ quasisimple groups. Suppose $t >1$. Since
$\Ext^1_G(V_1,V_2) \hookrightarrow \Ext^1_\GP(V_1,V_2)$,
there are irreducible $\GP$-submodules $W_i$ of $V_i$ for $i = 1,2$ such that
$\Ext^1_\GP(W_1,W_2) \neq 0$.  Write
$W_i = W_{i1} \otimes \ldots \otimes W_{it}$ where $W_{ij}$ is an irreducible $L_j$-module. By Lemma \ref{lem:kunneth}, we may assume  that
$W_{1j} \cong W_{2j}$ for $j=2, \ldots, t$, and
either $\Ext^1_{L_1}(W_{11},W_{21}) \neq 0$, or
$W_{11} \cong W_{21}$ and $\Ext^1_{L_j}(W_{1j},W_{2j}) \neq 0$ for some
$j$. Interchanging $L_1$ and $L_j$ in the latter case, we can always assume that
$\Ext^1_{L_1}(W_{11},W_{21}) \neq 0$.
By \cite[Theorem A]{Gcr} we then have
\begin{equation}\label{w1}
  \dim W_{11}+\dim W_{21} \geq p-1 > 2.
\end{equation}

Now if $W_{1j}$ is nontrivial for some $j \geq 2$, say $W_{12} \not\cong k$, then
$$\dim V \geq \dim W_1 + \dim W_2 \geq 2(\dim W_{11}+\dim W_{21}) = 2p-2.$$
It follows that $V_i = W_i = W_{i1} \otimes W_{i2} \otimes k \otimes \ldots \otimes k$,
$\dim W_{i1} = (p-1)/2$, $\dim W_{i2} = 2$. Furthermore, $t=2$ as $V$ is faithful,
and we arrive at (a).

We may now assume that $W_{1j} \cong W_{2j} \cong k$ for all $j > 1$.
Suppose that $G$ normalizes $L_1$. Since every $\GP$-composition
factor of $V_1$ is $G$-conjugate to $W_1$, it follows that $L_2$ acts trivially on
all composition factors of $V_1$. The same is true for $V_2$. As $L_2$ is
quasisimple, we see that $L_2$ acts trivially on $V$, contrary to the faithfulness of
$V$. Thus there must be some $g \in G$ conjugating $L_1$ to $L_j$ for some $j > 1$,
say $L_1^g = L_2$. By (\ref{w1}) we may assume that $W_{11} \not\cong k$. Then $g(W_1) \not\cong W_1$ as $L_2$ acts trivially on $W_1$ but not on $g(W_1)$.
Thus $(V_1)_\GP$ has at least two distinct simple summands $W_1$ and $g(W_1)$.
If furthermore $W_{21} \not\cong k$, then $(V_2)_\GP$ also has at least two
distinct simple summands $W_2$ and $g(W_2)$, and so
$$\dim V \geq 2(\dim W_1 + \dim W_2)  = 2(\dim W_{11}+\dim W_{21})\geq 2p-2.$$
In this case, we must have that $V_i = W_i \oplus g(W_i)$, $\dim W_i = (p-1)/2$,
$t = 2$ as $V$ is faithful, and we arrive at (b).

Consider the case $W_{21} \cong k$. Now (\ref{w1}) implies that
$\dim W_1 = \dim W_{11} \geq p-2$, whence $\dim V_1 \geq 2p-4$.
On the other hand, $\dim V_2 \geq 2$. It follows that $2p-4 = 2$,
again a contradiction.

\smallskip
(ii) By Lemma \ref{lem:opprime}, $\bfO_{p'}(\GP) \leq \bfZ(\GP)$. Hence we are done by (i).
\end{proof}

\begin{lem}\label{ext-defi}
Let $H$ be a quasisimple finite group of Lie type in char $p > 3$. Assume that
$V_1,V_2 \in \IBr_p(H)$ satisfy
$\dim V_1+\dim V_2 <2p$.

{\rm (i)} If $H \not\cong \SL_2(q),~\PSL_2(q)$, then $\Ext^1_H(V_1,V_2) = 0$.
In particular, there is no reducible indecomposable $kG$-module with
$\GP \cong H$ and $\dim V < 2p$.

{\rm (ii)} Suppose $H \cong \SL_2(q)$ or $\PSL_2(q)$, $\Ext^1_H(V_1,V_2) \neq 0$,
and $\dim V_1 = \dim V_2$. Then $q = p$ and $V_1 = L((p-3)/2)$ or
$L((p-1)/2)$.
\end{lem}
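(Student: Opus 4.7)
The plan is to dispatch the two parts separately, exploiting the tight bound $\dim V_1+\dim V_2<2p$ which severely restricts both dimensions and highest weights.

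For part (ii), let $H\cong \SL_2(q)$ or $\PSL_2(q)$ and assume $\Ext^1_H(V_1,V_2)\ne 0$ and $\dim V_1=\dim V_2$. I would first rule out $q=p^n$ with $n\ge 2$ directly via Lemma~\ref{lemma1}: any non-split extension of equal-dimensional irreducibles would then have dimension at least $(p^2-1)/2$, which is $\ge 2p$ for $p\ge 5$, contradicting $\dim V_1+\dim V_2<2p$. Hence $q=p$. The irreducibles $L(0),\ldots,L(p-1)$ of $k\SL_2(p)$ (respectively the even-weight $L(a)$'s parametrizing $k\PSL_2(p)$-irreducibles) have pairwise distinct dimensions, so $\dim V_1=\dim V_2$ forces $V_1\cong V_2\cong L(a)$ for some $a$. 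A non-split self-extension of $L(a)$ yields a reducible self-dual indecomposable module of dimension $2(a+1)\le 2p-2$, so Proposition~\ref{prop1} pins down $a\in\{(p-3)/2,(p-1)/2\}$.

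For part (i), $H$ is quasisimple of Lie type in characteristic $p>3$ with underlying simply connected simple algebraic group $\cG$ of rank $\ge 2$. The plan is to reduce, using Steinberg's tensor product theorem and the observation that every nontrivial restricted irreducible of a rank-$\ge 2$ group has dimension $\ge 3$, to the case where each $V_i=L(\lambda_i)$ with $\lambda_i$ a $p$-restricted weight (a Frobenius twist with two non-trivial slots would already give $\dim V_i\ge 9$ and typically combine with any nontrivial $V_j$ to violate the bound). Jantzen's inequality \cite{jantzenlow} then places each $\lambda_i$ in the closure of the lowest alcove, so that $L(\lambda_i)=V(\lambda_i)=\nabla(\lambda_i)$ as rational $\cG$-modules; Kempf vanishing gives $\Ext^1_{\cG}(L(\lambda_1),L(\lambda_2))=0$. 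A Cline--Parshall--Scott--van der Kallen--type comparison between extensions for $\cG$ and for $H=\cG(\bbF_q)$ (supplemented, where this is delicate, by a type-by-type check against the explicit classification of irreducibles of dimension $<2p$ in the sequel \cite{GHT}) then transfers the vanishing down to $H$, giving $\Ext^1_H(V_1,V_2)=0$. The ``in particular'' clause in (i) is immediate: any reducible indecomposable $kG$-module $V$ with $\dim V<2p$ admits two composition factors $V_1,V_2$ with $\Ext^1_G(V_1,V_2)\ne 0$, and since $[G:\GP]$ is coprime to $p$, the restriction map $\Ext^1_G(V_1,V_2)\hookrightarrow \Ext^1_{\GP}((V_1)_{\GP},(V_2)_{\GP})$ produces $\GP$-irreducible constituents $W_1,W_2$ with $\dim W_1+\dim W_2\le \dim V<2p$ and $\Ext^1_H(W_1,W_2)\ne 0$, contradicting the first part.

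The main obstacle is the finite-versus-algebraic comparison in part (i): when $\dim V_i$ can be as large as $2p-2$ and $\lambda_i$ sits on alcove walls, the natural map $\Ext^1_{\cG}\to \Ext^1_H$ need not be an isomorphism, so the clean Kempf-type argument at the algebraic level must be combined with type-by-type input (e.g.\ Brauer tree or Deligne--Lusztig data in the rank-two, low-defect cases). I expect the cleanest route in practice to be this hybrid: algebraic linkage does the bulk of the work, and the small-dimensional classification in \cite{GHT} closes the remaining finitely many awkward cases.
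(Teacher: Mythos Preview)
Your part (ii) is essentially correct and close to the paper's argument, but with two small gaps. First, to invoke Proposition~\ref{prop1} you need the non-split self-extension of $L(a)$ to be self-dual; this is true, but requires the observation that $\SL_2(p)$ has cyclic Sylow $p$-subgroups, so $\dim\Ext^1_H(L(a),L(a))\le 1$ by Lemma~\ref{cyclic}, whence the extension is unique and hence isomorphic to its dual. Second, Proposition~\ref{prop1} requires $\dim V<2p-2$ strictly, whereas your bound $2(a+1)\le 2p-2$ allows equality when $a=p-2$; you would still need to rule out $\Ext^1_H(L(p-2),L(p-2))\ne 0$ separately. The paper sidesteps both issues by citing \cite{AJL} directly for the $q=p$ case.

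For part (i), the paper takes a much shorter route than your sketch: it simply invokes McNinch's theorem \cite[Theorem~1.1]{McN}, which gives exactly the required semisimplicity (and hence $\Ext^1$-vanishing) for finite groups of Lie type in characteristic $p$ once the module dimension is suitably bounded. The only case not covered by McNinch's hypotheses is $H=\Sp_{2n}(5)$, which the paper handles by a short dimension count for $n\ge 3$ and a direct computer check (by Klaus Lux) for $n=2$. Your outline --- reducing to restricted weights, placing them in the lowest alcove via Jantzen, vanishing at the algebraic level via Kempf, then descending to the finite group --- is essentially a sketch of the ideas behind McNinch's theorem itself, and as you correctly identify, the descent step is delicate. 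Citing \cite{McN} avoids reinventing this machinery. Your derivation of the ``in particular'' clause matches the paper's (which uses Lemma~\ref{semi1}(ii) to produce the composition factors with nonzero $\Ext^1$).
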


\begin{proof}
(i) Note that $\bfZ(H)$ is a $p'$-group as $p > 3$. Hence, we can replace $H$ by
the fixed point subgroup $\cG^F$ for some Steinberg endomorphism
$F~:~\cG \to \cG$ on some simple simply connected algebraic group $\cG$
defined over a field of characteristic $p$ (see Lemma \ref{kernel}).
Hence, if $H \not\cong \Sp_{2n}(5)$, the result
follows by \cite[Theorem 1.1]{McN}. In the exceptional case
$H=\Sp_{2n}(5)$, then $p=5$ and so we are only considering modules of
dimension at most $9$. If $n \ge 3$, then $\dim V_1 + \dim V_2 > 10$ unless
at least one of the $V_i$ is trivial and the other is either trivial or the natural
module of dimension $2n$, and in both cases $\Ext^1_H(V_1,V_2) = 0$.
If $n=2$, one just computes to see that all the relevant $\ext_H^1(V_1, V_2)$
are trivial (done by Klaus Lux).

Suppose now that $V$ is a reducible indecomposable $kG$-module with
$\GP \cong H$ and $\dim V < 2p$. By Lemma \ref{semi1}(ii), there are composition factors $V_1$, $V_2$ of $V$ such that $\Ext^1_G(V_1, V_2) \neq 0$. It then follows
that $\Ext^1_H(W_1,W_2) \neq 0$ for some simple $H$-summands $W_i$ of
$V_i$ for $i = 1,2$ and $\dim W_1 + \dim W_2 < 2p$, a contradiction.

\smallskip
(ii) Again we can replace $H$ by $\SL_2(q)$. The statement then follows from
Lemma \ref{lemma1} when $q > p$, and from \cite{AJL} if $q=p$.
\end{proof}

There are a considerable number of examples of non-split extensions
$(V_1|V_2)$ with $\GP$ non-quasisimple and $\dim V_1 + \dim V_2=2p-2$.
For example,  suppose that $G=\SL_2(p) \times \SL_2(p)$ and
$V_1 = L(1) \otimes L(a)$ and $V_2=L(1) \otimes L(p-a-3)$.  Then
by \cite{AJL} and Lemma \ref{lem:kunneth},  $\ext_G^1(V_1, V_2) \ne 0$.
For our adequacy results, we do need to consider the case where
$\dim V_1 = \dim V_2= p-1$ in more detail.

\begin{lem} \label{2p-2}
Let $V$ be a faithful indecomposable $kG$-module with two composition factors
$V_1$, $V_2$, both of dimension $p-1$.  Assume that $p > 3$ and $\bfO_p(G) =1$.
Then one of the following holds:
\begin{enumerate}[\rm(i)]
\item $\bfO_{p'}(\GP) \not\leq \bfZ(\GP)$ and Lemma \ref{lem:opprime} applies;
\item  $\GP$ is quasisimple; or
\item  $\GP = \SL_2(p) \times \SL_2(p^a)$ (modulo some central subgroup) and
one of the following holds:
\begin{enumerate} [\rm(a)]
\item   $V_1 \cong V_2 \cong  L((p-3)/2) \otimes L(1)^{(p^b)}$ as $\GP$-modules
(for some $0 \leq b < a$).
\item   $a=1$ and $V_1 \cong V_2 \cong   X \oplus Y$ where $\GP$ acts as a quasisimple
group on $X, Y$ and $\dim X = \dim Y = (p-1)/2$ (so $X, Y \cong L((p-3)/2)$
for the copy of $\SL_2(p)$ acting nontrivially on $X$ or $Y$).
\end{enumerate}
\end{enumerate}
\end{lem}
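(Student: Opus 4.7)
The plan is to cascade the earlier structural results, applying them in a specific order. First, if $\bfO_{p'}(\GP) \not\leq \bfZ(\GP)$, Lemma \ref{lem:opprime} yields case (i) at once, so we may assume $\bfO_{p'}(\GP) \leq \bfZ(\GP)$. If $\GP$ is quasisimple, we are in case (ii). Otherwise, Lemma \ref{two-cfs}(i) applies (since $\dim V_1 = \dim V_2 = p-1$), producing $\GP = L_1 \ast L_2$, a central product of two quasisimple groups, together with one of the tensor-factorizations labeled (a), (b) in that lemma.

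In subcase (a) of Lemma \ref{two-cfs}(i) we have $V_i = A_i \otimes B$ with $A_i \in \IBr_p(L_1)$ of dimension $(p-1)/2$, $B \in \IBr_p(L_2)$ of dimension $2$, and $\Ext^1_{L_1}(A_1, A_2) \neq 0$. First I would identify $L_1$: applying Theorem \ref{bz} to the quasisimple group $L_1$ acting on $A_1$, one checks that $p > 3$ together with $\dim A_1 = (p-1)/2$ rules out every case except (f), so $L_1$ is of Lie type in characteristic $p$. Lemma \ref{ext-defi}(ii) then forces $L_1 \cong \SL_2(p)$ or $\PSL_2(p)$, and the only irreducible of dimension $(p-1)/2$ for these groups is $L((p-3)/2)$, whence $A_1 \cong A_2 \cong L((p-3)/2)$ and $V_1 \cong V_2$. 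For $L_2$, note that $\GP = \bfO^{p'}(\GP)$ has no nontrivial $p'$-quotient, while $\GP/L_1 \cong L_2/(L_1 \cap L_2)$ would be a $p'$-group if $p \nmid |L_2|$; hence $p \mid |L_2|$. A second application of Theorem \ref{bz} to $L_2$ acting on $B$ (where cases (a)--(e) are again excluded for dimension reasons) places $L_2$ in case (f), yielding $L_2 \cong \SL_2(p^a)$ and $B \cong L(1)^{(p^b)}$ for some $0 \leq b < a$. This is exactly case (iii)(a).

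Subcase (b) of Lemma \ref{two-cfs}(i) gives $V_i = (A_i \otimes k) \oplus (k \otimes B_i)$ with $\dim A_i = (p-1)/2$ and some $g \in G$ interchanging $L_1 \leftrightarrow L_2$ and $A_i \leftrightarrow B_i$. Hence $L_1 \cong L_2$, both admit $(p-1)/2$-dimensional irreducibles with nonzero mutual $\Ext^1$, so the argument of subcase (a) applies to both factors: $L_1 \cong L_2 \cong \SL_2(p)$ (forcing $a = 1$) and $A_i \cong B_i \cong L((p-3)/2)$. Thus $V_1 \cong V_2 \cong X \oplus Y$ as in case (iii)(b). The main technical point will be the exclusion of a cross-characteristic quasisimple factor $L_2$ (such as $2\AAA_5 \cong \SL_2(5)$ carrying a $2$-dimensional irreducible in characteristic $p > 5$); this is dispatched by the observation above that $\bfO^{p'}(\GP) = \GP$ forces $p \mid |L_2|$, thereby triggering the defining-characteristic case of the Blau--Zhang classification.
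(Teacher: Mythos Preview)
Your argument is correct and follows the same overall strategy as the paper: reduce via Lemma~\ref{lem:opprime} and Lemma~\ref{two-cfs}(i) to the two subcases (a), (b), then pin down $L_1$ using Blau--Zhang plus Lemma~\ref{ext-defi}, and finally identify $L_2$. Two small remarks. First, the paper applies \cite[Theorem A]{BZ} to the length-$2$ indecomposable $L_1$-module with composition factors $A_1,A_2$ (dimension $p-1$), whereas you apply Theorem~\ref{bz} to the irreducible $A_1$ (dimension $(p-1)/2$); both routes land in case~(f). Second, your invocation of Lemma~\ref{ext-defi}(ii) really uses both parts of that lemma --- part~(i) is what forces $L_1\cong\SL_2(q)$ or $\PSL_2(q)$ among Lie-type groups, and part~(ii) then gives $q=p$ --- and your separate argument that $p\mid|L_2|$ is already contained in Lemma~\ref{eg} (invoked in the proof of Lemma~\ref{two-cfs}).
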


\begin{proof}
Assume that neither (i) nor (ii) holds. Then by Lemma \ref{two-cfs}(i),
$E(\GP) = \GP = L_1 * L_2$ is a central product
of two quasisimple groups, and either (a) or (b) of Lemma \ref{two-cfs}(i) occurs.
In either case, we see that $L_1$ admits an indecomposable module $W$ of length
$2$ with composition factors $A_1$ and $A_2$, both of dimension $(p-1)/2$.
By \cite[Theorem A]{BZ} applied to $W$, $L_1$ is of Lie type in characteristic $p$.
Also, $\bfZ(L_1) \leq \bfZ(\GP) \leq \bfO_{p'}(G)$ is a $p'$-group. Hence
$L_1 \cong \SL_2(p)$ (modulo a central subgroup) by Lemma \ref{ext-defi} and
$A_1 \cong A_2 \cong L((p-3)/2)$.
In particular, $L_2 \cong \SL_2(p)$ in case (b), and (iii)(b) holds. In the case of (a),
$B \in \IBr_p(L_2)$ has dimension $2$. Since $p > 3$, by Theorem \ref{bz} we
conclude that $L_2$ is of Lie type in characteristic $p$, and in fact
$L_2 \cong \SL_2(p^a)$ (modulo a central subgroup) and $B \cong L(1)^{(p^b)}$ for some $a \geq 1$ and $0 \leq b < a$. Thus (iii)(a) holds.
\end{proof}

\begin{prop}\label{indec-str}
Let $p > 3$ and let $G$ be a finite group with a faithful, reducible, indecomposable $kG$-module $V$ of dimension $\leq 2p-3$. Suppose in addition that $\bfO_p(G) = 1$. Then
$\GP = E(\GP)$, $G$ has no composition factor isomorphic to $C_p$, and one of the following holds.

{\rm (i)} $\GP$ is quasisimple.

{\rm (ii)} $\GP$ is a central product of two quasisimple groups and
$\dim V = 2p-3$.  Furthermore, $V$ has one composition factor of dimension $1$,
and either one of dimension $2p-4$ or two of dimension $p-2$. In either case,
$V \not\cong V^*$.
\end{prop}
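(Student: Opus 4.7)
The plan is threefold: (a) show $\bfO_{p'}(\GP) \leq \bfZ(\GP)$ so that Lemma~\ref{eg} yields $\GP = E(\GP) = L_1 * \cdots * L_t$ and the vanishing of $C_p$ as a composition factor of $G$; (b) using Lemma~\ref{semi1}(ii), take a pair of composition factors with nonzero $\Ext^1$ and apply the Künneth decomposition (Lemma~\ref{lem:kunneth}) together with \cite[Theorem A]{Gcr} to concentrate their $\GP$-supports on a single factor $L_1$; (c) case-analyse by whether $G$ normalises $L_1$ and on the length of $V$, propagating along the connected $\Ext^1$-quiver (Lemma~\ref{block1}) to reach either case (i) or case (ii). I expect the main obstacle to be the propagation argument when $V$ has $\geq 3$ composition factors.

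For step (a), if $J := \bfO_{p'}(\GP) \not\leq \bfZ(\GP)$, then Lemma~\ref{block2}(ii) forces $J$ to act non-scalar on every composition factor of $V$ (otherwise it would land in $\bfZ(G) \cap \GP \leq \bfZ(\GP)$). Applying Theorem~\ref{bz} to the image of $\GP$ on any simple $\GP$-summand $W$ of a composition factor, the non-centrality of the image of $J$ together with $\bfO^{p'}(H) = H$ places us in case (a) or (e), forcing $\dim W = p-1$; every composition factor of $V$ then has dimension $\geq p-1$, so $\dim V \geq 2(p-1) > 2p-3$, contradicting the hypothesis. Hence Lemma~\ref{eg} yields $\GP = E(\GP) = L_1 * \cdots * L_t$ (with $\GP \neq 1$ since otherwise $V$ would be semisimple). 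For step (b), assume $t \geq 2$. Lemma~\ref{semi1}(ii) produces composition factors $V_1, V_2$ with $\Ext^1_G(V_1, V_2) \neq 0$; using $\Ext^1_G \hookrightarrow \Ext^1_\GP$ (as $G/\GP$ is a $p'$-group), pick simple $\GP$-summands $W_i \subseteq V_i$ with $\Ext^1_\GP(W_1, W_2) \neq 0$ and write $W_i = W_{i,1} \otimes \cdots \otimes W_{i,t}$. By Lemma~\ref{lem:kunneth}, after relabelling, $\Ext^1_{L_1}(W_{1,1}, W_{2,1}) \neq 0$ and $W_{1,j} \cong W_{2,j}$ for $j \geq 2$; \cite[Theorem A]{Gcr} gives $\dim W_{1,1} + \dim W_{2,1} \geq p-1$, and any nontrivial $W_{1,j}$ with $j \geq 2$ would push $\dim V \geq 2(p-1)$, a contradiction. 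So each $W_i$ is the inflation of an $L_1$-module.

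For step (c), if $G$ does not normalise $L_1$, pick $g$ with $L_{j_0} := L_1^g \neq L_1$; then any nontrivial $W_i$ satisfies $W_i \not\cong W_i^g$, doubling $\dim V_i$. Both $W_1, W_2$ nontrivial would give $\dim V \geq 2(p-1)$, impossible; so after swapping we may take $W_1 = k$, which forces $\dim W_2 \geq p-2$, $\dim V_2 \geq 2(p-2)$, hence $\dim V = 2p-3$ with $V$ of length two having composition factors of dimensions $1$ and $2p-4$; the $L_k$ with $k \notin \{1, j_0\}$ act trivially on both factors (and so on $V$, by the no-$p$-quotient property of $E(\GP)$), forcing $t = 2$: case (ii), length two. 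If instead $G$ fixes $L_1$, then $L_2, \ldots, L_t$ act trivially on $V_1, V_2$; for $V$ of length two this propagates to all of $V$, whence $t = 1$: case (i). When $V$ has $\geq 3$ composition factors (the main obstacle), every further composition factor $V_3$ is $\Ext^1$-connected to $V_1$ or $V_2$ by Lemma~\ref{block1}. A Künneth+Gcr analysis of the pair $(V_2, V_3)$ forces the $\GP$-support of $V_3$ onto $L_1$ (an exceptional Künneth index $j_0 \geq 2$ would demand $\dim W_3 \geq \dim W_2 \cdot (p-2)$, blowing up $\dim V$), while a connection of $V_3$ to a $\GP$-trivial composition factor (necessarily forcing $W_1 = k$) yields $H^1(\GP, V_3) \neq 0$; a standard inflation-restriction argument (using $H^1(K, k) = 0$ since the kernel $K$ is a central product of perfect and $p'$ pieces) allows Lemma~\ref{lem:h1} to be applied to the faithful quotient, forcing the simple summand of $V_3$ to be supported on a single $L_{j_3}$ of dimension $\geq p-2$. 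A tight dimension count then leaves exactly two outcomes: every extra composition factor supported on $L_1$ (giving $t = 1$, case (i)), or exactly three composition factors of dimensions $1, p-2, p-2$ with supports $L_1, L_2$ distinct and $t = 2$ (case (ii), length three). Finally, $V \not\cong V^*$ in both variants of case (ii) is seen by comparing composition factor dimensions at $\soc V$ and $\soc V^*$: in the length-two variant these are $1$ and $2p-4$ in opposite order; in the length-three non-uniserial variants the same dimension mismatch occurs; and in the uniserial length-three case self-duality would demand the two dimension-$(p-2)$ composition factors be duals of each other, which fails since they have distinct $\GP$-supports ($L_1$ versus $L_2$).
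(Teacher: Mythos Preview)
Your step (a) contains a genuine gap. You correctly deduce from Lemma~\ref{block2}(ii) that if $J=\bfO_{p'}(\GP)\not\leq\bfZ(\GP)$ then $J$ acts non-scalarly on every $G$-composition factor $V_i$. But you then assert that the image of $J$ is non-central in the image of $\GP$ on \emph{any} $\GP$-summand $W$ of $V_i$, and hence $\dim W=p-1$. This implication fails: it is perfectly possible that $J$ acts by scalars on each $\GP$-summand $W_a$ of $V_i$ via characters $\lambda_a$ that are permuted nontrivially by $G$ (so $J$ is non-scalar on $V_i$ as a whole, yet central in each $\Phi_a(\GP)$). Theorem~\ref{bz} then gives no lower bound on $\dim W_a$, and your dimension count $\dim V\geq 2(p-1)$ collapses. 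The paper repairs this by a different route: one first observes there is \emph{some} $\GP$-composition factor $X$ with $J$ non-scalar (else $[J,\GP]$ acts unipotently on $V$, hence trivially), deduces $\dim X\geq p-1$ and multiplicity one, and then uses Lemma~\ref{semi1}(i) to produce an indecomposable $\GP$-subquotient of length two with factors $X$ and some $Y$ on which $J$ \emph{is} scalar---contradicting Lemma~\ref{block2}(ii) applied to that subquotient. This extra Ext-based step is essential.

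Once (a) is fixed, your steps (b)--(c) are correct but take a more circuitous route than the paper. You pick a specific Ext-connected pair $(V_1,V_2)$, pin the $\GP$-support to one component $L_1$ via K\"unneth, split on whether $G$ normalises $L_1$, and then propagate support through the Ext-quiver. The paper instead partitions composition factors globally by which $L_j$ acts nontrivially (sets $\cX_0,\cX_1,\ldots$), observes via Lemma~\ref{two-cfs}(ii) that $\cX_i$ and $\cX_j$ (for $i,j\geq 1$ distinct) can never be Ext-linked, and then uses Lemma~\ref{block1} to force links $\cX_1\to\cX_0$ and $\cX_2\to\cX_0$, each contributing dimension $\geq p-2$ via Lemma~\ref{lem:h1}. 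This avoids your inductive propagation and the somewhat delicate case analysis around ``necessarily forcing $W_1=k$''. Your self-duality argument via socle/head dimension comparison is fine and parallels the paper's use of Lemma~\ref{mult1}.
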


\begin{proof}
(a) Note that $\bfO_p(\GP) \leq \bfO_p(G) = 1$. Next we show that
$J := \bfO_{p'}(\GP) \leq \bfZ(\GP)$.  As in the proof of Lemma \ref{lem:opprime},
it suffices to show that $J$ acts by scalars on every $\GP$-composition factor of
$V$. So assume that there is a $\GP$-composition factor $X$ of $V$ on
which $J$ does not act by scalars.  Again as in the proof of Lemma \ref{lem:opprime},
we see by Theorem \ref{bz} that $\dim X \geq p-1$. Since $\dim V \leq 2p-3$,
it follows that $X$ is a $\GP$-composition factor of multiplicity $1$, and moreover $J$ acts by scalars on any other $\GP$-composition factor $Y$ of $V$. Also, $X$ extends to
a $G$-composition factor (of multiplicity $1$) of $V$. Now, by Lemma \ref{semi1}(i),
there is an indecomposable subquotient of length $2$ of $V$ with $G$-composition
factors $X$ and $T \not\cong X$. In particular,  by symmetry we may assume that
$0 \neq \Ext^1_G(X,T) \hookrightarrow \Ext^1_\GP(X,T)$, and so
$\Ext^1_\GP(X,Y) \neq 0$ for some simple $\GP$-summand $Y$ of $T$. But this
is impossible by Lemma \ref{block2}(ii) (as $J$ acts by scalars on $Y$ but not on $X$).

Applying Lemma \ref{eg} to $\GP$, we see that
$$\GP = (\GP)^+ = E(\GP) = L_1 * \ldots *L_t,$$
a central product of $t$ quasisimple subgroups. Note that $t \geq 1$ as otherwise
$G$ is a $p'$-group and so $V$ does not exist. Furthermore, $G$ has no
composition factors isomorphic to $C_p$.

\smallskip
(b) Assume now that $t \geq 2$. Suppose in addition
that, for every composition factor $V_i$ of $V$, at most one of the components
$L_j$ of $\GP$ acts nontrivially on $V_i$. For $1 \leq j \leq t$, let $\cX_j$ denote the
set of isomorphism classes of composition factors $V_i$ of $V$ on which $L_j$ acts
nontrivially. Also let $\cX_0$ denote the set of isomorphism classes of
composition factors $V_i$ of $V$ on which $\GP$ acts trivially. By the faithfulness
of $V$, $\cX_j \neq \varnothing$ for $j > 0$. Consider for instance $X \in \cX_1$.
By Lemma \ref{semi1}(i), there is some $X' \in \cX_j$ (for some $j$) and some
indecomposable subquotient $W$ of length $2$ of $V$ with composition factors
$X$, $X'$. Note that the $p$-radical of the group induced by the action of
$G$ on $W$ is trivial as $C_p$ is not a composition factor of $G$.
Applying Lemma \ref{two-cfs}(ii) to $W$, we see that $j=0$ or $1$. Moreover,
if for {\it all} $X \in \cX_1$ there is no such $W$ with $X' \in \cX_0$, then Lemma \ref{block1} applied to $(\cX := \cX_1,\cY := \cup_{i \neq 1}\cX_i)$ implies that $V$ is decomposable, a contradiction. Thus for some $X \in \cX_1$, such $W$ exists with
$X' \in \cX_0$. Note that in this case $\dim X \geq p-2$. Indeed, $\GP$
acts trivially on $X'$, and by symmetry we may assume that
$$0 < \dim \Ext^1_G(X',X) \leq \dim \Ext^1_\GP(X',X).$$
Hence, for some simple summand $X_1$ of the $\GP$-module $X$ we have
$0 \neq \Ext^1_\GP(k,X_1) \cong H^1(\GP,X_1)$. Note that $C_p$ is not
a composition factor of $\GP$, so by Lemma \ref{kernel} we may assume here that
$\GP$ acts faithfully on $X_1$. Applying Lemma \ref{lem:h1} to $\GP$, we get
$\dim X \geq \dim X_1 \geq p-2$.

Similarly, for some $Y \in \cX_2$, we get an
indecomposable subquotient $T$ of length $2$ of $V$ with composition factors
$Y$ and $Y' \in \cX_0$, and moreover $\dim Y \geq p-2$. Since $\dim V \leq 2p-3$
and $\cX_0 \ni X',Y'$, we conclude that $\dim V = 2p-3$, $\dim X = \dim Y = p-2$,
$t = 2$, and
$X' \cong Y'$ has dimension $1$. Suppose in addition that $V \cong V^*$. Observe
that $X^* \not\cong Y,X'$, so $X \cong X^*$. Similarly, $Y$ and $X'$ are self-dual.
Thus all three composition factors of $V$ have multiplicity $1$ each and are self-dual.
At least one of them occurs in $\soc(V)$, and then also in $\hd(V)$ by
duality. It follows by Lemma \ref{mult1} that $V$ is decomposable, a contradiction.
Thus we arrive at (ii).

\smallskip
(c) Finally, we consider the case where at least two of the $L_i$'s act nontrivially on
some composition factor $V_i$ of $V$. By Lemma \ref{semi1}(i), there is some
indecomposable subquotient $W$ of length $2$ of $V$ with composition factors
$V_i$ and $V_j$. By Lemma \ref{two-cfs}(ii) applied to $W$, $\dim V_j = 1$. In turn
this implies by Lemma \ref{lem:h1} that $\dim V_i \geq 2p-4$. Since
$\dim V \leq 2p-3$, we must have that $\dim V_i = 2p-4$, $V = W$,
$t = 2$ and $\dim V = 2p-3$.
Applying Lemma \ref{mult1} and using the indecomposability of $V$ as above,
we see that $V \not\cong V^*$ and again arrive at (ii).
\end{proof}

\section{Extensions and self-extensions. II}

Let $q$ be any odd prime power. It is well known, see e.g.\ \cite{TZ2} and \cite{GMST}, that
the finite symplectic group $\Sp_{2n}(q)$ has two complex irreducible {\it Weil}
characters $\xi_{1,2}$ of degree $(q^n+1)/2$, respectively $\eta_{1,2}$ of degree $(q^n-1)/2$,
whose reductions modulo any {\it odd} prime $p\nmid q$ are absolutely irreducible and distinct and are called {\it ($p$-modular) Weil characters} of $\Sp_{2n}(q)$.

\begin{lem}\label{weil-ext}
Let $q$ be an odd prime power and $p$ an odd prime divisor of $q^n+1$ which
does not divide $\prod^{2n-1}_{i=1}(q^i-1)$.
Let $S:= \Sp_{2n}(q)$
and let $W_1$ and $W_2$ denote irreducible $kS$-modules affording the two
irreducible $p$-modular Weil characters of $S$ of degree $(q^n-1)/2$.
Then for $1 \leq i,j \leq 2$ we have that
$\Ext^1_S(W_i,W_j) = 0$, unless $i \neq j$ and $n = 1$, in which case
$\dim\Ext^1_S(W_i,W_j) =  1$.
\end{lem}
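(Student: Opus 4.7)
The plan rests on the fact that, under the stated hypotheses, $p$ is a primitive prime divisor of $q^{2n}-1$, so $|S|_p = (q^n+1)_p$ and Sylow $p$-subgroups of $S=\Sp_{2n}(q)$ are cyclic, since they are contained in the anisotropic torus $T$ of order $q^n+1$. Consequently every $p$-block of $S$ of positive defect is a Brauer tree algebra, and both Lemma \ref{cyclic} and the standard Brauer-tree formulas for $\Ext^1$ between simple modules are available.

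Next I would observe that $\eta_i(1)=(q^n-1)/2$ is coprime to $p$ (since $\gcd(q^n-1,q^n+1)\mid 2$ and $p$ is odd), so $\eta_i$ has positive $p$-defect and lifts $W_i$. By Lemma \ref{cyclic}, $\Ext^1_S(W_i,W_i)=0$ iff $\eta_i$ is the unique ordinary lift of $W_i$, equivalently iff the edge corresponding to $W_i$ on the Brauer tree of its block is not the unique edge meeting the exceptional vertex. Similarly $\Ext^1_S(W_1,W_2)\neq 0$ requires $W_1,W_2$ to lie in a common block, with their edges sharing a vertex on the Brauer tree. So the whole problem reduces to locating the two Weil edges on the relevant Brauer tree(s).

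For $n=1$, i.e.\ $S\cong\SL_2(q)$, the Brauer tree of the block containing $\eta_1,\eta_2$ is classically known (cf.\ \cite{Burk}): it is a straight line with the exceptional vertex in the interior, and $\eta_1,\eta_2$ occupy the two extreme outer vertices. The two edges $W_1,W_2$ then meet only at the exceptional interior vertex, which forces $\dim\Ext^1_S(W_1,W_2)=1$, while neither edge is a unique edge at that vertex, forcing $\Ext^1_S(W_i,W_i)=0$ by Lemma \ref{cyclic}. This is enough to complete the $n=1$ case.

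For $n\ge 2$ the normalizer $\bfN_S(P)/\bfC_S(P)=\bfN_S(T)/T$ is cyclic of order $2n$, so each Brauer tree of $S$ of defect $P$ has at most $2n$ edges. My plan is to show via Brou\'e's $e$-Harish-Chandra theory with $e=2n$ (together with the explicit character values of the Weil representation, which are controlled by the Howe dual pair $(\Sp_{2n},O_1)$) that either $\eta_1$ and $\eta_2$ lie in distinct $p$-blocks (so that $\Ext^1_S(W_1,W_2)=0$ automatically), or they lie in the same block but correspond to non-adjacent edges (no shared vertex) on the Brauer tree. In either case the cross-extension vanishes, and the Lemma \ref{cyclic} criterion then rules out the self-extensions just as in the $n=1$ case. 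The main technical obstacle is precisely the explicit identification of the Brauer tree and the location of the Weil edges in the higher-rank case; I anticipate that the cleanest route is to invoke the Fong--Srinivasan block distribution for classical groups at primitive primes, which pins down the block of each $\eta_i$ via its Lusztig series and then determines its position on the tree.
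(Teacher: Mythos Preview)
Your setup (cyclic Sylow $p$-subgroups, Lemma \ref{cyclic}, Brauer-tree formulas) is exactly right, and your $n=1$ argument via the known Brauer tree of $\SL_2(q)$ is correct. The gap is entirely in $n\ge 2$: what you have there is a plan, not a proof, and the plan is heavier than necessary. Locating the Weil edges on the Brauer trees of $\Sp_{2n}(q)$ via Fong--Srinivasan is feasible in principle, but the $\eta_i$ are not unipotent characters, so you would need the non-unipotent block theory; your assertion that the relevant block has inertial index $2n$ also needs justification for a non-principal block. More importantly, you never actually supply the input needed for self-extensions when $n\ge2$: that each $W_i$ has a \emph{unique} complex lift.

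The paper bypasses all Brauer-tree bookkeeping for $n\ge2$ with two short moves. First, it cites \cite[Theorem 1.1]{TZ1} (the minimal-degree character classification for classical groups) to see directly that $\eta_i$ is the only lift of $W_i$, whence $\Ext^1_S(W_i,W_i)=0$ by Lemma \ref{cyclic}. Second, for the cross term it passes to $H=S\langle\sigma\rangle$, where $\sigma$ is the diagonal involution swapping $\eta_1,\eta_2$, and sets $V=\Ind^H_S(W_1)$. By the degree bound \cite[Theorem 5.2]{TZ1}, for $n\ge2$ any complex lift of $V$ must restrict to $\eta_1+\eta_2$ on $S$ and hence equals $\Ind^H_S(\eta_1)$; so $V$ has a unique lift, $\Ext^1_H(V,V)=0$ by Lemma \ref{cyclic}, and Frobenius reciprocity
\[
\Ext^1_H(\Ind^H_S W_1,V)\cong\Ext^1_S(W_1,V|_S)\cong\Ext^1_S(W_1,W_1)\oplus\Ext^1_S(W_1,W_2)
\]
kills the cross-extension. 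For $n=1$ the same device runs in reverse: a $\sigma$-invariant $\chi\in\Irr(S)$ of degree $q-1$ extends to $H$ and gives a second lift of $V$, forcing $\dim\Ext^1_S(W_1,W_2)=1$. This replaces your projected Fong--Srinivasan computation with two citations and a one-line induction argument.
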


\begin{proof}
The conditions on $(n,q)$ imply that $(n,q) \neq (1,3)$. In this case, \cite[Theorem 1.1]{TZ1}
implies that each $W_i$ has a unique complex lift (a complex module affording some
$\eta_i$). Also, the Sylow $p$-subgroups of $S$ are cyclic of order $(q^n+1)_p$.
Hence $\Ext^1_S(W_i,W_i) = 0$ by Lemma \ref{cyclic}.

Note that an involutory diagonal automorphism $\sigma$ of $S$ fuses $\eta_1$ with
$\eta_2$ and $W_1$ with $W_2$. Consider the semi-direct product
$H := S : \langle \sigma \rangle$ and the irreducible $kH$-module
$V := \Ind^H_S(W_1)$ of dimension $q^n-1$. Certainly, $\Ind^H_S(\eta_1)$ is a complex lift
of $V$.

Assume that $n > 1$. Now if $(n,q) \neq (2,3)$, then by \cite[Theorem 5.2]{TZ1}, $S$ has
exactly five irreducible complex characters of degree $\leq (q^n-1)$: $1_S$, $\eta_{1,2}$,
and $\xi_{1,2}$. When $(n,q) = (2,3)$, there is one extra complex character of degree
$6$, cf.\ \cite{Atlas}. It follows that if $\chi$ is any complex lift of $V$, then
$\chi_S = \eta_1 + \eta_2$. Since $\sigma$ fuses $\eta_1$ and $\eta_2$, we see that
$\chi  = \Ind^H_S(\eta_1)$. Thus $V$ has a unique complex lift, and so by Lemma
\ref{cyclic} and Frobenius reciprocity we have
$$0 = \Ext^1_H(V,V) = \Ext^1_H(\Ind^H_S(W_1),V) \cong
    \Ext^1_S(W_1,V_S) \cong \Ext^1_S(W_1,W_1) \oplus \Ext^1_S(W_1,W_2).$$
In particular, $\Ext^1_S(W_1,W_2) = 0$.

Next suppose that $n = 1$. Inspecting the character table of $\SL_2(q)$ as given in
\cite[Table 2]{DM}, we see that $S$ has a $\sigma$-invariant complex irreducible
character $\chi$ of degree $q-1$ such that the restriction of $\chi$ to $p'$-elements of
$S$ is the Brauer character of $V_S$. Since $H/S$ is cyclic and generated by $\sigma$,
it follows that $\chi$ extends to a complex irreducible character $\tilde\chi$ of $H$.
Now $\tilde\chi \neq \Ind^H_S(\eta_1)$ (since the latter is reducible over $S$), but both of them are complex lifts of $V$ (by Clifford's theorem). Applying Lemma \ref{cyclic} and
Frobenius reciprocity as above, we see that
$\dim \Ext^1_H(V,V) = \dim \Ext^1_S(W_1,W_2) = 1$.
\end{proof}

\begin{lem}\label{spor}
Let $H$ be a quasisimple group with $\bfZ(H)$ a $p'$-group. Let $W$ and $W'$ be absolutely irreducible $kH$-modules in characteristic $p$ of dimension $d$,
where $(H,p,d)$ is one of the following triples
$$\begin{array}{l}
    (2\AAA_7,5,4), ~(3J_3,19,18), (2Ru,29,28),~(6_1 \cdot \PSL_3(4),7,6),\\
    (6_1 \cdot \PSU_4(3),7,6),~
    (2J_2,7,6),~(3\AAA_7,7,6),~(6\AAA_7,7,6), ~(M_{11},11,10),\\
    (2M_{12},11,10),~(2M_{22},11,10),~(6Suz,13,12),~
    (2\gtwo(4),13,12),~ (3\AAA_6,5,3), \\
    (3\AAA_7,5,3), ~(M_{11},11,9),~(M_{23},23,21), ~
    (2\AAA_7,7,4), ~(J_1,11,7).\end{array}$$
If $\bfZ(H)$ acts the same way on $W$ and $W'$, assume in addition that
there is an automorphism of $H$ which sends $W$ to $W'$.
Then $\Ext^1_H(W,W') = 0$, with the following two exceptions
$(H,p,d) = (3\AAA_6,5,3)$ and $(2\AAA_7,7,4)$, where $\dim \Ext^1_H(W,W) = 1$.
\end{lem}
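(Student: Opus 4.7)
The plan is to exploit Lemma \ref{cyclic}. First observe that in each triple $(H,p,d)$ on the list, the prime $p$ divides $|H|$ exactly once (as recorded in Theorem \ref{bz}(b)--(d)), so a Sylow $p$-subgroup of $H$ is cyclic of order $p$ and every $p$-block of $H$ has cyclic defect. Whenever $W$ and $W'$ lie in different $p$-blocks, $\Ext^1_H(W,W') = 0$ trivially; in particular this disposes of every pair on which $\bfZ(H)$ acts through distinct central characters. So the remaining task is to treat pairs with identical central character, which I split into two sub-cases.

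Sub-case $W \cong W'$: By Lemma \ref{cyclic}, $\Ext^1_H(W,W) \ne 0$ (and then has dimension~$1$) if and only if $W$ admits at least two non-isomorphic complex lifts. For each triple on the list I would simply read off the number of complex characters of $H$ whose reduction mod~$p$ contains the Brauer character of $W$ from the ordinary character tables in \cite{Atlas} and the decomposition data in \cite{JLPW} and \cite{ModAt}. In all cases except $(3\AAA_6,5,3)$ and $(2\AAA_7,7,4)$ the lift is unique, giving $\Ext^1_H(W,W) = 0$. In the two exceptional cases, the pair of Galois-conjugate complex characters of degree~$3$ in the non-principal $5$-block of $3\AAA_6$ (respectively of degree~$4$ in the non-principal $7$-block of $2\AAA_7$) reduce to the same Brauer character, so $\dim \Ext^1_H(W,W) = 1$.

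Sub-case $W \not\cong W'$ with identical central character and some $\sigma \in \Aut(H)$ sending $W$ to $W'$: here $W$ and $W'$ are $\sigma$-conjugate, hence (if they lie in a common block) label $\sigma$-conjugate edges of the Brauer tree of that block. Because the block has cyclic defect, $\Ext^1_H(W,W') \ne 0$ would force these two edges to share a vertex of the Brauer tree. Using the Brauer trees recorded in \cite{JLPW} I would check, case by case, that no two distinct automorphism-conjugate simples with the required dimension $d$ sit on adjacent edges of the relevant tree; in each case this fails for numerical reasons (the multiplicities at the would-be common vertex, or the shape of the tree, do not accommodate such a pair), so $\Ext^1_H(W,W') = 0$.

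The main obstacle is not conceptual but organizational: one must patiently walk through all nineteen triples, in each case extracting the block decomposition of the simples of dimension~$d$, the Brauer tree, and the list of ordinary lifts. The delicate points are (i) verifying that the two advertised exceptional triples are genuinely the only ones admitting multiple complex lifts of a single simple of dimension~$d$, and (ii) in the cross-extension sub-case, ensuring that no pair of non-isomorphic automorphism-conjugate simples of dimension~$d$ occupies adjacent edges on the Brauer tree of a common block. Both verifications reduce to finite inspection using the tables of \cite{Atlas}, \cite{JLPW} and \cite{ModAt}.
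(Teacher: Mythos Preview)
Your approach is correct and shares the overall shape of the paper's argument: both rely on the fact that a Sylow $p$-subgroup has order $p$ so that Lemma~\ref{cyclic} governs self-extensions, and both dispose of the case of distinct central characters trivially. The treatment of $W\cong W'$ via counting complex lifts is exactly what the paper does.

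Where you diverge is in the cross-extension sub-case $W\not\cong W'$ with equal central character. You propose a direct Brauer-tree inspection (checking that two $\Aut(H)$-conjugate edges of dimension $d$ are never adjacent). The paper instead splits this into two pieces. For six of the triples --- $(6\AAA_7,7,6)$, $(3J_3,19,18)$, $(2Ru,29,28)$, $(M_{11},11,10)$, $(2M_{12},11,10)$, $(2M_{22},11,10)$ --- it simply observes from \cite{JLPW} or \cite{GAP} that no two distinct simples of dimension $d$ with the same central character are $\Aut(H)$-conjugate, so the hypothesis of the lemma is vacuous and there is nothing to prove. For the remaining seven triples it passes to $J:=H{:}\langle\sigma\rangle$ with $\sigma$ the involutory automorphism swapping $W$ and $W'$, and applies Lemma~\ref{cyclic} to the induced module $\Ind^J_H(W)$ exactly as in the proof of Lemma~\ref{weil-ext}: uniqueness of a complex lift of this single $J$-module forces $\Ext^1_H(W,W')=0$ by Frobenius reciprocity. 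This induction trick trades your edge-adjacency check against a single lift count for $J$, which is often easier to read off. Your route works too, but you should be aware that ``adjacent on the Brauer tree'' is only necessary, not sufficient, for a nonzero $\Ext^1$ (one needs $W'$ to be the immediate successor of $W$ at the shared vertex); since you are proving vanishing, the necessary condition is all you need, so this does not harm your argument.
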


\begin{proof}
Note that the Sylow $p$-subgroups of $H$ have order $p$. Hence,
in the case $W \cong W'$ we can apply Lemma \ref{cyclic}; in particular,
we arrive at the two exceptions listed above. This argument settles
the cases of $(M_{11},11,9)$, $(M_{23},23,21)$, $(J_1,11,7)$,
$(2\gtwo(4),13,12)$.

If $W \not\cong W'$ and
$\bfZ(H)$ acts differently on $W$ and $W'$ then we also get $\Ext^1_H(W,W') = 0$
since $\bfZ(H)$ is a central $p'$-group.
So it remains to consider the case
where $W \not\cong W'$ and $\bfZ(H)$ acts the same way on them. Suppose in addition
that there is an involutory automorphism $\sigma$ of $H$ that swaps $W$ and
$W'$ and the module $\Ind^J_H(W)$ of $J := H:\langle \sigma \rangle$ has at most one
complex lift. Then we can apply Lemma \ref{cyclic} to $J$ as in the proof of Lemma \ref{weil-ext} to conclude that $\Ext^1(W,W') = 0$. These arguments are used to handle
the cases of $(2\AAA_7,5,4)$, $(3\AAA_7,5,3)$, $(3\AAA_7,7,6)$, $(2J_2,7,6)$,
$(6Suz,13,12)$, $(6_1 \cdot \PSL_3(4),7,6)$, and $(6_1 \cdot \PSU_4(3),7,6)$.

In the six remaining cases of $(6\AAA_7,7,6)$, $(3J_3,19,18)$,
$(2Ru,29,28)$, $(M_{11},11,10)$, $(2M_{12},11,10)$, and $(2M_{22},11,10)$
we note (using \cite{JLPW} or \cite{GAP})
that the non-isomorphic $H$-modules $W$ and $W'$ with the same action
of $\bfZ(H)$ are {\it not} $\Aut(H)$-conjugate.
\end{proof}

\begin{cor}\label{nonzero1}
Suppose that $q > 3$ is an odd prime power such that $p = (q+1)/2$ is a prime. Then
there is a finite absolutely irreducible linear group $G < \GL(V) = \GL_{q-1}(k)$ of degree
$q-1$ over $k$ such that $\GP \cong \SL_2(q)$, all irreducible $\GP$-submodules in $V$
are Weil modules of dimension $(q-1)/2$, and $\dim \Ext^1_G(V,V) = 1$. In particular,
$(G,V)$ is not adequate.
\end{cor}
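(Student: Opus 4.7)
The plan is to exhibit $G$ explicitly as $\SL_2(q)$ extended by a diagonal automorphism, and take $V$ to be the module induced from a single small Weil module. Concretely, I would let $S = \SL_2(q)$, let $W_1, W_2$ denote the two absolutely irreducible $p$-modular Weil modules of $S$ of dimension $(q-1)/2$ (as in the setup of Lemma \ref{weil-ext} with $n=1$), and let $\sigma$ be an involutory diagonal automorphism of $S$, so that $W_2 \cong \sigma\cdot W_1$. Set $G := S \rtimes \langle\sigma\rangle$ and $V := \Ind^G_S(W_1)$, which has dimension $2\cdot(q-1)/2 = q-1$.

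First I would record the elementary properties. Since $W_1 \not\cong W_2$ as $S$-modules and $\sigma$ swaps them, Clifford theory shows $V$ is absolutely irreducible over $G$ with $V|_S \cong W_1 \oplus W_2$. Because $p$ is odd and $[G:S] = 2$, every $p$-element of $G$ lies in $S$; on the other hand, since $q > 3$ the group $S$ is quasisimple with $p\mid |S|$ (as $q+1 = 2p$), so $S = \bfO^{p'}(S)$, and hence $\GP = S \cong \SL_2(q)$. Thus the two irreducible $\GP$-submodules of $V$ are precisely the Weil modules $W_1, W_2$, as required.

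Next I would compute $\Ext^1_G(V,V)$. The hypotheses $q > 3$ and $p = (q+1)/2$ prime guarantee that $p \geq 3$ is odd, that $p \mid q^n+1$ for $n=1$, and that $p\nmid q-1 = 2(p-1)$, so Lemma \ref{weil-ext} applies and gives $\Ext^1_S(W_1,W_1) = 0$ and $\dim\Ext^1_S(W_1,W_2) = 1$. Since $[G:S] = 2$ is coprime to $p$, induction from $S$ to $G$ is exact and Shapiro's lemma (combined with $V|_S = W_1 \oplus W_2$) gives
\[
\Ext^1_G(V,V) \;=\; \Ext^1_G(\Ind_S^G W_1,V) \;\cong\; \Ext^1_S(W_1, V|_S) \;\cong\; \Ext^1_S(W_1,W_1)\oplus\Ext^1_S(W_1,W_2).
\]
Adding the two contributions yields $\dim\Ext^1_G(V,V) = 1$, and failure of condition (ii) of the adequacy definition then immediately gives non-adequacy.

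The only real content is already embedded in Lemma \ref{weil-ext}; the $n=1$ exception there was tailored precisely to produce such examples. So no genuine obstacle remains for the corollary itself: the work consists in packaging the right group extension so that Shapiro's lemma converts the $S$-calculation into a $G$-calculation, and in checking that $\GP$ really is all of $\SL_2(q)$ (which uses that $\PSL_2(q)$ is simple for $q > 3$ and that $p$ is odd).
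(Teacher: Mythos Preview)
Your proof is correct and follows essentially the same approach as the paper: both construct $G$ as an extension of $S=\SL_2(q)$ by an involutory diagonal automorphism and take $V=\Ind^G_S(W_1)$, then invoke the $n=1$ computation from Lemma~\ref{weil-ext} together with Frobenius reciprocity (Eckmann--Shapiro) to get $\dim\Ext^1_G(V,V)=1$. The only cosmetic difference is that the paper realizes this extension concretely as $\GU_2(q)/C$ (with $C$ the central subgroup of order $(q+1)/2$, using $q\equiv 1\pmod 4$), whereas you use the abstract split extension $S\rtimes\langle\sigma\rangle$; either choice works.
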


\begin{proof}
Our conditions on $p,q$ imply that $q \equiv 1 (\mod 4)$. Now we can just appeal to
the proof of Lemma \ref{weil-ext}, taking $H = \GU_2(q)/C$, where $C$ is the unique
subgroup of order $(q+1)/2$ in $\bfZ (\GU_2(q))$.
\end{proof}

\begin{prop}\label{extra2}
Suppose $(G,V)$ is as in the extraspecial case (ii) of Theorem \ref{str}. Then
$\Ext^1_G(V,V) = 0$.
\end{prop}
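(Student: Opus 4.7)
My plan is to apply Lemma~\ref{zero-ext} and then use the Lyndon--Hochschild--Serre (inflation--restriction) sequence to reduce to cohomology of $\GP/\bfO_{p'}(\GP)$, which vanishes by perfectness of the quotient. Writing $V_{\GP} = e\bigoplus_{i=1}^{t} W_i$ with the $W_i$ pairwise non-isomorphic irreducible $\GP$-modules, Lemma~\ref{zero-ext} reduces the proof to showing that $\ext^1_{\GP}(W_i, W_j) = 0$ for all pairs $i,j$ (including $i=j$).

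Set $R := \bfO_{p'}(\GP)$. Theorem~\ref{str}(ii) provides two structural facts: $R$ acts irreducibly on each $W_i$, and $\GP/R \cong S^m$ for $S$ simple non-abelian, so in particular $\GP/R$ is perfect. Since $R$ is a normal $p'$-subgroup and $k$ has characteristic $p$, one has $H^q(R, M) = 0$ for every $kR$-module $M$ and every $q \geq 1$, so the Lyndon--Hochschild--Serre spectral sequence collapses onto the $q=0$ row and yields
$$\ext^1_{\GP}(W_i, W_j) \cong H^1(\GP, W_i^* \otimes W_j) \cong H^1\bigl(\GP/R,\ \Hom_R(W_i, W_j)\bigr).$$

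I then split into two cases according to whether $(W_i)_R \cong (W_j)_R$. If $(W_i)_R \not\cong (W_j)_R$, Schur's lemma gives $\Hom_R(W_i, W_j) = 0$ and hence $\ext^1 = 0$ immediately. Otherwise $W_i$ and $W_j$ are two extensions to $\GP$ of a common irreducible $R$-module, so by the Clifford-theoretic result \cite[Corollary 8.20]{N} (already used in the proof of Proposition~\ref{extra1}) one has $W_j \cong W_i \otimes \chi$ for some linear character $\chi$ of $\GP/R$; perfectness of $\GP/R$ forces $\chi = 1$ and hence $W_j \cong W_i$. In this remaining case $\Hom_R(W_i, W_i) = k$, with trivial $\GP/R$-action (since $\GP$-endomorphisms of $W_i$ are scalars), so the cohomology reduces to $H^1(\GP/R, k) = 0$, once again by perfectness of $\GP/R$.

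I do not expect a substantive obstacle here. The only care required is the standard collapse of the LHS spectral sequence for a $p'$ normal subgroup in modular characteristic, and the observation that any one-dimensional character of $\GP/R \cong S^m$ is trivial; both are routine once the hypotheses of Theorem~\ref{str}(ii) have been invoked.
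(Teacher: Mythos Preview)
Your proof is correct, but it takes a different route from the paper, particularly in the diagonal case $i=j$.

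Both arguments begin with the same reduction via Lemma~\ref{zero-ext} to $\Ext^1_{\GP}(W_i,W_j)=0$. For $i\neq j$, the paper invokes (from the proof of Proposition~\ref{extra1}) that $W_i\not\cong W_j$ as $R$-modules and then argues directly with Maschke and isotypic components; your LHS collapse to $\Hom_R(W_i,W_j)=0$ is the same content in cohomological language. The real divergence is at $i=j$: the paper passes to $\GP/K_i$ via Lemma~\ref{kernel}(ii), notes that this quotient has cyclic Sylow $p$-subgroups of order $p$, and then applies the cyclic-defect criterion Lemma~\ref{cyclic} together with the uniqueness of the complex lift of $W_i$ established in the proof of Proposition~\ref{extra1}. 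Your argument bypasses all of this: after the LHS collapse you land on $H^1(\GP/R,k)$, which vanishes simply because $\GP/R\cong S^m$ is perfect. This is shorter and avoids both the lifting discussion and Lemma~\ref{cyclic}; the paper's approach, on the other hand, exhibits more of the block-theoretic structure and reuses the complex-lift fact already proved.

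One small wording point: your parenthetical ``since $\GP$-endomorphisms of $W_i$ are scalars'' is not quite the reason the $\GP/R$-action on $\Hom_R(W_i,W_i)$ is trivial. The point is that $\Hom_R(W_i,W_i)=k$ because $W_i$ is $R$-irreducible (Schur), and the conjugation action of $\GP$ on scalar endomorphisms is trivial. You have already recorded the $R$-irreducibility, so this is only a matter of phrasing.
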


\begin{proof}
Write $V|_\GP = e\sum^t_{i=1}W_i$ as usual and let $K_i$ be the kernel of the action
of $\GP$ on $W_i$. By Lemma \ref{zero-ext}, it suffices to show
that $\Ext^1_\GP(W_i,W_j) = 0$ for all $i,j$. Recall that $R := \bfO_{p'}(\GP)$ acts
irreducibly on $W_i$. By Theorem \ref{str}, $K_i$ has no composition factor
$\cong C_p$, whence $\Ext^1_\GP(W_i,W_i) = \Ext^1_{\GP/K_i}(W_i,W_i)$ by
Lemma \ref{kernel}(ii). Next, $\GP/K_i$ has cyclic Sylow $p$-subgroups (of order $p$)
by Theorem \ref{bz}(e), and we have shown in the proof of Proposition \ref{extra1} that
the $\GP/K_i$-module $W_i$ has a unique complex lift. Hence
$\Ext^1_{\GP/K_i}(W_i,W_i) = 0$ by Lemma \ref{cyclic}.

Suppose now that $i \neq j$ and let $M$ be any extension of the $\GP$-module
$W_i$ by the $\GP$-module $W_j$. Recall that the $R$-modules $W_i$ and $W_j$
are irreducible and non-isomorphic, as shown in the proof of Proposition \ref{extra1}.
But $R$ is a $p'$-group, so by Maschke's theorem $M = M_1 \oplus M_2$ with
$M_i \cong W_i$ as $R$-modules. Now for any $g \in \GP$,
$g(M_i) \cong (W_i)^g \cong W_i$ as $R$-modules, and so $g(M_i) = M_i$. Thus
$M = M_1 \oplus M_2$ as $\GP$-module. We have shown that
$\Ext^1_\GP(W_i,W_j) = 0$.
\end{proof}

\begin{prop}\label{simple2}
Suppose $(G,V)$ is as in case (i) of Theorem \ref{str}. Then
$\Ext^1_G(V,V) = 0$ unless one of the following possibilities occurs for the
group $H < \GL(W)$ induced by the action of $\GP$ on any irreducible
$\GP$-submodule $W$ of $V$.

{\rm (i)} $p = (q+1)/2$, $\dim W = p-1$, and $H \cong \SL_2(q)$.

{\rm (ii)} $p = 2^f +1$ is a Fermat prime, $\dim W = p-2$, and $H \cong \SL_2(2^f)$.

{\rm (iii)} $(H,p,d) = (3\AAA_6,5,3)$ and $(2\AAA_7,7,4)$.
\end{prop}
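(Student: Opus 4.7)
The plan is to reduce $\Ext^1_G(V,V)$ to extension computations over the individual quasisimple factors of $\GP$ and then invoke the cyclic-defect machinery of Lemma \ref{cyclic}, together with Lemmas \ref{weil-ext} and \ref{spor}.

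First, by Lemma \ref{zero-ext} it suffices to show $\Ext^1_\GP(W_i, W_j) = 0$ for all irreducible $\GP$-summands of $V_\GP$, outside the claimed exceptions. By Theorem \ref{str}(i), $\GP = L_1 \ast \cdots \ast L_n$ is a central product of quasisimple groups with $\bfZ(\GP)$ a $p'$-group, and each $W_i$ induces a quasisimple subgroup of $\GL(W_i)$; writing $W_i = W_{i,1} \boxtimes \cdots \boxtimes W_{i,n}$, exactly one tensor factor $W_{i,k_i}$ is nontrivial. Applying Lemma \ref{lem:kunneth} kills $\Ext^1_\GP(W_i,W_j)$ whenever $k_i \neq k_j$ (the modules differ in at least two $L$-factors), and when $k_i = k_j =: k$ reduces to $\Ext^1_{L_k}(W_{i,k}, W_{j,k})$, using $H^1(L_j,k) = 0$ for every $j$ since $L_j$ is perfect with no $C_p$-composition factor by Theorem \ref{str}.

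In every case (b), (c), (d) of Theorem \ref{bz}, Sylow $p$-subgroups of $L_k$ have order $p$, so Lemma \ref{cyclic} applies. For the sporadic possibilities in (b2)--(b4), (c), (d), Lemma \ref{spor} gives $\Ext^1_H = 0$ with the only exceptions $(3\AAA_6,5,3)$ and $(2\AAA_7,7,4)$, producing exception (iii). For $H = \Sp_{2n}(q)$ of (b1), Lemma \ref{weil-ext} yields vanishing when $n \geq 2$ and $\dim\Ext^1(W_1,W_2)=1$ when $n=1$; the latter gives exception (i) via $\Sp_2(q) = \SL_2(q)$. Cross-extensions $W \not\cong W'$ with different kernels are killed by Corollary \ref{product2} or by differing $\bfZ(H)$-central characters (Lemma \ref{block2}(ii)).

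The residual cases are three infinite families: $H = \SU_n(q)$ with $n \geq 3$ from (b1), $H = \PSL_n(q)$ with $n \geq 3$ from (c), and $H = \AAA_p$ with $p \geq 7$ from (c); note that the $n=2$ subcase of $\PSL_n(q)$ forces $q = 2^f$ and $p = 2^f+1$ Fermat, which together with $\AAA_5 \cong \PSL_2(5)$ constitutes exception (ii). For each of these families, the plan is to identify the Brauer tree of the principal $p$-block, show that the edge corresponding to $W$ is not adjacent to the exceptional vertex (so $W$ has a unique ordinary lift and Lemma \ref{cyclic} yields $\Ext^1_H(W,W)=0$), and dispatch cross-extensions by passing to $H \rtimes \langle\sigma\rangle$ for an appropriate outer automorphism fusing $W$ and $W'$, as in the proof of Lemma \ref{weil-ext}. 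The main obstacle is the explicit Brauer-tree identification in the $\SU_n(q)$ and $\PSL_n(q)$ cases for $n \geq 3$: here one needs the Harish-Chandra decomposition of a specific Weil or unipotent character of degree roughly $p$. The tools assembled in this paper—the decompositions of Lemma \ref{sl2}, the Weil-character data of \cite{GT1,GMST}, and the known fact that the Brauer tree of the principal block of $\AAA_p$ is a line segment with the exceptional vertex at one end and the $(p-2)$-dimensional module at the opposite extreme—should supply what is needed.
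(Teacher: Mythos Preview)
Your reduction via Lemma~\ref{zero-ext} and Lemma~\ref{lem:kunneth} is sound and essentially parallel to the paper's use of Lemma~\ref{zero-ext}, Corollary~\ref{product2}, and Lemma~\ref{kernel}. Two points deserve attention.

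First, a bookkeeping gap: the sporadic entries $(2\AAA_7,5,4)$, $(3J_3,19,18)$, $(2Ru,29,28)$ sit in case (b1) of Theorem~\ref{bz}, not (b2)--(b4), so your partition misses them; they are of course covered by Lemma~\ref{spor}. More importantly, when you invoke Lemma~\ref{spor} for $W \not\cong W'$ with the same central character, you must verify its extra hypothesis that $W$ and $W'$ are $\Aut(H)$-conjugate. This is not automatic: several entries in Lemma~\ref{spor} (e.g.\ $6\AAA_7$, $3J_3$, $2Ru$, $M_{11}$, $2M_{12}$, $2M_{22}$) have non-isomorphic modules of the required dimension with the same central character that are \emph{not} $\Aut(H)$-conjugate. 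The paper supplies the missing step: since $V$ is irreducible over $G$, Clifford theory gives $W_j \cong W_i^g$ for some $g \in G$, and as $K_i = K_j$ this $g$ induces an automorphism of $H = \GP/K_i$ sending $W_i$ to $W_j$.

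Second, for the three infinite families $\SU_n(q)$ ($n\ge 3$), $\PSL_n(q)$ ($n\ge 3$), and $\AAA_p$ ($p\ge 7$), you propose to identify the Brauer tree and then handle cross-extensions via an outer-automorphism induction trick. This is more work than needed, and you leave it unfinished. The paper's route is much shorter: in each of these families the module $W$ of the relevant dimension is \emph{unique} up to isomorphism (so there are no cross-extensions at all), and $W$ has at most one complex lift, so Lemma~\ref{cyclic} immediately gives $\Ext^1_H(W,W)=0$. These uniqueness and lift-count statements are already in the literature the paper cites: \cite[Theorem~2.7, Proposition~11.3]{GMST} for $\SU_n(q)$, \cite[Theorem~1.1]{GT1} for $\PSL_n(q)$ with $n\ge 3$, and \cite[Lemma~6.1]{GT2} together with \cite{Atlas} for $\AAA_p$. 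No Brauer-tree identification or Harish-Chandra computation is required.
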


\begin{proof}
Write $V|_\GP = e\sum^t_{i=1}W_i$ as usual and let $K_i$ be the kernel of the action
of $\GP$ on $W_i$. By Lemma \ref{zero-ext}, it suffices to show
that $\Ext^1_\GP(W_i,W_j) = 0$ for all $i,j$. Note that neither $\GP$ nor $K_i$ can
have $C_p$ as a composition factor, according to Theorem \ref{str}. Furthermore,
if $K_i \neq K_j$ then we are done by Corollary \ref{product2}. So we may assume
that $K_i = K_j$ and then replace $\GP$ by $H = \GP/K_i = \GP/K_j$ by Lemma
\ref{kernel}. Now we will go over the possibilities for $(H,W_i)$ listed in Theorem
\ref{bz}(b)--(d).

\smallskip
Suppose we are in the case (b1) of Theorem \ref{bz}. Assume first that
$(p,H) = ((q^n+1)/2,\Sp_{2n}(q))$. It is well known (cf.\ \cite[Theorem 2.1]{GMST}) that
$H$ has exactly two irreducible modules of dimension $(q^n-1)/2$, namely the two Weil
modules of given dimension. Hence we can apply Lemma \ref{weil-ext} and arrive
at the exception (i).

Next, assume that $(p,H) = ((q^n+1)/(q+1),\PSU_n(q))$; in
particular, $n \geq 3$ is odd. Applying Theorem 2.7 and Proposition 11.3 of
\cite{GMST}, we see that there is a unique irreducible $kH$-module of dimension
$p-1 = (q^n-q)/(q+1)$ and, furthermore, this module has a unique complex lift. Hence
we are done by Lemma \ref{cyclic}.

\smallskip
Suppose now that we are in the case (c) of Theorem \ref{bz}. If $H = \AAA_p$, then
using \cite[Lemma 6.1]{GT2} for $p \geq 17$ and \cite{Atlas} for $p \leq 13$, we see that
$H$ has a unique irreducible $kH$-module of dimension $p-2$ and furthermore that
module has no complex lift unless $p=5$, whence we are done by Lemma \ref{cyclic}. Note
that the exception $p = 5$ is recorded in (ii) (with $f = 2$).

Next, assume that $(p,H) = ((q^n-1)/(q-1), \PSL_n(q))$. If $n=2$, then $p=q+1=2^f+1$ is a
Fermat prime, in which case $H = \SL_2(2^f)$ has a unique irreducible $kH$-module $W$
of dimension $p-2$, with $2^{f-1}$ complex lifts, whence $\dim \Ext^1_H(W,W) = 1$
by Lemma \ref{cyclic}. This exception is recorded in (ii). If $n \geq 3$, then by
\cite[Theorem 1.1]{GT1}, $H$ has a unique irreducible $kH$-module $W$
of dimension $p-2$ with no complex lifts, whence $\dim \Ext^1_H(W,W) = 0$
by Lemma \ref{cyclic}.

It remains to consider the $19$ cases listed in Lemma \ref{spor}. Furthermore,
by Corollary \ref{product2} we need only consider the case where $\GP$ acts on
$W_i$ and $W_j$ with the same kernel. Since $\GP$ has no composition factor
isomorphic to $C_p$, by Lemma \ref{kernel}(ii), we may view $W_i$ and $W_j$
as modules over the same quasisimple group $H$, with the same kernel. The
irreducibility of $G$ on $V$ further implies that $W_j \cong W_i^g$ for some
$g \in G$, whence the $H$-modules $W_i$ and $W_j$ are $\Aut(H)$-conjugate.
Now we are done by applying Lemma \ref{spor}.
\end{proof}

\begin{cor}\label{nonzero2}
Suppose that $p = 2^f+1$ is a Fermat prime. Then
there is a finite absolutely irreducible linear group $G < \GL(V) = \GL_{p-2}(k)$ of degree
$p-2$ over $k$ such that $G = \GP \cong \SL_2(2^f)$ and $\dim \Ext^1_G(V,V) = 1$. In particular, $(G,V)$ is not adequate.
\end{cor}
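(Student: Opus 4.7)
The plan is to take $G=\SL_2(q)$ with $q=2^f$ (assuming $f\ge 2$, so that $p-2\ge 3$ and $G$ is simple non-abelian) acting on the unique irreducible $kG$-module $V$ of dimension $p-2=q-1$ that was identified in the proof of Proposition~\ref{simple2}(ii). First, I would check that $G=\GP$: since $G$ is simple non-abelian and $p\mid |G|$, the subgroup generated by the $p$-elements is a non-trivial normal subgroup of $G$ and hence equals $G$. By the analysis in the proof of Proposition~\ref{simple2} (via \cite{Burk}), $V$ is indeed absolutely irreducible of dimension $p-2$ over $k$, giving an embedding $G<\GL(V)=\GL_{p-2}(k)$ with the required properties.

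Next, I would compute $\Ext^1_G(V,V)$ using Lemma~\ref{cyclic}. Because $p=q+1$ exactly divides $|G|=q(q^2-1)$, the Sylow $p$-subgroups of $G$ are cyclic of order $p$, so $V$ lies in a block of cyclic defect. It remains to count the complex lifts of $V$: the ordinary irreducible characters of $\SL_2(q)$ of degree $q-1$ are precisely the cuspidal (discrete series) characters, parametrized by the non-trivial characters of the non-split torus of order $q+1=p$ modulo Weyl inversion, which number $q/2=2^{f-1}$. These all have degree coprime to $p$, hence belong to the principal $p$-block alongside the trivial and Steinberg characters (the Brauer tree being an open polygon with the $2^{f-1}$ cuspidals collected at an exceptional vertex), and they all reduce modulo $p$ to the unique $(p-2)$-dimensional irreducible Brauer character, namely $V$. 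Since $f\ge 2$ gives $2^{f-1}\ge 2$ non-isomorphic complex lifts of $V$, Lemma~\ref{cyclic} immediately yields $\dim\Ext^1_G(V,V)=1$.

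Finally, since condition (ii) in the definition of adequacy demands $\Ext^1_G(V,V)=0$, the pair $(G,V)$ is not adequate. No real obstacle arises here; the only non-trivial input is the description of the principal $p$-block of $\SL_2(q)$ for $p\mid q+1$, which is standard cross-characteristic representation theory and was already invoked in the proof of Proposition~\ref{simple2}. The case $f=1$ (i.e.\ $p=3$) is genuinely degenerate: then $\SL_2(2)\cong S_3$ satisfies $\GP=C_3\ne S_3$, and the corollary is vacuous, consistent with the fact that only one cuspidal lift exists in that situation.
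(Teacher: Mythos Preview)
Your proposal is correct and follows essentially the same approach as the paper: the paper's proof simply refers back to the exception (ii) identified in the proof of Proposition~\ref{simple2}, where it was already shown that $H=\SL_2(2^f)$ has a unique irreducible $kH$-module $W$ of dimension $p-2$ with $2^{f-1}$ complex lifts, and then Lemma~\ref{cyclic} gives $\dim\Ext^1_H(W,W)=1$. Your version supplies the same argument with more detail (the verification $G=\GP$, the explicit description of the cuspidal characters and the Brauer tree, and the discussion of the degenerate case $f=1$), but the logical content is identical.
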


\begin{proof}
See the proof of Proposition \ref{simple2} and the exception (ii) listed therein.
\end{proof}

{\bf Proof of Theorem \ref{thm: ext=0}.}
(a) Assume first that $G$ is not $p$-solvable. Then $\GP$ has no
composition factor isomorphic to $C_p$ and
$H^1(G,k) = 0$ by Theorem \ref{str}. By Lemma \ref{zero-ext}, we need to
verify if $\Ext^1_\GP(W_i,W_j) = 0$ for any two simple $\GP$-submodules
$W_i$ and $W_j$ of $V$, of dimension $1 < d < p$. Suppose for instance that
$\Ext^1_\GP(W_1,W_2) \neq 0$.

Suppose in addition that $p > 3$. Then the perfect group $\GP$ admits a reducible
indecomposable module $U$ with two composition factors $W_1$ and $W_2$,
of dimension $2d$, say with kernel $K$. Since $\GP$ has no
composition factor isomorphic to $C_p$, $\bfO_p(X) = 1$ for the group
$X := \GP/K$ induced by the action of $\GP$  on $U$.
Suppose that $X$ is not quasisimple. By Proposition \ref{indec-str},
we have $d =p-1$. Then by Lemma \ref{2p-2}, either we arrive at the exception
(b)(ii) listed in Theorem \ref{thm: ext=0}, or else Lemma \ref{lem:opprime}
applies.  In the latter case, we see that $H^1(X,k) \neq 0$, whence $X$ and $\GP$ admit $C_p$ as a composition factor, a contradiction. Thus $X$ is quasisimple and
$\bfZ(X)$ is a $p'$-group. If $X$ is of Lie type in characteristic $p > 3$, then we must
have $d = (p \pm 1)/2$ and arrive (using Lemma \ref{ext-defi})
at the exception (b)(i). Otherwise we are in
the case (i) of Theorem \ref{str} and so by Proposition \ref{simple2} we arrive
at the exceptions (b)(iii)--(v).

\smallskip
(b) Now we consider the case $p = 3$ and $G$ is not $p$-solvable. Then
the perfect group $\GP$ acts nontrivially on $W_1$ and $W_2$ of dimension $2$.
Applying Theorem \ref{str}, we see that $\GP = L_1 * \ldots * L_n$ is a
central product of quasisimple groups; moreover, for all $j$ we have that
$L_j = \SL_2(q)$ with $q= 3^a > 3$ or $q = 5$. Also, for each $i$, there is
a unique $k_i$ such that $L_j$ acts nontrivially on $W_i$ precisely when $j = k_i$.
Since $\Ext^1_X(k,k) = 0$ for any perfect group $X$, by Lemma
\ref{lem:kunneth} we may assume that $k_1 = k_2 = 1$ and
$\Ext^1_{L_1}(W_1,W_2) \neq 0$. If $q=5$, then the case (b)(iii) holds. Otherwise
we arrive at (b)(vi) -- indeed, $\Ext^1_{L_1}(L(3^{a-2}),L(3^{a-1})) \neq 0$ by
\cite[Corollary 4.5]{AJL}.

\smallskip
(c) We may now assume that $\GP$ is $p$-solvable (and so is $G$). In particular, the subgroup
$H < \GL(W_i)$ induced by the action of $\GP$ on $W_i$ is $p$-solvable, whence
$p$ is a Fermat prime, and $H = \bfO_{p'}(H)P$ with $P \cong C_p$.
Since $\GP$ projects onto $H$, $\GP$ also has $C_p$ as a composition factor, and so
$H^1(\GP,k) \neq 0$; in particular,  $\Ext^1_\GP(V,V) \neq 0$.
We arrive at the exception (a) of Theorem \ref{thm: ext=0}.
\hfill $\Box$


\medskip
{\bf Proof of Corollary \ref{cor: adequate}.} Suppose that $(G,V)$ is {\it not} adequate, and let
$\overline{V} := V \otimes_k \overline{k}$.
By the assumptions, $\dim W < p$. Since $(\dim \overline{V})/(\dim W)$ divides $|G/\GP|$ by
\cite[Theorem 8.30]{N}, $p \nmid (\dim_{\overline{k}}\overline{V}) = \dim_k V$. Next, $(G,V)$ is weakly adequate by Theorem
\ref{thm: weak adequacy}. It follows that $\Ext^1_G(V,V) \neq 0$ and so
$\Ext^1_G(\overline{V},\overline{V}) \neq 0$. Now we can apply Theorem \ref{thm: ext=0}.
\hfill $\Box$


\begin{thebibliography}{99999}

\bibitem{Alp}
   J.\ L.\ Alperin, `{\it Local Representation Theory.
Modular Representations as an Introduction to the Local Representation Theory
of Finite Groups}, Cambridge Studies in Advanced Mathematics, 11. Cambridge
University Press, Cambridge, 1986.

\bibitem{AJL}  H.\ Andersen,  J.\ Jorgensen,  and P.\ Landrock,
The projective indecomposable modules of $\SL(2,p^n)$.
{\it Proc.\ London Math.\ Soc.} (3) {\bf 46} (1983),  38--52.

\bibitem{gee}  T.\ Barnet-Lamb, T.\ Gee,
and D.\ Geraghty,  Serre weights for rank two unitary groups,
{\it Math.\ Annalen} (to appear).

\bibitem{BLGGT}
  T.\ Barnet-Lamb, T.\ Gee, D.\ Geraghty, and R.\ Taylor,
Potential automorphy and change of weight, {\it Annals of Math.}  {\bf 179} (2014), 501--609.


\bibitem{Ben}
  D.\ J.\ Benson, `{\it Representations and Cohomology. I}' 2nd ed., Cambridge Stud.\ Adv.\ Math.\
{\bf  30}, Cambridge University Press, Cambridge 1998.

\bibitem{BZ}
  H.\ I.\ Blau and J.\ Zhang, Linear groups of small degree over fields of finite characteristic,
{\it J.\ Algebra} {\bf 159} (1993), 358--386.

\bibitem{BCDT}
  C.\ Breuil, B.\ Conrad, F.\ Diamond, and R.\ Taylor,
On the modularity of elliptic curves over $\bbQ$: wild $3$-adic exercises,
{\it J.\ Amer.\ Math.\ Soc.} {\bf 14} (2001), 843--939.

\bibitem{Burk}
  R.\ Burkhardt, Die Zerlegungsmatrizen der Gruppen $\PSL(2,p^{f})$, {\it J.\
Algebra} {\bf 40} (1976), 75--96.

\bibitem{C}
  F.\ Calegari, Even Galois representations and the Fontaine-Mazur conjecture.
II, {\it J.\ Amer.\ Math.\ Soc.} {\bf 25} (2012), 533--554.


\bibitem{CHT}
  L.\ Clozel, M.\ Harris, and R.\ Taylor,
Automorphy for some $l$-adic lifts of automorphic mod $l$ Galois
representations. With Appendix A, summarizing unpublished work of Russ Mann,
and Appendix B by Marie-France Vign\'eras, {\it Publ.\ Math.\ Inst.\ Hautes \'Etudes Sci.} No.\
{\bf 108} (2008), 1--181.


\bibitem{Atlas}
 J.\ H.\ Conway, R.\ T.\ Curtis, S.\ P.\ Norton, R.\ A.\ Parker, and R.\ A.\ Wilson,
`{\it An ATLAS of Finite Groups}', Clarendon Press, Oxford, $1985$.

\bibitem{D}
  L.\ Dieulefait,  Automorphy of $\mathrm{Symm}^5(\GL(2))$ and base
change, preprint, arXiv:1208.3946.

\bibitem{DG}
  L.\ Dieulefait and T.\ Gee, Automorphy lifting for small $\ell$ -- Appendix B to Dieulefait's
``Automorphy of $\mathrm{Symm}^5(\GL(2))$ and base change'', preprint,  arXiv:1209.5105.

\bibitem{DM}
F.\ Digne and J.\ Michel, `{\it Representations of Finite Groups of
Lie Type}', London
Mathematical Society Student Texts $21$, Cambridge University Press,
1991.

\bibitem{DH}  S.\ Doty and A.\ Henke,
Decomposition of tensor products of modular irreducibles for $\SL_2$,
{\it Quart.\ J.\ Math.} {\bf 56} (2005), 189--207.

\bibitem{GAP}
The GAP group, `{\it {\sf GAP} - groups, algorithms, and
programming}', Version 4.4,
2004,
{\sf http://www.gap-system.org}.

\bibitem{Geck}
  M.\ Geck, Irreducible Brauer characters of the $3$-dimensional special
unitary groups in non-describing characteristic, {\it Comm.\ Algebra} {\bf 18}
(1990), 563--584.

\bibitem{Gcr}   R.\ M.\ Guralnick, Small dimensional representations are semisimple,
{\it J.\ Algebra} {\bf 220} (1999),  531--541.

\bibitem{G2}  R.\ M.\ Guralnick,  Adequate subgroups II,
{\it Bull.\ Math.\ Sci.} {\bf 2} (2012), 193--203.

\bibitem{Gapp}   R.\ M.\ Guralnick,  Appendix A:  Adequacy of representations of
finite groups of Lie type, preprint,  arXiv:1208.4128.

\bibitem{GHT}
 R.\ M.\ Guralnick, F.\ Herzig, and Pham Huu Tiep,  Adequate subgroups and indecomposable
modules, {\it J.\ Europ.\ Math.\ Soc.} (to appear).

\bibitem{GHTT}
  R.\ M.\ Guralnick, F.\ Herzig, R.\ Taylor, and
J.\ Thorne,  Adequate subgroups,  {\it J.\ Inst.\ Math.\ Jussieu} {\bf 11} (2012),  907--920.

\bibitem{GKKL}  R.\ M.\ Guralnick, W.\ M.\ Kantor, M.\ Kassabov,
and A.\ Lubotzky,   Presentations of finite simple groups:
profinite and cohomological approaches, {\it Groups Geom.\ Dyn.} {\bf 1} (2007), 469--523.


\bibitem{GMST}
R.\ M.\ Guralnick, K.\ Magaard, J.\ Saxl, and Pham Huu Tiep,
Cross characteristic representations of odd characteristic symplectic groups and
unitary groups, {\it J.\ Algebra} {\bf 257} (2002), 291--347.

\bibitem{GT1}
  R.\ M.\ Guralnick and Pham Huu Tiep, Low-dimensional representations of special linear
groups in cross characteristic, {\it Proc.\ London Math.\ Soc.} {\bf 78} (1999),
116--138.

\bibitem{GT2}
  R.\ M.\ Guralnick and Pham Huu Tiep, The non-coprime $k(GV)$ problem,
{\it J.\ Algebra} {\bf 293} (2005), 185--242.

\bibitem{H}   D.\ F.\ Holt,  Exact sequences in cohomology and an application,
{\it J.\ Pure Appl.\ Algebra} {\bf 18} (1980), 143--147.

\bibitem{JLPW}
  C.\ Jansen, K.\ Lux, R.\ A.\ Parker, and R.\ A.\ Wilson, `{\it An ATLAS of
 Brauer Characters}', Oxford University Press, Oxford, $1995$.

\bibitem{J}
  G.\ James, The irreducible representations of the finite general linear groups,
{\it Proc.\ London Math.\ Soc.} {\bf 52} (1986), 236--268.



\bibitem{jantzen}  J.\ C.\ Jantzen,  Representations of algebraic groups. Second edition.
Mathematical Surveys and Monographs, 107. American Mathematical Society, Providence, RI, 2003.

\bibitem{jantzenlow}  J.\ C.\ Jantzen,
Low-dimensional representations of reductive groups are semisimple,
in: `{\it Algebraic Groups and Lie Groups}',
255--266, Austral.\ Math.\ Soc.\ Lect.\ Ser., 9, Cambridge Univ.\ Press, Cambridge, 1997.

\bibitem{Janusz}
G.\ J.\ Janusz, Indecomposable modules for finite groups,
{\it Ann.\ of Math.} {\bf 89} (1969), 209--241.

\bibitem{KW}
  C.\ Khare and J.-P.\ Wintenberger,
Serre's modularity conjecture. I, {\it Invent.\ Math.} {\bf 178} (2009), 485--504.

\bibitem{KL}
  P.\ B.\ Kleidman and M.\ W.\ Liebeck, `{\it The Subgroup Structure of the
Finite Classical Groups}', London Math.\ Soc.\ Lecture Note Ser.\ no.\
{\bf 129}, Cambridge University Press, 1990.

\bibitem{KT1}
  A.\ S.\ Kleshchev and Pham Huu Tiep, Representations of finite special linear groups in
non-defining characteristic, {\it Adv.\ Math.} {\bf 220} (2009), 478--504.

\bibitem{KT2}
  A.\ S.\ Kleshchev and Pham Huu Tiep, Representations of general linear groups which are
  irreducible over subgroups,  {\it Amer.\ J.\ Math.} {\bf 132} (2010), 425--473.




\bibitem{McN}
  G.\ McNinch, Semisimple modules for finite groups of Lie type,
{\it J.\ London Math.\ Soc.} {\bf 60} (1999), 771--792.

\bibitem{ModAt}
  Decomposition matrices,
{\sf {http://www.math.rwth-aachen.de/homes/MOC/decomposition/}}\theoremstyle{plain}

\bibitem{N}
G.\ Navarro, `{\it Characters and Blocks of Finite Groups}',
Cambridge University Press, 1998.

\bibitem{NT}
  G.\ Navarro and Pham Huu Tiep, Degrees of rational characters of finite groups,
{\it Adv.\ Math.} {\bf 224} (2010), 1121--1142.

\bibitem{Serre1}
  J.-P.\ Serre,  Sur la semi-simplicit\'e des produits tensoriels de
repr\'esentations de groupes, {\it  Invent.\ Math.} {\bf 116} (1994), 513--530.


\bibitem{T} J.\ Thorne,  On the automorphy of $l$-adic Galois representations with small residual image. With an appendix by Robert Guralnick, Florian Herzig, Richard Taylor and Thorne, {\it J.\ Inst.\ Math.\ Jussieu} {\bf 11} (2012),  855--920.

\bibitem{T2} J. Thorne, A $2$-adic automorphy lifting theorem for unitary groups over CM fields, preprint.


\bibitem{TZ1}
Pham Huu Tiep and A.\ E.\ Zalesskii, Minimal characters of the finite
classical groups, {\it Commun.\ Algebra} {\bf 24} (1996), 2093--2167.

\bibitem{TZ2}
  Pham Huu Tiep and A.\ E.\ Zalesskii, Some characterizations of the Weil representations of
  symplectic and  unitary groups, {\it J.\ Algebra} {\bf 192} (1997), 130--165.

\bibitem {AGR}
  R.\ A.\ Wilson, P.\ Walsh, J.\ Tripp, I.\ Suleiman, R.\ A.\ Parker, S.\ P.\ Norton,
  S.\ Nickerson, S.\ Linton, J.\ Bray, and R.\ Abbott,
  ATLAS of Finite Group Representations,
  \texttt{http://brauer.maths.qmul.ac.uk/Atlas/v3}.

\end{thebibliography}
\end{document}